\definecolor{vert}{RGB}{15,120,5}
\definecolor{gris}{RGB}{128,128,128}
\definecolor{bleu}{RGB}{0,50,150}
\definecolor{rouge}{RGB}{149,24,24}
\theoremstyle{plain}
\newtheorem{thm}{Theorem}[subsection]
\newtheorem{prop}[thm]{Proposition}
\newtheorem{lem}[thm]{Lemma}
\newtheorem{cor}[thm]{Corollary} 
\newtheorem{conject}[thm]{Conjecture}
\theoremstyle{definition}
\newtheorem{defi}[thm]{Definition}
\newtheorem{constr}[thm]{Construction}
\newtheorem{introthm}{Theorem}
\newtheorem{introprop}[introthm]{Proposition}
\theoremstyle{remark}
\newtheorem{rem}[thm]{Remark}
\newtheorem{recoll}[thm]{Recollection}
\numberwithin{equation}{thm}
\newcommand{\cref}[1]{\zcref{#1}}
\newcommand{\zcrefname}[3]{%
  \zcRefTypeSetup{#1}{
    Name-sg = #2 ,
    name-sg = #2 ,
    Name-pl = #3 ,
    name-pl = #3 ,
  }
}
\theoremstyle{definition}
\theoremstyle{plain}
\newcommand{\titre}{}
\newcommand{\auteur}{}
\title{\titre}
\author{\auteur}
\numberwithin{equation}{subsubsection}
\newcommand{\C}{\mathbb{C}}
\newcommand{\R}{\mathbb{R}}
\newcommand{\Q}{\mathbb{Q}}
\newcommand{\Z}{\mathbb{Z}}
\newcommand{\N}{\mathbb{N}}
\newcommand{\Spec}{\operatorname{Spec}}
\newcommand{\A}{\mathbb{A}}
\newcommand{\HH}{\mathrm{H}}
\newcommand{\D}{\mathrm{D}}
\newcommand{\lcal}{\mathcal{L}}
\newcommand{\acal}{\mathcal{A}}
\newcommand{\bcal}{\mathcal{B}}
\newcommand{\ecal}{\mathcal{E}}
\newcommand{\fcal}{\mathcal{F}}
\newcommand{\scal}{\mathcal{S}}
\newcommand{\ccal}{\mathcal{C}}
\newcommand{\dcal}{\mathcal{D}}
\newcommand{\mcal}{\mathcal{M}}
\newcommand{\ncal}{\mathcal{N}}
\newcommand{\mc}{\mathcal}
\newcommand{\St}{\mathrm{St}}
\newcommand{\proet}{\mathrm{pro\acute{e}t}}
\newcommand{\an}{\mathrm{an}}
\newcommand{\bscr}{\mathscr{B}}
\newcommand{\dconset}{\D_\mathrm{cons}}
\newcommand{\dconspro}{\D_\mathrm{cons}}
\newcommand{\dconsan}{\D_\mathrm{cons}}
\newcommand{\dgman}{\D_\mathrm{gm}}
\newcommand{\Hom}{\mathrm{Hom}}
\newcommand{\Gm}{{\mathbb{G}_m}}
\newcommand{\F}{\mathbb{F}}
\newcommand{\hcal}{\mathcal{H}}
\newcommand{\Mp}{{\mathcal{M}_{\mathrm{perv}}}}
\DeclareMathOperator{\sHom}{\mathscr{H}\text{\kern -3pt {\calligra\large om}}\,}
\newcommand{\map}{\mathrm{map}}
\newcommand{\CAlg}{\mathrm{CAlg}}
\newcommand{\Perf}{\mathrm{Perf}}
\newcommand{\HHp}{{{}^{p}\HH}}
\newcommand{\DM}{\mathrm{DM}^\et}
\newcommand{\DN}{\mathrm{DN}}
\newcommand{\et}{\mathrm{\acute{e}t}}
\newcommand{\Mod}{\mathrm{Mod}}
\newcommand{\un}{\mathbbm{1}}
\newcommand{\Sh}{\mathrm{Sh}}
\newcommand{\Sch}{\mathrm{Sch}}
\newcommand{\op}{\mathrm{op}}
\newcommand{\nscr}{\mathscr{N}}
\newcommand{\catinfty}{\mathrm{Cat}_\infty}
\newcommand{\DMgm}{\mathrm{DM}_\mathrm{gm}^\et}
\newcommand{\DMc}{\mathrm{DM}_c^\et}
\newcommand{\DNgm}{\mathrm{DN}_\mathrm{gm}}
\newcommand{\DNc}{\mathrm{DN}_c}
\DeclareFontFamily{U}{BOONDOX-calo}{\skewchar\font=45 }
\DeclareFontShape{U}{BOONDOX-calo}{m}{n}{
  <-> s*[1.05] BOONDOX-r-calo}{}
\DeclareFontShape{U}{BOONDOX-calo}{b}{n}{
  <-> s*[1.05] BOONDOX-b-calo}{}
\DeclareMathAlphabet{\mathcalboondox}{U}{BOONDOX-calo}{m}{n}
\SetMathAlphabet{\mathcalboondox}{bold}{U}{BOONDOX-calo}{b}{n}
\DeclareMathAlphabet{\mathbcalboondox}{U}{BOONDOX-calo}{b}{n}
\newcommand{\colim}{\operatorname{\mathrm{colim}}}
\newcommand{\In}{\operatorname{\mathrm{Ind}}}
\newcommand{\heart}{\heartsuit}
\newcommand{\rar}{\to}
\newcommand{\PrL}{\mathrm{Pr}^\mathrm{L}}
\newcommand{\Sm}{\mathrm{Sm}}
\newcommand{\id}{\mathrm{Id}}
\def\thissectiontitle{}
\def\thissectionnumber{}
\def\thissubsectiontitle{}
\def\thissubsectionnumber{}
\gdef\thissectiontitle{#1}\gdef\thissectionnumber{\thesection}#1}
\gdef\thissubsectiontitle{#1}\gdef\thissubsectionnumber{\thesubsection}#1}
  \pretocmd{\section}{\global\toggletrue{todoSection}}{}{}
  \pretocmd{\subsection}{\global\toggletrue{todoSubsection}}{}{}
  \xpretocmd{\todo}{%
    \iftoggle{todoSubsection}{
     \addtocontents{tdo}{\protect\contentsline{subsection}%
        {\protect\numberline{\thissubsectionnumber}{\thissubsectiontitle}}{}{} }
      \global\togglefalse{todoSubsection}
        }{}
    }{}{}%
  \xpretocmd{\todo}{%
    \iftoggle{todoSection}{
     \addtocontents{tdo}{\protect\contentsline{section}%
        {\protect\numberline{\thissectionnumber}{\thissectiontitle}}{}{} }
      \global\togglefalse{todoSection}
        }{}
    }{}{}%
\begin{document}

\font\myfont=cmr12 at 14.5pt
\title{{\myfont Nori motives (and mixed Hodge modules) with integral coefficients}}

\date{}
\author{Rapha\"el Ruimy, Swann Tubach}

\maketitle
\begin{abstract} 
  We construct abelian categories of integral Nori motivic sheaves over a scheme of characteristic zero. The first step is to study the presentable derived category of Nori motives over a field. Next we construct an algebra in \'etale motives such that modules over it afford a t-structure that restricts to constructible objects. This category of integral Nori motives has the six operations and arc-descent. We finish by providing analogous constructions and results for mixed Hodge modules on schemes over the reals.
\end{abstract}
\tableofcontents
\section*{Introduction}
A cohomology theory of algebraic varieties can be represented by a motivic ring spectrum, or motivic algebra, and vice versa. Finding the universal cohomology of algebraic varieties is therefore the same as finding a universal algebra in the category of \'etale motives. The most natural candidate is the unit object, which represents \'etale motivic cohomology. In characteristic zero, all known cohomology theories compare to the Betti motivic algebra, that is the algebra corresponding to the cohomology theory given by singular cohomology of the complex points. This algebra has the convenient property that modules over it afford a t-structure. In this paper, we define the Nori algebra which is universal among motivic algebras that compare to the Betti algebra and for which modules over them have a reasonable t-structure. 

The above description is derived in nature: the category of \'etale motives is constructed out of a derived category, and we what we call a cohomology theory is a functor
\[\mathrm{R}\Gamma\colon\mathrm{Sm}_k^\op\to\D(\acal)\] sending a smooth variety over a field $k$ to a complex of objects (in some abelian category $\acal$); this contains more information than all the $\HH^i$ of this complex. There is however another construction of a universal cohomology theory, that has a more abelian nature. This construction is due to Nori. We recall it before going into more details about our construction. 
\newline

\textbf{Motives over a field.}
We fix for now a subfield $k$ of $\C$. Nori's fundamental insight is that, as every known cohomology theory of $k$-varieties can be compared to singular cohomology, one should start from the abelian groups \[\HH^i_\mathrm{B}(X,Z):=\HH^i_\mathrm{sing}(X^\an,Z^\an)\] given by the singular cohomology of the analytification of any algebraic variety $X$ over $k$, relative to the analytification of a closed subvariety $Z\subseteq X$. The category $\mcal(k,\Z)$ of Nori motives over $k$ is then an abelian category containing an object $\HH^i(X,Z)$ which, in a sense that will be made precise, is the abelian group $\HH^i_\mathrm{B}(X,Z)$ endowed with the finest structure that is preserved by algebraic maps; this mysterious structure is in fact the action of an affine group scheme over $\Z$: the motivic Galois group $\mathrm{G}_\mathrm{mot}(k)$. 
 There is a quiver (or oriented graph) $\mathrm{Pairs}_k$, constructed out of the morphisms between the $\HH^i(X,Z)$ giving functoriality in $(X,Y)$,  boundaries for the long exact sequence of cohomology and expressing the fact that the Lefschetz object $\Z(-1):=\HH^2(\mathbb{P}^1_k,\emptyset)$ is invertible (see \cite[Definition 9.3.2]{MR3618276}). For Nori, a cohomology theory is a representation 
\[\HH^*\colon\mathrm{Pairs}_k \to \acal\] of the quiver of pairs, so that our $\HH^i_\mathrm{B}$ gives a cohomology theory $\HH^*_\mathrm{B}$. The abelian category of Nori motives is the universal representation of the quiver $\mathrm{Pairs}_k$ that factors the Betti representation in a faithful way: 
for every diagram 
\[\begin{tikzcd}
	{\mathrm{Pairs}_k} & {\acal} & {\mathrm{Ab}^\mathrm{ft}} \\
	& {\mcal(k,\Z)}
	\arrow["{\HH^*_\acal}", from=1-1, to=1-2]
	\arrow[from=1-1, to=1-3, bend left=30,"\HH^*_\mathrm{B}"{description}]
	\arrow["{\HH^*_\mathrm{univ}}"', from=1-1, to=2-2]
	\arrow[from=1-2, to=1-3,"\mathrm{f}_\acal"]
	\arrow[from=2-2, to=1-2,dashed,"R_\acal"]
	\arrow["{R_\mathrm{B}}"', from=2-2, to=1-3]
\end{tikzcd}\]
of full arrows, with $\mathrm{f}_\acal$ faithful and exact, there exists an essentially unique faithful exact functor $R_\acal$ fitting in the dashed arrow. In such a diagram, one can see $\acal$ as additional structure on singular cohomology of pairs, so that it is clear that $\mcal(k,\Z)$ provides the best possible structure.

This abelian approach is linked to the above derived approach. The $\infty$-category of geometric \'etale motives $\DMgm(k,\Z)$ with integral coefficients is made in such way that any enriched mixed Weil cohomology theory in the sense of Cisinski and D\'eglise's \cite{mixedweil} $\mathrm{R}\Gamma\colon\Sm_k^\op\to\D(\acal)$ factors through the canonical functor $h\colon\Sm_k^\op\to\DMgm(k,\Z)$ (which sends an smooth variety to its cohomological motive). The following result provides the link between Voevodsky and Nori motives.
\begin{introthm}[Nori, Harrer, Choudhury, Gallauer \cite{harrerComparisonCategoriesMotives2016,MR3649230}] There exists a Nori realisation functor 
\[\rho_\mathrm{N}\colon\DMgm(k,\Z)\to\D^b(\mcal(k,\Z))\] such that the composition with the Betti realisation of Nori motives $\D^b(\mcal(k,\Z))\xrightarrow{R_\mathrm{B}}\D^b(\mathrm{Ab}^\mathrm{fr})$ gives the derived Betti cohomology theory $C^*_\mathrm{sing}\colon \Sm_k^\op \to\D^b(\mathrm{Ab}^\mathrm{ft})$.
\end{introthm}
  This shows that any cohomology theory \emph{\`a la} Nori induces a derived cohomology theory. Conversely there exists a representation $\mathrm{Pairs}_k\to\DMgm(k,\Z)$, therefore any derived cohomology theory induces a representation of the quiver of pairs. Giving a formal meaning to these considerations led to a new universal property of Nori motives (\cite[Corollary 10.1.7]{MR3618276}):
for every diagram 
\[\begin{tikzcd}
	{\DMgm(k,\Z)} & {\acal} & {\mathrm{Ab}^\mathrm{ft}} \\
	& {\mcal(k,\Z)}
	\arrow["{\HH^*_\acal}", from=1-1, to=1-2]
	\arrow[from=1-1, to=1-3, bend left=30,"\HH^0\circ\rho_\mathrm{B}"{description}]
	\arrow["{\HH^0\circ\rho_\mathrm{N}}"', from=1-1, to=2-2]
	\arrow[from=1-2, to=1-3,"\mathrm{f}_\acal"]
	\arrow[from=2-2, to=1-2,dashed,"R_\acal"]
	\arrow["{R_\mathrm{B}}"', from=2-2, to=1-3]
\end{tikzcd}\]
of full arrows with $\HH^*_\acal$ a cohomological functor and $\mathrm{f}_\acal$ a faithful exact functor, there exists a unique faithful exact functor $R_\acal$ filling the dashed arrow.
\newline

\textbf{Rational Nori motivic sheaves.} This new point of view opened a way to construct relative versions of Nori motives. Indeed, why stop at abelian groups? Sheaves of abelian groups are so nice. Using the method of Nori with a diagram of pairs, Arapura (\cite{MR2995668}) and Ivorra (\cite{MR3723805}) constructed abelian categories of Nori motivic sheaves that are the universal recipient of a cohomology theory of relative pairs of $k$-varieties modelled on constructible sheaves for Arapura, and on perverse sheaves for Ivorra. Although we will not go into details, let us say that these categories have simple universal properties but lack a critical structure that one expects when dealing with sheaves: a six functors formalism. Indeed, for a theory of sheaves $\acal(X)$ to be efficient, notably for some reductions, it is desirable to dispose of the six operations: those  are adjunctions 
\[ f^* \colon 
\D^b(\acal(X))\leftrightarrows \D^b(\acal(Y))\colon f_*,\]
\[ f_! \colon 
\D^b(\acal(Y))\leftrightarrows \D^b(\acal(X))\colon f^!,\] for $f\colon Y\to X$ a morphism of varieties, and adjunctions 
\[ -\otimes F \colon 
\D^b(\acal(X))\leftrightarrows \D^b(\acal(X))\colon \underline{\mathrm{Hom}}(F,-)\] for $F$ belonging to $\D^b(\acal(X))$. The abelian categories constructed by Arapura and Ivorra do not have all of the six operations, because it is very hard to exhibit some morphisms of the quivers of pairs that would induce the above operations. This is where using the new universal property of Nori motives is critical: working with rational coefficients, if instead of taking a diagram of pairs, one takes the category $\DMgm(X,\Q)$ of relative \'etale motives and uses the perverse Betti realisation, one can construct an abelian category $\Mp(X,\Q)$ of perverse motivic sheaves which has the new universal property. This is what Ivorra and S. Morel do in their paper \cite{ivorraFourOperationsPerverse2022}. As promised, using the full strength of the functoriality of $\DMgm(X,\Q)$ (that affords the six operations), this allows for the construction of the six operations:

\begin{introthm}[Ivorra, Morel, Terenzi \cite{ivorraFourOperationsPerverse2022,terenziTensorStructurePerverse2024a}]
  Over quasi-projective $k$-varieties, the derived category $\D^b(\Mp(-,\Q))$ of perverse motives is endowed with the six operations. 
\end{introthm}

At that point, the two sheaf theories $\DMgm(-,\Q)$ and $\D^b(\Mp(-,\Q))$ did not talk to each other because of the way they were constructed: the first one is very derived, and its universal property is highly $\infty$-categorical, whereas the second one is based on abelian constructions, and had poor $\infty$-categorical properties. In \cite{SwannRealisation}, the second author of the present paper proved that the constructions of Ivorra-Morel and Terenzi had canonical $\infty$-categorical lifts, so that the universal property of the functor $\DMgm(-,\Q)$ proven by Drew and Gallauer in \cite{MR4560376} would provide a realisation functor 
\[\rho_\mathrm{N}\colon \DMgm(-,\Q)\to\D^b(\Mp(-,\Q))\] that commutes with the six operations. This realisation functor induces an adjunction 
\[ \rho_\mathrm{N} \colon 
\DM(-,\Q)\leftrightarrows \In\D^b(\Mp(-,\Q))\colon \rho_*^\mathrm{N}\] between the associated functors of presentable $\infty$-categories. As $\rho_\mathrm{N}$ is monoidal, its right adjoint $\rho_*^\mathrm{N}$ is lax-monoidal and thus creates a commutative algebra object $\nscr_{(-),\Q}:=\rho_*^\mathrm{N}\Q_{(-)}$ in the presentable $\infty$-category of \'etale motives $\DM(-,\Q)$.
Hence, for every variety $X$ the functor $\rho_\mathrm{N}$ factors through a refined realisation 
\[\DM(X,\Q)\xrightarrow{-\otimes\nscr_{X,\Q}}\mathrm{Mod}_{\nscr_{X,\Q}}(\DM(X,\Q))\xrightarrow{\widetilde{\rho_\mathrm{N}}}\In\D^b(\Mp(X,\Q)).\] 
\begin{introthm}[\cite{SwannRealisation}]
Let $\DN(-,\Q):=\mathrm{Mod}_{\nscr_{(-),\Q}}(\DM(-,\Q))$. The induced functor \[\widetilde{\rho_\mathrm{N}}\colon \DN(-,\Q)\to\In\D^b(\Mp(-,\Q))\] is an equivalence. In particular, for every $k$-variety of structural map $p_X\colon X\to\Spec(k)$ the canonical map $p_X^*\nscr_{k,\Q}\to\nscr_{X,\Q}$ is an equivalence.

Furthermore, if $K$ is a field extension of $k$ included in $\C$, the map $(\nscr_{k,\Q})|_K \to \nscr_{K,\Q}$ is an equivalence. 
\end{introthm}
The whole sheaf theory $\In\D^b(\Mp(-,\Q))$ can therefore be recovered from the algebra $\nscr_{k,\Q}\in \CAlg(\DM(k,\Q))$.
The last part of the theorem allows to extend the functor $\DN(-,\Q)$ to any $\Q$-scheme $p_X\colon X\to \Spec(\Q)$ by setting $\nscr_{X,\Q}=p_X^*\nscr_{\Spec(\Q),\Q}$. Hence, everything can be recovered from the algebra $\nscr_{\Spec(\Q),\Q}$ and the functor $\DN(-,\Q)$ can be enhanced into a six functors formalism thanks to the results of \cite{MR4466640}.
\newline 

\textbf{Main results of this paper.}
The goal of this paper is to extend the above picture to integral coefficients. We first provide a new construction of integral Nori motivic sheaves. The idea is to first define a presentable stable $\infty$-category by taking modules over some algebra (the \textit{Nori algebra}) and then prove that this category is endowed with an ordinary t-structure and a perverse t-structure which provides abelian categories of Nori motivic sheaves. The advantage of our construction over that of \cite{MR2995668} and \cite{MR3723805} is that our categories are endowed with a full six functors formalism which still remains open for Arapura and Ivorra's construction. Furthermore, they are canonically endowed with a realisation functor from \'etale motives. The existence of the six functors implies one of the main results of \cite{MR2178703} (see \cref{Leray}). We are also able to prove remarkable descent properties (namely arc-descent) on our category of Nori motives. Finally, we can compare our stable version of Nori motives with the derived categories of both of its hearts; for the ordinary heart we obtain an equivalence and we formulate a conjecture on the perverse heart which implies the equivalence between its bounded derived category and geometric Nori motives as defined below.
\newline 

\textbf{The Nori algebra.}
The construction of Ivorra and Morel could only work with rational coefficients: their construction notably relies on the fact that the Verdier duality functor is perverse t-exact and can be expressed as a derived functor on the category of perverse sheaves using \cite{MR0923133}. This is not at all the case with integral coefficients as this was pointed out in \cite[Section 3.3]{MR0751966}. They also use in a crucial manner the motivic unipotent nearby cycle functor constructed by Ayoub, and this functor is behaves badly with integral coefficients \cite[Remarque 3.4.14]{AyoubPartII}. However, the following idea to extend their work is very natural and is the starting point of the present paper: note first that in any $\Z$-linear stable presentable $\infty$-category $\ccal$ and any object $A$ in $\ccal$, there is a Hasse fracture square 
\[\begin{tikzcd}
	A & {\lim_nA\otimes_\Z\Z/n\Z} \\
	{A\otimes_\Z\Q} & {(\lim_nA\otimes_\Z\Z/n\Z)\otimes_\Z\Q}
	\arrow[from=1-1, to=1-2]
	\arrow[from=1-1, to=2-1]
	\arrow["\lrcorner"{anchor=center, pos=0.125}, draw=none, from=1-1, to=2-2]
	\arrow[from=1-2, to=2-2]
	\arrow[from=2-1, to=2-2]
\end{tikzcd}\] where the limits are taken over the poset $\N^*$ of nonzero integers ordered by divisibility (so that if $A=\Z\in\D(\Z)$, the top right object is just the profinite completion $\hat{\Z}$ of the integers).
If one manages to construct a map 
\[\nscr_{k,\Q}\to(\lim_n\Z/n\Z)\otimes_\Z \Q\] in $\DM(k,\Q)$, then one could use a Hasse fracture square to \emph{define} an algebra $\nscr_k:= \nscr_{k,\Z}$ in $\DM(k,\Z)$ such that for every $k$-variety $p_X\colon X\to\Spec(k)$ the $\infty$-category $\DN(X,\Z)$ of modules over $p_X^*\nscr_k$ in $\DM(X,\Z)$ is entitled the name of integral Nori motives over $X$.

To do so, one solution is to study integral Nori motives over a field. As explained above, we have an abelian category $\mcal(k,\Z)$ of Nori motives over $k$. Thus there are two naive possibilities for a presentable $\infty$-category of Nori motives over $k$: one could take the unbounded derived category $\D(\In\mcal(k,\Z))$ of the indisation of Nori motives, or take the indisation $\In\D^b(\mcal(k,\Z))$ of the bounded derived category of Nori motives. These two categories do not give the correct presentable category as explained in \cref{WhyDnComplicated}.

Nonetheless, we first consider the $\infty$-category $\D(\In \mcal(k,\Z))$, as it is easier to get a grasp of. In fact as mentionned above, the work of Choudhury-Gallauer (\cref{realFieldDInd}) gives us a symmetric monoidal functor 
$$\rho'_\mathrm{N}\colon\DM(k,\Z)\to\D(\In(\mcal(k,\Z))).$$ Thus we may take a right adjoint $F$ of this functor, and \emph{define} $\nscr_k$ as the value of $F$ at the unit object of $\D(\In(\mcal(k,\Z)))$. Using rigidity for Nori motives over a field (as proven by Nori, \cref{rigab}), and the fact that $\D^b(\mcal(k,\Z))$ is a full subcategory if $\D(\In(\mcal(k,\Z)))$, we manage to prove (\cref{algRat} and \cref{rigAlgebre}) that the algebra $\nscr_k$ does fit in a Hasse fracture square
\[\begin{tikzcd}
	\nscr_k & {\lim_n\Z/n\Z} \\
	{\nscr_{k,\Q}} & {(\lim_n\Z/n\Z)\otimes_\Z\Q}
	\arrow[from=1-1, to=1-2]
	\arrow[from=1-1, to=2-1]
	\arrow["\lrcorner"{anchor=center, pos=0.125}, draw=none, from=1-1, to=2-2]
	\arrow[from=1-2, to=2-2]
	\arrow[from=2-1, to=2-2]
\end{tikzcd}.\] 
As a consequence, we have:

\begin{introthm}[\cref{geomfields} and \cref{indfieldsalg}]\label{geomfieldsIntro}
  Let $k$ be a subfield of the complex numbers. Consider the factorisation of $\rho'_\mathrm{N}$ through the modules category 
  \[\overline{\rho'_\mathrm{N}}\colon\Mod_{\nscr_k}(\DM(k,\Z))\to\D(\In\mcal(k,\Z)).\]
  This functor is fully faithful when restricted to the thick subcategory $\DNgm(k,\Z)$ generated by the $M_k(X)(i)\otimes \nscr_k$ for $X$ smooth over $k$ and induces an equivalence between the latter and $\D^b(\mcal(k,\Z))$.

  Moreover, if $f\colon \Spec(K)\to\Spec(k)$ is a morphism of fields, then the canonical comparisaon map \[f^*\nscr_k\to\nscr_K\] is an equivalence.
\end{introthm}

\textbf{Integral Nori motivic sheaves.}
For any qcqs $\Q$-scheme with a structural map $p_X\colon X\to \Spec(\Q)$, we now set $\nscr_X:=p_X^*\nscr_{\Spec(\Q)}$. If $\Lambda$ is a commutative ring, we set $\DN(X,\Lambda)=\Mod_{\nscr_X \otimes_\Z \Lambda}(\DM(X,\Z))$; we have a six functors formalism on $\DN(-,\Lambda)$ by \cref{SixFunBig}. We have therefore produced a presentable stable $\infty$-category of Nori motives. Furthermore, we have a Nori realisation functor \[\rho_\mathrm{N}\colon \DM(X,\Lambda)\to \DN(X,\Lambda)\] which is simply the functor $-\otimes \nscr_X$. It is compatible with the six functors in the following sense:

\begin{introthm}[\cref{compatibilite sif}]
  Let $\Lambda$ be a commutative ring. Over qcqs $\Q$-schemes of finite dimension, the Nori realisation $\rho_\mathrm{N}\colon \DM(-,\Lambda)\to \DN(-,\Lambda)$ is compatible with the operations of type $f^*$, $f_*$, $f_!$, $f^!$, $\otimes$ and $\underline{\Hom}(M,-)$ for $M$ of geometric origin.
\end{introthm}

Then next steps will aim to produce a small abelian category from $\DN(X,\Lambda)$. 
To that end, we study the subcategory $\DNgm(X,\Lambda)$ made of Nori motives of geometric origin (\cref{DNgm}). In \cite{DMpdf}, we proved a categorical criterion for being geometric which is \emph{constructibility} as well as continuity and descent properties of the category $\DMgm(-,\Lambda)$ of geometric \'etale motives. The same properties hold for Nori motives, more precisely:

\begin{introthm}[\cref{megathmcons}]
  Let $\Lambda$ be a commutative ring. The functor $\DNgm(-,\Lambda)$ is finitary. Furthermore, 
  \begin{enumerate}    
    \item If $X$ is a qcqs $\Lambda$-finite (see \cref{LambdaFin}) $\Q$-scheme, then $\DNgm(X,\Lambda)=\DN(X,\Lambda)^\omega$.
    \item If the ring $\Lambda$ is regular, a Nori motive $M$ over a qcqs $\Q$-scheme of finite dimension is of geometric origin if and only if it is \emph{constructible}, that is for any open affine $U\subseteq X$, there is a finite stratification $U_i\subseteq U$  made of constructible locally closed subschemes such that each $M|_{U_i}$ is dualisable. 
  \end{enumerate}
\end{introthm}

In addition, the six functors from $\DN(-,\Lambda)$ induce a six-functors formalism on $\DNgm(-,\Lambda)$ over quasi-excellent $\Q$-schemes of finite dimension (\cref{sifgm}) and we also have a Verdier duality functor under some mild resolution of singularities assumptions (see \cref{Verdier}).
\newline

\textbf{t-structures on geometric motives.} 
The next step is to define a t-structure on $\DNgm(X,\Z)$ in order to define abelian categories of Nori motivic sheaves. By analogy with constructible sheaves, two t-structures should exist: the ordinary t-structure and the perverse t-structure. The first we construct is the ordinary one. Assume that $X$ is of finite-type over a subfield $k$ of $\C$ and recall that following Saito (\cite[4.6]{MR1047415}), the second author defined in \cite{SwannRealisation} a t-structure on $\DNgm(X,\Q)(=\D^b(\Mp(X,\Q)))$ such that the Betti realisation functor \[\DNgm(X,\Q)\to \dconsan(X^\an,\Q)\] to the category of constructible analytic complexes is t-exact when the latter is endowed with its ordinary t-structure. When $k$ is algebraically closed, this can be used to define a t-structure integrally. Indeed, we prove that a geometric motive with coefficients $\Z$ is exactly a geometric motive with coefficients $\Q$ whose Betti realisation has an underlying $\Z$-lattice, and those have a t-structure (\cref{PBBetti}).

In general, we use a gluing procedure due to Vaish (\cite{MR4109490}) to derive a t-structure on a scheme $X$ from the knowledge of a t-structure on fields. This is \cref{tord} and this gives an ordinary t-structure for integral Nori motives over any qcqs $\Q$-scheme of finite dimension. We denote by $\mcal_\mathrm{ord}(X,\Z)$ the heart of this t-structure.

In \cite{MR1940678}, Nori showed that $\dconsan(X^\an,\Q)$ is the derived category of constructible sheaves. Following his ideas and using the adaptation of his argument to the setting of rational Nori motives in \cite{SwannRealisation}, we show that $\DNgm$ is the derived category of ordinary Nori motives.

\begin{introthm}[{\cref{DbNat}}]
  Let $X$ be a qcqs $\Q$-scheme of finite dimension. Then, the canonical functor 
  \[\D^b(\mcal_\mathrm{ord}(X,\Z))\to \DNgm(X,\Z)\] is an equivalence.
\end{introthm}

We also give two other applications of the ordinary t-structure: firstly, the Leray spectral sequence with integral coefficients is motivic \cref{Leray} and secondly, we show that $\DNgm(-,\Z)$ satisfies arc-hyperdescent over qcqs $\Q$-schemes of finite dimension \cref{ArcDesc}. We therefore expect it to be the case for $\DMgm(-,\Z)$ but we do not know how to prove it.

From the ordinary t-structure, we can construct a perverse t-structure using the usual gluing method of \cite{MR0751966}.
\begin{introprop}[\cref{ExistPerv}]Let $X$ be an excellent $\Q$-scheme of finite dimension endowed with a dimension function. Then, there is a t-structure on $\DNgm(X,\Z)$ such that the $\ell$-adic realisation functor \[R_\ell\colon \DNgm(X,\Z)\to \dconspro(X_\proet,\Z_\ell)\] is t-exact when the right-hand side is endowed with its perverse t-structure. 
\end{introprop}
We let $\Mp(X,\Z)$ be the category of perverse Nori motives which is defined as the heart of the above t-structure. As in the case of perverse sheaves, the Verdier duality functor exchanges $\Mp(X,\Z)$ with another abelian category $\mcal_\mathrm{perv}^+(X,\Z)$ (see \cref{perv+}). We can also define a motivic intermediate extension functor that is sent to its $\ell$-adic or analytic counterpart through the appropriate realisation functors (\cref{j!*}). This gives a motivic intersection complex $\mathrm{IC}_{X,\Z}$ in $\mcal_\mathrm{perv}(X,\Z)$. We prove that the classical pairings for intersection homology can be lifted to Nori motives. We expect that Wildeshaus' intersection complex of Shimura varieties in \'etale motives \cite{MR2953419,MR4050012} realises to the complex we defined though the functor from \'etale motives to Nori motives. 
Finally, any t-exactness property of the six functors which is true after realisation carries over to Nori motives, in particular, we can prove the hard Lefschetz theorem for Nori motives. 

We then study whether $\DNgm(X,\Z)$ is the derived category of perverse Nori motives. For this the classical argument of Beilinson \cite{MR0923133} fails because unipotent nearby cycles are badly behaved with integral coefficients. Under the assumption that the abelian category $\Mp(X,\Z)$ has \emph{enough torsion-free objects} in the sense of \cref{TorsOb}, it is easy to prove that the canonical functor 
\[\D^b(\Mp(X,\Z))\to\DNgm(X,\Z)\] is an equivalence by reducing to torsion coefficients (this is Beilinson's result) and to rational coefficients. Unfortunately, we were note able to prove that there are enough torsion-free objects, but we propose a strategy (\cref{DbPerv}) that could work to achieve this goal. This is linked to a comparison with the approach of Ivorra quoted above, which has enough torsion-free objects because the generators of the category are very explicit.

The perverse t-structure on Nori motives can also be used to prove new cases of the Artin vanishing theorem for Artin motives proved by the first named author of the present paper in \cite{ruimyAbelianCategoriesArtin2023}. Indeed, we prove in \cref{0mot} that the functor from \'etale motives to Nori motives restricts to an equivalence on Artin motives, and we use that 
to show that if $f\colon X\to S$ is a quasi-finite affine map of characteristic zero schemes, with regular source, the functor 
\[f_!\colon \mathrm{DM}_{\Q-c}^{\et,sm0}(X,\Z)\to \mathrm{DM}^{\et,0}(S,\Z)\] is t-exact (see \cref{ArtinVanishing}). Here the left-hand side consists of $\Q$-constructible smooth Artin motives and the right-hand side consist of Artin motives; 
both sides are endowed with their perverse homotopy t-structure (these notions were defined by the first author in \cite[Definition 3.2.1.1]{ruimyAbelianCategoriesArtin2023}).
\newline 

\textbf{Integral Mixed Hodge modules.}
The story we told also works for mixed Hodge modules, with simpler arguments. , \cite{SwannRealisation} and \cite{SwannMHM} provide an $\infty$-categorical enhancement of mixed Hodge modules, and a description of the objects of geometric origin as modules in $\DM$ over an algebra, as introduced by Drew in \cite{drewMotivicHodgeModules2018}. Thus we can define a category of mixed Hodge modules with integral coefficients as the category of mixed Hodge modules with rational coefficients having an underlying lattice. We can also define these notions for schemes over the reals (by taking Galois homotopy fixed points), and prove that objects of geometric origin are also modules over an algebra in \'etale motives (\cref{theoMHMmod}). Over $\Spec(\C)$, this category contains the bounded derived category of mixed Hodge structures with integral coefficients (\cref{MHMpt}). We can also prove that the stable $\infty$-category we define is indeed the bounded derived category of ordinary mixed modules with integral coefficients (\cref{Dbmhm}). Finally, we also prove in \cref{VMHS} that (admissible) variations of mixed Hodge structures with integral coefficients can be seen as integral mixed Hodge modules. 
\newline

\textbf{Conjectural picture.}
One of the goal of this paper was to clarify how the existence of a motivic t-structure with rational coefficients would imply the existence of a motivic t-structure with integral coefficients. 
Recall that following \cite{MR2953406}, by motivic t-structure on $\DMgm(k,\Q)$ we mean a t-structure compatible with the monoidal structure and such that the $\ell$-adic realisation functor
\begin{equation}
  \label{motivic t-structure}
  \rho_\ell\colon\DMgm(k,\Q)\to\D^b(\mathrm{Rep}_{\Q_\ell}(\mathrm{Gal}(\overline{k}/k)))\tag{1}
\end{equation}
is t-exact.
In \cite[Theorem 6.22]{SwannRealisation} the second author proved that if a motivic t-structure existed over schemes for rational motives, then \'etale motives and the indisation of the bounded derived category of perverse Nori motives would be equivalent. Using rigidity, we show in \cref{ifstdconjthengood} that under the same assumption, \'etale motives with integral coefficients are also equivalent to Nori motives, so that they also afford a t-structure. 
Moreover, we show that the motives killed by the Betti realisation are the only obstruction for $\DN(k,\Q)$ to be the derived category of its heart (\cref{DIndQ}).
This confirms a suggestion of Ayoub in \cite[Remark 2.13]{MR3751289}, and implies that the following analogue of \cite[Conjecture 2.8]{MR3751289} holds for Nori motives:
\begin{introprop}[\cref{conjAyoubDN}]
  Let $k$ be a subfield of $\C$. Let $M\in\DNgm(k,\Q)$ be a geometric Nori motive and let $\fcal\in\DN(k,\Q)$ be killed by the Betti realisation. Then the group 
  \[\mathrm{Hom}_{\DN(k,\Q)}(\fcal,M)=0\] vanishes.
\end{introprop}

\section*{Notations and conventions} 
We freely use the language of higher categories and higher algebra of \cite{MR2522659,lurieHigherAlgebra2022,lurieSpectralAlgebraicGeometry}. We will denote by $\catinfty$ the $\infty$-category of small $\infty$-categories, $\PrL$ (resp. $\PrL_\St$) the $\infty$-category of presentable (resp. stable presentable) $\infty$-category, $\catinfty^\mathrm{perf}$ the $\infty$-category of stable and idempotent complete small $\infty$-categories, and $\Pr^{\mathrm{L},\omega}_\St$ the $\infty$-category of compactly generated stable $\infty$-categories. Those four $\infty$-categories are endowed with the Lurie tensor product, and the indisation functor $\In$ is an equivalence between $\catinfty^\mathrm{perf}$ and $\Pr^{\mathrm{L},\omega}_\St$, of inverse the functor sending a compactly generated $\infty$-category $\ccal$ to its full subcategory $\ccal^\omega$ of compact objects.
 
If $\ccal$ is a symmetric monoidal $\infty$-category and $A\in\mathrm{CAlg}(\ccal)$ is a commutative algebra object in $\ccal$, we denote by $\Mod_A(\ccal)$ the $\infty$-category of $A$-modules in $\ccal$. If $\ccal = \mathrm{Sp}$ is the $\infty$-category of spectra we omit $\mathrm{Sp}$ from the notation: $\mathrm{Mod}_A$. If $\ccal$ is a stable $\infty$-category, and $a,b\in\ccal$, 
we denote by $\map_\ccal(a,b)\in\mathrm{Sp}$ the mapping spectrum between $a$ and $b$. If $\Lambda$ is an ordinary commutative ring, the abelian category of $\Lambda$-modules is denoted by $\Lambda$-$\mathrm{mod}$.

All schemes we consider are of characteristic zero, so that they have a map to $\Spec(\Q)$. If $X$ is a scheme, we denote by $\mathrm{Sch}_X$ the category of $X$-schemes, by $\mathrm{Sch}_X^\mathrm{qcqs}$ the category of quasi-compact quasi-separated (which we will write qcqs in the rest of the paper) $X$-schemes, by $\Sch^\mathrm{ft}_X$ the category of $X$-schemes of finite type and by $\Sm_X$ the category of smooth separated $X$-schemes. 

We will use $\infty$-categories of \'etale motives: if $X$ is a scheme and $\Lambda$ is a commutative ring, we denote by $\DM(X,\Lambda)$ the $\infty$-category of \'etale motives with coefficients in $\Lambda$: to construct it, one first considers $\A^1$-invariant \'etale hypersheaves on the category $\mathrm{Sm}_X$ with values in $\mathrm{Mod}_\Lambda$, and then one Tate-stabilise (see \cite{MR3205601,MR3477640,rob}). The thick subcategory generated by the Tate twisted image of the Yoneda functor \[M_X\colon\Sm_X\to\DM(X,\Lambda)\] is $\DMgm(X,\Lambda)$ the $\infty$-category of motives of geometric origin (that is, it is the smallest category of $\DM(X,\Lambda)$ closed under finite limits and colimits and by retracts that contains the $M_X(Y)(n)$ for $Y\in\mathrm{Sm}_X$ and $n\in\Z$).

The following table lists the main players in this article, indicating their first appearance for reference.
\begin{center}
\begin{tabular}{ccc}
Nori motives over a field & $\mcal(k,\Z)$ & \cref{sect:Norimot} \\
Algebra representing Nori motivic cohomology & $\mathscr{N}_k$ & \cref{defi:NoriAlgebra} \\
Integral Nori motives & $\DN(X,\Lambda)$ &  \cref{defi:DNbig} \\
Nori motives of geometric origin & $\DNgm(X,\Lambda)$  & \cref{DNgm}\\
$\ell$-adic realisation of Nori motives & $R_\ell$ & \cref{LAdicSmall}\\
Betti realisation of Nori motives & $R_\mathrm{B}$ & \cref{Betti real 6FF}\\
Abelian category of ordinary Nori motives & $\mcal_\mathrm{ord}(X,\Z)$ & \cref{defi:Mord}\\
Abelian category of perverse Nori motives & $\Mp(X,\Z)$ & \cref{defi:Mperv}\\
Universal category of perverse Nori motives & $\mcal_\mathrm{univ}(X,\Z)$ & \cref{constru:MunivPerv} \\
$\infty$-category of integral mixed Hodge modules & $\D_\mathrm{H}(X,\Z)$ & \cref{defDH}
\end{tabular}
\end{center}
\subsection*{Acknowledgements}

We thank very warmly Fr\'ed\'eric D\'eglise and Sophie Morel for their constant support. We thank Martin Gallauer for a discussion about \cref{thm:TrueCG}. We would also like to thank Joseph Ayoub, Robin Carlier, Denis-Charles Cisinski, Rune Haugseng, Maxime Ramzi and Luca Terenzi for answering some of our naive questions on $\infty$-categories, Nori motives and \'etale motives. We also thank the anonymous referee for their comments. Finally, we would like to thank Fabrizio Andreatta, Luca Barbieri-Viale, Federico Binda, Niels Feld, Jan Nagel, Riccardo Pengo, Juan Diego Rojas and Alberto Vezzani for their interest in this work and some useful conversations.

We were funded by the ANR HQDIAG. RR was funded by the \emph{Prin 2022: The arithmetic of motives and L-functions} and by the Universit\'e Grenoble Alpes. ST acknowledges support by the \'E.N.S de Lyon for his PhD fundings and the European Research Council (ERC) under
the European Union's Horizon 2020 research and innovation programme ``EMOTIVE'' (grant agreement
no. 101170066).

\section{Preliminaries.}
\subsection{Computing right adjoints}
Recall that an $\infty$-category $\ccal$ is called \emph{rigid} if it is a  stably symmetric monoidal $\infty$-category in which every object is dualisable.
\begin{prop}\label{BlackMagic}
  Let $\ecal_c$ be a rigid stable $\infty$-category and denote by $\ecal$ its indisation. Let $\mcal,\ncal$ be $\ecal$-modules in $\PrL$ and let $\acal$ be an $\ecal$-algebra in $\PrL$ which is also the indisation of a rigid $\infty$-category. Assume given an adjunction 
  \[F\colon \mathcal{M} \leftrightarrows \mathcal{N}\colon G\] in which the right adjoint is co-continuous and the left adjoint is $\ecal$-linear. Then the right adjoint of 
  \[F\otimes_\ecal \mathrm{Id}_\acal \colon \mcal\otimes_\ecal \acal\to\ncal\otimes_\ecal\acal\] is given by 
  $G\otimes_\ecal \mathrm{Id}_\acal$. 
\end{prop}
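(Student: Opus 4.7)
The plan is to first promote $G$ to a strictly $\ecal$-linear functor and then transport the adjunction $F \dashv G$ through the base change $- \otimes_\ecal \acal$, viewed as a 2-functor between module categories. Uniqueness of right adjoints will then identify $G \otimes_\ecal \id_\acal$ as the desired right adjoint.

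\textbf{Step 1: lax $\ecal$-linearity of $G$.} Because $F$ is $\ecal$-linear, its right adjoint $G$ comes equipped canonically with a lax $\ecal$-linear structure: for $e \in \ecal$ and $y \in \ncal$, the projection candidate
\[
e \otimes G(y) \longrightarrow G(e \otimes y)
\]
is obtained by applying $G$ to the image under $F$, using the unit of $F \dashv G$ on the source and the counit together with $\ecal$-linearity of $F$ on the target.

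\textbf{Step 2: upgrade lax to strict using rigidity of $\ecal$.} The key ingredient is the standard fact that when $\ecal$ is the indisation of a rigid stable $\infty$-category, every colimit-preserving lax $\ecal$-linear functor between $\ecal$-modules in $\PrL$ is strictly $\ecal$-linear. Since both sides of the projection map are colimit-preserving in $e$ (here $G$ is colimit-preserving by hypothesis), it suffices to check the map is an equivalence for $e \in \ecal_c$ a compact generator, and those are dualizable. For such $e$, tensoring with $e$ is left adjoint to tensoring with $e^\vee$, and a direct manipulation via the duality datum of $e$ reduces the projection formula to the triangle identities of $F \dashv G$. Thus $G$ lifts to a morphism in $\Mod_\ecal(\PrL)$ and $F \dashv G$ is an adjunction in that $\infty$-category.

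\textbf{Step 3: base change along $\acal$.} The relative tensor product
\[
- \otimes_\ecal \acal \colon \Mod_\ecal(\PrL) \longrightarrow \Mod_\acal(\PrL)
\]
is a (symmetric monoidal) 2-functor, so it preserves adjunctions. Applied to $F \dashv G$ it yields an adjunction
\[
F \otimes_\ecal \id_\acal \dashv G \otimes_\ecal \id_\acal
\]
in $\Mod_\acal(\PrL)$, and hence also in $\PrL$ after forgetting structure. By uniqueness of right adjoints in $\PrL$, this identifies the right adjoint of $F \otimes_\ecal \id_\acal$ with $G \otimes_\ecal \id_\acal$.

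\textbf{Main obstacle.} The real content of the statement is Step 2: lax linearity is cheap but strictness is not, and it is precisely here that the rigidity of $\ecal$ is indispensable. The rigidity hypothesis on $\acal$ plays a secondary, but still necessary, role: it ensures that $\mcal \otimes_\ecal \acal$ and $\ncal \otimes_\ecal \acal$ behave well (in particular remain presentable with controlled compact objects) so that the formal manipulations of Step 3 are legitimate, and so that no further completion is needed to identify the tensored functor $G \otimes_\ecal \id_\acal$ with an actual colimit-preserving right adjoint.
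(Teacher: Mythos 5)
Your overall strategy matches the paper's: establish lax $\ecal$-linearity of $G$, promote it to strict $\ecal$-linearity using rigidity of $\ecal$, and then invoke 2-functoriality of $-\otimes_\ecal\acal$ to transport the adjunction. Steps 1 and 2 are correct and are precisely what the paper does (the paper cites \cite[Examples 7.3.2.8 and 7.3.2.9]{lurieHigherAlgebra2022} and \cite[Proposition 4.9 (3)]{MR3607274} for what you prove by hand, and the dualizability argument you sketch is indeed the mechanism behind the latter).

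There is, however, a gap in Step 3 that you pass over too quickly. Knowing that $G$ is a strictly $\ecal$-linear, colimit-preserving functor only tells you that $G$ is a $1$-morphism in the $(\infty,2)$-category $\mathbf{Pr}^\mathrm{L}(\ecal)$ of $\ecal$-modules. In order to apply a $2$-functor and deduce that it ``preserves the adjunction,'' you need $F\dashv G$ to be an \emph{internal} adjunction in that $2$-category: the unit and counit have to be $\ecal$-linear $2$-morphisms, and the triangle identities must hold at that level. It is not automatic that an external adjunction whose two functors both happen to be $1$-morphisms lifts to an internal one, and your phrase ``$F\dashv G$ is an adjunction in that $\infty$-category'' asserts precisely the thing that needs to be proved. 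The paper supplies exactly this ingredient via \cite[Proposition 5.2]{MR4695687}. Your argument is repairable by invoking such a lifting result, but as written this step is missing.

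Finally, your closing remark about the role of rigidity of $\acal$ is not quite the right justification: rigidity of $\acal$ is used so that $\mathbf{Pr}^\mathrm{L}(\acal)$ exists as an $(\infty,2)$-category and so that $-\otimes_\ecal\acal$ is a genuine $2$-functor between these $2$-categories (this is the content of \cite[Section 4.4]{MR3607274}), not to control presentability or compact objects of $\mcal\otimes_\ecal\acal$.
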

\begin{proof}
  Denote by $\mathrm{Pr}^\mathrm{L}(\ecal)$ the $\infty$-category $\mathrm{Mod}_\ecal(\mathrm{Pr}^\mathrm{L})$ of modules in $\mathrm{Pr}^\mathrm{L}$ over $\ecal$. By \cite[Section 4.4]{MR3607274} this $\infty$-category has a natural $(\infty,2)$-categorical enhancement $\mathbf{Pr}^\mathrm{L}(\ecal)$ such that the map $\ecal\to\acal$ of Ind-rigid categories induces an $2$-functor 
  \[-\otimes_\ecal \acal\colon \mathbf{Pr}^\mathrm{L}(\ecal) \to \mathbf{Pr}^\mathrm{L}(\acal).\]
  Now, the adjunction 
  \[f^*\colon \mcal \leftrightarrows \ncal \colon f_*\] is a priori only an adjunction internal to $\catinfty$. However, as $f^*$ is $\ecal$-linear, the functor $f_*$ has an $\ecal$-lax-linear structure for free (see \cite[Examples 7.3.2.8 and 7.3.2.9]{lurieHigherAlgebra2022}). This lax structure is in fact strong by \cite[Proposition 4.9 (3)]{MR3607274}, and as $f_*$ is also a left adjoint, it is a map in $\mathrm{Pr}^\mathrm{L}(\ecal)$, so that by \cite[Proposition 5.2]{MR4695687} the adjunction $(f^*,f_*)$ is an internal adjunction in $\mathbf{Pr}^\mathrm{L}(\ecal)$. As internal adjunctions are preserved by $2$-functors (because an internal adjunction in $\mathbf{Pr}^\mathrm{L}(\ecal)$ is just a $2$-functor $\mathfrak{adj}\to \mathbf{Pr}^\mathrm{L}(\ecal)$ from the `walking adjunction category', see \cite[Section 4]{MR4367222}), the image $(f^*\otimes_\ecal \mathrm{Id}_\acal,f_*\otimes_\ecal\mathrm{Id}_\acal)$ of $(f_*,f^*)$ by $-\otimes_\ecal\acal$ is again an internal adjunction. Thus, the right adjoint of $f^*\otimes_\ecal \mathrm{Id}_\acal$ is indeed $f_*\otimes_\ecal \mathrm{Id}_\acal$.

\end{proof}

\subsection{Lemmas on t-structures and bounded derived categories}
For t-structures we will always use cohomological conventions as in \cite[Section 1.3]{MR0751966}.
\begin{lem}[Lemma 3.2.18 of \cite{MR4061978}]
  \label{LimTStructures}
  Let $(\mc{C}_i)_i$ be a projective system of stable (resp. stable presentable) $\infty$-categories which admits a limit $\ccal$ in $\catinfty$ (resp. $\PrL_\mathrm{St}$). 
  Assume that for any index $i$, the stable $\infty$-category $C_i$ is endowed with a t-structure and such that all transition functors are t-exact. 
  Then,  \[\ccal^{\leqslant 0}:=\{M \mid \forall i, p^i(M)\in \mc{C}_i^{\leqslant 0}\}\] 
  \[\ccal^{\geqslant 0}:=\{M \mid \forall i, p^i(M)\in \mc{C}_i^{\geqslant 0}\}\]
 defines a t-structure on $\mc{C}$ such that each projection $p^i\colon \ccal\to\ccal_i$ is t-exact. 
\end{lem}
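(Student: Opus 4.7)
My plan is to verify the three defining properties of an $\infty$-categorical t-structure for the pair $(\ccal^{\leqslant 0}, \ccal^{\geqslant 0})$ defined in the statement, exploiting at each step that everything can be controlled pointwise along the projections $p^i \colon \ccal \to \ccal_i$. Closure under the appropriate shifts is immediate: each $p^i$ is an exact functor (as the projection from a limit of stable $\infty$-categories) and hence commutes with $[\pm 1]$, so if $M \in \ccal^{\leqslant 0}$ then $p^i(M[1]) = p^i(M)[1] \in \ccal_i^{\leqslant -1} \subseteq \ccal_i^{\leqslant 0}$, and symmetrically for $\ccal^{\geqslant 0}$.

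For semi-orthogonality I use that mapping spectra in a limit of stable $\infty$-categories are themselves limits: for $M \in \ccal^{\leqslant -1}$ and $N \in \ccal^{\geqslant 0}$,
\[\map_\ccal(M, N) \simeq \lim_i \map_{\ccal_i}(p^i M, p^i N).\]
The $\infty$-categorical t-structure axioms make each term coconnective of amplitude $\leqslant -1$, and this property is stable under limits in $\mathrm{Sp}$, so $\map_\ccal(M,N)$ itself has the required amplitude.

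The main step, and the main obstacle, is the construction of truncation triangles. Given $M \in \ccal$, I want to assemble the pointwise truncations $\tau^{\leqslant 0}_i p^i M$ into a single object of $\ccal$. The crucial input is t-exactness of each transition functor $f_{ij} \colon \ccal_i \to \ccal_j$, which commutes canonically with $\tau^{\leqslant 0}$; hence the family $(\tau^{\leqslant 0}_i p^i M)_i$ is coherent and defines an object $\tau^{\leqslant 0} M \in \lim_i \ccal_i = \ccal$. The counit maps $\tau^{\leqslant 0}_i p^i M \to p^i M$ lift to a morphism $\tau^{\leqslant 0} M \to M$, whose cofiber $C$ satisfies $p^i C \simeq \tau^{\geqslant 1}_i p^i M \in \ccal_i^{\geqslant 1}$ by exactness of $p^i$, so $C \in \ccal^{\geqslant 1}$. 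This gives the truncation triangle, and t-exactness of each $p^i$ then follows from $p^i \tau^{\leqslant 0} \simeq \tau^{\leqslant 0}_i p^i$ together with its analogue for $\tau^{\geqslant 1}$.

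The delicate point is making this coherent assembly rigorous in the $\infty$-categorical setting. The cleanest formulation is to note that the $\tau^{\leqslant 0}_i$ are right adjoints to the fully faithful inclusions $\ccal_i^{\leqslant 0} \hookrightarrow \ccal_i$, which, by t-exactness, form a morphism of diagrams $I \to \catinfty$ (or into $\PrL_\mathrm{St}$ in the presentable case) whose pointwise right adjoints commute with the transition functors. In the spirit of \Cref{BlackMagic}, one then invokes that limits of pointwise adjunctions yield adjunctions on limits, producing the inclusion $\lim_i \ccal_i^{\leqslant 0} \hookrightarrow \ccal$ with right adjoint $\tau^{\leqslant 0}$; the subcategory $\lim_i \ccal_i^{\leqslant 0}$ is easily identified with the $\ccal^{\leqslant 0}$ of the statement.
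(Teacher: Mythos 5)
Your proof is correct, and since the paper itself gives no argument for this lemma (it simply cites [MR4061978]), there is no in-paper proof to compare against; your approach is the standard one. The three ingredients you use are exactly right: (i) each projection $p^i$ is exact, so the aisle and co-aisle are closed under the relevant shifts; (ii) mapping spectra in a limit of stable $\infty$-categories are computed as limits of mapping spectra, and the full subcategory of spectra with $\pi_n=0$ for $n\geqslant 0$ is a reflective subcategory of $\mathrm{Sp}$ and hence closed under limits, which kills possible $\lim^1$-obstructions; (iii) the truncation functor is obtained by passing the pointwise adjunctions $\ccal_i^{\leqslant 0}\leftrightarrows\ccal_i$ to the limit, the Beck--Chevalley condition being exactly the t-exactness of the transition functors. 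The one thing I would change is the appeal to \Cref{BlackMagic}: that proposition is about tensoring an $\ecal$-linear adjunction by a rigid algebra, not about limits of diagrams of adjunctions. The correct tool here is \cite[Proposition 4.7.4.19]{lurieHigherAlgebra2022}, which the paper itself invokes for precisely this purpose in the proof of \Cref{GInvColim}; it produces an adjunction on the limit when the relevant commutative squares are right-adjointable, and t-exactness of the transitions furnishes that right-adjointability.
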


\begin{lem}
  \label{coLimTStructures}
  Let $(\mc{C}_i)_i$ be a filtered inductive system of stable $\infty$-categories which admits a colimit $\ccal$ in $\catinfty$. 
  Assume that for any index $i$, the stable $\infty$-category $\ccal_i$ is endowed with a t-structure and that all transition functors are t-exact. 
  Then,  \[\ccal^{\leqslant 0}:=\colim  \ccal_i^{\leqslant 0}\] 
  \[\ccal^{\geqslant 0}:=\colim  \ccal_i^{\geqslant 0}\]
 defines a t-structure on $\mc{C}$ such that each $f_i\colon \ccal_i\to\ccal$ is t-exact. 
\end{lem}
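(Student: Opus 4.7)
The plan is to verify the three axioms of a t-structure on the full subcategories $\ccal^{\leqslant 0}$ and $\ccal^{\geqslant 0}$ defined above, and then note that t-exactness of each $f_i \colon \ccal_i \to \ccal$ is tautological from the construction. Two standard facts about filtered colimits in $\catinfty$ will do most of the work: first, a filtered colimit of stable $\infty$-categories with exact transition functors is stable and each insertion functor $f_i$ is exact; second, mapping spectra are computed as filtered colimits, namely for $X \in \ccal_i$ and $Y \in \ccal_j$ one has
\[
\map_\ccal(f_i X, f_j Y) \simeq \colim_{k \geqslant i,j} \map_{\ccal_k}(\phi_{ik} X, \phi_{jk} Y),
\]
where $\phi_{ik}$ denotes the transition functors. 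Combined with the fact that filtered colimits in $\catinfty$ preserve fully faithful functors, this guarantees that the essential image of $\colim_i \ccal_i^{\leqslant 0}$ in $\ccal$ is the full subcategory on objects of the form $f_i(X_i)$ with $X_i \in \ccal_i^{\leqslant 0}$, and symmetrically for the $\geqslant 0$-part.

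Closure of $\ccal^{\leqslant 0}$ under $[1]$ is immediate: writing $X = f_i(X_i)$ with $X_i \in \ccal_i^{\leqslant 0}$, exactness of $f_i$ gives $X[1] = f_i(X_i[1])$, and $X_i[1] \in \ccal_i^{\leqslant 0}$ by the t-structure on $\ccal_i$; the dual statement is the same. For the existence of truncation triangles, I will pick any $X \in \ccal$, present it as $f_i(X_i)$ for some $i$, apply $f_i$ to the truncation fiber sequence $\tau_{\leqslant 0} X_i \to X_i \to \tau_{\geqslant 1} X_i$ in $\ccal_i$, and observe that exactness of $f_i$ produces a fiber sequence in $\ccal$ whose outer terms lie in $\ccal^{\leqslant 0}$ and $\ccal^{\geqslant 1}$ by construction.

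The step carrying the most content is orthogonality. Given $X = f_i(X_i)$ with $X_i \in \ccal_i^{\leqslant 0}$ and $Y = f_j(Y_j)$ with $Y_j \in \ccal_j^{\geqslant 1}$, I will invoke the mapping spectrum formula together with the t-exactness of the $\phi_{ik}$: for every $k \geqslant i,j$, the images $\phi_{ik}(X_i)$ and $\phi_{jk}(Y_j)$ lie in $\ccal_k^{\leqslant 0}$ and $\ccal_k^{\geqslant 1}$ respectively, so each term $\map_{\ccal_k}(\phi_{ik} X_i, \phi_{jk} Y_j)$ vanishes by orthogonality in $\ccal_k$, whence the filtered colimit vanishes as well. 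This is the one place where t-exactness of the transition functors is used essentially; without it, the vanishing would not propagate through the colimit system. Once the three axioms are in place, t-exactness of $f_i$ holds by definition, since $f_i$ sends $\ccal_i^{\leqslant 0}$ and $\ccal_i^{\geqslant 0}$ into $\ccal^{\leqslant 0}$ and $\ccal^{\geqslant 0}$ by construction.
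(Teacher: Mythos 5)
Your approach is essentially the same as the paper's: compute mapping objects in the colimit as filtered colimits over the diagram, verify the three t-structure axioms, and conclude t-exactness of the insertion functors tautologically. However, there is a slip in the orthogonality step. You assert that for $X_i \in \ccal_i^{\leqslant 0}$ and $Y_j \in \ccal_j^{\geqslant 1}$, each mapping \emph{spectrum} $\map_{\ccal_k}(\phi_{ik} X_i, \phi_{jk} Y_j)$ vanishes by orthogonality in $\ccal_k$. This is false in general: orthogonality only forces $\pi_n$ of that spectrum to vanish for $n \geqslant 0$ (since $(\phi_{ik}X_i)[n]$ stays in $\ccal_k^{\leqslant 0}$), while the negative homotopy groups are unconstrained. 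For instance in $\D(\Z)$ with $X = \Z$ in degree $0$ and $Y = \Z[-1]$ in cohomological degree $1$, the mapping spectrum is $\Z[-1]$, which has $\pi_{-1} = \Z \neq 0$.

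The argument is easily repaired, because the axiom only requires $\pi_0 \map_\ccal(X,Y) = \Hom_{h\ccal}(X,Y) = 0$, and $\pi_0$ of spectra commutes with filtered colimits; this is precisely the phrasing the paper uses (``taking homotopy groups commutes with filtered colimits''). Alternatively, if you work with mapping \emph{spaces} rather than mapping spectra, the vanishing statement is literally true: the mapping space between an object of $\ccal_k^{\leqslant 0}$ and one of $\ccal_k^{\geqslant 1}$ is contractible, and filtered colimits of contractible spaces are contractible.
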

\begin{proof}
  Filtered colimits in $\catinfty$ are computed in the category of simplicial sets. Thus, we can compute the mapping spaces as a colimit, and then as taking homotopy groups commutes with filtered colimits we obtain the orthogonality. The stability under shift is obvious, and the existence of triangles comes from the existence of triangle at some level of the colimit.
\end{proof}

If $\acal$ is an abelian category, we denote by $\D^b(\acal)$ its bounded derived $\infty$-category as constructed in \cite[Section 7.4]{bunkeControlledObjectsLeftexact2019}.
\begin{lem}
  \label{colimDb}
  Let $\acal = \colim_i \acal_i$ be a filtered colimit of small abelian categories with exact transitions. 
  Then the canonical functor 
  \[\colim_i \D^b(\acal_i) \to\D^b(\acal)\] is an equivalence.
\end{lem}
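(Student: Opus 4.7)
The strategy is to put a t-structure on $\ccal := \colim_i \D^b(\acal_i)$ via \Cref{coLimTStructures}, compare its heart to $\acal$, and then check that the canonical functor $F\colon \ccal \to \D^b(\acal)$ is a t-exact equivalence. Since all transition functors $\D^b(\acal_i)\to \D^b(\acal_j)$ are t-exact (because $\acal_i\to\acal_j$ is exact), \Cref{coLimTStructures} endows $\ccal$ with a t-structure, t-bounded because each stage is, and for which every projection from $\D^b(\acal_i)$ is t-exact. Its heart is
\[\ccal^\heart = \colim_i (\D^b(\acal_i))^\heart = \colim_i \acal_i = \acal,\]
the first equality because truncation functors commute with filtered colimits of t-exact functors, and the second by assumption. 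By construction, $F$ is t-exact and induces the identity on hearts.

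Now I reduce the equivalence to behavior on the heart. Both t-structures are bounded, so every object on either side is a finite iterated extension of objects of $\acal$; in particular essential surjectivity of $F$ follows immediately from the equivalence on hearts together with the fact that $F$ creates fiber sequences. For fully faithfulness I would argue by induction on the t-amplitude, the key case being: for any $A,B\in\acal$ and any $n\in\Z$, the map
\[\map_\ccal(A,B[n]) \longrightarrow \map_{\D^b(\acal)}(A,B[n])\]
is an equivalence. On $\pi_{-n}$ this reads as a comparison between $\colim_{j\geqslant i}\Ext^n_{\acal_j}(A_j,B_j)$ (where $A_j,B_j$ are lifts of $A,B$ to some $\acal_i$, possible by filteredness) and $\Ext^n_{\acal}(A,B)$.

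Thus the real content is to show that Yoneda $\Ext$-groups commute with filtered colimits of small abelian categories along exact functors. This I would prove via the Yoneda description: an $n$-fold extension
\[0\to B \to E_1 \to \cdots \to E_n \to A \to 0\]
involves finitely many objects and morphisms of $\acal$, each of which, by the explicit form of filtered colimits of categories, lifts to some stage $\acal_j$ with $j\geqslant i$; exactness of the transition functors then upgrades this to an exact sequence in $\acal_j$, giving surjectivity. Injectivity works similarly, since an equivalence of two Yoneda extensions is a finite diagram of morphisms that lifts to a common stage. This is the step requiring the most care: one must check that not only do the data of an $n$-extension lift, but that being exact at each spot is itself a finite condition that can be tested at some stage, which follows because exactness is tested by vanishing of $\Hom$'s out of kernels/cokernels and because the transitions are exact.

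The main obstacle is precisely this Yoneda-lifting argument; once it is in place, the t-exact functor $F$ is fully faithful on hearts, hence on all of $\ccal$ by the standard induction using the long exact sequence of $\Ext$ associated to truncation triangles $\tau^{\leqslant m}X \to X \to \tau^{\geqslant m+1}X$, and essentially surjective by t-boundedness, so $F$ is an equivalence.
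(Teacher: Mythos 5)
Your proof is correct, but it takes a genuinely different route from the paper's. Both start from \Cref{coLimTStructures} and the identification of the heart of $\colim_i\D^b(\acal_i)$ with $\acal$, but from there the paper invokes the universal property of the bounded derived $\infty$-category proved in \cite[Corollary 7.4.12]{bunkeControlledObjectsLeftexact2019}: since the colimit carries a bounded t-structure with heart $\acal$, one gets a canonical t-exact functor $G\colon\D^b(\acal)\to\colim_i\D^b(\acal_i)$, and the two composites are checked to be identities by restriction to the heart and to each $\D^b(\acal_j)$ respectively. Your argument instead verifies the equivalence directly: t-exactness of $F$ plus boundedness give essential surjectivity, and full faithfulness reduces by the standard truncation d\'evissage to the statement that the natural maps $\colim_j \Ext^m_{\acal_j}(A_j,B_j)\to\Ext^m_\acal(A,B)$ are isomorphisms, which you prove via the Yoneda description of Ext and the fact that in a filtered colimit of abelian categories along exact functors, finite diagrams and their exactness conditions (being a finite amount of data, and vanishing of the relevant homology objects being detectable at a finite stage) can all be tested at some stage. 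What the paper's approach buys is brevity, at the cost of importing a nontrivial external universal property; what yours buys is self-containedness and a more elementary flavor, at the cost of having to actually carry out the Yoneda lifting, which you correctly identify as the only nontrivial step. One small slip: with cohomological conventions, $\pi_{-k}\map_\ccal(A,B[n])=\Ext^{n+k}_\acal(A,B)$, so $\pi_{-n}\map(A,B[n])$ is $\Ext^{2n}$, not $\Ext^n$; the intended comparison is on $\pi_0$ (or equivalently all homotopy groups of $\map(A,B[n])$ for varying $n$). This does not affect the substance of the argument.
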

\begin{proof}
  Denote by \[F\colon \colim_i \D^b(\acal_i) \to\D^b(\acal)\] the canonical functor induced by the functors $\D^b(\acal_i)\to\D^b(\acal)$. By \cref{coLimTStructures} the $\infty$-category $\colim_i\D^b(\acal_i)$ has a t-structure, of heart $\colim_i\acal_i = \acal$. Thus, by the universal property of $\D^b(\acal)$ proved in \cite[Corollary 7.4.12]{bunkeControlledObjectsLeftexact2019}, there exists a canonical functor 
  \[G\colon \D^b(\acal)\to\colim_i\D^b(\acal_i)\] determined by its restriction to $\acal$. The restriction of $F\circ G$ to $\acal$ is the identity of $\acal$, thus $F\circ G \simeq \mathrm{Id}_{\D^b(\acal)}$. Conversely, the functor $G\circ F$ is determined by its composition with the natural functors $\iota_j\colon \D^b(\acal_j)\to\colim_i\D^b(\acal_i)$ which are themselves determined by their restriction to $\acal_j$. The restriction to $\acal_j$ of $G\circ F\circ \iota_j$ lands in \[\acal_j\subset \colim_i\acal_i\subset\colim_i\D^b(\acal_i)\] and is just the canonical functor $\acal_j\to\acal$, thus we have that $G\circ F\simeq \mathrm{Id}_{\colim_i\D^b(\acal_i)}$.
\end{proof}
\begin{defi}\label{TorsOb}
  Let $\acal$ be a small abelian category and let $n$ be an integer. An object $A$ of $\acal$ is 
\begin{enumerate}
  \item torsion-free if for all nonzero integers $m\in\Z\setminus\{0\}$ the map $A\xrightarrow{\times m} A$ is a monomorphism.
  \item of $n$-torsion if the map $A\xrightarrow{\times n}A$ is the zero map.
\end{enumerate}
We denote by $\acal[n]$ the full subcategory of $\acal$ that consists of $n$-torsion objects. 

Finally, we say that $\acal$ has \emph{enough torsion-free objects} if for any object $A$ in $\acal$, there exists an epimorphism $B\twoheadrightarrow A$ with $B$ a torsion-free object. 
\end{defi}
For $A$ a commutative ring, we denote by $\mathrm{Perf}_A$ the $\infty$-category of perfect complexes of $A$-modules. It consists of dualisable objects of $\Mod_A$.
\begin{prop}
   \label{torsderive}
  Let $\acal$ be a small abelian category and let $n$ be a positive integer. Assume that $\acal$ has enough torsion-free objects. 
  Then the left derived functor \[-\otimes_\Z^\mathrm{L} \Z/n\Z\colon \D^b(\acal)\to\D^b(\acal[n])\] of the functor 
  $-\otimes_\Z \Z/n\Z \colon A\mapsto A/nA$ exists, factors through the canonical functor 
  \[\D^b(\acal)\to\D^b(\acal)\otimes_{\mathrm{Perf}_\Z}\mathrm{Perf}_{\Z/n\Z}\] and induces an fully faithful functor 
  \[\D^b(\acal)\otimes_{\mathrm{Perf}_\Z}\mathrm{Perf}_{\Z/n\Z}\to \D^b(\acal[n]).\]
\end{prop}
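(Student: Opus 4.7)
First I would note that every subobject of a torsion-free object is torsion-free, so the full additive subcategory $\acal^{\mathrm{tf}} \subseteq \acal$ of torsion-free objects is closed under kernels of epimorphisms. Combined with the enough-torsion-free hypothesis, every object $A$ of $\acal$ admits a 2-term torsion-free resolution $0 \to K \to B \to A \to 0$, so the torsion-free dimension of $\acal$ is at most one. A theorem of Keller, applied to the exact subcategory $\acal^{\mathrm{tf}}$, then gives an equivalence $\D^b(\acal^{\mathrm{tf}}) \simeq \D^b(\acal)$. The snake lemma applied to $0 \to A \xrightarrow{n} A \to A/nA \to 0$ shows that $A \mapsto A/nA$ is an exact functor $\acal^{\mathrm{tf}} \to \acal[n]$; extending it to bounded derived $\infty$-categories and precomposing with the inverse of the above equivalence yields the left derived functor $L\colon \D^b(\acal) \to \D^b(\acal[n])$.

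Next, since $n$ acts as zero on every object of $\acal[n]$, the $\infty$-category $\D^b(\acal[n])$ admits a canonical $\Perf_{\Z/n\Z}$-module structure refining its $\Perf_\Z$-module structure. Any $\Z$-linear exact functor into a $\Z/n\Z$-linear stable $\infty$-category is automatically $\Z/n\Z$-linear, as the forgetful functor between these $\infty$-categories of stable module categories is fully faithful. By the universal property of base change, $L$ therefore factors essentially uniquely through a $\Z/n\Z$-linear functor
\[\overline{L}\colon \D^b(\acal) \otimes_{\Perf_\Z} \Perf_{\Z/n\Z} \to \D^b(\acal[n]).\]

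The main work is the fully faithfulness of $\overline{L}$. The source is generated as a stable $\infty$-category by the free $\Z/n\Z$-modules $X \otimes \Z/n\Z$ for $X \in \D^b(\acal)$, so since $\overline{L}$ is exact it suffices to check fully faithfulness on these objects. By the base change adjunction, the source mapping spectrum between two such free modules is
\[\map_{\D^b(\acal)}(X, Y \otimes^\mathrm{L}_\Z \Z/n\Z).\]
On the target side, the inclusion $\iota\colon \acal[n] \hookrightarrow \acal$ is exact and right adjoint at the abelian level to $A \mapsto A/nA$; because the latter has enough acyclic objects (the torsion-free ones), this lifts to a derived adjunction $L \dashv \iota$ on bounded derived $\infty$-categories, yielding
\[\map_{\D^b(\acal[n])}(L X, L Y) \simeq \map_{\D^b(\acal)}(X, \iota L Y).\]
A torsion-free resolution $0 \to B_1 \to B_0 \to Y \to 0$ identifies both $\iota L Y$ and $Y \otimes^\mathrm{L}_\Z \Z/n\Z$ with the complex $[B_1/nB_1 \to B_0/nB_0]$ in $\D^b(\acal)$, so the two mapping spectra agree and $\overline{L}$ is fully faithful. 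The main technical subtlety will be establishing the derived adjunction rigorously at the $\infty$-categorical level, which can be handled via the universal property of the bounded derived $\infty$-category from \cite{bunkeControlledObjectsLeftexact2019}: it identifies $L$ with a left Kan extension, from which adjointness to $\iota$ follows formally.
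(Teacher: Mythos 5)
Your proof follows the same overall architecture as the paper's — derive the adjunction $L\dashv\iota$, factor through the base change, and compute mapping spectra on a generating family using torsion-free resolutions — and the core full-faithfulness calculation (reducing $\map(LX,LY)$ to $\map(X,\iota LY)$ and identifying $\iota LY$ with $Y\otimes_\Z^\mathrm{L}\Z/n$ via a two-term torsion-free resolution) is exactly the content of the paper's Step~3.

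Where you diverge is Step~1: the paper constructs the derived adjunction using Cisinski's formalism of categories with weak equivalences and cofibrations (cofibrations being monomorphisms with termwise torsion-free cokernel), whereas you propose Keller's resolution theorem $\D^b(\acal^{\mathrm{tf}})\simeq\D^b(\acal)$ and then extend the exact functor $-/n\colon\acal^{\mathrm{tf}}\to\acal[n]$. This is a legitimate alternative route to the existence of $L$; note though that constructing $L$ this way does not \emph{immediately} give the adjunction $L\dashv\iota$, and your appeal to ``left Kan extension, from which adjointness follows formally'' is vague — the paper's cofibration-category argument produces the adjunction directly as a derived Quillen-type adjunction. You do flag this as the main subtlety, which is fair, but be aware it is not a free corollary of the universal property of $\D^b$.

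There is one outright error, even if it ends up being harmless: the assertion that ``any $\Z$-linear exact functor into a $\Z/n\Z$-linear stable $\infty$-category is automatically $\Z/n\Z$-linear, as the forgetful functor between these $\infty$-categories of stable module categories is fully faithful'' is false. Restriction of scalars along $\Perf_\Z\to\Perf_{\Z/n\Z}$ is fully faithful only if the multiplication $\Perf_{\Z/n\Z}\otimes_{\Perf_\Z}\Perf_{\Z/n\Z}\to\Perf_{\Z/n\Z}$ is an equivalence; after indization this is $\Mod_{\Z/n\otimes_\Z^\mathrm{L}\Z/n}\to\Mod_{\Z/n}$, which fails since $\Z/n\otimes_\Z^\mathrm{L}\Z/n$ has a nontrivial $\pi_1$. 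The claim also does not type-check as applied to $L$, since $\D^b(\acal)$ is not $\Z/n$-linear. Fortunately the factorization you want follows from the adjunction (universal property of extension of scalars) alone: a $\Perf_\Z$-linear exact functor into a $\Perf_{\Z/n}$-module corresponds to a $\Perf_{\Z/n}$-linear functor out of the base change, no full faithfulness required. So the sentence is wrong but dispensable; strike it and invoke the adjunction directly, as the paper does in its Step~2.
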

\begin{proof}
  The proof goes with several steps.

  \underline{Step 1:} The functor \[-\otimes_\Z \Z/n\Z\colon \acal\to\acal[n]\] is the left adjoint of the inclusion functor \[\iota\colon\acal[n]\to\acal,\] and we can derive this adjunction to obtain an adjunction \[-\otimes_\Z^\mathrm{L} \Z/n\Z \colon 
        \D^b(\acal)\leftrightarrows \D^b(\acal[n])\colon \iota.\]
    Indeed, the adjunction induces an adjunction of functors between the categories of bounded complexes $\mathrm{Ch}^b(\acal)$ and $\mathrm{Ch}^b(\acal[n])$ and we can then use Cisinski's formalism of derived functors: we endow $\mathrm{Ch}^b(\acal[n])$ with the trivial structure of category with equivalences and fibrations (so that equivalences are quasi-isomorphisms and all maps are fibrations), and $\mathrm{Ch}^b(\acal)$ with the structure of a category with equivalences and cofibrations such that equivalences are quasi-isomorphisms and cofibrations are monomorphism with term wise torsion-free cokernels (this is indeed a structure of category with weak equivalences and cofibrations because there are enough torsion-free objects and that they are acyclic for $-\otimes_\Z\Z/n$). By \cite[Theorem 7.5.30]{MR3931682}, using that $\D^b(\acal)\simeq\mathrm{Ch}^b(\acal)[W_{qiso}^{-1}]$, we obtain the existence of the adjunction on the derived categories.

    \underline{Step 2:} The functor $-\otimes^\mathrm{L}_\Z\Z/n\Z$ factors trough $\D^b(\acal)\otimes_{\mathrm{Perf}_\Z}\mathrm{Perf}_{\Z/n\Z}$. This is because of the universal property of the tensor product of idempotent complete categories: it is computed as the compact objects of the tensor product between indisation, and the latter has a universal property by definition of Lurie's tensor product of presentable $\infty$-categories. As we have a $\mathrm{Perf}_\Z$-linear functor $\D^b(\acal)\to\D^b(\acal[n])$ and $ \D^b(\acal[n])$ is $\mathrm{Perf}_{\Z/n\Z}$-linear, this step is clear.

    \underline{Step 3:} The functor \[\alpha\colon \D^b(\acal)\otimes_{\mathrm{Perf}_\Z}\mathrm{Perf}_{\Z/n\Z}\to\D^b(\acal[n])\] is fully faithful. 
    Given two complexes $K,L$ in $\D^b(\acal)$ and two complexes $P,Q$ in $\mathrm{Perf}_{\Z/n\Z}$, we have \[\map_{\D^b(\acal)\otimes_{\mathrm{Perf}_\Z}\mathrm{Perf}_{\Z/n\Z}}(K\boxtimes P,L\boxtimes Q)\simeq \map_{\D^b(\acal)}(K,L)\otimes_\Z \map_{\mathrm{Perf}_{\Z/n\Z}}(P,Q),\] as it is proven in \cite[Proposition 3.5.5]{hoRevisitingMixedGeometry2023}, where $K\boxtimes P$ is the image of $(K,P)$ under the canonical functor \[\D^b(\acal)\times\mathrm{Perf}_{\Z/n\Z} \to\D^b(\acal)\otimes_{\mathrm{Perf}_\Z}\mathrm{Perf}_{\Z/n\Z}.\] To prove that $\alpha$ is fully faithful, by d\'evissage it suffices to prove that it is fully faithful on objects of the form $A\boxtimes \Z/n\Z$ with $A$ an object of $\acal$. The image of such an object in $\D^b(\acal[n])$ is $A\otimes^\mathrm{L}_\Z\Z/n\Z$. Thus, given the above formula for the mapping spectra in the tensor product, we have to show that for two objects $A$ and $B$ of $\acal$, the map 
      \[\map_{\D^b(\acal)}(A,B)\otimes_\Z\Z/n \to \map_{\D^b(\acal[n])}(A\otimes^\mathrm{L}_\Z\Z/n\Z,B\otimes^\mathrm{L}_\Z\Z/n\Z)\] is an equivalence. This is true because 
      \begin{align*}
        \map_{\D^b(\acal)}(A,B)\otimes_\Z\Z/n & \simeq  \mathrm{cofib}(\map_{\D^b(\acal)}(A,B)\xrightarrow{\times n}\map_{\D^b(\acal)}(A,B)) \\
        &\simeq  \map_{\D^b(\acal)}(A,\mathrm{cofib}(B\xrightarrow{\times n}B))\\
        & \overset{(*)}{\simeq}  \map_{\D^b(\acal)}(A,\iota(B\otimes^\mathrm{L}_\Z\Z/n\Z)) \\
        & \simeq  \map_{\D^b(\acal[n])}(a\otimes_\Z^\mathrm{L}\Z/n\Z,b\otimes_\Z^\mathrm{L}\Z/n\Z),
      \end{align*} where $(*)$ can be checked by taking a torsion-free resolution of $b$. 
This finishes the proof.
\end{proof}
\begin{rem}
  The above functor in proposition is not essentially surjective in general, as it can be seen by taking $\acal = \mathrm{Ab}^\mathrm{ft}$ the abelian category of finitely generated abelian groups, where we get the difference between $\mathrm{Perf}_{\Z/n\Z}$ and $\D^b(\Z/n\Z\text{-}\mathrm{mod}^\mathrm{ft})$, where $\Z/n\Z\text{-}\mathrm{mod}^\mathrm{ft}$ is the abelian category of finite type $\Z/n\Z$-modules.
\end{rem}

\subsection{Change of coefficients}
For $\Lambda$ a commutative ring, we denote by $\PrL_\Lambda:=\Mod_{\Mod_\Lambda}(\PrL)$ the $\infty$-category of $\Lambda$-linear presentable $\infty$-categories and by $\catinfty^\Lambda:=\Mod_{\Perf_\Lambda}(\catinfty^\mathrm{perf})$ the $\infty$-category of $\Lambda$-linear small idempotent-complete $\infty$-categories.
\begin{lem}
  \label{LemmeMagique}
  Let $F \colon \ccal\to \dcal$ be a map in $\CAlg\left(\PrL_\Z\right)$ and let $G$ be the right adjoint to $F$.
  Assume that for any object $N$ of $\dcal$, the natural map \[G(N)\otimes_\Z \Q \to G(N\otimes_\Z \Q)\] is an equivalence. If $A$ is a commutative algebra in $\Mod_\Z$, denote by $F_A$ the functor $F\otimes_{\Mod_\Z} \Mod_A$.
  
  Then, the functor $F$ is fully faithful (\emph{resp}. an equivalence) if and only if the functors $F_\Q$ and $F_{\Z/n\Z}$ are fully faithful (\emph{resp}. an equivalence) for any nonzero integer $n$. 
\end{lem}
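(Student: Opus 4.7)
My plan is to apply the Hasse fracture square to the cofibers of the unit $\eta_c\colon c\to GF(c)$ and counit $\epsilon_d\colon FG(d)\to d$ of the adjunction $(F,G)$. The starting point is the observation that in any $\Z$-linear stable presentable $\infty$-category, an object $X$ vanishes if and only if $X\otimes_\Z\Q\simeq 0$ and $X\otimes_\Z\Z/n\Z\simeq 0$ for every nonzero integer $n$: this follows from the Hasse pullback square recalled in the introduction together with the fact that $\lim_n X\otimes_\Z\Z/n\Z$ vanishes as soon as each factor does. Since $F$ is fully faithful exactly when $\mathrm{cofib}(\eta_c)=0$ for every $c$, and an equivalence exactly when additionally $\mathrm{cofib}(\epsilon_d)=0$ for every $d$, and since $-\otimes_\Z A$ commutes with cofibers, the vanishing of these cofibers reduces to the vanishing of the cofibers of $\eta_c\otimes_\Z A$ and $\epsilon_d\otimes_\Z A$ for $A\in\{\Q\}\cup\{\Z/n\Z:n\geq 1\}$.

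The crucial technical ingredient is that for each such $A$, the right adjoint $G$ commutes with $-\otimes_\Z A$. For $A=\Z/n\Z$ this comes for free: $X\otimes_\Z\Z/n\Z$ is the cofiber of $X\xrightarrow{n}X$, a finite colimit, preserved by $G$ because a right adjoint between stable $\infty$-categories is exact. For $A=\Q$ this is precisely the hypothesis of the lemma. As $F$ automatically commutes with $-\otimes_\Z A$ (being a $\Z$-linear left adjoint), both $F$ and $G$ descend to functors between the $A$-linear base-changed categories $\ccal_A$ and $\dcal_A$. A direct check of the adjunction through mapping spaces shows that the descent $G_A\colon\dcal_A\to\ccal_A$ is the right adjoint of $F_A$, and the composite $G_A F_A$ agrees with the descent $(GF)_A$ of the endofunctor $GF$. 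In particular the unit of $(F_A,G_A)$ evaluated at $c\otimes_\Z A$ is precisely $\eta_c\otimes_\Z A$, and similarly the counit at $d\otimes_\Z A$ is $\epsilon_d\otimes_\Z A$.

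The proof then concludes in both directions. For the forward direction: if $F$ is fully faithful, then $GF\simeq\id_\ccal$, so after descent $G_AF_A\simeq\id_{\ccal_A}$, whence $F_A$ is fully faithful; applying the same argument to $FG\simeq\id_\dcal$ handles the equivalence case. For the reverse direction: if $F_A$ is fully faithful for every $A\in\{\Q\}\cup\{\Z/n\Z\}$, then the unit of $(F_A,G_A)$ is an equivalence everywhere, hence at $c\otimes_\Z A$, hence $\eta_c\otimes_\Z A$ is an equivalence for every $c\in\ccal$; the Hasse criterion applied to $\mathrm{cofib}(\eta_c)$ then forces $\eta_c$ to be an equivalence and $F$ to be fully faithful; the analogous argument for $\epsilon_d$ completes the equivalence case. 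The subtle point throughout is the compatibility of right adjoints under base change, which for $A=\Q$ is the content of the hypothesis and for $A=\Z/n\Z$ follows from exactness of $G$.
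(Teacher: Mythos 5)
Your proof is correct and takes essentially the same approach as the paper's: reduce to checking the unit (and counit) after tensoring with $\Q$ and $\Z/n\Z$, using exactness of $G$ for the torsion case (since $-\otimes_\Z\Z/n\Z$ is a finite colimit) and the stated hypothesis for the rational case, then conclude by the Hasse fracture square. The only variation is in the essential-surjectivity step, where the paper avoids invoking the counit of $(F_A,G_A)$ and instead uses the triangle $N\to N\otimes_\Z\Q\to\colim_n N\otimes_\Z\Z/n\Z$ together with the fact that the essential image of a fully faithful colimit-preserving exact functor is closed under colimits and finite limits; both routes are equivalent in content.
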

\begin{proof}
If $F$ is fully faithful, then \cite[Lemma 2.14]{haineNonabelianBasechangeBasechange2022} ensures that $F_A $ is for any commutative algebra in $\Mod_\Z$. The same assertion about equivalences is obvious. This proves the "only if" part of the statement.

We now prove the converse. 
Assume first that the functors $F_\Q$ and $F_{\Z/n\Z}$ are fully faithful. Let $M$ be an object of $\mc{C}$. We want to prove that the natural map \[M\to GF(M)\] is an equivalence. This can be checked after tensoring with $\Q$ and with $\Z/n\Z$. Since tensoring with $\Z/n\Z$ commutes with any exact functor, the map $M/n\to GF(M)/n$ being an equivalence amounts to the full faithfulness of $F_{\Z/n\Z}$.

On the other hand, our assumption on $G$ implies that the right adjoint of $F_\Q$ sends an object of the form $N \otimes \Q$ to $G(N\otimes \Q)$. Therefore, the full faithfulness of $F_\Q$ implies that the map \[M\otimes_\Z \Q\to GF(M)\otimes_\Z \Q\] is an equivalence. Hence, the functor $F$ is fully faithful.

Assume now further that $F_\Q$ and $F_{\Z/n\Z}$ are essentially surjective. Let $N$ be an object of $\dcal$. Then, we have an exact triangle \[N\to N\otimes_\Z \Q \to \colim_n \  N\otimes_\Z \Z/n\Z;\] the essential surjectivity of $F_\Q$ and $F_{\Z/n\Z}$ imply that $N\otimes_\Z \Q$ and $N\otimes_\Z \Z/n\Z$ belong to the image of $F$. As $F$ is fully faithful, the latter is closed under colimits and finite limits, so that $N$ belongs to it. This finishes the proof
\end{proof}

\begin{rem}
  An obvious case where the previous \cref{LemmeMagique} applies is when $F\colon\ccal\to \dcal$ is a functor between compactly generated presentable $\mathrm{Mod}_\Z$-linear $\infty$-categories that preserves compact objects. Indeed, in that case the right adjoint to $F$ commutes with colimits hence also with \[(\mathrm{Id}\to-\otimes_\Z \Q) = \colim_n(\mathrm{Id}\xrightarrow{n}\mathrm{Id}).\] However, we will apply this lemma in a case where $\ccal$ is \emph{not} compactly generated.
\end{rem}

We will also need to consider torsion objects in an $\infty$-category.
\begin{defi}
  Let $\mc{C}$ be an object of $\CAlg(\PrL_\Z)$ or $\CAlg(\mathrm{Cat}_\infty^\Z)$ and let $p$ be a prime number. An object $M$ is 
  \begin{enumerate}
    \item torsion when $M\otimes_\Z \Q=0$.
    \item $p$-torsion when $M\otimes_\Z \Z[1/p]=0$.
    \item $p$-complete when the canonical map $M\to \lim M/p^n$ is an equivalence.
  \end{enumerate}
  We denote by $\mc{C}_\mathrm{tors}$ (resp. $\mc{C}_{p-\mathrm{tors}}$, resp. $\mc{C}^\wedge_p$) the full subcategory of $\mc{C}$ made of its torsion (resp. $p$-torsion, resp. $p$-complete) objects.
\end{defi}

  In the case when $\mc{C}$ belongs to $\CAlg(\PrL_\Z)$, the inclusion $\mc{C}^\wedge_p \subseteq \mc{C}$ has a left adjoint $(-)^\wedge_p$ given by $M\mapsto \lim M/p^n$ called the $p$-completion. Furthermore, the map $(-)^\wedge_p$ induces an equivalence \[\mc{C}_{p-\mathrm{tors}}\to \mc{C}^\wedge_p\] with an inverse given by $M \mapsto M \otimes \Z(p^\infty)$ where $\Z(p^\infty)\subseteq \Q/\Z$ denotes the Pr\"ufer $p$-group.
  
  Given a map $F\colon \mc{C}\to \mc{D}$ in $\CAlg(\PrL_\Z)$, we can also construct its $p$-completion $F^\wedge_p\colon \mc{C}^\wedge_p\to \mc{D}^\wedge_p$ which sends $M=\lim M/p^n$ to $\lim F(M)/p^n$.

\section{Derived categories of Nori motives over a field.}
\subsection{Reminders on Nori motives}
\label{sect:Norimot}
Let $k$ be a subfield of $\C$. Nori constructed a symmetric monoidal abelian category $\mcal(k,\Z)$ that affords the following universal property (see \cite[Section 9 and Corollary 10.1.6]{MR3618276}):
there is a cohomological functor 
\[\HH^0_\mathrm{N}\colon \DMgm(k,\Z)\to \mcal(k,\Z)\] factoring the $\HH^0$ of the Betti realisation
\[\rho_\mathrm{B}\colon\DMgm(k,\Z)\to\Mod_\Z\]
of \cite{MR2602027} which is universal for this property: any cohomological functor 
\[\DMgm(k,\Z)\to\ccal\] to a $\Z$-linear abelian category $\ccal$, such that there exists a faithful exact functor $\ccal\to\Z\text{-}\mathrm{mod}$ giving a factorisation of the $\HH^0$ of the Betti realisation of \'etale motives, will factor uniquely through $\HH^0_\mathrm{N}$, in a way compatible with the various Betti realisations.

The definition also work with rational coefficients. In this case, Nori motives form a tannakian category:
\begin{thm}[Nori, {\cite[Theorem 9.1.5]{MR3618276}}]
  \label{NoriTannaka}
  Let $k$ be a subfield of $\C$. The Betti realisation 
  \[\mcal(k,\Q)\to\mathrm{Vect}^{\mathrm{fd}}_\Q\] of Nori motives is a fiber functor of Tannakian categories. In particular, there exists a Hopf algebra $\mathrm{H}_\mathrm{N}\in \mathrm{Vect}_\Q$ such that the Betti realisation factors as an equivalence \[\mcal(k,\Q)\simeq \mathrm{coMod}_{\mathrm{H}_\mathrm{N}}(\mathrm{Vect}^\mathrm{fd}_\Q)\]
  where $\mathrm{coMod}_{\mathrm{H}_\mathrm{N}}(\mathrm{Vect}_\Q^\mathrm{fd})$ is the abelian categories of comodules over $\mathrm{H}_\mathrm{N}$ in the category $\mathrm{Vect}_\Q^\mathrm{fd}$ of finite dimensional $\Q$-vector spaces.
\end{thm}

We will need a result about torsion Nori motives. As Artin motives embed fully faithfully in $\DMgm(k,\Z)$ and the Betti realisation on those is conservative and t-exact, the functor 
\[\HH^0_\mathrm{N}:\DMgm(k,\Z)\to\mcal(k,\Z)\] induces a faithful exact functor 
\begin{equation}\label{artinMotive}\iota \colon \mathrm{Rep}(\mathrm{Gal}(\bar{k}/k),\Z)\to \mcal(k,\Z)\end{equation}
where $\mathrm{Rep}(\mathrm{Gal}(\bar{k}/k),\Z)$ is the abelian category of $\Z$-linear continuous representations of $\mathrm{Gal}(\bar{k}/k)$ which are finitely generated as $\Z$-modules. 

\begin{thm}[Nori \cite{fakhruddinNotesNoriLectures2000}]
  \label{rigab}
The functor $\iota$ defined above induces an equivalence
\[\iota \colon \mathrm{Rep}(\mathrm{Gal}(\bar{k}/k),\Z)_{\mathrm{tors}}\xrightarrow{\sim
} \mcal(k,\Z)_{\mathrm{tors}} \]
on the full subcategories of torsion objects.
\end{thm}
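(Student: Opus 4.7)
The functor $\iota$ is already established to be exact and faithful, so what remains is to prove that it is full and essentially surjective on torsion objects. The primary decomposition of torsion in any $\Z$-linear abelian category is preserved by $\iota$, and $\mcal(k,\Z)_\mathrm{tors}=\colim_n \mcal(k,\Z)[n]$, so it suffices to show that for every prime $\ell$ and every integer $r\geq 1$ the induced functor
\[\iota_{\ell^r}\colon \mathrm{Rep}(\mathrm{Gal}(\bar{k}/k),\Z/\ell^r\Z)\to \mcal(k,\Z)[\ell^r]\]
is an equivalence of abelian categories.

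The first step is to build a faithful left inverse from the $\ell$-adic realisation. On any object of $\mcal(k,\Z)[\ell^r]$ the Betti realisation is $\ell^r$-torsion, so $R_{\ell'}$ vanishes for any prime $\ell'\neq \ell$. Joint conservativity of all $\ell$-adic realisations therefore reduces to conservativity of $R_\ell$ alone on this subcategory, and since $R_\ell$ is exact it is thus faithful there. Because a continuous action of $\mathrm{Gal}(\bar{k}/k)$ on a finite $\Z_\ell$-module automatically factors through a finite quotient, we obtain a faithful exact functor $R_{\ell^r}\colon \mcal(k,\Z)[\ell^r]\to \mathrm{Rep}(\mathrm{Gal}(\bar{k}/k),\Z/\ell^r\Z)$. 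Inspecting $\iota$ on Artin motives of finite \'etale $k$-schemes and extending by exactness shows $R_{\ell^r}\circ \iota_{\ell^r}\simeq \mathrm{Id}$; a diagram chase using faithfulness of $R_{\ell^r}$ then yields full faithfulness of $\iota_{\ell^r}$.

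Essential surjectivity proceeds via Suslin--Voevodsky rigidity. By \Cref{univCatdef} every object $M$ of $\mcal(k,\Z)[\ell^r]$ is a subquotient of some $\HH^0_{\mathrm{N}}(A)$ with $A\in\DMgm(k,\Z)$. Applying $\HH^0_{\mathrm{N}}$ to the distinguished triangle $A[-1]\otimes^{\mathrm{L}}_\Z \Z/\ell^r\Z\to A\xrightarrow{\ell^r} A$ in $\DMgm(k,\Z)$ produces a surjection $\HH^0_{\mathrm{N}}(A[-1]\otimes^{\mathrm{L}}_\Z\Z/\ell^r\Z)\twoheadrightarrow \HH^0_{\mathrm{N}}(A)[\ell^r]$, so $M$ is a subquotient of $\HH^0_{\mathrm{N}}(B)$ for some $B\in \DMgm(k,\Z/\ell^r\Z)$. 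Suslin--Voevodsky rigidity identifies this last category with $\Db(k_\et,\Z/\ell^r\Z)$, whose standard $\HH^0$ lands in $\mathrm{Rep}(\mathrm{Gal}(\bar{k}/k),\Z/\ell^r\Z)$, and combining this identification with the universal property of the Nori construction naturally identifies $\HH^0_{\mathrm{N}}(B)$ with $\iota_{\ell^r}(\HH^0_\et(B))$. Since $\iota_{\ell^r}$ is fully faithful and exact, its essential image is stable under subquotients and therefore contains $M$.

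The main obstacle will be this last identification $\HH^0_{\mathrm{N}}(B)\simeq \iota_{\ell^r}(\HH^0_\et(B))$ for $B\in\DMgm(k,\Z/\ell^r\Z)$: the two cohomological functors $\DMgm(k,\Z/\ell^r\Z)\to \mcal(k,\Z)[\ell^r]$ under comparison have equal Betti realisations but are constructed by a priori different routes, and bootstrapping from pointwise Betti agreement to a natural equivalence requires invoking the universal property of the Nori category with $\Z/\ell^r\Z$-coefficients, using that on the $\ell^r$-torsion subcategory the integral and mod-$\ell^r$ Betti realisations coincide.
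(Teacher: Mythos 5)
The overall strategy is the same one the paper follows in the generalisation \Cref{compTors}: use rigidity to identify the mod-$\ell^r$ part of $\DMgm$ with Artin objects, then show that every $\ell^r$-torsion Nori motive is a subquotient of something that manifestly lies in the Artin image. Your full faithfulness argument --- building a faithful left inverse $R_{\ell^r}$ out of the $\ell$-adic realisation, which the paper shows is jointly conservative on $\mcal(k,\Z)$, and using $R_{\ell^r}\circ\iota_{\ell^r}\simeq\mathrm{Id}$ --- is correct and is a reasonable substitute for the paper's use of the comparison functor $\mathrm{comp}$ in \Cref{compTors}.

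However, the essential surjectivity step has a genuine gap. From \Cref{univCatdef}, an object $M\in\mcal(k,\Z)[\ell^r]$ is a \emph{quotient} of some $\HH^0_{\mathrm{N}}(A)$, not a subobject, and the surjection $\HH^0_{\mathrm{N}}(A[-1]\otimes^{\mathrm{L}}_\Z\Z/\ell^r\Z)\twoheadrightarrow\HH^0_{\mathrm{N}}(A)[\ell^r]$ that you extract from the long exact sequence does not help: an $\ell^r$-torsion quotient of $\HH^0_\mathrm{N}(A)$ need not be a subquotient of $\HH^0_\mathrm{N}(A)[\ell^r]$. Concretely, $\Z/\ell\Z$ is an $\ell$-torsion quotient of $\Z$, but $\Z[\ell]=0$, so no nontrivial subquotient can come from the torsion subgroup. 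The piece of the long exact sequence you actually need is the one on the other side: the injection $\HH^0_{\mathrm{N}}(A)/\ell^r\hookrightarrow\HH^0_{\mathrm{N}}(A\otimes^{\mathrm{L}}_\Z\Z/\ell^r\Z)$. Since $M$ is an $\ell^r$-torsion quotient of $\HH^0_\mathrm{N}(A)$, the quotient map factors through $\HH^0_\mathrm{N}(A)/\ell^r$, and combining with the injection exhibits $M$ as a subquotient of $\HH^0_\mathrm{N}(B)$ for $B=A\otimes^{\mathrm{L}}_\Z\Z/\ell^r\Z\in\DMgm(k,\Z/\ell^r\Z)$. This is exactly what the diagram in the paper's proof of \Cref{compTors} encodes. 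Finally, the compatibility $\HH^0_{\mathrm{N}}(B)\simeq\iota_{\ell^r}(\HH^0_\et(B))$ that you flag at the end is indeed essential; in \Cref{compTors} it is extracted from the t-exactness of the Artin inclusion $\dconset(X_\et,\Lambda/n)\to\DNgm(X,\Lambda)$ (for fields, that the Betti realisation is t-exact on Artin motives), so you would need to make that ingredient explicit.
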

\begin{proof}
  The proof can be found in \cite[Theorem 6.1]{fakhruddinNotesNoriLectures2000}, but see also the proof of \cref{compTors} which is a generalisation.
\end{proof}

\subsection{Realisation functor to the derived category of ind Nori motives.}

Let $k$ be a subfield of $\C$. In this section, we recall how to construct a realisation functor 
\[\rho'_\mathrm{N}\colon\DM(k,\Z)\to \D(\In\mcal(k,\Z)),\] compatible with the Betti realisation. 
Its construction goes back to Nori and Beilinson's Basic Lemma, and we will use the description given in \cite{MR3649230}.

\begin{constr}
By \cite[Proposition 7.1 and its proof]{MR3649230}, there is 
  a lax symmetric monoidal $1$-functor 
  \[\mathrm{SmAff}_k\to \mathrm{Ch}^b(\mcal(k,\Z)^\mathrm{eff})\]
  from the category of affine $k$-schemes to the category of bounded complexes of effective Nori motives (\cite[Definition 9.1.3]{MR3618276}).
  We can compose this functor with the 
  functor \[\mathrm{Ch}^b(\mcal(k,\Z)^\mathrm{eff})\to\mathrm{Ch}( \In\mcal(k,\Z))\] 
  given by the map $\mcal(k,\Z)^\mathrm{eff}\to\mcal(k,\Z)$. 
  The construction roughly goes as follows: 
  for each smooth affine variety $X$, one can construct (using Nori's Basic Lemma) a finite \emph{cellular filtration} $\fcal$ on $X$ by closed subschemes which computes the cohomology of $X$
  (this is similar to how the cellular filtration of a CW-complex computes its cohomology). 
  This gives a bounded complex $C_\fcal^*(X)$ in $\mathrm{Ch}^b(\mcal(k,\Z))$ whose Betti realisation computes the singular cohomology of the variety of the complex points of $X$. 
  To make a functor out of it, one has to make choices to have cellular filtrations compatible with morphisms; 
  to that end, one considers the colimit $C^*(X)$ (in $\mathrm{Ch}( \In\mcal(k,\Z))$) over all possible cellular filtrations.
  Moreover, the composition $R_\mathrm{B}\circ C^*$ with the Betti realisation 
  functor $R_\mathrm{B}$ is naturally isomorphic (as a monoidal functor) 
  to the functor 
  $C^*_\mathrm{sing}\colon X\mapsto C^*_\mathrm{sing}(X^\mathrm{an})$
   sending a $k$-variety to the complex of singular 
  chains over the analytic variety of its complex points (this is \cite[Corollary 6.4]{MR3649230}).
  Notice finally that the functor $C^*$ sends \'etale $k$-schemes to objects of $\mcal(k,\Z)$ seen as complexes placed in degree $0$ (where it is given by the functor $\iota$ of \cref{artinMotive}).

  We now use Blumberg, Gepner and Tabuda's \cite[Proposition 3.5]{MR3209352}. 
  First notice that
  \[C^*\colon\mathrm{SmAff}_k\to\mathrm{Ch}( \In\mcal(k,\Z))\] is a lax 
  symmetric monoidal functor between symmetric monoidal $1$-categories. 
  Moreover, by construction of the functor $C^*$, we have $C^*(k)
  =\Z_k$ 
  and for any affine $k$-varieties $X_1$ and $X_2$, the canonical map 
  \[C^*(X_1)\otimes C^*(X_2)\to C^*(X_1\times X_2)\] 
  is a quasi-isomorphism, because this is true when $C^*$ is replaced by 
  $C^*_\mathrm{sing}$. 
  Setting the weak equivalences on $\mathrm{SmAff}_k$ to be the isomorphisms of 
  schemes, and the class of weak equivalences $W_{\mathrm{qiso}}$ in 
  $\mathrm{Ch}( \In\mcal(k,\Z))$ to be the class of quasi-isomorphisms, 
  the functor $C^*$ preserves weak equivalences. Therefore, the 
  proposition quoted above yields
  a symmetric monoidal structure on $\D( \In\mcal(k,\Z))$ and a symmetric 
  monoidal functor 
  \[\mathrm{SmAff}_k^\times\to\D( \In\mcal(k,\Z))^\otimes\] 
  whose underlying functor is 
  \[\mathrm{SmAff}_k\xrightarrow{C^*}\mathrm{Ch}
  ( \In\mcal(k,\Z))\xrightarrow{W_{\mathrm{qiso}}^{-1}}\D( \In\mcal(k,\Z)).\]

Now, by \cite[Proposition 7.1]{MR3649230} the above functor factors through \'etale motives. If $K/k$ is an extension of subfields of $\C$, then as any cellular filtration of a variety over $k$ provides a cellular filtration of its base change to $K$, this functor is compatible with change of fields. Moreover, by \cite[Proposition 7.1]{MR3649230}, the composition of $C^*$ with the Betti realisation of Nori motives is equivalent to the Betti realisation of \'etale motives.
Thus we obtained:
\end{constr}
\begin{thm}[Choudhury-Gallauer, Harrer, Nori]
  \label{realFieldDInd}
  Let $k$ be a subfield of $\C$. 
  There exists a symmetric monoidal functor 
  \[\rho'_\mathrm{N}\colon \DM(k,\Z)\to \D(\In\mcal(k,\Z))\] 
  that gives back the Betti realisation when composed with the Betti realisation of Nori motives, and sends 
  $\DMgm(k,\Z)$ to $\D^b(\mcal(k,\Z))$. Moreover, the construction is compatible with pullback of fields.
\end{thm}

It will be crucial for us that, rationnally, the above realisation functor is in some sense compatible with the realisation functor 
\[\rho_\mathrm{N}\colon \DM(k,\Q)\to \In \D^b(\mcal(k,\Q))\] 
of \cite{SwannRealisation} which is given by the universal property of $\DM$ as a six functors formalism. 

We begin with a lemma.
\begin{lem}
  \label{lem:DIndTensQ}
  Let $k$ be a subfield of $\C$. The canonical functor 
  \[\D(\In\mcal(k,\Z))\to\D(\In\mcal(k,\Q))\]
  induces an equivalence $\D(\In\mcal(k,\Z))\otimes\Mod_\Q\xrightarrow{\sim}\D(\In\mcal(k,\Q))$ where the tensor product is taken in $\PrL$.
\end{lem}
\begin{proof}
  By \cite[C.4.2.2]{lurieSpectralAlgebraicGeometry}, the functor $\D(\In\mcal(k,\Z))\otimes\Mod_\Q\to\D(\In\mcal(k,\Q))$ is the stabilisation of 
  the functor
   \[\D^{\leqslant 0}(\In\mcal(k,\Z))\otimes \Mod^{\leqslant 0}_\Q\to\D^{\leqslant 0}(\In\mcal(k,\Q)).\]
   Now, recall (\cite[Definition 7.1.10]{MR3618276}) that $\mcal(k,\Z)$ is a colimit of categories of finitely generated modules over some rings $E_F$, so that $\In\mcal(k,\Z)$
   is in fact the filtered colimit of $\mathrm{Mod}_{E_F}^\heartsuit$. Thus, the prestable category $\D^{\leqslant 0}(\mcal(k,\Z))$ is the filtered colimit  of the prestable categories 
   $\Mod_{E_F}^{\leqslant 0}$ as the functor 
   $\D^{\leqslant 0}(-)$ is a left adjoint by \cite[C.5.4.9]{lurieSpectralAlgebraicGeometry}. Finally, the tensor product with $\Mod_\Q^{\leqslant 0}$ above is the filtered colimit 
   of $\Mod_{E_F\otimes_\Z\Q}^{\leqslant 0}$, but this is $\D^{\leqslant 0}(\In\mcal(k,\Q))$ by \cite[Lemma 7.2.2 1.]{MR3618276}.
\end{proof}


We now prove that when rationalised, the funtor $\rho'_\mathrm{N}$ is very close to the functor $$\rho_\mathrm{N}\colon \DM(k,\Q)\to\In\D^b(\mcal(k,\Q))$$ constructed by the second author in \cite{SwannRealisation}.
We begin with some reminders on weak Tanakian formalism from a monadic point of view.  The next propositions consist of bookeeping Lurie's higher algebra book and Raksit's \cite{raksitHochschildHomologyDerived2020} to revisit the constructions of Ayoub \cite{MR3259031} in the setting of $\infty$-categories.

\begin{defi}
  Let $\ccal$ be a symmetric monoidal $\infty$-category.
  \begin{enumerate}
    \item A commutative bi-algebra in $\ccal$ is an object of $\mathrm{coAlg}(\CAlg(\ccal)):=\mathrm{Alg}(\CAlg(\ccal)^\op)^\op$, that is an object of the opposite category of associative algebras in the opposite category of commutative algebras in $\ccal$.
    \item A Hopf algebra is a bi-algebra $H$ in $\ccal$ equiped with a homotopy antipode, that is the bi-algebra $\mathrm{ho}(H)\in\mathrm{ho}(\ccal)$ is a Hopf algebra in the usual sense.
  \end{enumerate}
\end{defi}
\begin{rem}
  \label{rem:CAlgCoAlg=CoAlgCAlg}
  By \cite[Proposition 2.1.2]{raksitHochschildHomologyDerived2020}, there is a natural equivalence $\mathrm{coAlg}(\CAlg(\ccal))\simeq \CAlg(\mathrm{coAlg}(\ccal)):=\mathrm{CAlg}(\mathrm{Alg}(\ccal^\op)^\op)$. In particular, any commutative bi-algebra $A$ has an underlying co-algebra structure in $\ccal$. 
\end{rem}
\begin{defi}
  Let $\ccal$ be a symmetric monoidal $\infty$-category and let $A$ be a commutative co-algebra in $\ccal$. The $\infty$-category of comodules over $A$ is by definition \[\mathrm{coMod}_A(\ccal):=\mathrm{LMod}_A(\ccal^\op)^\op\] where we see $A$ as an associative algebra in $\ccal^\op$ through \cref{rem:CAlgCoAlg=CoAlgCAlg}. By \cite[Proposition 2.2.1]{raksitHochschildHomologyDerived2020} (see \cref{thm:raksit} and also \cite[Theorem 3.18]{zbMATH07771515}), the $\infty$-category $\mathrm{coMod}_A(\ccal)$ is naturally symmetric monoidal and the forgetful functor $\mathrm{coMod}_A(\ccal)\to\ccal$ is symmetric monoidal. 
\end{defi}

\begin{rem}
  Recall that a functor $F\colon \mathcal{D}\to\mathcal{C}$ is \emph{comonadic} if it has a right adjoint $G$ such that the natural factorisation through modules over the endomorphism comonad (\cite[Proposition 4.7.3.3]{lurieHigherAlgebra2022})
  \[\mathcal{D}\to \mathrm{coMod}_T(\ccal)\to \ccal,\] where $T$ is the opposite of the endomorphism monad $T_{F^\op}\in\mathrm{Alg}(\mathrm{End}(\ccal^\op))$ of $F^\op$ and $\mathrm{coMod}_T(\ccal):=\mathrm{LMod}_{T_{F^\op}}(\ccal^\op)^\op$, induces an equivalence $\mathcal{D}\to\mathrm{coMod}_T(\ccal)$. The typical example is the above $\mathrm{coMod}_A(\ccal)\to \ccal$.
\end{rem}

\begin{lem}
  \label{lem:ComonadComod}
  Let $\ccal$ be a symmetric monoidal $\infty$-category and let $H$ be a Hopf algebra in $\ccal$. Consider the co-bar construction $H_\bullet:=\mathrm{coBar}(H)\colon \Delta\to \mathrm{CAlg}(\ccal)_{/\un_\ccal}$ (\cite[Construction 4.4.2.7]{lurieHigherAlgebra2022}). Let 
  \[\mathbf{coMod}_H(\ccal) := \lim_{n\in\Delta}\mathrm{Mod}_{H_n}(\ccal).\]
  Then the canonical functor $o_H\colon \mathbf{coMod}_H(\ccal)\to\ccal$ given by the projection on $\Mod_{\un_\ccal}(\ccal)\simeq \ccal$ is comonadic, and induces a symmetric monoidal equivalence 
  \[\mathbf{coMod}_H(\ccal)\simeq \mathrm{coMod}_H(\ccal).\]
\end{lem}

\begin{proof}
  See \cite[Proposition E.1.4]{zbMATH06622449} (this is a direct application of \cite[Theorem 4.7.5.2]{lurieHigherAlgebra2022}).
\end{proof}

Recall the following result on endomorphism comonads:
\begin{thm}[{{\cite[\S 4.7.3]{lurieHigherAlgebra2022}}, \cite[Corollary 5.12]{MR4367222}}]
  \label{thm:monadsAbstr}
  The forgetful functor 
  \[\mathrm{Fun}(\Delta^1,\catinfty)_{\mathrm{comnd}\text{-}\mathrm{ladj}}\to \mathrm{Fun}(\Delta^1,\catinfty)_{\mathrm{ladj}}\] from comonadic left adjoints functors to left adjoints functors admits a left adjoint sending a left adjoint $F:\mathcal{D}\to\mathcal{C}$ to the forgetful functor $\mathrm{coMod}_{(T_{F^\op})^\op}(\ccal)\to \ccal$ from comodules over the endomorphism comonad.
\end{thm} 
When considering linear $\infty$-categories over a symmetric monoidal $\infty$-category $\ccal$, the above adjunction simplifies drastically.

\begin{prop}[Raksit]
  \label{thm:raksit}
  Let $\ccal$ be a symmetric monoidal $\infty$-category. The association $A\mapsto \mathrm{coMod}_A(\ccal):= \mathrm{LMod}_A(\ccal^\op)^\op$ assembles to give a symmetric monoidal functor 
  \[\mu'_\ccal\colon \mathrm{coAlg}(\ccal)\to\mathrm{RMod}_\ccal(\catinfty)_{/\ccal}\] from co-algebras in $\ccal$ to right $\ccal$-modules in $\catinfty$, with a map to $\ccal$ (that is, pairs $(\mathcal{M},U)$ with $\mathcal{M}$ an $\infty$-category with a $\ccal$-module structure, and $U\colon\mcal\to\ccal$ a $\ccal$-linear functor). The symmetric monoidal structure on $\mathrm{coAlg}(\ccal)$ is inherited from that of $\ccal^\op$ and the symmetric monoidal structure on $\mathrm{RMod}_\ccal(\catinfty)_{/\ccal}$ is inherited from that of $\catinfty$, which is the cartesian one. 
  
  Then this functor is fully faithful and its image consists of pairs $(\mcal,U)$ such that $U$ is comonadic and the right adjoint of $U$ is $\ccal$-linear.
  Moreover, consider the full subcategory $$\mathrm{RMod}_\ccal(\catinfty)_{/\ccal}^0\subset \mathrm{RMod}_\ccal(\catinfty)_{/\ccal}$$ consisting of pairs $(\mcal,U)$ where the right adjoint of $U$ is also $\ccal$-linear. Then the induced functor 
  \[\mu'_\ccal\colon \mathrm{coAlg}(\ccal)\to\mathrm{RMod}_\ccal(\catinfty)_{/\ccal}^0\] admits a left adjoint $\nu'_\ccal$.
\end{prop}
\begin{proof}
  The existence of the functor is \cite[Proposition 2.2.1]{raksitHochschildHomologyDerived2020}, the full faithfullness is \cite[Proposition 2.3.6]{raksitHochschildHomologyDerived2020}, and the existence of the left adjoint $\nu'_\ccal$ follows from \cite[Lemma 2.3.5]{raksitHochschildHomologyDerived2020} (see the proof of \cite[Proposition 2.3.6]{raksitHochschildHomologyDerived2020}).
\end{proof}

\begin{rem}
  \label{rem:isReallyComonades}
  Under the equivalence $\mathrm{End}_\ccal(\ccal,\ccal)\simeq \ccal$ between $\ccal$-linear endofunctors of $\ccal$ and the $\infty$-category $\ccal$ itself (the functor is given by evalutation at $\un_\ccal$, see for example \cite[Corollary 4.8.5.21]{lurieHigherAlgebra2022} applied to the $\infty$-category $\In\ccal$), the functor $\mu'_\ccal$ of \cref{thm:raksit} is induced by that of \cref{thm:monadsAbstr}. In particular, the co-multiplication of the algebra $A=\nu'_\ccal(\mcal,U)$, for $U\colon\mcal\to\ccal$ a $\ccal$-linear functor whose right adjoint $G$ is also $\ccal$-linear is given by $$A=UG(\un_\ccal)\xrightarrow{\mathrm{unit}}U(GU)G(\un_\ccal)=UG\Big((UG)(\un_\ccal)\otimes \un_\ccal\Big)\simeq UG(\un_\ccal)\otimes UG(\un_\ccal)=A\otimes A$$ where in the last equivalence we use the $\ccal$-linearity of $G$ and $U$.
\end{rem}

\begin{cor}
  \label{cor:SymMonoComonad}
  Let $\ccal$ be a symmetric monoidal $\infty$-category. The functor $\mu'_\ccal$ induces a fully faithful right adjoint functor 
  \[\mu^\otimes_\ccal\colon \CAlg(\mathrm{coAlg}(\ccal))\to \mathrm{CAlg}(\catinfty)^0_{\ccal/ /\ccal}\] from commutative co-algebras in $\ccal$ to the $\infty$-category $\mathrm{CAlg}(\catinfty)^0_{\ccal/ /\ccal}$ of symmetric monoidal $\infty$-categories $\mcal$ with a symmetric monoidal functor $U\colon\dcal\to\ccal$ that has a symmetric monoidal section $s\colon \ccal\to \mcal$ and such that the underlying right adjoint of $U$ is $\ccal$-linear. 
\end{cor}
\begin{proof}
  Because $\mu'_\ccal$ is symmetric monoidal, it induces a functor betwen commutative algebras. The identification of the commutative algebras objects in $\mathrm{RMod}_\ccal(\catinfty)^0_{/\ccal}$ is explained in \cite[Construction 2.2.2]{raksitHochschildHomologyDerived2020}. We have to check that the functor $\nu'_\ccal$ induces a left adjoint to $\mu^\otimes_\ccal$. Because $\mu'_\ccal$ is symmetric monoidal, the functor $\nu'_\ccal$ is oplax symmetric monoidal, thus it suffices to check that the canonical map $\nu'_\ccal(\mcal_1\otimes_\ccal \mcal_2,U_1\otimes_\ccal U_2)\to\nu'_\ccal(\mcal_1,U_1)\otimes\nu'_\ccal(\mcal_2,U_2)$ is an equivalence. By construction of $\nu'_\ccal$ and by definition of the relative tensor product as the geometric realisation of the bar construction, it suffices to check that if $T_i\in \mathrm{Alg}(\mathrm{Fun}(\ccal_i^\op,\ccal_i^\op))^\op$ for $i\in\{1,2\}$ are two comonads, then the natural map $\mathrm{coMod}_{T_1\times T_2}(\ccal_1\times\ccal_2)\to\mathrm{coMod}_{T_1}(\ccal_1)\times \mathrm{coMod}_{T_2}(\ccal_2)$, induced by the left adjoint of the product preserving functor of \cref{thm:monadsAbstr}, is an equivalence. This follows directly from \cite[Corollary 4.7.3.16]{lurieHigherAlgebra2022} as both forgetful functors to $\ccal_1\times\ccal_2$ are comonadic with the same comonad.
\end{proof}
With this in mind, we can enhance \cite[Section 1]{MR3259031} to $\infty$-categories.
\begin{prop}
  \label{prop:ComonadeFactori}
  Let $R\colon \mathcal{M} \to \mathcal{C}$ be a symmetric monoidal left adjoint functor between symmetric monoidal $\infty$-categories. Assume the following:
    \begin{enumerate}
      \item $R$ admits a section $e\colon \mathcal{C}\to\mathcal{M}$ which is symmetric monoidal.
      \item The right adjoint $R_*$ of $R$ is $\mcal$-linear, that is, for every $c\in\ccal$ and $m\in\mcal$, the natural map \[R_*(c)\otimes m\to R_*(c\otimes R(m))\] is an equivalence.
    \end{enumerate}
    
    Then there exists a Hopf algebra $\mathcal{H}\in\mathcal{C}$ and a factorisation 
  \[\mathcal{M} \xrightarrow{\rho_\mathrm{univ}} \mathbf{coMod}_\mathcal{H}(\ccal)\xrightarrow{o_\mathcal{H}} \ccal\] of $R$ in $\mathrm{CAlg}(\catinfty)$ with the following property:
  For any commutative bi-algebra $A\in\ccal$ and any symmetric monoidal left adjoint functor \[\rho\colon\mathcal{M}^\otimes\to\mathrm{coMod}_A(\ccal)^\otimes\] whose composition with the forgetful functor is $R$, there exists an essentially unique map $\mathcal{H}\to A$ of commutative bi-algebras such that the following diagram commutes:
  \[\begin{tikzcd}
	&& {\mathrm{coMod}_A(\ccal)} \\
	{\mathcal{M}^\otimes} &&& {\ccal} \\
	&& {\mathbf{coMod}_{\mathcal{H}}(\ccal)}
	\arrow["U",from=1-3, to=2-4]
	\arrow["\rho", from=2-1, to=1-3]
	\arrow[from=2-1, to=3-3, "\rho_\mathrm{univ}",swap]
	\arrow[from=3-3, to=1-3,"\mathrm{cores}"]
	\arrow[from=3-3, to=2-4, "o_\mathcal{H}",swap]
\end{tikzcd}.\]
\end{prop}
\begin{proof}
  The hypothesis on $R$ makes the datum $(\mcal,R)$ an object of $\mathrm{CAlg}(\catinfty)^0_{\ccal/ /\ccal}$ : the $\mcal$-linearity of $R_*$ together with the existence of the section $e$ implies that $R_*$ is $\ccal$-linear. Let $\nu^\otimes_\ccal$ be the right adjoint of $\mu^\otimes_\ccal$, and let $\mathcal{H}:=\nu^\otimes_\ccal(\mcal,R)\in\mathrm{CAlg}(\mathrm{coAlg}(\ccal))$. By construction, the homotopy bi-algebra $\mathrm{ho}(\mathcal{H})$ is exactly the same as the one considered in \cite[Th\'eor\`eme 1.21]{MR3259031}. In particular, as our hypothesis on $R$ imply \cite[Hypoth\`ese 1.40]{MR3259031}, there exists a homotopy antipode by \cite[Th\'eor\`eme 1.45]{MR3259031}. Thus, $\mathcal{H}$ is a Hopf algebra in $\ccal$. Moreover, the unit of the adjunction $(\nu^\otimes_\ccal,\mu^\otimes_\ccal)$ provides a factorisation of $R$ as 
  \[\mcal\xrightarrow{\rho_\mathrm{univ}} \mathrm{coMod}_H(\ccal) \to \ccal\] where all $\infty$-categories are symmetric monoidal and the functors symmetric monoidal and $\ccal$-linear.

  Let $A\in\CAlg(\mathrm{coAlg}(\ccal))$ be a commutative bi-algebra. Then using the adjunction, we have:
  \[\mathrm{Map}_{\CAlg(\mathrm{coAlg}(\ccal))}(\mathcal{H},A)\simeq \mathrm{Map}_{\mathrm{CAlg}(\catinfty)_{\ccal/ /\ccal}}((\mcal,R),(\mathrm{coMod}_A(\ccal),U)).\] This provides the wanted universal property. The fact that in the proposition we used $\mathbf{coMod}_{\mathcal{H}}(\ccal)$ instead of $\mathrm{coMod}_{\hcal}(\ccal)$ is allowed by \cref{lem:ComonadComod}.

\end{proof}
\begin{rem}
  \label{rem:naivecomod}
  The comodule structure on $R(M)$ provided by the factorization through $\rho_\mathrm{univ}$ is nothing more than the co-unit $R(M)\xrightarrow{R(\varepsilon)} RR_*R(M)\simeq R(M)\otimes R(R_*(\un))$, where $R_*$ is the right adjoint of $R$. The functor from comodules over $\mathcal{H}$ to comodules over $H$ is also induced by the adjunction $(\rho,\rho_*)$. In particular, we see that up to forget some structure, we recover the factorisation of \cite[Lemme 1.55]{MR3259031}.
\end{rem}

We now specify to motives. When using \cref{cor:SymMonoComonad}, we may choose a bigger universe so that our presentable $\infty$-categories become small.

\begin{lem}
  \label{lem:checkhypo}
  Consider the Betti realisation $\rho_\mathrm{B}\colon \DM(k,\Q)\to \D(\Q)$. Then using the essentially unique symmetric monoidal left adjoint $\D(\Q)\to\DM(k,\Q)$, the couple $(\DM(k,\Q),\rho_\mathrm{B})$ satifies the hypothesis of \cref{prop:ComonadeFactori}.
\end{lem}
\begin{proof}
  We have to check that the right adjoint of $\rho_\mathrm{B}$ is $\DM(k,\Q)$-linear. This follows from \cite[Proposition 4.9]{MR3607274}: the $\infty$-category $\DM(k,\Q)$ is compactly generated (see for instance \cite[Lemma 5.1]{MR4319065}) and that its compact objects are the geometric motives, and the latter are dualisable over a field (\cite[Chapter 5, Theorem 4.3.7]{MR1764197}).
\end{proof}
\begin{recoll}
  \label{univPropComodHo}
Consider the Betti realisation $\rho_\mathrm{B}\colon \DM(k,\Q)\to \D(\Q)$. In \cite{MR3259031}, Ayoub endows the object $\mathcal{H}_\mathrm{A}:=\rho_\mathrm{B}\rho_{\mathrm{B}*}(\Q)$ with the structure of a Hopf algebra in $\mathrm{ho}(\D(\Q))$ (here $\rho_{\mathrm{B}*}$ is the right adjoint of $\rho_\mathrm{B})$. By \cref{prop:ComonadeFactori} and \cref{lem:checkhypo}, this enhances to a Hopf algebra structure in $\D(\Q)$ and we can factor $\rho_\mathrm{B}$ by a symmetric monoidal functor $\underline{\rho_\mathrm{B}}\colon \DM(k,\Q)\to \mathbf{coMod}_{\mathcal{H}_\mathrm{A}}(\D(\Q))$ and the forgetful functor of comodules.

Assume now we have a Hopf algebra $H$ in $\D(\Q)$ and a symmetric monoidal left adjoint functor $$\rho_H\colon\DM(k,\Q)\to\mathbf{coMod}_H(\D(\Q))$$ factoring the Betti realisation through the forgetful functor $o_H$. Applying \cref{prop:ComonadeFactori} we obtain a unique map $\mathcal{H}_A\to H$ of Hopf algebras such that the diagram 
\[\begin{tikzcd}
	&& {\mathbf{coMod}_H(\D(\Q))} \\
	{\DM(k,\Q)} &&& {\D(\Q)} \\
	&& {\mathbf{coMod}_{\mathcal{H}_A}(\D(\Q))}
	\arrow["{o_H}", from=1-3, to=2-4]
	\arrow["\rho", from=2-1, to=1-3]
	\arrow[from=2-1, to=3-3,"\underline{\rho_\mathrm{B}}",swap]
	\arrow[from=3-3, to=1-3,"\mathrm{cores}"]
	\arrow[from=3-3, to=2-4]
\end{tikzcd}\] of symmetric monoidal left adjoints commutes.

Finally, if $H$ is concentrated in degre zero, then by \cite[Proposition 2.4.3]{zbMATH06780490}, there is a natural functor 
\[\D(\mathrm{coMod}_H(\mathrm{Vect}_\Q))\to\mathbf{coMod}_H(\D(\Q))\] which induces an equivalence between left bounded complexes (so that its restriction to $\D^+$ is fully faithful).
\end{recoll}
\begin{rem}
  Using the universal property of $\mathcal{H}$ (\cref{cor:SymMonoComonad}), the universal property of the motivic Galois group of Iwanari in \cite{iwanari}, and the fact that $\Spec(\mathcal{H})$ coincides with Iwanari's motivic Galois group (\cite[Remark 1.3.22]{ayoubAnabelianPresentationMotivic2022}), it is not hard to check that the two possible universal functors $\DM(k,\Q)\to \mathbf{coMod}_\mathcal{H}(\D(\Q))$ defined using the two universal properties agree. 

  However, in \cite{zbMATH08143086,zbMATH06695841}, Ayoub enhances $\mathcal{H}$ to a stronger version of Hopf algebra (that is, $\mathcal{H}$ is a group object in $\mathrm{CAlg}(\D(\Q))^\op$) and constructs a symmetric monoidal realisation functor $\DM(k,\Q)\to\mathbf{coMod}_\mathcal{H}(\D(\Q))$. We did not try comparing this functor with the one we just constructed. 
\end{rem}

With the above, we can interpret the main theorem of \cite{MR3649230} as a description of the universal property of $\underline{\rho_\mathrm{B}}$:
\begin{constr}
  \label{constr:HAHN}
  Let $k$ be a subfield of $\C$. By \cite[Proposition 2.2]{MR1246272}, the functor $\D^b(\mcal(k,\Q))\to \D(\In\mcal(k,\Q))$ is fully faithful thus using \cref{NoriTannaka} we can see $\D^b(\mcal(k,\Q))$ as the full subcategory of $\mathbf{coMod}_{\mathrm{H}_\mathrm{N}}(\D(\Q))$ consisting of objects whose underlying object $o_{\mathrm{H}_\mathrm{N}}$ is a perfect complex. Consider the functor 
  \[\rho_{\mathrm{CG}}\colon \DMgm(k,\Q)\to \D^b(\mcal(k,\Q))\] obtained by restricting to $\DMgm(k,\Q)$ the functor $\rho'_\mathrm{N}$ of \cref{realFieldDInd} (where we identify the target using \cref{lem:DIndTensQ}). This induces a functor 
  $$\rho_\mathrm{CG}^\mathrm{big}\colon \DM(k,\Q)\to\mathbf{coMod}_{\mathrm{H}_N}(\D(\Q)).$$ It factors the Betti realisation, thus, by \cref{prop:ComonadeFactori}, this induces a map of Hopf algebras $\varphi\colon \mathcal{H}_\mathrm{A}\to \mathrm{H}_\mathrm{N}$. By \cite[Corollary 2.105]{MR3259031}, the object $\mathcal{H}_\mathrm{A}$ is connective in $\D(\Q)$, thus as $\mathrm{H}_\mathrm{N}$ is concentrated in degree zero, the map $\varphi$ factors trough the co-connective cover $\tau\colon \mathcal{H}_\mathrm{A}\to\mathrm{H}_\mathrm{A}$, giving a map $\overline{\varphi}\colon \HH_\mathrm{A}:=\HH^0(\mathcal{H}_\mathrm{A})\to \mathrm{H}_\mathrm{N}$, which is a map of Hopf algebras (this is because $\mathcal{H}_\mathrm{A}$ is connective).
\end{constr}
\begin{thm}[Choudhury-Gallauer]
  \label{thm:TrueCG}
  Let $k$ be a subfield of $\C$. The map $\overline{\varphi}\colon\HH_\mathrm{A}\to \mathrm{H}_\mathrm{N}$ of \cref{constr:HAHN} is an equivalence of Hopf algebras. In particular, the functor $\rho_\mathrm{CG}^\mathrm{big}$ fits in the following commutative diagram of symmetric monoidal left adjoint functors:
\[\begin{tikzcd}
	{\DM(k,\Q)} & {\mathbf{coMod}_{\mathcal{H}_\mathrm{A}}(\D(\Q))} & {\mathbf{coMod}_{\mathrm{H}_\mathrm{A}}(\D(\Q))} & {\D(\Q)} \\
	&& {\mathbf{coMod}_{\mathrm{H}_\mathrm{N}}(\D(\Q))}
	\arrow["{\underline{\rho_\mathrm{B}}}", from=1-1, to=1-2]
	\arrow["{\rho_\mathrm{B}}", from=1-1, to=1-4,bend left = 20]
	\arrow["{\rho_{\mathrm{CG}}^\mathrm{big}}"', from=1-1, to=2-3]
	\arrow["{\mathrm{cores}_\tau}", from=1-2, to=1-3]
	\arrow[from=1-3, to=1-4,"o_{\mathrm{H}_\mathrm{A}}"]
	\arrow["\mathrm{cores}_{\overline{\varphi}}", from=1-3, to=2-3]
	\arrow[from=2-3, to=1-4,"o_{\mathrm{H}_\mathrm{N}}",bend right=20,swap]
\end{tikzcd},\] where the vertical functor $\mathrm{cores}_{\overline{\varphi}}$ is an equivalence.

\end{thm}
\begin{proof}
  By the very construction of $\underline{\rho_\mathrm{B}}$ and \cref{prop:ComonadeFactori} (see \cref{rem:naivecomod}), we see that the above triangle 
  \[\begin{tikzcd}
	\mathbf{coMod}_{\mathrm{H}_\mathrm{A}}(\D(\Q)) \ar[r]\ar[d] & \D(\Q) \\
  \mathbf{coMod}_{\mathrm{H}_\mathrm{N}}(\D(\Q))\ar[ru]&
\end{tikzcd},\] composed with $\D(\Q)\to\mathrm{ho}(\D(\Q))$, can be factored as 
\[\begin{tikzcd}
	\mathbf{coMod}_{\mathrm{H}_\mathrm{A}}(\D(\Q)) \ar[r]\ar[d,] & \mathrm{coMod}^\mathrm{naive}_{\mathrm{H}_\mathrm{A}}(\mathrm{ho}(\D(\Q))) \ar[r]\ar[d] & \mathrm{ho}(\D(\Q)) \\
  \mathbf{coMod}_{\mathrm{H}_\mathrm{N}}(\D(\Q))\ar[r] & \mathrm{coMod}^\mathrm{naive}_{\mathrm{H}_\mathrm{N}}(\mathrm{ho}(\D(\Q))) \ar[ru]&
\end{tikzcd}\] with $\mathrm{coMod}^\mathrm{naive}$ naive (1-categorical) comodules. Moreover, the obtained functor 
$$\rho^\mathrm{univ}\colon \DM(k,\Q)\to \mathrm{coMod}^\mathrm{naive}_{\mathcal{H}_A} $$ is the functor constructed by Ayoub in \cite{MR3259031}, and thus by \cite[Lemme 1.55]{MR3259031} (that we may apply thanks to \cite[Lemma 7.6]{MR3649230}) the map $\overline{\varphi}$ is the only map of Hopf algebras whose composition with $\tau$ makes the diagram 
\[\begin{tikzcd}
	{\DM(k,\Q)} & {\mathrm{coMod}^\mathrm{naive}_{\mathcal{H}_\mathrm{A}}(\mathrm{ho}(\D(\Q)))} & {\mathrm{ho}(\D(\Q))} \\
	& {\mathrm{coMod}^\mathrm{naive}_{\mathrm{H}_\mathrm{N}}(\mathrm{ho}(\D(\Q)))}
	\arrow["{\rho^\mathrm{univ}}", from=1-1, to=1-2]
	\arrow["{ \rho_\mathrm{B}}"{description}, from=1-1, to=1-3, bend left=20]
	\arrow[from=1-1, to=2-2]
	\arrow["o", from=1-2, to=1-3]
	\arrow["{\mathrm{cores}_\varphi}"', dashed, from=1-2, to=2-2]
	\arrow["o"', from=2-2, to=1-3]
\end{tikzcd}\] commute. Thus the map $\overline{\varphi}$ is exactly the map $\varphi_\mathrm{N}$ considered in \cite{MR3649230}, which is thus an equivalence of Hopf algebras by Theorem 9.1 of \emph{loc. cit.}
\end{proof}

We now prove the expected comparison result.
\begin{prop}
  \label{compatRealRat}
  Let $k$ be a subfield of $\C$. Then the composition 
  \[\DM(k,\Z)\otimes\Mod_\Q\simeq\mathrm{DM}^\et(k,\Q)\xrightarrow{\rho_\mathrm{N}}\In\D^b(\mcal(k,\Q))\to\D(\In\mcal(k,\Z))\otimes\Mod_\Q \] 
  is naturally isomorphic to the functor $\rho'_\mathrm{N}$ constructed in \cref{realFieldDInd} tensored by $\mathrm{Id}_{\Mod_\Q}$.
\end{prop}
\begin{proof}
  First, by functoriality of the Lurie tensor product, the composition of $\rho'_\mathrm{N}\otimes\mathrm{Id}_{\Mod_\Q}$ with the functor 
  $\DM(k,\Z)\to\DM(k,\Q)$ is the functor obtained out of the universal property of $\DM(k,\Z)$ with $C^*$ replaced by $C^*\otimes \Q$. Moreover as $\DM(k,\Q)$ is compactly generated, it suffices to show that the two functors coincide when restricted to $\DMgm(k,\Q)$.
  
We now apply \cref{prop:ComonadeFactori} to the realisation functor $\rho_\mathrm{N}^\mathrm{big}\colon \DM(k,\Q)\to \mathbf{coMod}_{\mathrm{H}_\mathrm{N}}(\D(\Q))$ obtained by extending by colimits the functor $$\rho_\mathrm{N}\colon \DMgm(k,\Q)\to \D^b(\mcal(k,\Q))$$ constructed in \cite{SwannRealisation} using the 6 operations. This provides a Hopf algebra morphism $\alpha\colon\mathcal{H}_\mathrm{A}\to \mathrm{H}_\mathrm{N}$, that we can factor as $\alpha \simeq \overline{\alpha}\circ\tau$ with $\overline{\alpha}\colon \HH^0(\mathcal{H}_\mathrm{A})\to\mathrm{H}_\mathrm{N}$. Moreover, using that $\rho_\mathrm{CG} \simeq \mathrm{cores}_{\overline{\varphi}}\circ\mathrm{cores}_\tau\circ \underline{\rho_\mathrm{B}}$ (\cref{thm:TrueCG}), we have a commutative diagram 
\[\begin{tikzcd}
	{\DMgm(k,\Q)} & {\D^b(\mcal(k,\Q))} & {\mcal(k,\Q)} & {\mathrm{Vect}_\Q^\mathrm{fd}} \\
	& {\D^b(\mcal(k,\Q))} & {\mcal(k,\Q)} & {\mathrm{Vect}_\Q^\mathrm{fd}}
	\arrow["{\rho_\mathrm{CG}}", from=1-1, to=1-2]
	\arrow["{\rho_\mathrm{N}}"', from=1-1, to=2-2]
	\arrow["{\HH^0}", from=1-2, to=1-3]
	\arrow["{\mathrm{cores}_{\overline{\alpha}(\overline{\varphi}^{-1})}}", from=1-2, to=2-2]
	\arrow["{R_\mathrm{B}}", from=1-3, to=1-4]
	\arrow["{\mathrm{cores}_{\overline{\alpha}(\overline{\varphi}^{-1})}}", from=1-3, to=2-3]
	\arrow[from=1-4, to=2-4,equal]
	\arrow["{\HH^0}"', from=2-2, to=2-3]
	\arrow["{R_\mathrm{B}}"', from=2-3, to=2-4]
\end{tikzcd}.\] By \cite[Corollary 6.20]{SwannRealisation}, the universal factorisation functor $\HH^0_\mathrm{N}$ of \cref{sect:Norimot} is the composition $\HH^0\circ\rho_\mathrm{N}$. As this is also true for $\HH^0\circ \rho_\mathrm{CG}$ (see the proof of \cite[Corollary 10.1.7]{MR3618276}), by the universal property of Nori motives (\cite[Section 1]{ivorraFourOperationsPerverse2022}), this implies that $\mathrm{cores}_{\overline{\alpha}(\overline{\varphi}^{-1})}\colon\mcal(k,\Q)\to\mcal(k,\Q)$ is the identity functor. By \cite[Corollary 7.4.12]{bunkeControlledObjectsLeftexact2019}, the t-exactness also implies that the functor on $\D^b(\mcal(k,\Q))$ is the identity functor, finishing the proof.
\end{proof}

\subsection{The algebra representing Nori motivic cohomology.}
Let $k$ be a subfield of $\C$. Thanks to \cref{realFieldDInd} we have at hand a symmetric monoidal functor 
\[\rho'_\mathrm{N}\colon \DM(k,\Z)\to \D(\In\mcal(k,\Z))\] compatible with the Betti realisation. 
\begin{defi}
    \label{defi:NoriAlgebra}
  Let $F$ be the right adjoint of $\rho'_\mathrm{N}$. We denote by $\nscr_k$ the $\mathbb{E}_\infty$-ring object in $\DM(k,\Z)$ given by applying the lax symmetric monoidal functor $F$ to 
  the unit object $\Z_k$ of $\D(\In\mcal(k,\Z))$.
\end{defi}

\begin{lem}
  \label{RAdjRatRN}
  Let $M\in\mcal(k,\Z)$. Then the canonical map 
  \[F(M)\otimes\Q\to F(M\otimes \Q)\] is an equivalence.
\end{lem}
\begin{proof}
  It suffices to prove that for any smooth $k$-scheme $X$ and any integer $i\in\Z$, the map 
  \[\mathrm{map}_{\DM(k,\Z)}(M_k(X)(i),F(M)\otimes\Q)\to\mathrm{map}_{\DM(k,\Z)}(M_k(X)(i),F(M\otimes\Q))\]
  is an equivalence. Using that, for constructible \'etale motives, tensorisation by the rationals can be set out of the mapping spectra (see for example \cite[Corollary 5.4.9]{MR3477640}), and using the adjunction, we see that it suffices to prove that 
  the map 
  \[\mathrm{map}_{\D(\In\mcal(k,\Z))}(\rho'_\mathrm{N}(M_k(X)(i)),M)\otimes\Q\to\mathrm{map}_{\D(\In\mcal(k,\Z))}(\rho'_\mathrm{N}(M_k(X)(i)),M\otimes\Q)\]
is an equivalence. 
The right-hand side can be rewritten as 
\[\mathrm{map}_{\D(\In\mcal(k,\Z))\otimes\Mod_\Q}(\rho'_\mathrm{N}(M_k(X)(i))\otimes\Q,M\otimes\Q)\]
thus using \cref{lem:DIndTensQ}, the above map is in fact 
\[\mathrm{map}_{\D^b(\mcal(k,\Z))}(\rho'_\mathrm{N}(M_k(X)(i)),M)\otimes\Q\to\mathrm{map}_{\D^b(\mcal(k,\Q))}(\rho'_\mathrm{N}(M_k(X)(i))\otimes\Q,M\otimes\Q)\]
which is an equivalence by \cite[Proposition B.4.1]{MR3545132}.
\end{proof}

Let $k\subset K$ be subfields of $\C$ and denote by $f\colon\Spec(K)\to \Spec(k)$ the associated map of schemes. By construction we have a commutative square 
\[\begin{tikzcd}
	{\DM(k,\Z)} & { \D(\In\mcal(k,\Z))} \\
	{\DM(K,\Z)} & { \D(\In\mcal(K,\Z))}
	\arrow[from=1-1, to=2-1,"f^*"]
	\arrow["{\rho'_\mathrm{N}}", from=1-1, to=1-2]
	\arrow[from=1-2, to=2-2,"f^*"]
	\arrow["{\rho'_\mathrm{N}}"', from=2-1, to=2-2]
\end{tikzcd}\] in $\mathrm{Pr}^\mathrm{L}$. It induces an exchange transformation between $f^*$ and $F$.
In particular, we have a map 
\begin{equation}
  \label{comFieldsalgMap}
  f^*\nscr_k\to\nscr_K
\end{equation}
of algebras in $\DM(K,\Z)$. We will prove that this map is an equivalence, but first we need two important propositions:
\begin{prop}
  \label{algRat}
  Let $\nscr_{k,\Q}$ be the algebra constructed in \cite{SwannRealisation}, that is the algebra obtained by applying the right adjoint of the functor $\rho_\mathrm{N}$ to the unit object.
  By \cref{compatRealRat} and \cref{RAdjRatRN} we have a natural ring map 
  \[\nscr_{k,\Q}\to \nscr_k\otimes_\Z\Q\] in $\DM(k,\Q)$. This map is an equivalence. 
\end{prop}
\begin{proof}
  It suffices to prove that the map is an equivalence after applying the functors 
  $$\mathrm{map}_{\mathrm{DM}(k,\Q)}(M_k(X)(i),-)\colon\mathrm{DM}^\et(k,\Q)\to\D(\Q)$$ for $X$ a smooth $k$-scheme and $i$ an integer. Using adjunctions and \cref{RAdjRatRN},
  we see that it suffices to check that the map 
  \[\mathrm{map}_{\mathrm{DN}(k,\Q)}(\rho_\mathrm{N}(M_k(X)(i)),\Q_k)\to\mathrm{map}_{\mathrm{D}(\In\mcal(k,\Q))}(\rho'_\mathrm{N}(M_k(X)(i)),\Q_k)\]
  is an equivalence, which is the case because on $\D^b(\mcal(k,\Q))$ the functor 
  \[\In\D^b(\mcal(k,\Q))\to\D(\In\mcal(k,\Q))\] is fully faithful.
\end{proof}
Recall that the chang of site from the small \'etale site to the smooth \'etale site induces a functor 
\[\iota\colon \D(X_\et,R)\to \DM(X,R)\] for any commutative ring $R$ and any scheme $X$. By the rigidity theorem (see \cite{MR4224739} for the most general statement), this functor is an equivalence when $R$ is a torsion ring.
\begin{prop}
\label{rigAlgebre}
  The ring map $\un_k\to\nscr_k$ in $\DM(k,\Z)$ becomes an equivalence after passing mod $n$ for any nonzero integer $n\in\N^*$:
  $\un_k/n\simeq \nscr_k/n$.
\end{prop}
\begin{proof}
  Again to prove this, it suffices to prove that for every smooth $k$-scheme $X$ and any integer $i\in\Z$, the map 
  \[ \mathrm{map}_{\mathrm{DM}(k,\Z)}(M_k(X)(i),\un_k\otimes_\Z\Z/n\Z)\to \mathrm{map}_{\mathrm{DM}(k,\Z)}(M_k(X)(i),\nscr_k\otimes_\Z\Z/n\Z)\]
  is an equivalence. Using adjunctions and rigidity for \'etale motives, together with the fact that tensoring with $\Z/n\Z$ is a finite colimit thus goes out of the mapping spectra, 
  it suffices to prove that the map 
  \[ \mathrm{map}_{\D^b_c(k_\et,\Z/n\Z)}(\iota^{-1}(M_k(X)(i)),\un_k\otimes_\Z\Z/n\Z)\to \mathrm{map}_{\mathrm{D}^b(\mcal(k,\Z))}(\rho'_\mathrm{N}(M_k(X)(i)),\Z_k)\otimes_\Z\Z/n\Z\]
  is an equivalence. The right-hand side is in fact by \cref{torsderive} (that we can use because $\mcal(k,\Z)$ has enough torsionfree objects) the mapping complex in the category 
  $\D^b(\mcal(k,\Z)[n])$. By \cref{rigab} the category $\mcal(k,\Z)[n]$ is the category of finite type representations of the absolute Galois group of $k$ with $\Z/n\Z$ coefficients. 
  In particular, $\D^b(k_\et,\Z/n\Z)$ embeds in $\D^b(\mcal(k,\Z)[n])$, its image being exactly complexes of representations with perfect underlying complex of $\Z/n\Z$-modules. This finishes the proof.
\end{proof}
\begin{cor}
  \label{indfieldsalg}
  Let $k\subset K$ be a map of subfield of the complex numbers. The natural map 
  $(\nscr_k)_{\mid K}\to\nscr_K$ of \cref{comFieldsalgMap} is an equivalence.
\end{cor}
\begin{proof}
  Indeed, it suffices to prove this after rationalisation and passing mod $n$ for all nonzeros integers $n$. The rational statement is proven in \cite[Corollary 6.16]{SwannRealisation}, and the torsion statement follows from the fact that $f^*(\Z/n\Z)\simeq \Z/n\Z$ for all $f$, as any exact functor commutes with cofibers.
\end{proof}

We finish this section with the following result:
\begin{thm}\label{geomfields}
  Let $k$ be a subfield of the complex numbers. Consider the factorisation of $\rho'_\mathrm{N}$ through the modules category 
  \[\overline{\rho'_\mathrm{N}}\colon\Mod_{\nscr_k}(\DM(k,\Z))\to\D(\In\mcal(k,\Z)).\]
  This functor is fully faithful when restricted to the thick subcategory $\DNgm(k,\Z)$ of $\DN(k,\Z):=\mathrm{Mod}_{\nscr_k}(\DM(k,\Z))$ generated by the $M_k(X)(i)\otimes \nscr_k$ for $X$ smooth over $k$ and induces an equivalence between the latter and $\D^b(\mcal(k,\Z))$.
\end{thm}
\begin{proof}
  Note that $\DNgm(k,\Z)$ is an idempotent complete stable $\infty$-category, also note that its image through $\overline{\rho_\mathrm{N}'}$ is contained in $\D^b(\mcal(k,\Z))$ because the image of the $M_k(X)$ through $\rho_\mathrm{N}'$ belongs to $\D^b(\mcal(k,\Z))$. 
  
  We first prove that 
  $\overline{\rho'_\mathrm{N}}$ is fully faithful when restricted to $\DNgm(k,\Z)$. 
  To do so, it suffices to check that it is the case after being tensored over $\Perf_\Z$ by 
  $\Perf_\Q$ and $\Perf_{\Z/n\Z}$ for all nonzero integers $n\in\N^*$. 
  When tensored by $\mathrm{Perf}_\Q$, using \cref{algRat} and \cite[Proposition 6.16]{SwannRealisation} we see that the functor is fully faithful.
  When tensored by $\mathrm{Perf}_{\Z/n\Z}$, this follows from \cref{rigab}, \cref{torsderive}, \cref{rigAlgebre} and rigidity for \'etale motives.
  We now prove essential surjectivity. In fact, the proof of fully faithfulness rationally gives that the functor 
  \[\DNgm(k,\Z)\to\D^b(\mcal(k,\Z))\] is an equivalence when tensored by $\Perf_\Q$. In particular, if $K\in\D^b(\mcal(k,\Z))$ is a complex of Nori motives, we see that there exist an object 
  $P\in\DNgm(k,\Z)$ and an map $\overline{\rho'_\mathrm{N}}(P)\to M$ that becomes an equivalence after being tensored by $\Q$. Said in an other way, 
  this map $\overline{\rho'_\mathrm{N}}(P)\to M$ has torsion cofibre, thus we may assume that $M$ is torsion: there exists an integer $n\in\N^*$ such that the multiplication by $n$ on $M$ is zero.
  In this case we have that $M\oplus M[1]\simeq M\otimes_\Z\Z/n\Z$, so it suffices to reach $M\otimes_\Z\Z/n\Z$.
  But because $M$ is dualisable, the image $M\otimes^L_\Z\Z/n\Z$ of $M$ in $\D^b(\mcal(k,\Z)[n])$ has perfect underlying complex of $\Z/n\Z$-modules, thus lies in $\D^b_c(k_\et,\Z/n\Z)$. By rigidity of \'etale motives again, we 
  see that $M\otimes_\Z\Z/n\Z$ is in the image of $\rho'_\mathrm{N}$ and therefore in that of $\overline{\rho'_\mathrm{N}}$, finishing the proof.
\end{proof}

\begin{cor}
  \label{Corgeomfields}
  Let $k$ be a subfield of $\C$. Then the thick category  $\DNgm(k,\Z)$ of $\DN(k,\Z)$ generated by the $M_k(X)(i)\otimes \nscr_k$ for $X$ smooth over $k$ is endowed with a t-structure such that the Betti realisation is t-exact.
\end{cor}

\begin{rem}\label{WhyDnComplicated} The above construction of $\DN(k,\Z)=\mathrm{Mod}_{\nscr_k}(\DM(k,\Z))$ is a bit more involved that what the reader might have expected; there are indeed two naive possibilities for a presentable $\infty$-category of Nori motives over $k$: one could take the unbounded derived category $\D(\In\mcal(k,\Z))$ of the indisation of Nori motives, or take the indization $\In\D^b(\mcal(k,\Z))$ of the bounded derived category of Nori motives. However, they do not give the correct presentable category because they cannot coincide with $\DM(k,\Z)$. 
  
More precisely, it is believed that the $\infty$-category $\DM(k,\Z)$ affords a t-structure, that restricts to the subcategory $\DMgm(k,\Z)$ whose heart would then be $\mcal(k,\Z)$. But then, the above naive definitions cannot give the right answer: firstly, it is known by a counterexample of Ayoub (\cite[Lemma 2.4]{MR3751289}) that the Betti realisation of the big category $\DM(k,\Q)$ is not conservative, so that it is not possible in general (at least if $k$ is of infinite transcendence degree over $\Q$) that this category is of the form $\D(\In\mcal(k,\Q))$ because the Betti realisation of the latter \emph{is} conservative; secondly, the category $\DM(k,\Z)$ is not compactly generated by $\DMgm(k,\Z)$ when the field $k$ has infinite cohomological dimension (take $k=\R$ the field of real numbers), so that $\In\D^b(\mcal(k,\Z))$ cannot be the right answer either. However see \cref{DIndQ} for a result related to this discussion.
\end{rem}

\section{Relative Nori motives.}
\subsection{Relative Nori motives as modules in \'etale motives}

In \cite{SwannRealisation} it was proven that the construction of Ivorra and Morel in \cite{ivorraFourOperationsPerverse2022} has a natural interpretation in terms of modules in \'etale motives. Indeed, if for $X$ a finite type $k$-scheme, one denote by \[\mathrm{DN}(X,\Q):=  \In \D^b(\Mp(X))\] the indization of the bounded derived category of perverse motives, the realisation functor 
\[\rho_\mathrm{N}\colon\DM(X,\Q)\to \mathrm{DN}(X,\Q) \] factors through
the $\infty$-category of modules in $\DM(X,\Q)$ over the algebra $\nscr_{X,\Q} := \pi_X^*\nscr_{k,\Q}$, with $\pi_X\colon X \to \Spec(k)$ the structural morphism, and induces an equivalence with the latter and $\mathrm{DN}(X,\Q)$: there is a natural equivalence of functors 
\[\Mod_{\nscr_{(-)}}(\DM(-,\Q))\xrightarrow{\sim} \mathrm{DN}(-,\Q).\]

In the previous section, we constructed a commutative algebra $\nscr_k$ in $\DM(k,\Z)$ which is equivalent to $\nscr_{k,\Q}$ when rationalised, and to $\un_k/n$ when tensored with $\Z/n\Z$. For $X$ a finite type $k$-scheme, we define $\nscr_X:=\pi_X^*\nscr_k$ to be the pullback of $\nscr_k$ to $X$. Clearly, this algebra also has the property that when rationalised, it is $\nscr_{X,\Q}$, and that the natural map $\un_X/n\to \nscr_X/n$ is an equivalence for $n$ a nonzero integer. Furthermore, if $f\colon X\to \Spec(\Q)$ is the structural map, \cref{indfieldsalg} implies that the natural transformation $f^*\mathscr{N}_{\Spec(\Q)} \to \mathscr{N}_X$ is an equivalence. We  therefore extend the definition of $\mathscr{N}_X$ to any $\Q$-scheme $f\colon X\to \Spec(\Q)$ by setting $\nscr_X:=f^* \nscr_{\Spec(\Q)}$.

\begin{defi}
  \label{defi:DNbig}
  Let $X$ be a $\Q$-scheme. The 
\emph{presentable $\infty$-category of Nori motives over $X$} is the $\infty$-category 
\[\mathrm{DN}(X,\Z) := \Mod_{\nscr_X}\DM(X,\Z)\] of modules in $\DM(X,\Z)$ over the algebra $\nscr_X$. 

Let $\Lambda$ be a commutative ring.  We then define \[\mathrm{DN}(X,\Lambda) := \Mod_{\Lambda}\DN(X,\Z).\]
We denote by $\Lambda_X$ unit object of $\DN(X,\Lambda)$
\end{defi}

We can promote this construction into a functor \[\mathrm{DN}(-,\Lambda)\colon \Sch^\op_\Q \to \mathrm{CAlg}(\mathrm{Pr}^\mathrm{L}_\Lambda),\] and there is a natural transformation $\DM(-,\Lambda)\to \mathrm{DN}(-,\Lambda)$:
as $\Spec(\Q)$ is the final object of $\Sch_\Q$, the functor $\DM(-,\Lambda)$ actually takes values in 
the $\infty$-category of $\DM(\Spec(\Q),\Lambda)$-algebras in $\mathrm{Pr}^\mathrm{L}$. There is a natural functor \[-\otimes(\nscr_{\Spec(\Q)}\otimes\Lambda)\colon\DM(\Spec(\Q),\Lambda)\to \Mod_{\nscr_{\Spec(\Q)}\otimes \Lambda}(\DM(\Spec(\Q),\Lambda))\] and we can define 
\[\mathrm{DN}(-,\Lambda):=\DM(-,\Lambda) \otimes_{\DM(\Spec(\Q),\Lambda)}\Mod_{\nscr_{\Spec(\Q)}}(\DM(\Spec(\Q),\Lambda)).\]

On $\DM(-,\Lambda)$, the operations $f^*$, $g_\sharp$ and $\otimes$ for $f$ a morphism of schemes and $g$ a smooth morphism of schemes are $\DM(\Spec(\Q),\Lambda)$-linear. Thus, they induce analogous functors for $\mathrm{DN}$: they are the $\nscr$-linearisation of the functors on $\DM(-,\Lambda)$. Moreover, if $g$ is smooth, the adjunction $(g_\sharp,g^*)$ is then an internal adjunction to the ($\infty$,2)-category of $\DM(\Spec(\Q),\Lambda)$-linear presentable $\infty$-categories, hence the $\nscr$-linearisation of each of the two functor actually form an adjunction for $\DN$.
By \cite[Theorem 4.6.1 and Remark 4.6.2]{MR4466640}, we then have:
\begin{prop}\label{SixFunBig}
  The functor \[\DN(-,\Lambda)\colon \Sch^\op_\Q \to \mathrm{CAlg}(\mathrm{Pr}^\mathrm{L}_\Lambda)\] is naturally part of a six functors formalism such that the functors of type $f^*$, $f_!$ and $\otimes$ are compatible with the forgetful functor to $\DM$. We denote the tensor structure by $\otimes_\mathrm{N}$ and the internal Hom by $\underline{\Hom}_\mathrm{N}$. 
\end{prop}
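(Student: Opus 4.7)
The plan is to invoke the abstract construction theorem of Mann \cite[Theorem 4.6.1 and Remark 4.6.2]{MR4466640}, which takes as input a functor from $\Q$-schemes to presentable symmetric monoidal $\infty$-categories equipped with $*$-pullbacks, $\sharp$-functors for smooth morphisms, and the usual smooth base-change and projection formulas, and outputs a full six functor formalism. The goal is to verify that the natural data on $\DN(-,\Lambda)$ inherited from $\DM(-,\Lambda)$ via $\nscr$-linearisation satisfies these hypotheses.

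The essential input is the base-change compatibility $f^* \nscr_X \simeq \nscr_Y$ for any morphism $f \colon Y \to X$ of $\Q$-schemes. This is built into our definition: since $\nscr_X := p_X^* \nscr_{\Spec \Q}$ with $p_X \colon X \to \Spec \Q$ the structural morphism, functoriality of $f^*$ in $\DM$ immediately gives $f^* \nscr_X \simeq f^* p_X^* \nscr_{\Spec \Q} \simeq p_Y^* \nscr_{\Spec \Q} \simeq \nscr_Y$. The paragraph preceding the statement already produces $f^*$, $g_\sharp$ (for $g$ smooth) and $\otimes_\mathrm{N}$ on $\DN$ as $\nscr$-linearisations of the corresponding operations on $\DM$, together with the smooth adjunction $(g_\sharp, g^*)$. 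All the remaining axioms demanded by Mann, namely smooth base change, the projection formulas, $\A^1$-invariance and Zariski (resp.\ Nisnevich) excision, transfer from $\DM$ to $\DN$ because $\nscr$-linearisation is a symmetric monoidal functor between the ambient $(\infty,2)$-categories of $\DM(\Spec\Q,\Lambda)$-linear presentable categories, and such diagrammatic identities are preserved by $2$-functors.

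Mann's theorem then produces the exceptional adjunction $(f_!, f^!)$ for any separated morphism of finite type, the internal Hom $\underline{\Hom}_\mathrm{N}$, and all the remaining six-functor coherences. The compatibility of $f^*$, $f_!$ and $\otimes_\mathrm{N}$ with the forgetful functor $\DN \to \DM$ is built into the construction, since these operations are defined as the $\nscr$-linearisations of the corresponding operations on $\DM$; by contrast, the right adjoints $f_*$, $f^!$ and $\underline{\Hom}_\mathrm{N}$ are obtained by adjunction and are only lax-compatible with the forgetful functor in general. The main technical obstacle is keeping track of the $(\infty,2)$-categorical data required as input for Mann's machinery, but this is handled uniformly by working throughout inside $\mathrm{Mod}_{\DM(\Spec\Q,\Lambda)}(\PrL)$ and using that base change of algebras is a $2$-functor between the relevant module $(\infty,2)$-categories.
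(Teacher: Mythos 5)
Your proposal follows the same route as the paper: set up $f^*$, $g_\sharp$ and $\otimes_\mathrm{N}$ as the $\nscr$-linearisations of the corresponding operations on $\DM$, observe that the required base-change and projection-formula identities are preserved because linearisation along the map $\DM(\Spec\Q,\Lambda)\to\mathrm{Mod}_{\nscr}(\DM(\Spec\Q,\Lambda))$ is a $2$-functor on $\DM(\Spec\Q,\Lambda)$-linear presentable $\infty$-categories, and then invoke \cite[Theorem 4.6.1 and Remark 4.6.2]{MR4466640} to produce the exceptional adjunction, the internal Hom, and all coherences. This is precisely how the paper argues in the paragraph immediately preceding the statement, so the substance of your argument matches.

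One correction worth flagging: the reference \cite{MR4466640} is to Ayoub, Gallauer and Vezzani, \emph{The six-functor formalism for rigid analytic motives}, not to Mann. You have the right citation key and the right theorem, but the attribution in your prose is wrong; Mann's six-functor formalism machinery is a different (though thematically related) piece of work that is not cited in this paper. Also, your list of hypotheses (\(\A^1\)-invariance, Nisnevich excision, etc.) is a loose paraphrase of the actual input data for \cite[Theorem 4.6.1]{MR4466640}; what is actually needed and what the paper relies on is that the \emph{coefficient system} structure on $\DM(-,\Lambda)$ — in particular, the adjunction $(g_\sharp,g^*)$ for $g$ smooth, localisation, and the projection formula — is expressed internally to the $(\infty,2)$-category of $\DM(\Spec\Q,\Lambda)$-modules and is therefore transported by $\nscr$-linearisation. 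Your closing remark that the right adjoints are in general only lax-compatible with the forgetful functor is consistent with the paper's deliberate restriction of the compatibility claim to $f^*$, $f_!$, $\otimes_\mathrm{N}$; the stronger compatibility with $f_*$, $f^!$, $\underline{\Hom}_\mathrm{N}(M,-)$ for $M$ geometric is then established separately in \Cref{compatibilite sif} under a finiteness hypothesis using \Cref{BlackMagic}.
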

Furthermore $\DN$ also has some continuity properties when $\DM$ has.
\begin{defi}\label{LambdaFin}
  Let $X$ be a qcqs scheme and let $\Lambda$ be a commutative ring. 
  We say that $X$ is $\Lambda$-finite if it has finite Krull dimension and
  \[\sup_{x\in X, p\in \Lambda^\times}\mathrm{cd}_p(\kappa(x))<\infty\]
where $\mathrm{cd}_p(k)$ is the Galois cohomological dimension of a field $k$ with $\F_p$-coefficients.
\end{defi}

\begin{prop}
  \label{continu}
  Let $X=\lim X_i$ be a limit of qcqs \'etale-locally $\Lambda$-finite $\Q$-schemes with affine transition maps. 
  The natural map 
  \[\colim_i \DN(X_i,\Lambda)\to\DN(X,\Lambda)\] in $\mathrm{CAlg}(\mathrm{Pr}^\mathrm{L})$ is an equivalence.
\end{prop}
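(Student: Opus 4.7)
The plan is to reduce the continuity of $\DN(-,\Lambda)$ to the known continuity of $\DM(-,\Lambda)$ on étale-locally $\Lambda$-finite $\Q$-schemes, exploiting the fact that the algebras $\nscr_{X_i}$ are compatible under pullback.

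First, I would use the presentation of $\DN(-,\Lambda)$ given just before the statement of the proposition: as a functor on $\Sch_\Q^\op$,
\[\DN(X,\Lambda) \simeq \DM(X,\Lambda)\otimes_{\DM(\Spec(\Q),\Lambda)}\DN(\Spec(\Q),\Lambda),\]
with the equivalence natural in $X$. This rewriting is legitimate because $\nscr_X = \pi_X^*\nscr_{\Spec(\Q)}$ by definition, so Lurie's base change formula for modules (see \cite[Theorem 4.8.4.6]{lurieHigherAlgebra2022}) identifies $\Mod_{\nscr_X\otimes\Lambda}(\DM(X,\Z))$ with the indicated relative tensor product in $\CAlg(\PrL)$.

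Second, I would invoke the continuity of $\DM(-,\Lambda)$: under the étale-local $\Lambda$-finiteness hypothesis, the natural functor $\colim_i\DM(X_i,\Lambda)\to\DM(X,\Lambda)$ is an equivalence in $\CAlg(\PrL)$, which is a standard result for étale motives. Since the relative tensor product $-\otimes_{\DM(\Spec(\Q),\Lambda)}\DN(\Spec(\Q),\Lambda)$ is a colimit-preserving functor in $\PrL$, we obtain
\begin{align*}
\DN(X,\Lambda) &\simeq \DM(X,\Lambda)\otimes_{\DM(\Spec(\Q),\Lambda)}\DN(\Spec(\Q),\Lambda) \\
&\simeq \Bigl(\colim_i\DM(X_i,\Lambda)\Bigr)\otimes_{\DM(\Spec(\Q),\Lambda)}\DN(\Spec(\Q),\Lambda) \\
&\simeq \colim_i\Bigl(\DM(X_i,\Lambda)\otimes_{\DM(\Spec(\Q),\Lambda)}\DN(\Spec(\Q),\Lambda)\Bigr) \\
&\simeq \colim_i\DN(X_i,\Lambda).
\end{align*}

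The only mild obstacle is checking that this composite equivalence agrees with the natural transition map induced by the functoriality of $\DN(-,\Lambda)$, but this is immediate from the fact that the functoriality of $\DN$ in $X$ comes precisely from that of $\DM$ on the first factor of the tensor product.
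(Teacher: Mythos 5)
Your argument is correct and matches the paper's proof in substance: both reduce the statement to continuity of $\DM(-,\Lambda)$ (which is \cite[Lemma 5.1]{MR4319065}) by exploiting that $\nscr_{X_i}$ is compatible with pullback and that the module-forming construction commutes with filtered colimits. The paper phrases that last step by citing the fact that the functor $(\ccal,A)\mapsto\Mod_A(\ccal)$ preserves colimits (\cite[Lemma 3.5.6]{MR4466640}) instead of using the relative tensor product presentation, but the two formulations are interchangeable and the structure of the argument is the same.
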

\begin{proof} The functor sending a pair $(\ccal,A)$ consisting of a symmetric monoidal presentable $\infty$-category $\ccal$ together with a commutative algebra $A$ in $\mathrm{CAlg}(\ccal)$ to the $\infty$-category $\Mod_A(\ccal)$ of modules over $A$ in $\ccal$ commutes with colimits: this is \cite[Lemma 3.5.6]{MR4466640}. The proposition then follows from the same result for \'etale motives which is \cite[Lemma 5.1]{MR4319065}.
\end{proof}

\begin{prop}\label{hdescGro}  
  The functor $\DN(-,\Lambda)$ has the same descent properties as $\DM(-,\Lambda)$. In particular:
  \begin{enumerate}
    \item It is an \'etale hypersheaf and a cdh-sheaf.
    \item It is a cdh-hypersheaf over qcqs $\Q$-schemes of finite valuative dimension.
    \item It is an h-hypersheaf over Noetherian schemes of finite dimension.
  \end{enumerate}
\end{prop}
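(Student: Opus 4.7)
The strategy is to deduce each descent statement for $\DN(-,\Lambda)$ from the corresponding known descent statement for $\DM(-,\Lambda)$, using the monadic description
\[ \DN(X,\Lambda)=\Mod_{\nscr_X\otimes\Lambda}(\DM(X,\Z)) \]
and the fact, established in \Cref{indfieldsalg}, that $\nscr$ is stable under pullback: $f^*(\nscr_X\otimes\Lambda)\simeq \nscr_Y\otimes\Lambda$ for any morphism $f\colon Y\to X$.

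The heart of the matter is the following descent-compatibility of modules. Given a (hyper)cover $U_\bullet\to X$ for the relevant topology, the symmetric monoidal pullback functors $\DM(X,\Z)\to \DM(U_n,\Z)$ send $\nscr_X\otimes\Lambda$ to $\nscr_{U_n}\otimes\Lambda$, producing a cosimplicial diagram in the $\infty$-category of pairs $(C,A)$ with $C\in\mathrm{CAlg}(\PrL)$ and $A\in\mathrm{CAlg}(C)$. I will invoke (and if needed, prove by hand via Barr--Beck--Lurie) the general fact that the functor $(C,A)\mapsto \Mod_A(C)$ preserves limits of such compatible diagrams. Concretely: the forgetful functor $\lim_n \Mod_{A^n}(C^n)\to \lim_n C^n$ is conservative and preserves all limits (since this holds level-wise and limits in $\PrL$ are computed in $\catinfty$), and the induced comonad $-\otimes_{\lim C^n}(\text{limit algebra})$ on $\lim_n C^n$ is computed termwise thanks to the symmetric monoidal structure on the transition functors. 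Combining this with the equivalence $\lim_n\DM(U_n,\Z)\simeq \DM(X,\Z)$ provided by descent for $\DM$, under which the system $(\nscr_{U_n}\otimes\Lambda)_n$ matches $\nscr_X\otimes\Lambda$, yields $\DN(X,\Lambda)\simeq \lim_n\DN(U_n,\Lambda)$.

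The three items then become direct applications, each using the corresponding descent property of $\DM(-,\Lambda)$: \'etale hyperdescent and cdh-descent are classical (Cisinski--D\'eglise, Ayoub); cdh-hyperdescent over qcqs $\Q$-schemes of finite valuative dimension is established in the companion paper \cite{DMpdf}; and h-hyperdescent over Noetherian finite-dimensional $\Q$-schemes is standard for \'etale motives. In each case, the hypercover-shaped limit in $\PrL$ is transferred through the module construction exactly as above.

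The main obstacle is justifying the commutation of $\Mod_{(-)}(-)$ with these limits. This is the \emph{dual} of the colimit statement \cite[Lemma 3.5.6]{MR4466640} cited for \Cref{continu}: unlike the colimit case, a fully general statement would require some care with Beck--Chevalley conditions. However, in our setting the needed case is exceptionally clean because the transition functors are symmetric monoidal colimit-preserving and the relevant algebras are pulled back along them, so the projection formula $f^*(-\otimes \nscr_X) \simeq f^*(-)\otimes \nscr_Y$ makes the monads Beck--Chevalley-compatible, and a direct verification via Barr--Beck--Lurie identifies the limit monad with $-\otimes\nscr_X$, as desired.
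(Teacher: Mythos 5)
Your proof reduces to the same key lemma as the paper's — that the module construction commutes with the descent limit — but establishes it by a different means. The paper quotes \cite[Proposition 2.12]{aranhaChowWeightStructure2023a}, which gives an equivalence of $\infty$-operads $\mathrm{Mod}(\DM(X,\Lambda))^\otimes\simeq\lim_\Delta\mathrm{Mod}(\DM(U_\bullet,\Lambda))^\otimes$, then verifies that $\nscr_X\to\lim_\Delta(f_n)_*\nscr_{U_n}$ is a limit diagram (using that $\nscr$ is pulled back together with descent for $\DM$ itself), and finally takes the fiber over $\nscr_X$. You instead propose to run Barr--Beck--Lurie directly on the forgetful functor $\lim_n\Mod_{\nscr_{U_n}\otimes\Lambda}(\DM(U_n,\Z))\to\lim_n\DM(U_n,\Z)\simeq\DM(X,\Z)$ and identify the induced monad with $-\otimes(\nscr_X\otimes\Lambda)$ via the projection-formula Beck--Chevalley square. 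Both routes are sound, and yours is more self-contained at the cost of redoing by hand what Aranha--Chowdhury package. Two imprecisions in your writeup should be corrected. First, monadicity does not need the forgetful functor to preserve limits; it needs it to preserve (at least split) geometric realizations, \emph{i.e.}\ colimits, and this requires a different justification than "limits in $\PrL$ are computed in $\catinfty$" — rather, one uses that the projections $\lim_n\ccal_n\to\ccal_j$ out of a limit in $\PrL$ preserve colimits and are jointly conservative, then checks commutation with colimits componentwise. Second, the endofunctor you call a "comonad on $\lim_n C^n$" is the free-module \emph{monad}. Finally, "the system matches" should be made precise: the Beck--Chevalley condition (which you correctly isolate, and which is exactly where $f^*\nscr_X\simeq\nscr_Y$ from \Cref{indfieldsalg} enters) shows the left adjoint is computed termwise as $(M_n)_n\mapsto(M_n\otimes\nscr_{U_n}\otimes\Lambda)_n$, and then evaluating on the unit recovers $\nscr_X\otimes\Lambda$ under the descent equivalence. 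With these details filled in your argument is a valid alternative to the paper's citation-based proof of the transfer step; the reduction to the known descent statements for $\DM(-,\Lambda)$ is the same in both.
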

\begin{proof}
  Note that the proposition is true if we replace $\DN$ by $\DM$. Indeed, since the presentable $\infty$-category $\DM(-,\Lambda)$ satisfies Nisnevich descent, the localisation property and proper base change, it satisfies cdh-descent as the proof of \cite[Theorem 3.3.10]{MR3971240} applies in this level of generality; for hyperdescent, this follows from the fact that the cdh topos is hypercomplete over qcqs schemes of finite valuative dimension by \cite[Corollary 2.4.16]{zbMATH07335443}. Finally, h-descent follows from \cite[Corollary 5.5.7]{MR3477640} as Bachmann's Rigidity Theorem \cite[Theorem 3.1]{bachmannrigidity} allows to remove the hypothesis on the base scheme in \textit{loc. cit.} 
  
  Now, let $f_\bullet\colon X_\bullet\to X$ be an hyper covering of one of the topologies considered in the proposition. By \cite[Proposition 2.12]{aranhaChowWeightStructure2023a} we have an equivalence of $\infty$-operads 
  \begin{equation}\label{limMod}\mathrm{Mod}(\DM(X,\Lambda))^\otimes\xrightarrow{\sim} \lim_\Delta \mathrm{Mod}(\DM(X_i,\Lambda))^\otimes\end{equation} where if $\ccal$ is a symmetric monoidal $\infty$-category, the $\infty$-category $\mathrm{Mod}(\ccal)$ can be informally described as pairs $(A,M)$ with $A$ in $\mathrm{CAlg}(\ccal)$ and $M$ in $\mathrm{Mod}_A(\ccal)$.
  We know that the diagram 
  \[\begin{tikzcd}
    \nscr_X
          \arrow[r]
        &  (f_0)_*\nscr_{X_0}
            \arrow[r, shift left = 0.25em]
            \arrow[r, leftarrow]
            \arrow[r, shift right = 0.25em]
          &  (f_1)_*\nscr_{X_1}
              \arrow[r, shift left = 0.5em]
              \arrow[r, leftarrow, shift left = 0.25em]
              \arrow[r]
              \arrow[r, leftarrow, shift right = 0.25em]
              \arrow[r, shift right = 0.5em]
            &  \dots
  \end{tikzcd}\] is a limit diagram in $\DM(X,\Lambda)$. Indeed, as $\nscr$ is pulled back from $X$ the above diagram is in fact 
  \[\begin{tikzcd}
    \nscr_X
          \arrow[r]
        &  (f_0)_*f_0^*\nscr_X
            \arrow[r, shift left = 0.25em]
            \arrow[r, leftarrow]
            \arrow[r, shift right = 0.25em]
          &  (f_1)_*f_1^*\nscr_{X}
              \arrow[r, shift left = 0.5em]
              \arrow[r, leftarrow, shift left = 0.25em]
              \arrow[r]
              \arrow[r, leftarrow, shift right = 0.25em]
              \arrow[r, shift right = 0.5em]
            &  \dots
  \end{tikzcd},\] so that it is a limit diagram by descent in $\DM(X,\Lambda)$. Thus, the fiber $\DN(X,\Lambda)$ over $\nscr_X$ of the left-hand side of \cref{limMod} corresponds to $\lim_\Delta\DN(X_i,\Lambda)$, finishing the proof.
\end{proof}

\begin{rem}[Universal property of Nori motives]
  Let $k$ be a subfield of $\C$ and let $\Lambda$ be a regular ring.
  By Drew and Gallauer's work \cite{MR4560376}, we know that $\DM(-,\Lambda)$ is universal among functors 
  \[\mathrm{Sch}_k^\mathrm{ft}\to\mathrm{CAlg}(\mathrm{Pr}^\mathrm{L}_\Lambda)\] that satisfy $\A^1$-invariance, $\mathbb{P}^1$-stability, smooth base change and \'etale hyperdescent. We claim that $\DN(-,\Lambda)$ is universal among the above class of functors $\ccal$ that verify the following additional conditions: the $\infty$-category $\ccal_\mathrm{liss}(k)$ of dualisable objects in $\ccal(k)$ affords a t-structure and the induced functor $\DMgm(k,\Lambda) \to \ccal_\mathrm{liss}(k)$ factors the Betti realisation in a way that the functor $\ccal_\mathrm{liss}(k) \to \Perf_\Lambda$ is t-exact and conservative. Indeed, for such a $\ccal$, the universal property of Nori motives over $k$ gives us a faithful exact functor $\mcal(k)\to \ccal_\mathrm{liss}(k)^\heartsuit$ such that the induced functor $\D^b(\mcal(k))\to \ccal_\mathrm{liss}(k)\to \Perf_\Lambda$ is the Betti realisation of Nori motives. In particular, we have a commutative diagram 
  \[
    \begin{tikzcd}
      {\DM(k,\Lambda)} & {\ccal(k)} \\
      {\DN(k,\Lambda)}
      \arrow[from=1-1, to=1-2]
      \arrow[from=1-1, to=2-1]
      \arrow[from=2-1, to=1-2]
    \end{tikzcd}
  \] which induces, if one denotes by $\mathscr{C}$ the commutative algebra representing cohomology with values in $\ccal(k)$, a map of algebras $\nscr_k\to \mathscr{C}$ in $\DM(k,\Lambda)$. By pulling back $\mathscr{C}$ over finite type $k$-schemes we obtain morphisms of functors 
  \[\DM(-,\Lambda)\to\DN(-,\Lambda)\to\mathrm{Mod}_\mathscr{C}(\DM(-,\Lambda))\to \ccal,\] thus the promised morphism of functors 
  $\DN\to \ccal$.

\end{rem}
\subsection{Nori motives of geometric origin}

The goal of this section is to prove the statements of \cite{DMpdf} in the setting of Nori motives. Namely, we define Nori motives of geometric origin and show that they can be characterised by the property of being \emph{constructible} which is more categorical. We also show descent properties as well as the continuity property.
\begin{defi}\label{DNgm}
  Let $X$ be a $\Q$-scheme and let $\Lambda$ be a commutative ring. The category $\DNgm(X,\Lambda)$ \emph{of Nori motives of geometric origin} (or simply \emph{geometric Nori motives}) is the thick subcategory  of $\DN(X,\Lambda)$ generated by the $f_\sharp(\Lambda_Y)(n)$ for $f\colon Y\to X$ smooth and $n$ an integer.
\end{defi}

\begin{thm}\label{megathmcons}
  Let $\Lambda$ be a commutative ring. Let $f\colon Y\to X$ be a morphism of qcqs $\Q$-schemes of finite dimension.
  \begin{enumerate}
    \item The functors of type $f^*$, $f_!$ (the latter for $f$ of finite type) and $\otimes_\mathrm{N}$ preserve geometric objects.
    \item  Let $M$ and $N$ be Nori motives over $X$ with $M$ geometric and let $P$ be a Nori motive over $Y$. Then, the natural transformations below are equivalences:. Then, the natural transformations below are equivalences:
    \begin{enumerate}
      \item $\map_\DN(M,N)\otimes \Q \to \map_\DN(M,N\otimes \Q)$
      \item $\underline{\Hom}_\mathrm{N}(M,N)\otimes \Q \to \underline{\Hom}_\mathrm{N}(M,N\otimes \Q)$
      \item $f_*(P)\otimes \Q \to f_*(P\otimes \Q)$
      \item $f^!(N)\otimes \Q \to f^!(N\otimes \Q)$ (for $f$ of finite type)
    \end{enumerate}
    \item Consider a qcqs $\Q$-scheme $X$ of finite dimension that is the limit of a projective system of qcqs $\Q$-schemes of finite dimension $(X_i)$ with affine transition maps.
    Then, the natural map
    \[\colim  \DNgm(X_i,\Lambda) \to \DNgm(X,\Lambda)\]
    is an equivalence.    
    \item Let $X$ be a qcqs $\Lambda$-finite $\Q$-scheme, then, a Nori motive $M$ is of geometric origin if and only if it is a compact object of $\DN(X,\Lambda)$.
    \item Assume that the ring $\Lambda$ is ind-regular\footnote{\textit{i.e.} it is a filtered colimit of regular rings.}. A Nori motive $M$ over a qcqs $\Q$-scheme of finite dimension is of geometric origin if and only if it is \emph{constructible}, that is for any open affine $U\subseteq X$, there is a finite stratification $U_i\subseteq U$  made of constructible locally closed subschemes such that each $M|_{U_i}$ is dualisable. 
  \end{enumerate}
\end{thm}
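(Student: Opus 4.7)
The plan is to transfer each assertion from the corresponding statement for $\DMgm$ established in \cite{DMpdf}, via the free--forgetful adjunction
\[F\colon \DM(X,\Lambda) \leftrightarrows \DN(X,\Lambda) \colon U,\qquad F(M) = M\otimes\nscr_X.\]
Two structural facts drive the reduction: first, $\nscr_X = \pi_X^*\nscr_{\Spec(\Q)}$ is pulled back from the base, so the $\nscr$-linearised operations $f^*$, $f_!$, $g_\sharp$ and $\otimes_\mathrm{N}$ commute with $F$ on the appropriate classes of objects; second, $\DNgm(X,\Lambda)$ is by definition the thick subcategory generated by the images $F(f_\sharp \un_Y(n))\simeq f_\sharp \nscr_Y(n)$ of the defining generators of $\DMgm(X,\Lambda)$.

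\textbf{Parts (1), (3), (4).} These are formal transfers. For (1), preservation of $\DMgm$ by $f^*$, $f_!$ and $\otimes$ in \cite{DMpdf}, combined with the compatibility of these operations with $F$, yields preservation of the defining generators of $\DNgm$ and therefore of the whole thick subcategory. For (3), \Cref{continu} for $\DN$ at the big category level, combined with the fact that the generators of $\DNgm$ pull back compatibly along the cofiltered limit, reduces the statement to the analogous one for $\DMgm$. For (4), the forgetful functor $U$ preserves colimits (modules over a commutative algebra), so $F$ preserves compact objects; under $\Lambda$-finiteness, $\DM^\omega = \DMgm$ by \cite{DMpdf}, giving $\DNgm \subseteq \DN^\omega$, while the reverse inclusion follows because $F(\DMgm)$ generates $\DN$ under colimits.

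\textbf{Part (2).} The forgetful functor $U$ is conservative, $\Z$-linear, and commutes with $-\otimes\Q$. After applying $U$, each of (a)--(d) becomes the analogous rationalisation-compatibility in $\DM$, which holds on geometric inputs by the corresponding results in \cite{DMpdf}. The one point requiring care is that $U$ intertwines $\underline{\Hom}_\mathrm{N}(M,-)$, $f_*$ and $f^!$ with the $\DM$-internal versions applied to $U(-)$; this follows from the $\nscr$-linearity of the six operations constructed in \Cref{SixFunBig} and the explicit module description of the algebra.

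\textbf{Part (5).} This is the main obstacle. The direction ``geometric $\Rightarrow$ constructible'' is straightforward: any generator $f_\sharp\nscr_Y(n)$ is locally dualizable after a suitable stratification of the target, by induction using resolution of singularities over $\Q$ exactly as in \cite{DMpdf}. For the converse, I would apply \Cref{LemmeMagique} to the fully faithful inclusion $\DNgm(X,\Lambda) \hookrightarrow \DN(X,\Lambda)^{\mathrm{cons}}$, noting that its right adjoint commutes with rationalisation by an argument in the style of \Cref{RArat}. This reduces the check to two regimes. Rationally, constructible objects in $\DN(X,\Q)$ are of geometric origin by the work of Ivorra--Morel and Terenzi \cite{ivorraFourOperationsPerverse2022,terenziTensorStructurePerverse2024a} combined with \Cref{ratcasgen}; modulo a nonzero integer $n$, the relative version of the rigidity equivalence of \Cref{rigfields} identifies $\DN(X,\Lambda/n)$ with a suitable category of \'etale sheaves, where the equivalence ``constructible $=$ compact $=$ of geometric origin'' is classical (Bhatt--Mathew, Gabber). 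The ind-regularity hypothesis on $\Lambda$ permits writing $\Lambda$ as a filtered colimit of regular rings, ensuring the change-of-coefficients manipulations above behave well with respect to compactness and constructibility.
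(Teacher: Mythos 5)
Your overall strategy of transferring from $\DM$ to $\DN$ via the free--forgetful adjunction matches the paper's high-level approach, and parts (3) and (4) are handled essentially correctly. However, there are meaningful discrepancies and one genuine gap.

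For part (1), your ``formal transfer'' conceals the fact that the hard case is $p_*$ for $p$ proper. You invoke ``compatibility of $f_!$ with $F$'', which, if pinned down, amounts to the projection formula in $\DM$ applied to the Cartesian algebra $\nscr$; that route is viable but you do not establish it. The paper instead decomposes $f_! = p_* \circ j_\sharp$ via Nagata, defers the $p_*$ case until \emph{after} continuity (part 3) is established, and then transfers the spreading-out argument of \cite[Corollary 3.6]{DMpdf}. Either path can be made to work, but yours is stated at a level where the key point -- why proper pushforward preserves the thick subcategory generated by the $f_\sharp \Lambda(n)$ -- is not actually addressed.

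For part (2), your reduction ``apply $U$'' is not well-formed as stated: $\map_{\DN}(M,N)$ is a spectrum, and $U(M)$ is not geometric in $\DM$ (indeed $U(M)$ involves $\nscr$, which is not a geometric motive). The paper's argument reduces $M$ to the unit $\Lambda_X$ by d\'evissage and adjunction, at which point $\map_{\DN}(\Lambda_X,N) \simeq \map_{\DM}(\Lambda_X,\rho_*^\mathrm{N}N)$ and one can invoke \cite[Proposition 3.1]{DMpdf} together with the colimit-preservation of $\rho_*^\mathrm{N}$. You have the right spirit but the precise mechanism is missing; moreover (d) for general $f$ cannot be deduced before (1) is fully proved, which is why the paper's logical structure is interleaved rather than linear.

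The genuine gap is in part (5). You propose applying \Cref{LemmeMagique} to the inclusion $\DNgm(X,\Lambda) \hookrightarrow \DN(X,\Lambda)^{\mathrm{cons}}$. But \Cref{LemmeMagique} is a statement about maps in $\CAlg(\PrL_\Z)$, and both categories here are small. After passing to $\In$, one would need to identify the right adjoint of the inclusion and verify that it commutes with rationalisation -- neither of which is clear, and the analogy with \Cref{RArat} is not a substitute because the right adjoint here is not of the form $\rho_*$ of a realisation. Even granting that machinery, your treatment of the rational input is incomplete: ``constructible = geometric'' for $\DN(X,\Q)$ over a general qcqs finite-dimensional base is not directly a consequence of \cite{ivorraFourOperationsPerverse2022,terenziTensorStructurePerverse2024a} or \Cref{ratcasgen} (the latter concerns fields). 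What the paper actually does is reduce by continuity, localisation and d\'evissage to the case $X = \Spec(k)$ with $\Lambda$ regular, then uses \Cref{bestlemma} to produce a geometric $P \to M \oplus M[1]$ whose cone $C$ is torsion, and then invokes a concrete description of torsion objects via the Artin motive functor and rigidity to show $C$ is geometric. This Artin-motive d\'evissage is the essential new input your sketch does not supply.
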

\begin{proof} In the course of this proof, we will denote by $\DNc$ the subcategory of constructible Nori motives. 
  
  The functors of type $f^*$, $f_\sharp$, $\otimes_\mathrm{N}$ and $i_*$ for $i$ a closed immersion readily preserve geometric objects. Using Nagata compactifications, the last remaining part of Assertion 1. is to prove that functors of type $p_*$ for $p$ proper preserve geometric objects which we will prove later. 
  
  Now, if $f\colon Y \to X$ is smooth and $n$ is an integer, \cref{SixFunBig} implies that $f_\sharp(\Lambda_Y)(n)=\rho_\mathrm{N}(f_\sharp(\un_Y)(n))$. Now, the functor $\rho_\mathrm{N}$ being monoidal, it sends constructible motives to constructible Nori motives. Since geometric motives are constructible using \cite[Corollary 3.7]{DMpdf}, the Nori motive $f_\sharp(\Lambda_Y)(n)$ is constructible. Hence, geometric Nori motives are constructible. 

We now prove Assertion 2.(a). By adjunction, we can reduce to the case where $M=\Lambda_X$.
  But we have a commutative diagram:
\[\begin{tikzcd}
  \map_\mathrm{DN}(\Lambda_X,N)\otimes \Q \ar[d] \ar[r] & \ar[d]\map_\mathrm{DN}(\Lambda_X,N\otimes \Q)\\
  \map_{\DM}(\Lambda_X,\rho^\mathrm{N}_*N)\otimes \Q \ar[r]& \map_{\DM}(\Lambda_X,\rho^\mathrm{N}_*(N\otimes \Q))
\end{tikzcd}\] 
where the vertical arrows are equivalences by definition of the right adjoint $\rho^\mathrm{N}_*$ to the Nori realisation and the bottom horizontal arrow is an equivalence by \cite[Proposition 3.1]{DMpdf} together with the fact that $\rho^\mathrm{N}_*$ commutes with colimits as it is a forgetful functor.

Assertions 2.(b) and 2.(c) as well as Assertion 2.(d) for $f$ a closed immersion then follow from 2.(a) by exactly the same proof as \cite[Proposition 3.1]{DMpdf}. Note that Assertion 2.(d) for a closed immersion implies that Assertion 2.(a) holds when $M$ is only assumed to be constructible: it allows by d\'evissage to reduce to the case where $M=\Lambda_X$.

  We now prove Assertion 4: in the $\Lambda$-finite case, the equivalence between geometric objects and compact objects can be deduced from the same result for $\DM$ which is \cite[Proposition 3.8]{DMpdf} and \cite[Lemma 2.6]{iwanari} which states that if a category is $\mc{C}$ is compactly generated, the category $\Mod_A(\mc{C})$ with $A$ some commutative algebra is still compactly generated and that the $A\otimes M$ with $M$ compact form a set of compact generators. This also implies Assertion 5. in the $\Lambda$-finite case: the fact that constructible objects are compact can then be proved exactly as in \cite[Proposition 3.8]{DMpdf}.

  The next step is to prove the continuity property both for geometric and for constructible Nori motives, \textit{i.e.} that the natural maps
  \[\colim  \DNgm(X_i,\Lambda) \to \DNgm(X,\Lambda)\]
  \[\colim  \DNc(X_i,\Lambda) \to \DNc(X,\Lambda)\]
are equivalences (here $\DNc$ denotes constructible Nori motives). This will in particular prove Assertion 3. As geometric motives are constructible, it suffices to prove the full faithfulness for constructible motives which can then be tested after tensoring with $\Q$ and with $\Z/p\Z$ for any prime number $p$. In the first case, it is then a consequence of \cref{continu}, while in the second case, it is a consequence of Bachmann's Rigidity Theorem in the form given in \cite[Theorem 3.2]{DMpdf} and of Bhatt and Matthew's continuity property for torsion \'etale sheaves in the form stated in \cite[Proposition 3.4]{DMpdf} which both apply for derived rings of coefficients. The essential surjectivity of the first functor can then be proved Zariski locally so that we can assume $X$ and the $X_i$ to be affine in which case, the usual spreading-out properties of \cite[Th\'eor\`eme 8.10.5, Proposition 17.7.8]{ega4} give the result (see the proof of \cite[Theorem 3.5]{DMpdf} for more details). The essential surjectivity of the second functor is then a consequence of the analogous of \cite[Lemma 3.9]{DMpdf} which can be proved exactly the same way:
\begin{lem}\label{bestlemma}
  Let $M$ be a constructible Nori motive over a qcqs $\Q$-scheme $X$ of finite dimension. Then, there is a geometric Nori motive $P$ as well as a map $P\to M\oplus M[1]$ whose cone $C$ is such that $C/n\simeq C\oplus C[1]$ for some integer $n$. 
\end{lem}

We can now finish the proof of Assertion 1: we have to prove that the functors of type $p_*$ for $p$ proper preserve geometric objects. This is a consequence of the continuity property for geometric objects and can be proved in the same fashion as in \cite[Corollary 3.6]{DMpdf}. Now, Assertion 1. implies Assertion 2.(d) for a general $f$ of finite which finishes the proof of Assertion 2.

It remains to prove that constructible Nori motives are geometric. As in the case of $\DM$, this can be reduced by continuity, localisation and d\'evissage to the case where $X$ is the spectrum of a field and $\Lambda$ is regular (see the proof of \cite[Theorem 4.1]{DMpdf}). 

\begin{lem} Let $k$ be a field of characteristic zero and let $\Lambda$ be a commutative ring. Then, the Artin motive functor \[\iota\colon\D(k_{\et},\Lambda)\rar \DN(k,\Lambda)\] is a fully faithful monoidal functor. Furthermore any torsion Nori motive lies in its essential image. 
\end{lem}
\begin{proof}
  If the result is known for $\Lambda=\Z$, then the result for an arbitrary ring of coefficients $\Lambda$ follows from tensoring both sides with $\Mod_\Lambda$: this operation preserves full faithfulness according to  \cite[Lemma 2.14]{haineNonabelianBasechangeBasechange2022}. 
  Now, using \cref{LemmeMagique}, \cref{rigAlgebre} and rigidity of \'etale motives, it suffices to show that this result holds with $\Lambda=\Q$. Then, the functor $\iota$ is the indization of the Artin motive functor \[\D(k_{\et},\Q)^\omega \to \D^b(\mcal(k,\Q))\] which is fully faithful. Indeed we have that the canonical functor \[\D^b(\mathrm{Rep}(\mathrm{Gal}(\overline{k}/k),\Lambda))\to\D(k_{\et},\Q)^\omega\] is an equivalence, and as all Artin motives are of weight $0$ in Nori motives, the abelian category of Artin motives is semi-simple, so that the above functor is fully faithful whenever it is fully faithful on the hearts, which is proven in \cite[Proposition 9.4.3]{MR3618276}. 
\end{proof}
  Let $M$ be a constructible Nori motive. We have to show that $M$ is geometric. \cref{bestlemma} yields a geometric Nori motive $P$ and a map $P\to M\oplus M[1]$ whose cone $C$ is such that $C\otimes \Q=0$.

We can then write $C=\iota D$, with $D$ a dualisable object of $\D(k_{et},\Lambda)$. In particular, it belongs to the thick stable subcategory generated by the $f_\sharp \underline{\Lambda}$ for $f$ finite \'etale according to \cite[2.1.2.15]{ruimyAbelianCategoriesArtin2023}. Thus, its image $C$ through $\iota$ is constructible as $\iota$ is compatible with the functors $f_\sharp$ and $f^*$ for $f$ finite \'etale: indeed $\iota=\rho_\mathrm{N} \rho_!$ 
  with $\rho_!$ the Artin motive functor to $\DM$; the same result is true for $\rho_!$ using \cite[§4.4.2]{MR3477640} and for $\rho_\mathrm{N}$ by \cref{SixFunBig}.
  Finally, $M\oplus M[1]$ is geometric and therefore $M$ is geometric.
\end{proof}
    \begin{cor}
      \label{colimrings}
      Let $\Lambda = \colim_i\Lambda_i$ be a filtered colimit of rings and let $X$ be a qcqs $\Q$-scheme of finite dimension. Then the canonical map 
      \[\colim_i\DNgm(X,\Lambda_i)\to\DNgm(X,\Lambda)\] is an equivalence.
\end{cor}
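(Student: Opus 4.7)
The plan is to express $\DNgm(X,\Lambda)$ as a tensor product in $\catinfty^\mathrm{perf}$ so that the dependence on $\Lambda$ becomes manifestly finitary. Throughout I assume $X$ is qcqs and finite-dimensional, which is the implicit context of the surrounding continuity results.

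The key identification I would establish first is that, for any commutative ring $\Lambda$, there is a canonical equivalence
\[\DNgm(X,\Lambda) \simeq \DNgm(X,\Z) \otimes_{\Perf_\Z} \Perf_\Lambda\]
in $\catinfty^\mathrm{perf}$, where the functor from right to left is obtained from the base change $\DNgm(X,\Z) \to \DNgm(X,\Lambda)$ by the universal property of the tensor product. To prove it, I would pass to indizations. By \Cref{megathmcons}(4) applied with $\Z$-coefficients (the $\Z$-finite hypothesis being vacuous beyond finite-dimensionality since $\Z^\times$ contains no primes), we have $\DN(X,\Z) \simeq \In \DNgm(X,\Z)$. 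Since the indization functor $\In \colon \catinfty^\mathrm{perf} \xrightarrow{\sim} \mathrm{Pr}^{\mathrm{L},\omega}_\St$ is symmetric monoidal and the inclusion into $\PrL$ respects tensor products of compactly generated stable categories, we obtain
\[\In(\DNgm(X,\Z) \otimes_{\Perf_\Z} \Perf_\Lambda) \simeq \DN(X,\Z) \otimes_{\Mod_\Z} \Mod_\Lambda \simeq \DN(X,\Lambda).\]
Thus $\DNgm(X,\Z) \otimes_{\Perf_\Z} \Perf_\Lambda$ is realised as the subcategory of compact objects of $\DN(X,\Lambda)$. It remains to match this with $\DNgm(X,\Lambda)$: both are thick subcategories, both contain the objects $f_\sharp(\Lambda_Y)(n)$ arising as images of the basic tensors $f_\sharp(\Z_Y)(n) \boxtimes \Lambda$, and these objects generate each side (using on the one hand that $\Lambda$ generates $\Perf_\Lambda$ as a thick subcategory, and on the other the very definition of $\DNgm(X,\Lambda)$).

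Granting this identification, the corollary follows from two standard facts. First, the functor $\Perf_{-}\colon\CAlg\to\catinfty^\mathrm{perf}$ is finitary, i.e. $\colim_i \Perf_{\Lambda_i} \simeq \Perf_\Lambda$; this is inherited from the corresponding statement for $\Mod_{-}$ in $\CAlg(\PrL)$ via the $\In$-equivalence. Second, $\DNgm(X,\Z) \otimes_{\Perf_\Z} (-)$ preserves filtered colimits in $\catinfty^\mathrm{perf}$, being the tensor product with a fixed object. Combining these:
\[\colim_i \DNgm(X,\Lambda_i) \simeq \DNgm(X,\Z) \otimes_{\Perf_\Z} \colim_i \Perf_{\Lambda_i} \simeq \DNgm(X,\Z) \otimes_{\Perf_\Z} \Perf_\Lambda \simeq \DNgm(X,\Lambda).\]

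The main obstacle is the first step: the delicate point is not the formal manipulation of tensor products, but verifying that the geometric generators $f_\sharp(\Lambda_Y)(n)$ exhaust the compact objects of $\DN(X,\Lambda)$ and not merely a proper thick subcategory. This ultimately rests on the compactness of the integral generators $f_\sharp(\Z_Y)(n)$ in $\DN(X,\Z)$, which is what \Cref{megathmcons}(4) provides and what propagates under base change along $\Z\to\Lambda$.
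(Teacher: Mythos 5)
Your argument has a genuine gap at the very first step. You assert that the $\Z$-finite hypothesis in \Cref{megathmcons}(4) is vacuous because ``$\Z^\times$ contains no primes,'' and therefore that $\DN(X,\Z)\simeq\In\DNgm(X,\Z)$ for every qcqs finite-dimensional $\Q$-scheme $X$. This cannot be right: taking $X=\Spec(\R)$ (which is affine, $0$-dimensional, and a $\Q$-scheme, hence a legitimate instance of the corollary), the object $\Z\in\DNgm(\Spec(\R),\Z)$ is \emph{not} compact in $\DN(\Spec(\R),\Z)$, because $\map_{\DN(\R,\Z)}(\Z,-)$ is computed as a $\mathrm{Gal}(\C/\R)$-homotopy fixed point spectrum and $\R$ has infinite $\F_2$-cohomological dimension, so these homotopy fixed points do not commute with filtered colimits of unbounded-below objects. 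This is exactly the obstruction the authors point out for $\DM(\R,\Z)$ in \Cref{WhyDnComplicated}, and it transfers to $\DN$ by the same mechanism. In context, the condition of \Cref{LambdaFin} bounds $\mathrm{cd}_p$ for primes $p$ \emph{not} invertible in $\Lambda$, so for $\Lambda=\Z$ it is anything but vacuous. Consequently your chain $\In\bigl(\DNgm(X,\Z)\otimes_{\Perf_\Z}\Perf_\Lambda\bigr)\simeq\DN(X,\Z)\otimes_{\Mod_\Z}\Mod_\Lambda\simeq\DN(X,\Lambda)$ breaks at the first equivalence, and with it the identification $\DNgm(X,\Lambda)\simeq\DNgm(X,\Z)\otimes_{\Perf_\Z}\Perf_\Lambda$ that the rest of your proof relies on.

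It is worth noting that the identification you are after is true and appears in the paper as \Cref{TensPerf} --- but that corollary is proved \emph{after} \Cref{colimrings} and uses it (to reduce to $\Lambda'$ of finite type over $\Z$), so the logical order is reversed relative to what you propose. The paper's actual proof of \Cref{colimrings} sidesteps the compact-generation issue entirely: after reducing to $X$ of finite type over $\Q$ by continuity, it checks full faithfulness separately after $\otimes\Q$ and after $\otimes\Z/p$. Rationally, compact generation of $\DN(X,\Q)$ by $\DNgm(X,\Q)$ holds unconditionally over such $X$ (the cohomological dimension problem disappears with $\Q$-coefficients), so an argument close in spirit to yours applies; mod $p$, rigidity reduces the statement to one about \'etale sheaves, handled by an external lemma. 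Your approach would work if one restricted to $\Lambda$-finite schemes, but the corollary is stated and needed in larger generality.
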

\begin{proof}
  As generators of $\DNgm(X,\Lambda)$ are clearly in the image, it suffices to prove fully faithfulness. Moreover by continuity we can assume that $X$ is of finite type over $\Q$. Now this can be proven after tensorisation by $\Q$ and $\Z/p$. The rational part follows from the fact that $\DNgm(X,\Lambda\otimes_\Z \Q)=\DNgm(X,\Lambda)\otimes\mathrm{Perf}_\Q$ consists of the compact objects of $\DN(X,\Lambda\otimes_\Z \Q)=\DN(X,\Q)\otimes \Mod_{\Lambda\otimes_\Z \Q}$ and the functor $\DN(X,\Q)\otimes\Mod_{\Q\otimes(-)}$ commutes with filtered colimits of rings. If we mod out by $p$, then the result follows from \cite[Lemma 4.3]{DMpdf}.
\end{proof}

    \begin{cor}\label{etdesc}
      Let $\Lambda$ be an ind-regular ring. The functor $\DNgm(-,\Lambda)$ is an \'etale hypersheaf over qcqs $\Q$-schemes of finite dimension. 
    \end{cor}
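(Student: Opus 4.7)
The plan is to promote the étale hyperdescent of $\DN(-,\Lambda)$ from \Cref{hdescGro} to the full subcategory of geometric objects. Given an étale hypercover $f_\bullet\colon X_\bullet\to X$ of qcqs finite-dimensional $\Q$-schemes, the equivalence $\DN(X,\Lambda)\simeq \lim_\Delta \DN(X_\bullet,\Lambda)$ induces a fully faithful embedding $\DNgm(X,\Lambda)\hookrightarrow \lim_\Delta \DNgm(X_\bullet,\Lambda)$: indeed, Assertion 1 of \Cref{megathmcons} ensures that each $f_n^*$ preserves geometric objects. The content is therefore to show that this embedding is essentially surjective, i.e., that an object $M\in\DN(X,\Lambda)$ with $f_0^*M\in\DNgm(X_0,\Lambda)$ is itself geometric.

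Using \Cref{colimrings}, I would first reduce to the case where $\Lambda$ is regular, so that Assertion 5 of \Cref{megathmcons} becomes available: geometric is equivalent to constructible, meaning dualizable on the strata of some finite stratification of $X$ by constructible locally closed subschemes. It then suffices to prove that constructibility descends along a single surjective étale map $f\colon Y\to X$. The argument splits into two sublemmas: (i) dualizability in $\DN(-,\Lambda)$ is étale-local, so that if $f^*M$ is dualizable on $Y$, then $M$ is dualizable on $X$; and (ii) a stratification of $Y$ by constructible locally closed subschemes, on each of which $f^*M$ is dualizable, pushes down to such a stratification of $X$. For (i), one uses that $\DN(-,\Lambda)$ is a symmetric monoidal étale hypersheaf: the (co)evaluation maps witnessing dualizability of $f^*M$ descend by hyperdescent, and the triangle identities are checked after the conservative pullback $f^*$. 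For (ii), Chevalley's theorem guarantees that the images $f(Y_i)$ of the strata are constructible in $X$, and a suitable Boolean refinement produces a finite constructible stratification $(X_j)$ of $X$ such that each $f^{-1}(X_j)$ is a union of $Y_i$'s on which $f^*M$ is dualizable; combined with (i), this shows $M|_{X_j}$ is dualizable, establishing constructibility of $M$.

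The hard part will be sublemma (i): isolating the clean $\infty$-categorical descent statement applicable in our context. The cleanest route is to observe that in a symmetric monoidal stable hypersheaf of presentable $\infty$-categories, an object is dualizable at $X$ as soon as it becomes dualizable on some étale cover of $X$ — a fact that ultimately rests on the closed monoidal structure of $\DN(-,\Lambda)$ (coming from \Cref{SixFunBig}) and the commutation of the internal Hom with the descent limit on the dualizable side. A milder bookkeeping point is the constructibility of Boolean combinations of the images $f(Y_i)$ in (ii), which is entirely classical. Once (i) and (ii) are in place, the characterisation of geometric motives as constructible ones finishes the essential surjectivity, and the corollary follows.
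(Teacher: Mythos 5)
Your overall strategy matches the paper's: the corollary is deduced from the étale hyperdescent of the big category $\DN(-,\Lambda)$ (\Cref{hdescGro}) together with the characterisation ``geometric $=$ constructible'' (\Cref{megathmcons}, Assertion 5), by observing that constructibility is an étale-local condition. The paper simply delegates this last point to \cite[Lemma 2.5]{DMpdf}. (A minor simplification: Assertion 5 of \Cref{megathmcons} is already stated for $\Lambda$ ind-regular, so the preliminary reduction via \Cref{colimrings} is not needed.) Your sublemma (i), that dualizability of an object in a symmetric monoidal hypersheaf of presentable $\infty$-categories can be tested on a cover, is correct and is the key ingredient.

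However, your sublemma (ii) has a genuine gap. The claim that a Boolean refinement by the images $f(Y_i)$ produces a stratification $(X_j)$ with ``each $f^{-1}(X_j)$ a union of $Y_i$'s'' is false in general: already for the squaring map $\Gm\to\Gm$ with $Y_1=\Gm\setminus\{1\}$, $Y_2=\{1\}$, one gets $X_2=\{1\}$ with $f^{-1}(X_2)=\{1,-1\}$, which is neither a $Y_i$ nor a union of them. What one actually needs is that, after a suitable refinement, the pieces $Y_i\cap f^{-1}(X_j)$ are \emph{open and closed} in $f^{-1}(X_j)$ (equivalently, each $Y_i\cap f^{-1}(X_j)\to X_j$ is surjective étale), since only then is $f^*M\vert_{f^{-1}(X_j)}$ genuinely dualizable and (i) applicable; in the absence of this one merely learns that $f^*M\vert_{f^{-1}(X_j)}$ is \emph{constructible}, which is what you started with. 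Refining by the images $f(Y_i)$ does not ensure the open--closed property: in general a locally closed $Y_i\subseteq Y$ intersected with a saturated fibre $f^{-1}(X_j)$ is only locally closed there. The standard way to close this gap is a spreading-out / Noetherian-induction argument: over the generic points of $X$ the fibre is finite discrete so the intersections are automatically open--closed, and one spreads this out to an open subset and inducts on the closed complement (using qcqs + finite dimension, or Noetherian approximation). Without an argument of this type, the descent of constructibility, and hence the corollary, is not established.
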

    \begin{proof}
      The condition of being constructible is \'etale local exactly as in \cite[Lemma 2.5]{DMpdf}.
    \end{proof}

    \begin{cor}\label{TensPerf}
      Let $\Lambda\to \Lambda'$ be a morphism between commutative rings and let $X$ be a qcqs $\Q$-scheme of finite dimension. The canonical map \[\DNgm(X,\Lambda)\otimes_{\Perf_\Lambda}\Perf_{\Lambda'} \to \DNgm(X,\Lambda')\] 
      is an equivalence. 
    \end{cor}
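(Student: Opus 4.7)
The plan is to reduce to the case $\Lambda = \Z$ and then exploit the construction of $\DN(X,-)$ as base change from $\DN(X,\Z)$, together with the vacuous $\Z$-finiteness of any qcqs finite-dimensional $\Q$-scheme. By associativity of the Lurie tensor product in $\catinfty^\mathrm{perf}$, if the assertion is established for both maps $\Z \to \Lambda$ and $\Z \to \Lambda'$, then
\[\DNgm(X,\Lambda) \otimes_{\Perf_\Lambda} \Perf_{\Lambda'} \simeq \left(\DNgm(X,\Z) \otimes_{\Perf_\Z} \Perf_\Lambda\right) \otimes_{\Perf_\Lambda} \Perf_{\Lambda'} \simeq \DNgm(X,\Z) \otimes_{\Perf_\Z} \Perf_{\Lambda'} \simeq \DNgm(X,\Lambda'),\]
so it suffices to treat the case $\Lambda = \Z$.

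For that case, I would use that $X$ is automatically $\Z$-finite, since the condition of \Cref{LambdaFin} is vacuous when $\Lambda = \Z$: indeed $\Z^\times = \{\pm 1\}$ contains no primes. Hence \Cref{megathmcons}.4 yields that $\DN(X,\Z) \simeq \In \DNgm(X,\Z)$ is compactly generated with $\DN(X,\Z)^\omega = \DNgm(X,\Z)$. By construction of the Nori category for general coefficients, $\DN(X,\Lambda') \simeq \DN(X,\Z) \otimes_{\Mod_\Z} \Mod_{\Lambda'}$; combined with the compatibility of the Lurie tensor product with indisation (under the equivalence $\catinfty^\mathrm{perf} \simeq \mathrm{Pr}^{\mathrm{L},\omega}_\mathrm{St}$), this gives an equivalence
\[\In\!\left(\DNgm(X,\Z) \otimes_{\Perf_\Z} \Perf_{\Lambda'}\right) \simeq \DN(X,\Lambda').\]
Restricting to compact objects, the canonical functor of the statement induces an equivalence onto $\DN(X,\Lambda')^\omega$. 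To conclude, I would identify $\DN(X,\Lambda')^\omega$ with $\DNgm(X,\Lambda')$: both are the thick subcategory of $\DN(X,\Lambda')$ generated by the family $\{f_\sharp(\Lambda'_Y)(n)\}$---by definition for $\DNgm(X,\Lambda')$, and because these are the images of the compact generators $M \otimes_\Z \Lambda'$ (with $M \in \DNgm(X,\Z)$) for $\DN(X,\Lambda')^\omega$.

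The hard part of this approach is the general fact that a relative tensor product of compactly generated $\Mod_\Z$-linear presentable $\infty$-categories is again compactly generated, with compacts thickly generated by pure tensors of compact generators. This is precisely the content of \cite[Lemma 2.6]{iwanari} (already invoked in the proof of \Cref{megathmcons}.4) and serves as the bridge between the small-category formulation of the corollary and the presentable-category definition of $\DN(X,\Lambda')$ by modules over $\nscr_X \otimes_\Z \Lambda'$.
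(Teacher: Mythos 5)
Your reduction to $\Lambda=\Z$ by associativity of the Lurie tensor product is correct and matches the paper's first step. The argument breaks down, however, at the $\Z$-finiteness claim. You read \Cref{LambdaFin} literally as imposing a condition only over primes $p\in\Lambda^\times$, hence vacuous for $\Lambda=\Z$; but the printed condition is almost certainly a typo for $p\notin\Lambda^\times$ (the primes contributing torsion). The literal reading would make the condition nontrivial precisely for $\Lambda=\Q$ (every prime lies in $\Q^\times$), which is absurd since there is no torsion with rational coefficients, and it directly contradicts \Cref{WhyDnComplicated}, where the authors observe that $\DM(\R,\Z)$ is \emph{not} compactly generated by $\DMgm(\R,\Z)$ because $\mathrm{cd}_2(\R)=\infty$; the same holds for $\Spec(\Q)$ since $\mathrm{cd}_2(\Q)=\infty$. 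So for a general qcqs finite-dimensional $\Q$-scheme $X$ you cannot invoke \Cref{megathmcons}.4 to conclude $\DNgm(X,\Z)=\DN(X,\Z)^\omega$; this identification is exactly what fails in general, which is why that assertion carries the $\Lambda$-finiteness hypothesis in the first place.

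Without that identification your bridge from the small-category statement to the presentable category $\DN(X,\Lambda')$ collapses. The paper's proof avoids compactness of the unit altogether: after reducing to $\Lambda=\Z$, it reduces full faithfulness to a statement about mapping spectra, uses \Cref{colimrings} to pass to $\Lambda'$ of finite type over $\Z$, reduces further to polynomial rings via the perfect-complex formula for mapping spectra, writes $A[T]=\colim_n A[T]_{\leqslant n}$, and then verifies the needed commutation with this filtered colimit separately after rationalisation (where geometric objects \emph{do} become compact) and modulo $p$ (where rigidity identifies the question with one about constructible \'etale sheaves). This rational/torsion splitting is the essential manoeuvre your proposal is missing, and there is no shortcut through compact generation with $\Z$-coefficients.
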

    \begin{proof}
      We claim that it is sufficient to deal with the case $\Lambda=\Z$. Indeed, assume that \[\DNgm(X,\Z)\otimes_{\Perf_\Z}\Perf_{A} \to \DNgm(X,A)\] is an equivalence for all rings $A$, then we have 
      \begin{align*}\DNgm(X,\Lambda)\otimes_{\Perf_\Lambda}\Perf_{\Lambda'} &\simeq \DNgm(X,\Z)\otimes_{\Perf_\Z}\Perf_{\Lambda}\otimes_{\Perf_\Lambda}\Perf_{\Lambda'}\\& \simeq \DNgm(X,\Z)\otimes_{\Perf_\Z}\Perf_{\Lambda'}\\&\simeq\DNgm(X,\Lambda').\end{align*}
      It suffices to prove fully faithfulness as generators of $\DNgm(X,\Lambda')$ clearly are in the image.
      Let $A$ be a ring and let $M,N$ be objects of $\DNgm(X,A)$. We will use that for any perfect complex of $A$-modules $P$, the canonical map 
      \begin{equation}\label{tensPerfMap}\map_{\DNgm(X,A)}(M,N)\otimes_A P\to\map_{\DNgm(X,A)}(M,N\otimes_A P)\end{equation} is an equivalence: this follows from the fact that $P$ is a finite colimit of free $A$-modules. By \cref{colimrings} we can assume that $\Lambda'$ is a ring of finite type over $\Z$, that is a quotient $\Z[T_1,\dots,T_r]/I$ of a polynomial ring by an ideal. Say we knew the result for $\Lambda' = \Z[T_1,\dots,T_r]$. Then as $\Z[T_1,\dots,T_r]/I$ is a perfect $\Z[T_1,\dots,T_r]$, the proof would be finished by \cref{tensPerfMap}. By induction it suffices to prove that if the result is known for a Noetherian ring $A$, then we can show that it holds for $A[T]$. But then writing \[A[T]=\colim_n A[X]_{\leqslant n}\] with $A[X]_{\leqslant n}$ the polynomials of degree $\leqslant n$, using \cref{tensPerfMap} we see that is suffices to prove that for $M,N$ in $\DNgm(X,A)$, the map 
      \[\colim_n \map_{\DNgm(X,A)}(M,N\otimes_A A[X]_{\leqslant n})\to \map_{\DNgm(X,A)}(M,\colim_n N\otimes_A A[X]_{\leqslant n})\] is an equivalence. Finally, this can be checked after tensorisation by $\Q$ (where this holds because $M$ becomes compact), and after tensorisation by $\Z/p$ where this follows from \cite[Lemma 5.3]{MR4609461} (note that \emph{loc. cit.} is stated for a ring and not a derived ring, but the proof is the same for our derived ring $A\otimes_\Z \Z/p$: pushforwards commutes with uniformly bounded below colimits by \cite[Corollary 3.10.5]{barwick2020exodromy} and then one has to prove the statement for the internal Hom, which is obvious is $M$ is dualisable, and follows in general by d\'evissage.)
    \end{proof}
    
  \begin{rem}
    The same proof gives the same result for $\DMgm$.
  \end{rem}

\subsection{Compatibilities of the six functors and Verdier duality}
We now study the compatibility of the Nori realisation with the six functors. We also show that the six functors induce functors on constructible motives when restricted to quasi-excellent schemes. Recall that by \cite[Theorem 4.6.1]{MR4466640} this is also the case for $\DM$.

\begin{thm}\label{compatibilite sif}
  Let $\Lambda$ be a commutative ring. Over the category of qcqs $\Q$-schemes of finite dimension, the Nori realisation
  \[\rho_\mathrm{N}:=-\otimes \mathscr{N} \colon \DM(-,\Lambda)\to \DN(-,\Lambda)\] is compatible with the operations
  \begin{itemize}
    \item $f^*$, $f_*$, for any morphism $f$ between qcqs $\Q$-schemes of finite dimension.
    \item $f_!$, $f^!$, for any morphism $f$ of finite type between qcqs $\Q$-schemes of finite dimension.
    \item $\otimes$ and $\underline{\Hom}(M,-)$ for $M$ of geometric origin.
  \end{itemize}
\end{thm}
\begin{proof}
The case of left adjoints is given by the definitions of the operations of type $f^*$, $f_!$ and $\otimes$ on $\DN(-,\Lambda)$. By adjunction, this yields exchange transformations 
\[\rho_\mathrm{N} R \to R \rho_\mathrm{N}\] for $R$ of type $f_*$, $f^!$  nd $\underline{\Hom}(M,-)$ for $M$ of geometric origin. The theorem amounts to proving that this exchange transformations are equivalences, which can be tested after tensoring those maps with $\Q$ or $\Z/p\Z$ for any prime number $p$. After tensoring with $\Z/p\Z$, the Nori realisation becomes an equivalence so the exchange maps are also equivalences. After tensoring with $\Q$, those maps can be interpreted as the same maps but with coefficients $\Lambda \otimes_\Z \Q$ since the operations of type $f_*$, $f^!$ and $\underline{\Hom}(M,-)$ for $M$ of geometric origin are compatible with rationalisation by Assertion 2. of \cref{megathmcons}. Hence, we can assume that $\Lambda$ is a $\Q$-algebra.

  Now recall that if $X$ is a qcqs $\Q$-scheme, we have \[\DN(X,\Lambda)=\DM(X,\Lambda)\otimes_{\DM(\Spec(\Q),\Lambda)} \DN(\Spec(\Q), \Lambda)\] and that the operation $f^*$ on $\DN$ is defined as $f^* \otimes \id_{\DN(\Spec(\Q), \Lambda)}$. Recall that $\DM(\Spec(\Q),\Lambda)$ is compactly generated by \cite[Lemma 5.1]{MR4319065} and that $\DN(\Spec(\Q),\Lambda)$ is also compactly generated by \cite[Lemma 2.6]{iwanari}. Recall also that if $f\colon Y\to X$ is a morphism of schemes, the functor $f^*$ preserves compact objects and therefore the functor $f_*$ is colimit-preserving. We can therefore apply \cref{BlackMagic} to show that $f_*\otimes \id_{\DN(\Spec(\Q), \Lambda)}$ is right adjoint to this functor which exactly means that $f_*$ is compatible with the Nori realisation. The case of the functor $\underline{\Hom}(M,-)$ can be deduced by the same argument applied to the functor $M\otimes -$ because geometric objects are compact (as the generators are the images of compact \'etale motives).
\end{proof}

\begin{prop}\label{sifgm}
  Let $\Lambda$ be a commutative ring. The six functors preserve the category
$\DNgm(-, \Lambda)$ when restricted to quasi-excellent $\Q$-schemes of finite dimension and to morphisms of finite type between these schemes. More precisely
\begin{enumerate}
  \item If $X$ is a qcqs $\Q$-scheme, then $\otimes_\mathrm{N}$ induces a monoidal structure on $\DNgm(X,\Lambda)$. It is closed when $X$ is quasi-excellent and of finite dimension.
  \item If $f$ is a morphism between qcqs $\Q$-schemes, then $f^*$ preserves geometric objects. If $f$ is furthermore of finite type, then $f_!$ also preserves geometric objects. 
  \item If $f$ is a morphism of finite type between quasi-excellent $\Q$-schemes of finite dimension, then $f_*$ and $f^!$ preserve geometric objects.
\end{enumerate}
\end{prop}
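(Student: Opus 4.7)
My plan is to reduce every assertion in this proposition to the corresponding property for \'etale motives, which holds by \cite[Theorem 4.6.1]{MR4466640}. Assertion 2 and the existence of the monoidal structure in Assertion 1 are immediate consequences of Assertion 1 of \Cref{megathmcons}, so the real content lies in showing that $\underline{\Hom}_\mathrm{N}$, $f_*$, and $f^!$ preserve $\DNgm$ under the appropriate quasi-excellence hypotheses.

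The key technical observation I will exploit is that $\DNgm(X,\Lambda)$ coincides with the closure under retracts of $\rho_\mathrm{N}(\DMgm(X,\Lambda))$. Indeed, since $\rho_\mathrm{N}$ is exact, the essential image $\rho_\mathrm{N}(\DMgm(X,\Lambda))$ is already a stable subcategory of $\DN(X,\Lambda)$; it contains the generators $f_\sharp(\Lambda_Y)(n)=\rho_\mathrm{N}(f_\sharp(\un_Y)(n))$ of $\DNgm(X,\Lambda)$, and conversely every generator of $\DNgm(X,\Lambda)$ arises this way, so the smallest thick (stable and retract-closed) subcategory generated by these is precisely this retract-closure. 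It follows that every $M\in\DNgm(X,\Lambda)$ is a retract of $\rho_\mathrm{N}(\tilde M)$ for some $\tilde M\in\DMgm(X,\Lambda)$.

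With this lifting device in hand, each remaining claim will follow the same template: lift, then apply \Cref{compatibilite sif} together with the known preservation for $\DM$. For $f_*$ with $f$ of finite type between quasi-excellent finite-dimensional $\Q$-schemes, given $M\in\DNgm(Y,\Lambda)$ as a retract of $\rho_\mathrm{N}(\tilde M)$, the equivalence $f_*\rho_\mathrm{N}(\tilde M)\simeq\rho_\mathrm{N}(f_*\tilde M)$ combined with $f_*\tilde M\in\DMgm(X,\Lambda)$ shows $f_*M\in\DNgm(X,\Lambda)$, since $\DNgm$ is closed under retracts. The case of $f^!$ is completely analogous. For $\underline{\Hom}_\mathrm{N}(M,N)$, I will lift both arguments; by contravariance in the first variable, $\underline{\Hom}_\mathrm{N}(M,N)$ is a retract of $\underline{\Hom}_\mathrm{N}(\rho_\mathrm{N}(\tilde M),\rho_\mathrm{N}(\tilde N))$, which by \Cref{compatibilite sif} (applicable since $\rho_\mathrm{N}(\tilde M)$ is geometric) is equivalent to $\rho_\mathrm{N}(\underline{\Hom}_{\DM}(\tilde M,\tilde N))$, and the right-hand side lies in $\rho_\mathrm{N}(\DMgm(X,\Lambda))$ by the $\DM$ analogue. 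I do not foresee any serious obstacle here: all heavy lifting has been done in \Cref{megathmcons} and \Cref{compatibilite sif}, and the only step genuinely needing careful justification is the retract-closure description of $\DNgm$, which is purely formal.
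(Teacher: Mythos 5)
Your overall strategy---lift to $\DM$ via $\rho_{\mathrm N}$, then transport the known preservation results through \Cref{compatibilite sif}---is exactly what the paper does. However, there is a genuine error in the device you use to carry it out. You assert that because $\rho_{\mathrm N}$ is exact, its essential image $\rho_{\mathrm N}(\DMgm(X,\Lambda))$ is a stable subcategory of $\DN(X,\Lambda)$, and hence that $\DNgm(X,\Lambda)$ is simply its retract closure, so that every geometric Nori motive is a retract of some $\rho_{\mathrm N}(\tilde M)$. This is false: the essential image of an exact functor between stable $\infty$-categories is in general \emph{not} closed under cofibers. Given $f\colon \rho_{\mathrm N}(X)\to\rho_{\mathrm N}(Y)$ in $\DN$, its cofiber is of the form $\rho_{\mathrm N}(\mathrm{cofib}(g))$ only if $f$ lifts to a map $g\colon X\to Y$ in $\DMgm$, and $\rho_{\mathrm N}$ is not known (or expected, unconditionally) to be full on geometric objects. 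A simple counterexample to your general claim: take the diagonal $\Delta\colon\mathrm{Perf}_k\to\mathrm{Perf}_k\times\mathrm{Perf}_k$; the cofiber of $(0,\mathrm{id})\colon(k,k)\to(k,k)$ is $(k\oplus k[1],0)$, which is not in the essential image. So your ``lifting device'' as stated does not exist.

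The correct statement---and what the paper uses---is that the objects $\rho_{\mathrm N}(\tilde M)$ for $\tilde M\in\DMgm(X,\Lambda)$ \emph{generate $\DNgm(X,\Lambda)$ as a thick subcategory}, which means it is built from them by iterated shifts, cofibers, and retracts, not merely retracts. The fix is a d\'evissage: since $f_*$, $f^!$ and $\underline{\Hom}_{\mathrm N}$ are exact (in each variable, for the latter), the full subcategory of objects whose image under each of these functors lies in $\DNgm$ is itself thick; it contains the generators $\rho_{\mathrm N}(\tilde M)$ by \Cref{compatibilite sif} together with \cite[Corollary 6.2.14]{MR3477640}, hence contains all of $\DNgm$. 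In the $\underline{\Hom}_{\mathrm N}$ case one performs the d\'evissage first in the covariant variable and then in the contravariant one, staying within geometric objects in the first slot so that \Cref{compatibilite sif} applies. With that replacement your proof is sound and coincides with the paper's.
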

\begin{proof}
  The first part of (1), as well as (2) have already been proved in \cref{megathmcons}. For the rest of the proposition, we can work over quasi-excellent $\Q$-schemes of finite dimension which are Noetherian by assumption, so that the results of \cite{MR3477640} apply. 

  Let $X$ be a quasi-excellent $\Q$-schemes of finite dimension. 
  The motives of type $\rho_\mathrm{N}(M)$ with $M$ in $\DMgm(X,\Lambda)$ generate $\DNgm(X,\Lambda)$ as a thick subcategory of $\DN(X,\Lambda)$. Therefore, the result follows from the same result for $\DMgm$ which is \cite[Corollary 6.2.14]{MR3477640} and from the compatibility of the six functors with the Nori realisation which is \cref{compatibilite sif}.
\end{proof}

\begin{prop}\label{Verdier}
  Let $\Lambda$ be a commutative ring.  Consider an excellent scheme $B$ of dimension $2$ or less as well as a regular separated $B$-scheme of finite type $S$, endowed with a geometric and $\otimes$-
invertible object $\omega_S$ in $\DN(S, \Lambda)$. For any separated morphism of finite type $f\colon X\to S$,
we define the \emph{Verdier duality functor} $\mathbb{D}_X$ by the formula 
\[\mathbb{D}_X(M) := \underline{\Hom}_\mathrm{N} (M, f^!(\omega_S)).\] Then, for
any geometric Nori motive $M$, the canonical map $M\to \mathbb{D}_X(\mathbb{D}_X(M))$
is an isomorphism.
\end{prop}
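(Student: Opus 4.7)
The plan is to reduce the biduality statement to cases where it is already known, using the Hasse-type decomposition that underlies the whole paper: an arrow $M \to N$ in $\DN(X,\Lambda)$ between geometric objects is an equivalence if and only if it becomes one after $-\otimes_\Z\Q$ and after $-\otimes_\Z\Z/p\Z$ for every prime $p$. This is because $\DNgm(X,\Lambda)$ is a $\Lambda$-linear stable $\infty$-category and one can embed it into $\DN(X,\Lambda)$ where the Hasse fracture square for the unit holds. So the first step is to reduce the proof to checking the equivalence $M \xrightarrow{\sim} \mathbb{D}_X\mathbb{D}_X(M)$ after these two coefficient changes.

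The second step is to check that Verdier duality commutes with both operations on geometric objects. By \Cref{megathmcons}, for $M$ geometric the functor $\underline{\Hom}_\mathrm{N}(M,-)$ and the functor $f^!$ commute with rationalisation, so that $\mathbb{D}_X(M)\otimes\Q \simeq \mathbb{D}_X(M\otimes\Q)$ computed in $\DN(X,\Lambda\otimes\Q)$ with respect to the rationalised dualising object $\omega_S\otimes\Q$, which remains geometric and invertible. For the mod-$p$ part, since the Nori realisation $\rho_\mathrm{N}\colon \DM(-,\Lambda/p)\to\DN(-,\Lambda/p)$ is an equivalence by \Cref{rigrhoN} (combined with \Cref{rigfields}), $\mathbb{D}_X$ on $\DNgm(X,\Lambda/p)$ corresponds under this equivalence to Verdier duality in constructible étale sheaves of $\Lambda/p$-modules, using the compatibility of all six operations with $\rho_\mathrm{N}$ on geometric objects (\Cref{compatibilite sif}).

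Once these compatibilities are in hand, the two sub-cases follow from known results. For the rational part, $\DN(X,\Lambda\otimes\Q)$ coincides on geometric objects with $\D^b(\Mp(X,\Lambda\otimes\Q))$ over finite-type $k$-schemes (cf.\ the discussion recalled in the introduction from Ivorra--Morel and \cite{SwannRealisation}); biduality on geometric/constructible objects of perverse motives with rational coefficients is established there under the standing hypotheses on $S$. For the mod-$p$ part, biduality for constructible étale sheaves with torsion coefficients on quasi-excellent finite-dimensional schemes is a classical result of Gabber (available in the form needed in \cite{MR3477640}). Combining the three steps yields the isomorphism $M \xrightarrow{\sim} \mathbb{D}_X\mathbb{D}_X(M)$.

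The main obstacle is the second step: the rational compatibility requires that both $\underline{\Hom}_\mathrm{N}(M,-)$ \emph{and} $f^!$ separately commute with $-\otimes\Q$ on geometric inputs, which is exactly what is provided by \Cref{megathmcons}(2)(b) and (d); one has to be careful that $f^!\omega_S$ remains geometric (which it is, as $\omega_S$ is assumed geometric and $f^!$ preserves geometric objects for $f$ of finite type between quasi-excellent finite-dimensional schemes by \Cref{sifgm}). Without this step the whole reduction strategy collapses, so it is where the structural results accumulated earlier in the paper do the real work.
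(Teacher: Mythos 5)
Your route is genuinely different from the paper's: you split along the Hasse fracture (rational and mod-$p$), whereas the paper transfers biduality directly from $\DMgm$ to $\DNgm$ at the level of $\Lambda$-coefficients. Concretely, the paper observes that the generators $f_\sharp(\Lambda_Y)(n)$ of $\DNgm$ are of the form $\rho_\mathrm{N}(-)$, that $\rho_\mathrm{N}$ commutes with $f^!$ and $\underline{\Hom}(M,-)$ on geometric inputs (\Cref{compatibilite sif}), and that biduality with $\omega_S=\Lambda_S$ holds in $\DMgm$ by \cite[Theorem 6.2.17]{MR3477640}; dévissage then gives the result for $\omega_S=\Lambda_S$, and \cite[Proposition 4.4.22]{MR3971240} upgrades to an arbitrary $\otimes$-invertible $\omega_S$. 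Your first and second steps (commuting $\mathbb{D}_X$ with $\otimes\Q$ and $\otimes\Z/p$ via \Cref{megathmcons} and \Cref{compatibilite sif}) are sound and are the real structural content; the conservativity of $(\otimes\Q,\otimes\Z/p)$ on geometric objects is also fine via the $\ell$-adic realisation.

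The gap is in your treatment of the two sub-cases once you have reduced. For the rational part you invoke Ivorra--Morel and \cite{SwannRealisation} ``under the standing hypotheses on $S$'': those references only establish Verdier biduality for perverse Nori motives over (quasi-projective) varieties over a field $k\subseteq\C$, not over the general regular $B$-scheme $S$ with $B$ excellent of dimension $\leqslant 2$ appearing in the statement; moreover identifying $\underline{\Hom}_\mathrm{N}(-,f^!\omega_S)$ with the Ivorra--Morel duality requires Terenzi's comparison (\cite[Proposition 5.19]{terenziTensorStructurePerverse2024a}), which you do not mention. The cleaner fix is to run, in both sub-cases, the same transfer argument the paper uses: apply \Cref{compatibilite sif} together with \cite[Theorem 6.2.17]{MR3477640} (which already covers the excellent base in the required generality and all the coefficient rings); but then the Hasse fracture step becomes unnecessary and you are back to the paper's proof. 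Finally, even for the mod-$p$ part, Gabber biduality is stated for the dualising complex $f^!\Lambda_S$, so you still need the argument of \cite[Proposition 4.4.22]{MR3971240} to pass to an arbitrary $\otimes$-invertible $\omega_S/p$; this step should be made explicit.
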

\begin{proof}
  From the compatibility of the Nori realisation with the six functors (\cref{compatibilite sif}) and the same result for \'etale motives \cite[Theorem 6.2.17]{MR3477640}, we deduce the result when $\omega_S=\Lambda_S$. This implies the result in general using \cite[Proposition 4.4.22]{MR3971240}. 
\end{proof}

\begin{rem}
  A priori with rational coefficients we have two Verdier duality functors, at least rationally: the one constructed using the above formula, and the one obtained using the universal property of Nori motives. Thankfully, these two functors are canonically isomorphic by Terenzi's \cite[Proposition 5.19]{terenziTensorStructurePerverse2024a}. The proof uses the existence of a Nori realisation of rational \'etale motives.
\end{rem}

\subsection{The \texorpdfstring{$\ell$}{\textell}-adic realisation}
In this paragraph, we define the $\ell$-adic realisation on Nori motives. 
\begin{prop}
  \label{rigtorscomplet}
  Let $\Lambda$ be a commutative ring, let $X$ be a qcqs $\Q$-scheme and let $\ell$ be a prime number. The maps 
  \[\D(X_\et,\Lambda)_\mathrm{tors}\xrightarrow{\rho_!}\DM(X,\Lambda)_\mathrm{tors} \xrightarrow{\rho_\mathrm{N}} \DN(X,\Lambda)_\mathrm{tors} \]
  \[\D(X_\et,\Lambda)^\wedge_\ell\xrightarrow{(\rho_!)^\wedge_\ell}\DM(X,\Lambda)^\wedge_\ell \xrightarrow{(\rho_\mathrm{N})^\wedge_\ell} \DN(X,\Lambda)^\wedge_\ell \]
  are equivalences.
\end{prop}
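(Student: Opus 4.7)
The plan is to reduce both equivalences to a single mod-$\ell^n$ statement, from which the $\ell$-adically complete case follows by taking the limit and the torsion case follows via the primary decomposition together with the equivalence $\ccal_{\ell-\mathrm{tors}} \simeq \ccal^\wedge_\ell$ recalled in the preliminaries.

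First, I would establish the following: for every prime $\ell$ and every integer $n \geq 1$, the natural functors
\[ \D(X_\et, \Lambda) \otimes_{\Mod_\Z} \Mod_{\Z/\ell^n} \xrightarrow{\rho_!} \DM(X, \Lambda) \otimes_{\Mod_\Z} \Mod_{\Z/\ell^n} \xrightarrow{\rho_\mathrm{N}} \DN(X, \Lambda) \otimes_{\Mod_\Z} \Mod_{\Z/\ell^n} \]
are equivalences. For the first map, Bachmann's rigidity theorem (already used in the proof of \Cref{hdescGro}) gives $\DM(X, \Lambda)\otimes_{\Mod_\Z}\Mod_{\Z/\ell^n} \simeq \DM(X, \Lambda/\ell^n) \simeq \D(X_\et, \Lambda/\ell^n)$ since $\ell$ is invertible on the $\Q$-scheme $X$. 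For the second one, the key input is the equivalence $\nscr_X \otimes_\Z \Z/\ell^n \simeq \un_X \otimes_\Z \Z/\ell^n$ of commutative algebras in $\DM(X, \Z)$: this already holds over $\Spec(\Q)$ by the combination of \Cref{rigfields} and \Cref{rigrhoN} invoked in the proof of \Cref{indfieldsalg}, and it passes to arbitrary $X$ under the symmetric monoidal pullback $\pi_X^*$. Consequently,
\[ \DN(X, \Lambda) \otimes_{\Mod_\Z} \Mod_{\Z/\ell^n} \simeq \Mod_{\un_X \otimes \Lambda/\ell^n}(\DM(X, \Z/\ell^n)) \simeq \DM(X, \Lambda) \otimes_{\Mod_\Z} \Mod_{\Z/\ell^n}, \]
and under this identification $\rho_\mathrm{N}$ becomes the identity.

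To pass to the $\ell$-adically complete case, I take the limit in $n$: all functors involved are colimit-preserving $\Z$-linear functors, so they commute with the operation $(-)/\ell^n$, and one has $\ccal^\wedge_\ell \simeq \lim_n (\ccal \otimes_{\Mod_\Z} \Mod_{\Z/\ell^n})$ for any $\Z$-linear presentable $\infty$-category $\ccal$. Transporting the resulting $\ell$-complete equivalences along the correspondence $\ccal_{\ell-\mathrm{tors}} \simeq \ccal^\wedge_\ell$ from the preliminaries then yields the $\ell$-primary torsion statement for every prime $\ell$, and the full torsion statement follows from the decomposition $\ccal_\mathrm{tors} \simeq \prod_\ell \ccal_{\ell-\mathrm{tors}}$ by assembling the primary components. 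The only point requiring genuine care is the mod-$\ell^n$ identification of the Nori algebra with the unit, but this is essentially built into the construction of $\nscr_X$ via the Hasse fracture square of \Cref{Hassefield}, so there is no serious obstacle.
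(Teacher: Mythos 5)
Your proof is correct and rests on exactly the same two ingredients as the paper's argument: Bachmann's rigidity theorem and the mod-$n$ identification of $\nscr_X$ with the unit (which you correctly trace back to \Cref{rigfields} and \Cref{rigrhoN}, and whose pullback-stability along $\pi_X^*$ you use). What differs is the direction of the bootstrap. The paper proves the torsion row first by invoking \Cref{LemmeMagique}, whose rational part is vacuous on torsion subcategories, so the reduction to mod-$n$ is immediate; the $\ell$-complete row is then a formal consequence of the equivalence $\ccal_{p\text{-}\mathrm{tors}} \simeq \ccal^\wedge_p$ recalled in the preliminaries. You go the other way, mod-$\ell^n \rightarrow \ell$-complete $\rightarrow \ell$-torsion $\rightarrow$ torsion, which is perfectly valid but forces you to invoke two auxiliary facts the paper never uses: (i) the identification $\ccal^\wedge_\ell \simeq \lim_n \bigl(\ccal \otimes_{\Mod_\Z}\Mod_{\Z/\ell^n}\bigr)$, and (ii) the primary decomposition $\ccal_\mathrm{tors} \simeq \prod_\ell \ccal_{\ell\text{-}\mathrm{tors}}$. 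Both hold for $\Z$-linear presentable stable $\infty$-categories --- (i) because $\Z/\ell^m$ is perfect over $\Z$ so $-\otimes_\Z\Z/\ell^m$ commutes with limits, making a compatible tower $(M_n)_n$ the reductions of its own limit, and (ii) because $\Q/\Z \simeq \bigoplus_p \Z(p^\infty)$ yields $M \simeq \bigoplus_p \bigl(M\otimes\Z(p^\infty)\bigr)[-1]$ for torsion $M$ --- but neither is in the paper's toolbox and each deserves at least a sentence of justification in your write-up. The net effect is a longer argument that is morally the same; the paper's route through \Cref{LemmeMagique} packages the fracture-square bootstrap once and for all and avoids having to re-derive these two background facts.

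Two small further remarks. First, in the mod-$\ell^n$ step you should be slightly careful with the identification $\DM(X,\Lambda)\otimes_{\Mod_\Z}\Mod_{\Z/\ell^n} \simeq \D(X_\et,\Lambda\otimes_\Z^{\mathrm{L}}\Z/\ell^n)$: when $\Lambda$ has $\ell$-torsion the coefficient ring $\Lambda\otimes_\Z^{\mathrm{L}}\Z/\ell^n$ is a genuinely derived ring, so the statement of rigidity has to be read in that generality (the paper also implicitly relies on this, e.g. in the proof of \Cref{megathmcons}). Second, your appeal to the Hasse fracture square of \Cref{Hassefield} for the equivalence $\un_X/\ell^n \simeq \nscr_X/\ell^n$ is slightly indirect --- extracting it from the square requires identifying $(\lim_m \un/m)/\ell^n$ with $\un/\ell^n$ --- whereas the direct route via \Cref{rigrhoN}, which you also cite, is cleaner and is what the proof of \Cref{indfieldsalg} actually uses.
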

\begin{proof}
  The second row of equivalences follows from the first and from the identification of $p$-torsion objects with $p$-complete objects. By \cref{LemmeMagique}, it suffices to prove the first identification after tensoring with $\Mod_{\Z/n\Z}$. The proposition then follows from the rigidity theorem \cite[Theorem 3.1]{bachmannrigidity} and from the fact that the canonical map $\un_X/n\to \nscr_X/n$ is an equivalence.
\end{proof}

\begin{defi}
  \label{defi:ell-adic-real}
  Let $X$ be a qcqs $\Q$-scheme. The big $\ell$-adic realisation of Nori motives is the functor \[R_\ell^\mathrm{big}\colon\DN(X,\Z)\xrightarrow{(-)^\wedge_\ell}\DN(X,\Z)^\wedge_\ell \xrightarrow{\sim} \D(X_{\et},\Z)^\wedge_\ell.\]
\end{defi}
\begin{rem}
  As in \cite[Section 7.2]{MR3477640}, we can endow $\D(-_{\et},\Z)^\wedge_\ell$ with the six functors formalism and show that the big $\ell$-adic realisation is compatible with the six functors.
\end{rem}

We now construct the $\ell$-adic realisation for constructible motives. It will land into the category of constructible sheaves of \cite{zbMATH06479630,MR4609461}.

\begin{constr}\label{LAdicSmall}
  If $X$ is a qcqs $\Q$-schemes of finite dimension, we have maps:
  \[\DN(X,\Z)\xrightarrow{R_\ell^\mathrm{big}}\D(X_\et,\Z)^\wedge_\ell\xrightarrow{-\otimes \Z/\ell^n\Z} \D(X_\et,\Z/\ell^n\Z).\]
The image of $\DNgm(X,\Z)$ through the latter is contained in the subcategory of constructible sheaves $\dconset(X_\et,\Z/\ell^n\Z)$ (that is those complexes whose restriction to any open affine subset are dualisable  on a finite stratification).
  The change of sites from the \'etale to the pro-\'etale site then induces an equivalence \[\dconset(X_\et,\Z/\ell^n\Z)\to \dconspro(X_\proet,\Z/\ell^n\Z)\] using \cite[Proposition 7.1]{MR4609461}. We therefore get a map
  \[\DNgm(X,\Z)\to \lim \dconset(X_\et,\Z/\ell^n\Z).\]
  \cite[Proposition 5.1]{MR4609461} ensures that the right-hand side is equivalent to $\dconspro(X_\proet,\Z_\ell)$ the $\infty$-category of constructible pro\'etale sheaves (again, those complexes whose restriction to any open affine subset are dualisable  on a finite stratification but in the pro-\'etale world). The categories $\dconspro((-)_\proet,\Z_\ell)$ are endowed with the six functors over quasi-excellent base schemes using \cite[Section 6.7]{zbMATH06479630}; these are defined by passing to the limit from the functors over $\dconset((-)_\et,\Z/\ell^n\Z)$ which are induced from those over $\D(-_\et,\Z/\ell^n\Z)$. This yields a map \[R_\ell\colon\DNgm(-,\Z)\to \dconspro((-)_\et,\Z_\ell)\] compatible with the six functors over quasi-excellent schemes which we call the \emph{$\ell$-adic realisation.}

  Finally, if $\Lambda$ is a localisation of a ring integral and flat over $\Z$, we can tensor the above functor by $\Perf_\Lambda$ over $\Perf_\Z$ to get the $\ell$-adic realisation \[R_\ell\colon\DNgm(-,\Lambda)\to \dconspro((-)_\proet,\Z_\ell)\otimes_{\Perf_\Z}\Perf_\Lambda \xrightarrow{\sim} \dconspro((-)_\proet,\Lambda_\ell),\] where the last equivalence is \cite[Corollary 2.3]{MR4630128}. 
  
  This functor is compatible with the functors of type $\otimes$, $f^*$ and $f_!$ and when we restrict to quasi-excellent base schemes and to finite type morphisms, it is compatible with the functors of type $f_*$ and $f^!$. Indeed, the same is true for the $\ell$-adic realisation of \'etale motives and therefore this follows from \cref{compatibilite sif}.
\end{constr}

\begin{prop}\label{Conservativity}
  Let $\Lambda$ be a localisation of a flat integral extension of $\Z$. Let $X$ be a qcqs $\Q$-scheme of finite dimension.
  \begin{enumerate}
    \item If $\Lambda$ is a $\Q$-algebra, and $\ell$ is a prime number, then the $\ell$-adic realisation functor \[R_\ell\colon\DNgm(X,\Lambda)\to \dconspro(X_\proet,\Lambda_\ell)\] is conservative for any prime $\ell$.
    \item If $\Lambda$ is not a $\Q$-algebra, the family of the $R_\ell$ where $\ell\notin \Lambda^\times$ is conservative. This means that if $M$ is an object  $\DNgm(X,\Lambda)$ such that for all $\ell\not\in\Lambda^\times$, the object $R_\ell(M)$ vanishes, then $M\simeq 0$. 
  \end{enumerate}
\end{prop}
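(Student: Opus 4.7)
The plan is to deduce case~(2) from case~(1), and to reduce case~(1) to the spectrum of a subfield of $\C$ by Noetherian induction. Throughout we use the compatibilities stated in \Cref{LAdicSmall}: $R_\ell$ commutes with $f^*$ and with base change of coefficients $\Lambda\to\Lambda'$ (since both $\DNgm(-,\Lambda)$ and $\dconspro((-)_\proet,\Lambda_\ell)$ are obtained by tensoring their integral versions by $\Perf_\Lambda$).

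For case~(2), the $\Z$-linearity of $\DN(X,\Lambda)$ implies that a geometric Nori motive $M$ vanishes iff $M\otimes_\Z\Q=0$ and $M\otimes_\Z\Z/p\Z=0$ for every prime $p$. The torsion vanishing is automatic for $p\in\Lambda^\times$; for $p\notin\Lambda^\times$, the hypothesis $R_p(M)=0$ implies $R_p(M)\otimes_\Z\Z/p\Z=0$ in $\D(X_\et,\Lambda/p\Lambda)$, which under the rigidity equivalence of \Cref{rigtorscomplet} is identified, by construction of $R_p$ in \Cref{LAdicSmall}, with $M\otimes_\Z\Z/p\Z$; hence $M\otimes_\Z\Z/p\Z=0$. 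The remaining vanishing $M\otimes\Q=0$ is case~(1) applied to $M\otimes\Q\in\DNgm(X,\Lambda\otimes_\Z\Q)$: since $\Lambda$ is not a $\Q$-algebra there exists at least one $\ell\notin\Lambda^\times$, and $R_\ell(M\otimes\Q)\simeq R_\ell(M)\otimes\Q=0$, while $\Lambda\otimes_\Z\Q$ is a $\Q$-algebra of the required form.

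For case~(1), \Cref{TensPerf} reduces us to $\Lambda=\Q$. Given $M\in\DNgm(X,\Q)$ with $R_\ell(M)=0$, continuity of $\DNgm$ (Assertion~3 of \Cref{megathmcons}) together with the analogous continuity of $\dconspro((-)_\proet,\Q_\ell)$ allow us to assume $X$ is of finite type over $\Q$, hence Noetherian and finite-dimensional. We then argue by induction on $\dim X$: for each generic point $\eta$ of $X$, the residue field $\kappa(\eta)$ is finitely generated over $\Q$, hence embeds into $\C$, and by \Cref{geomfields} we have $\DNgm(\kappa(\eta),\Q)\simeq\D^b(\mcal(\kappa(\eta),\Q))$. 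The $\ell$-adic realisation on this category is the derived functor of the exact abelian $\ell$-adic realisation constructed in Section~3.1, hence is $t$-exact; conservativity on $\D^b$ therefore reduces to conservativity on the heart $\mcal(\kappa(\eta),\Q)$, which is exactly the proposition on $\ell$-adic realisations at the end of Section~3.1. Since $R_\ell(M|_\eta)=R_\ell(M)|_\eta=0$, we obtain $M|_\eta=0$; by continuity this extends to $M|_U=0$ on some dense open $U\subset X$, and the localisation triangle $j_!j^*M\to M\to i_*i^*M$ (with $i\colon Z\hookrightarrow X$ the reduced closed complement) reduces the proof to showing that $i^*M\in\DNgm(Z,\Q)$ vanishes; but $\dim Z<\dim X$ and $R_\ell(i^*M)=i^*R_\ell(M)=0$, so the induction hypothesis concludes. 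The base case $\dim X=0$ is covered by the same argument applied to each irreducible component, which is already the spectrum of a finite extension of $\Q$.

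The most delicate step is the continuity of $\dconspro((-)_\proet,\Q_\ell)$ needed to reduce to the finite-type case in a manner compatible with the sequential limit construction of $R_\ell$ in \Cref{LAdicSmall}: this should follow from classical continuity for $\Z/\ell^n\Z$-constructible \'etale sheaves applied level-wise, combined with the fact that constructible pro-\'etale sheaves on a cofiltered limit of qcqs schemes with affine transitions are detected at a finite stage. The only other point requiring care is the verification that the $\ell$-adic realisation on the $\D^b$ of the abelian Nori category genuinely agrees with the $R_\ell$ of \Cref{LAdicSmall} after the identification \Cref{geomfields}, which is immediate from the construction of the latter as a limit built from \'etale sheaves on $(\Spec k)_{\proet}$.
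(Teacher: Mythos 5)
Your proof is correct in outline, but it takes a genuinely different route from the paper's for case~(1). The paper reduces to finite type over $\Q$ by continuity, then uses the tensor-product identification $\DNgm(X,\Lambda)\simeq\D^b(\mcal_\mathrm{perv}(X,\Q))\otimes_{\Perf_\Q}\Perf_\Lambda$ together with the description of mapping spectra in such a tensor product to reduce to $\Q$-coefficients, and then simply cites Ivorra--Morel's \cite[Proposition~6.11]{ivorraFourOperationsPerverse2022} for the conservativity of the $\ell$-adic realisation on rational perverse Nori motives. Your argument instead replaces that external citation by a Noetherian induction on $\dim X$: restriction to generic points, spreading out of vanishing from a generic point to a dense open, and the localisation triangle to conclude, with the base case over fields handled by the $t$-exactness of $R_\ell$ (\Cref{realelltex}) and the conservativity of the abelian $\ell$-adic realisation on $\mcal(k,\Q)$ from Section~3.1. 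This has the advantage of being more self-contained (it does not depend on Ivorra--Morel's computation of $\Ext$-groups between rational perverse motives), at the modest cost of two points you yourself flag: (i) one must check that the $R_\ell$ of \Cref{LAdicSmall} on $\DNgm(k,\Q)\simeq\D^b(\mcal(k,\Q))$ genuinely agrees (after deriving) with the abelian $\ell$-adic realisation of Section~3.1 -- this is true but not purely formal, since the two are defined by rather different universal/limit procedures, and an identification should be spelled out (e.g.\ by comparing both to the realisation of $\DMgm$); and (ii) the spreading-out step only needs continuity of $\DNgm$ (\Cref{megathmcons}) plus the general fact that in a filtered colimit of stable $\infty$-categories an object vanishing in the colimit vanishes at some finite stage, so the delicate-sounding continuity of $\dconspro$ is not actually needed -- you can drop that worry. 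For case~(2), your argument and the paper's are essentially the same (reduce to $\{-\otimes\Q,\ -\otimes\Z/\ell\}$-conservativity, handle torsion by rigidity, and invoke case~(1) for the rational part); your write-up is in fact more explicit than the paper's somewhat elliptic phrasing.
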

\begin{proof}
  We first show the first assertion. 
  By continuity (\cref{megathmcons}), we can reduce to the case where $X$ is a finite type $\Q$-scheme. As $\Lambda$ is a $\Q$-algebra, we have by \cite[Corollary 4.19]{SwannRealisation} an equivalence 
  \[\DN_\mathrm{gm}(X,\Lambda) \xrightarrow{\sim} \D^b(\mcal_\mathrm{perv}(X,\Q)) \otimes_{\Perf_\Q}\mathrm{Perf}_\Lambda.\]
  Moreover using the description of mapping spectra in tensor products provided by \cite[Proposition 3.5.5]{hoRevisitingMixedGeometry2023} we see that if the functor 
  \[R_\ell\colon \DNgm(X,\Q)\to\dconspro(X_\proet,\Q_\ell)\] is conservative, so is the functor of the proposition. Thus this follows from \cite[Proposition 6.11]{ivorraFourOperationsPerverse2022}. 

  We now prove the second assertion. As the family $\{-\otimes \Q$, $-\otimes \Z/\ell\Z \mid \ell \notin \Lambda^\times\}$ is conservative, by the first point, it suffices to show that if $R_\ell(M/\ell)$ vanishes, then so does $M/\ell$ for $M$ a geometric motive. This amounts to the rigidity theorem.
\end{proof}
We finish by a conjectural aspect of our construction: let $X$ be a qcqs $\Q$-scheme of finite dimension and let $\Lambda$ be a commutative ring. By construction we have a functor 
\[\rho_\mathrm{N}\colon\DM(X,\Lambda)\to\DN(X,\Lambda).\]
\begin{prop}
  \label{ifstdconjthengood}
  Assume that for all fields $K$ of finite transcendence degree over $\Q$, the $\infty$-category $\DMgm(K,\Q)$ is endowed with a  motivic t-structure (see the discussion around \cref{motivic t-structure}). Then for $X$ as above, the functor $\rho_\mathrm{N}$ is an equivalence. 
\end{prop}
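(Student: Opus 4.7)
The plan is to show, under the hypothesis, that the unit map $\un_X \to \nscr_X$ in $\DM(X,\Z)$ is an equivalence; since $\rho_\mathrm{N}$ is by construction the tensoring functor $-\otimes (\nscr_X \otimes_\Z \Lambda)$, this will imply the conclusion for every coefficient ring $\Lambda$. A first reduction is to the base: because $\nscr_X = p_X^* \nscr_{\Spec(\Q)}$ by \Cref{indfieldsalg} and $p_X^*$ is exact and symmetric monoidal, it suffices to prove that $\un_{\Spec(\Q)} \to \nscr_{\Spec(\Q)}$ is an equivalence in $\DM(\Spec(\Q),\Z)$.

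Next I would apply \Cref{LemmeMagique} to the colimit-preserving symmetric monoidal functor $\rho_\mathrm{N}\colon \DM(\Spec(\Q),\Z)\to \DN(\Spec(\Q),\Z)$. The required hypothesis that its right adjoint $\rho^\mathrm{N}_*$ commute with rationalisation is supplied by \Cref{RArat}. That lemma then reduces the problem to checking that $\rho_\mathrm{N}$ is an equivalence after base change along $\Z\to\Q$ and along $\Z\to\Z/n\Z$ for every nonzero integer $n$.

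The torsion half is already handled, and unconditionally so: by \Cref{rigrhoN}, based on the rigidity equivalence \Cref{rigfields}, the functor $\rho_\mathrm{N}\otimes_\Z \Mod_{\Z/n\Z}$ is an equivalence for every $n$, both sides identifying with $\D(\Spec(\Q)_\et,\Z/n\Z)$. The remaining case is the rational one: that $\rho_\mathrm{N}\otimes_\Z \Mod_\Q\colon \DM(\Spec(\Q),\Q)\to \DN(\Spec(\Q),\Q)$ is an equivalence. This is precisely the content of \cite[Theorem 4.11]{SwannRealisation}, whose hypothesis — a motivic t-structure on $\DMgm(K,\Q)$ for fields $K$ of finite transcendence degree over $\Q$ (in particular $K=\Q$) — is exactly the assumption of the present proposition. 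This final step is the genuine obstacle, and is essentially the entire force of the hypothesis; everything else is the formal packaging above, combining the pullback reduction with \Cref{LemmeMagique} and with rigidity for the torsion part.
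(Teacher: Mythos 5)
Your proof is correct and follows essentially the same route as the paper: reduce to $\Z$-coefficients over $\Spec(\Q)$, split into rational and torsion parts (the torsion part handled by rigidity via \Cref{rigfields} and \Cref{rigrhoN}), and invoke \cite[Theorem 4.11]{SwannRealisation} for the rational part, which is where the hypothesis actually enters. You make explicit the appeal to \Cref{LemmeMagique} together with \Cref{RArat} that the paper leaves implicit; one small citation slip is that $\nscr_X = p_X^*\nscr_{\Spec(\Q)}$ holds by \emph{definition} for a general $\Q$-scheme $X$, with \Cref{indfieldsalg} being the statement for field extensions that makes this definition consistent with the earlier one over fields.
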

\begin{proof}
  First, note that the functor $\rho_\mathrm{N}$ is the tensorisation by $\Mod_\Lambda$ of the functor with coefficients $\Z$, so that it suffices to deal with this case. Moreover as this functor is an equivalence after being tensored by $\Mod_{\Z/n\Z}$ for all $n\in\N^*$, it suffices to check that it becomes an equivalence after being tensored by $\Mod_\Q$. As $\nscr_X$ is pulled back from $\Spec(\Q)$, it also suffices to prove that $\rho_\mathrm{N}$ is an equivalence over $\Spec(\Q)$. The result then follows from \cite[Theorem 6.22]{SwannRealisation}.
\end{proof}
\begin{rem}
  \label{rem:Pridham}
  The hypothesis in the previous proposition could be weakened. Indeed it is very possible that using Pridham's \cite[Section 2.5]{MR4130840}, one can prove that if $\DMgm(\Spec(\Q),\Q)$ has a motivic t-structure, then the functor 
  \[\DMgm(\Spec(\Q),\Q)\to\D(\In\mcal(\Q,\Q))\] is fully faithful, so that, because its image is $\D^b(\mcal(\Q,\Q))$ (because $\mcal(\Q,\Q)$ is Noetherian, see \cite{MR1246272}), this induces an equivalence $\DMgm(\Spec(\Q),\Q)\simeq\D^b(\mcal(\Q,\Q))$, thus the functor \[\DM(\Spec(\Q),\Q)\to\DN(\Spec(\Q),\Q)\] is an equivalence, meaning that the map of rings $\un_k\otimes_\Z\Q\to\nscr_{k,\Q}$ is an equivalence. In other terms, to prove that $\DMgm(X,\Lambda)$ has a motivic t-structure for all $X$ as above and $\Lambda$, it ``suffices'' to prove that $\DMgm(\Spec(\Q),\Q)$ has a motivic t-structure. 
\end{rem}

Another aspect of the construction is that it gives evidence for a conjecture of Ayoub (\cite[Conjecture 2.8]{MR3751289}), up to the above \cref{ifstdconjthengood}:

\begin{prop}
  \label{DIndQ}
  Let $k$ be a subfield of $\C$.
   Denote by $\DN(k,\Q)^\flat$ the Verdier quotient of $\DN(k,\Q)$ by the kernel of the Betti realisation. 
   Then the presentable $\infty$-category $\DN(k,\Q)^\flat$ admits a separated t-structure whose heart is 
   canonically equivalent to $\In\mcal(k,\Q)$, and the natural functor 
  \[\D(\In\mcal(k,\Q))\to\DN(k,\Q)^\flat\] is an equivalence.
\end{prop}
\begin{proof}
    As $\mcal(k,\Q)$ is Noetherian, we can apply \cite[Proposition 3.6]{zbMATH02212628} to its indisation. This implies that the canonical functor \[J\colon \In \D^b(\mcal(k,\Q))\to\D(\In\mcal(k,\Q))\] is a Bousfield localisation (note that what is called $\mathrm{K}(\mathrm{Inj}(-))$ in \textit{loc. cit.} is nothing but $\In \D^b((-)^\omega)$ by \cite[Proposition 2.3]{zbMATH02212628}).

We now only have to show that $J$ and the Betti realisation have the same kernel. This amounts to the conservativity of the functor \[R'_\mathrm{B}\colon \D(\In \mcal(k,\Q))\to \Mod_\Q\] obtained by deriving the Betti realisation. Take an object $M$ to be killed by $R'_\mathrm{B}$. As the t-structure on $\D(\In \mcal(k,\Q))$ is separated, we can assume $M$ to be in the heart. By \cite[Lemma 1.3]{MR1246272} we can write $M=\colim M_i$ as a filtered colimit of objects $M_i$ of $\mcal(k,\Q)$ with injective transition maps. In particular, because $R'_\mathrm{B}(M)$ vanishes, so does each $R'_\mathrm{B}(M_i)$ and therefore $M_i$ by conservativity of the Betti realisation on Nori motives.
\end{proof}
\begin{rem}
  \label{conjAyoubDN}
  Using this, we can show that a conjecture of Ayoub (\cite[Conjecture 2.8]{MR3751289}) holds for Nori motives:
  Let $k$ be a subfield of $\C$. Let $M\in\D^b(\mcal(k,\Q))$ be a complex of Nori motives and let $\fcal\in\DN(k,\Q)$ be killed by the Betti realisation. Then the group 
  \[\mathrm{Hom}_{\DN(k,\Q)}(\fcal,M)=0\] vanishes.

  Indeed, because $\mcal(k,\Q)$ is Noetherian the above \cref{DIndQ} and \cite[Proposition 2.2]{MR1246272} imply that the functor \[\pi\colon \DN(k,\Q)\to\DN(k,\Q)^\flat\] is fully faithful when restricted to $\D^b(\mcal(k,\Q))$.
  We can write $\fcal=\colim_i N_i$ as a filtered colimit of objects in $\D^b(\mcal(k,\Q))$, and we have 
  \begin{align*}
    \mathrm{Hom}_{\DN(k,\Q)}(\fcal,M) & \simeq \lim_i\mathrm{Hom}_{\DN(k,\Q)}(N_i,M)\\
    &\simeq  \lim_i\mathrm{Hom}_{\DN(k,\Q)^\flat}(\pi(N_i),\pi(M))\\
    &\simeq  \mathrm{Hom}_{\DN(k,\Q)^\flat}(\pi(\fcal),\pi(M))\\
    & \simeq \mathrm{Hom}_{\DN(k,\Q)^\flat}(0,\pi(M))=0.
  \end{align*}
\end{rem}

\section{The abelian category of ordinary Nori motives.}

\subsection{Relative Nori motives as a pullback}

Until further notice, $k$ is an algebraically closed subfield of $\C$. For each finite type $k$-scheme $X$, Ayoub constructed in \cite{MR2602027} a Betti realisation functor 
\[\rho_\mathrm{B}\colon \DM(X,\Lambda)\to \D(X^\mathrm{an},\Z)\] to the derived category of analytic sheaves of abelian groups on $X^\mathrm{an}$. Let $\bscr_k $ be the algebra in $\DM(k,\Z)$ representing Betti cohomology \cite[Notation 1.57]{ayoubAnabelianPresentationMotivic2022} and if $X$ is a finite type $k$-scheme, let $\bscr_X$ be the pullback of $\bscr_k$ to $X$. Ayoub considered the functor on finite type $k$-schemes
\[\Mod_{\bscr_{(-)}}(\DM(-,\Z))\colon \left(\Sch_k^{ft}\right)^\op\to\mathrm{CAlg}(\mathrm{Pr}^{L}_\Z)\] 
of which he gave an explicit description in \cite[Theorem 1.93]{ayoubAnabelianPresentationMotivic2022}: he proved that the tautological functor 
$\DM(-,\Z)\to \Mod_{\bscr_{(-)}}(\DM(-,\Z))$ factors the refined Betti realisation 
\[\widetilde{\rho}_\mathrm{B}\colon\DM(-,\Z)\to \In  \dconsan((-)^\mathrm{an},\Z)\]
where $\dconsan((-)^\mathrm{an},\Z)$ denotes the category of constructible analytic complexes, and that for any finite type $k$-scheme $X$, the induced functor 
$\Mod_{\bscr_{X}}(\DM(X,\Z))\to \In  \dconsan((-)^\mathrm{an},\Z)$ is fully faithful. Its essential image is furthermore compactly generated: it is the indisation of a full subcategory $\dgman(X^\mathrm{an},\Z)$ of $\dconsan(X^\mathrm{an},\Z)$ of complexes of \emph{geometric origin}.

The realisation to Nori motives
\[\rho_\mathrm{N}\colon\DM(-,\Q)\to \DN(-,\Q)\] factors the refined Betti realisation of \'etale motives, hence there exists a map of algebra 
\[\varphi_X\colon\nscr_{X,\Q}\to \bscr_{X,\Q}\] compatible with pullback by finite type morphisms.
\begin{prop}\label{PBN}
Let $X$ be a $k$-scheme of finite-type. 
  There exist a natural map of algebras $\nscr_X\to\bscr_X$ giving back $\varphi_X$ after rationalisation. Moreover, it induces a pullback diagram 
  \[
  \begin{tikzcd}
      \nscr_X \arrow[r] \arrow[d]
          \arrow[dr, phantom, very near start, "{ \lrcorner }"]
        & \bscr_X \arrow[d] \\
      \nscr_{X,\Q} \arrow[r]
        & \bscr_{X,\Q}
  \end{tikzcd}.
  \]
\end{prop}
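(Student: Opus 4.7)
The plan is to construct the map $\nscr_X\to\bscr_X$ by extending the rational map $\varphi_X$ via Hasse fracture squares satisfied by both algebras, and then to deduce cartesianness of the target square from a rigidity argument on torsion.

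First, I would establish that $\bscr_X$ satisfies a Hasse fracture square analogous to that of $\nscr_X$:
\[
\begin{tikzcd}
\bscr_X \arrow[r] \arrow[d] & \lim_n \un_X/n \arrow[d] \\
\bscr_{X,\Q} \arrow[r] & (\lim_n \un_X/n)\otimes_\Z\Q.
\end{tikzcd}
\]
The Hasse fracture for any object of a $\Z$-linear stable $\infty$-category is automatic; the nontrivial content is the identification $\bscr_X\otimes_\Z\Z/n\Z\simeq\un_X/n$, which comes from rigidity: the Betti realisation $\rho_\mathrm{B}\otimes\Z/n\Z$ is an equivalence because it factors as Bachmann's rigidity equivalence $\DM(X,\Z/n\Z)\simeq\D(X_\et,\Z/n\Z)$ followed by Artin's comparison $\D(X_\et,\Z/n\Z)\simeq\D(X^{\mathrm{an}},\Z/n\Z)$, so its right adjoint sends $\un$ to $\un_X/n$. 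The same reasoning, via \Cref{rigrhoN} and \Cref{rigfields}, gives $\nscr_X/n\simeq\un_X/n$, so the upper-right corners of the two fracture squares agree canonically.

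Next, I would construct the algebra map $\nscr_X\to\bscr_X$. Since the fracture squares live in $\CAlg(\DM(X,\Z))$ and $\CAlg$ has all limits computed in the underlying category, it suffices to produce a compatible triple of algebra maps on the rationalisations, the profinite completions, and their common rationalised profinite completion. Take $\varphi_X$ on the rational parts and the identity of $\lim_n\un_X/n$ on the profinite parts. The required compatibility amounts to verifying that the two induced composites $\nscr_{X,\Q}\to(\lim_n\un_X/n)\otimes_\Z\Q$ agree; this follows from the naturality of the rationalised profinite completion map and the fact that $\varphi_X$ is, by construction, the algebra map induced by the factorisation of the refined Betti realisation through the refined Nori realisation (rationally), which is compatible with the comparison to the torsion étale theory. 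By construction, the resulting map $\nscr_X\to\bscr_X$ rationalises to $\varphi_X$ and becomes the identity of $\un_X/n$ after $\otimes_\Z\Z/n\Z$.

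Finally, for the cartesianness assertion, let $K:=\mathrm{fib}(\nscr_X\to\bscr_X)$. The identification of the torsion parts of the constructed map implies $K\otimes_\Z\Z/n\Z=0$ for every nonzero $n$, hence $K$ is uniquely divisible and the natural map $K\to K\otimes_\Z\Q$ is an equivalence. Since rationalisation is exact, $K\otimes_\Z\Q\simeq\mathrm{fib}(\nscr_{X,\Q}\to\bscr_{X,\Q})$, which is precisely the statement that the square of the proposition is cartesian.

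The main obstacle I expect is the bookkeeping required to ensure that the gluing happens internal to $\CAlg(\DM(X,\Z))$ rather than merely at the level of underlying objects, and in particular that the compatibility between $\varphi_X$ and the identity of $\lim_n\un_X/n$ after rationalised profinite completion is strictly verified. The cleanest resolution is to observe that both $\nscr_X\to\lim_n\un_X/n$ and (the yet-to-be-constructed) $\bscr_X\to\lim_n\un_X/n$ are characterised, after rationalisation, by the universal property of $\varphi_X$ as the unique algebra map factoring the refined Betti realisation through the refined Nori realisation; this uniqueness, combined with \Cref{RArat}, forces the compatibility without any further computation.
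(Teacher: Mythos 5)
Your proposal reaches the correct conclusions and rests on the same two key facts as the paper's proof (the identifications $\nscr_X/n \simeq \un_X/n$ from rigidity and $\bscr_X/n \simeq \un_X/n$ from Artin's comparison theorem), but it takes a genuinely different route. The paper does not construct the map $\nscr_X\to\bscr_X$ from the universal property of the Hasse fracture square of $\bscr_X$; instead, it introduces the commutative algebra $A$ defined as the pullback of $\bscr_X\to\bscr_{X,\Q}$ along $\varphi_X$, pastes this square onto the Hasse fracture square of $\bscr_X$, and identifies the resulting outer pullback square with the Hasse fracture square of $\nscr_X$, giving $A\simeq\nscr_X$. This produces the map (as the composite $\nscr_X\simeq A\to\bscr_X$) and cartesianness simultaneously, since the square defining $A$ is a pullback by construction. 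Your approach explicitly builds the algebra map by gluing $\varphi_X$ on the rational part with the identity on the profinite part, and then argues cartesianness separately by computing the fiber $K$ and observing that $K\otimes\Z/n=0$ for all $n$ implies $K\to K\otimes\Q$ is an equivalence. Both arguments have comparable cost; the paper's is slightly leaner because it needs neither the fiber computation nor the gluing step, but the compatibility issue you flag at the end---that the two composites $\nscr_{X,\Q}\to(\lim_n\un_X/n)\otimes\Q$ coincide---is equally present in the paper, where it is dispatched in a terse parenthetical (that the equivalence $\nscr_X/n\simeq\bscr_X/n$ factors through $\un_X/n\simeq\bscr_X/n$). Your proposed resolution via the universal property of $\varphi_X$ together with \Cref{RArat} is a reasonable way to close that gap.
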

\begin{proof}
  Let $A$ be the pullback \[
  \begin{tikzcd}
    A \arrow[r] \arrow[d]
        \arrow[dr, phantom, very near start, "{ \lrcorner }"]
      & \bscr_X \arrow[d] \\
    \nscr_{X,\Q} \arrow[r]
      & \bscr_{X,\Q}
\end{tikzcd}.\]
Thanks to Artin's Comparison Theorem 
\cite[Expos\'e XI, Th\'eor\`eme 4.4 \& Expos\'e
XVI, Th\'eor\`eme 4.1]{SGA4} (see also Step 1 of the proof of \cite[Lemma 1.107]{ayoubAnabelianPresentationMotivic2022}), the Hasse arithmetic fracture square for $\bscr_X$ is of the form:
\[
\begin{tikzcd}
  \bscr_X \arrow[r] \arrow[d]
      \arrow[dr, phantom, very near start, "{ \lrcorner }"]
    & \lim_n\un_X/n  \arrow[d] \\
    \bscr_{X,\Q} \arrow[r]
    & (\lim_n\un_X/n)\otimes \Q
\end{tikzcd},\] 
so that by pasting pullbacks, $A$ also fits in the pullback square 
\[
\begin{tikzcd}
  A \arrow[r] \arrow[d]
      \arrow[dr, phantom, very near start, "{ \lrcorner }"]
    & \lim_n\un_X/n \arrow[d] \\
    \nscr_{X,\Q}\arrow[r]
    & (\lim_n \un_X/n)\otimes \Q
\end{tikzcd}.\] where we recognise the Hasse fracture square of $\nscr_\Z$ (the arrow are the correct ones because the equivalence $\nscr_X/n\xrightarrow{\sim} \bscr_X/n$ factors the equivalence $\un_X/n \xrightarrow{\sim} \bscr_X/n$.)
\end{proof}
Therefore we have a canonical functor 
\[\DN(-,\Z)\to \In \dgman((-)^\mathrm{an},\Z).\]
\begin{cor}\label{Betti real 6FF}
  Relative Nori motives with integral coefficients (even relative to a non algebraically closed field) have a Betti realisation 
  \[\DN(-,\Z)\to \D((-)^\mathrm{an},\Z).\]
 which is compatible with the six functors.
\end{cor}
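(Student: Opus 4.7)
The plan is to construct the functor by combining the algebra map $\nscr_X \to \bscr_X$ from Proposition \ref{PBN} with Ayoub's identification $\Mod_{\bscr_X}(\DM(X,\Z)) \supseteq \In \dgman(X^\mathrm{an},\Z)$. First, I would extend the data of Proposition \ref{PBN} to any $\Q$-scheme $X$: since $\nscr_X$ and $\bscr_X$ are both defined by pulling back from $\Spec(\Q)$ (using Proposition \ref{indfieldsalg} for $\nscr$ and the analogous property for $\bscr$ which follows from the Hasse square formulation), the construction of the morphism of algebras $\nscr_X \to \bscr_X$ is functorial in $X$. Tensoring along this map gives a base change functor on modules
\[\DN(X,\Z) = \Mod_{\nscr_X}(\DM(X,\Z)) \longrightarrow \Mod_{\bscr_X}(\DM(X,\Z)),\]
and composing with the (refined) Betti realisation of \'etale motives $\Mod_{\bscr_X}(\DM(X,\Z)) \to \D(X^\mathrm{an},\Z)$ gives the desired functor. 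For a $\Q$-scheme $X$ lying over a non-algebraically closed subfield of $\C$, one fixes an embedding of an algebraic closure of $\Q$ into $\C$ and interprets $X^\mathrm{an}$ as $(X_\C)^\mathrm{an}$; the resulting functor is canonically independent of the embedding because the whole construction is Galois-equivariant (both the map $\nscr \to \bscr$ and Ayoub's refined realisation are natural in the base).

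Compatibility with the six operations is then formal. The Betti realisation $\rho_\mathrm{B}\colon \DM(-,\Z) \to \D((-)^\mathrm{an},\Z)$ on \'etale motives is compatible with the six functors by Ayoub's work, and the Nori realisation $\rho_\mathrm{N}\colon \DM(-,\Z) \to \DN(-,\Z)$ is compatible with them by Theorem \ref{compatibilite sif}. Since our Betti realisation of $\DN$ factors $\rho_\mathrm{B}$ through $\rho_\mathrm{N}$, compatibility with the left adjoints $f^*$, $f_!$ and $\otimes$ is immediate from the factorisation together with the fact that objects of the form $\rho_\mathrm{N}(M)$ generate $\DN(-,\Z)$ under colimits. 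For the right adjoints $f_*$, $f^!$ and $\underline{\Hom}(M,-)$ with $M$ of geometric origin, apply Proposition \ref{BlackMagic} exactly as in the proof of Theorem \ref{compatibilite sif}: the map $\nscr_X \to \bscr_X$ is a morphism of commutative algebras whose underlying $\infty$-categories of modules are obtained from rigid compact pieces by indisation, so the right adjoints of the left adjoints are computed by the analogous $\bscr$-linearised right adjoints on \'etale motives.

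The main obstacle is the bookkeeping around the target $\D((-)^\mathrm{an},\Z)$ for $\Q$-schemes living over non-algebraically closed fields: one must verify that passing through a complex embedding (equivalently, using Galois invariants on the algebraic closure) produces a well-defined functor whose six-functor compatibility is not spoiled by the Galois descent step. Once that is set up, the remaining checks are routine applications of Proposition \ref{BlackMagic} and Theorem \ref{compatibilite sif}.
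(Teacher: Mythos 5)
Your proposal matches the approach the paper takes: the realisation is the composite of the base-change $\Mod_{\nscr_X} \to \Mod_{\bscr_X}$ along the algebra map of \Cref{PBN}, followed by Ayoub's identification of $\Mod_{\bscr_X}(\DM(X,\Z))$ with $\In\dgman(X^\an,\Z)$, and compatibility with the six functors is checked as in \Cref{compatibilite sif} (reduce to $\Q$- and $\Z/n\Z$-coefficients, handle the torsion part by rigidity, and use \Cref{BlackMagic} for the right adjoints in the rational case). This is essentially what the paper's surrounding discussion is implicitly doing.

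One small correction to your setup: the paper always works with $k$ an explicit subfield of $\C$ rather than an abstract field, so there is no choice of complex embedding to make, and the claim of ``canonical independence of the embedding'' is a distraction — the inclusion $k \hookrightarrow \C$ (hence $\bar{k} \hookrightarrow \C$) is part of the data, and the analytification $X^\an := (X_\C)^\an$ is already determined. Relatedly, $\bscr_X$ is not defined by pullback from $\Spec(\Q)$; it is pulled back from $\Spec(\bar{k})$ (or $\Spec(\C)$), and the extension to the non-algebraically-closed case goes not through a new definition of $\bscr$ but through the Galois-descent identification $\DN(X,\Z) \simeq \DN(X_{\bar{k}},\Z)^{\mathrm{h}\Gal(\bar{k}/k)}$ together with the Galois-equivariance of the refined Betti realisation. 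These are phrasing issues rather than gaps; the core argument you give is the right one.
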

\begin{prop}
  \label{PBBetti}
  The square 
  \[\begin{tikzcd}
      \DN(-,\Z) \arrow[r] \arrow[d] 
        & \In \dgman((-)^\mathrm{an},\Z) \arrow[d] \\
      \DN(-,\Q) \arrow[r]
        & \In \dgman((-)^\mathrm{an},\Q)
  \end{tikzcd}\] is cartesian in $\mathrm{Fun}(\Sch_k^\op,\mathrm{CAlg}(\mathrm{Pr}^\mathrm{L}_\Z))$.
\end{prop}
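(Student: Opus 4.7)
The plan is to use the characterization of all four corners as module categories and reduce the statement to a pushout of $\mathbb{E}_\infty$-algebras. Since colimits in a functor $\infty$-category are computed pointwise, it suffices to prove, for every $X$, that the square is cocartesian in $\CAlg(\PrL_\Z)$. By definition of $\DN$ we have $\DN(X,\Z) \simeq \Mod_{\nscr_X}(\DM(X,\Z))$, and by Ayoub's result recalled at the beginning of the section we have $\In\dgman(X^\an,\Z) \simeq \Mod_{\bscr_X}(\DM(X,\Z))$, with analogous identifications after tensoring with $\Q$. Since the functor $A \mapsto \Mod_A(\DM(X,\Z))$ from $\CAlg(\DM(X,\Z))$ to $\CAlg(\PrL_{\DM(X,\Z)})$ preserves colimits (a standard consequence of \cite[Section 4.8]{lurieHigherAlgebra2022}), the claim reduces to showing that the associated square of algebras
\[
\begin{tikzcd}
    \nscr_X \arrow[r] \arrow[d]
      & \bscr_X \arrow[d] \\
    \nscr_{X,\Q} \arrow[r]
      & \bscr_{X,\Q}
\end{tikzcd}
\]
is cocartesian in $\CAlg(\DM(X,\Z))$.

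To establish this algebra pushout, I will use that in the $\Mod_\Z$-linear symmetric monoidal setting, rationalization is a cobase change along $\Z\to\Q$: we have $\nscr_{X,\Q}\simeq \nscr_X\otimes_\Z\Q$ and $\bscr_{X,\Q}\simeq \bscr_X\otimes_\Z\Q$ as $\mathbb{E}_\infty$-algebras in $\DM(X,\Z)$. Associativity of the relative tensor product then gives
\[
\nscr_{X,\Q}\otimes_{\nscr_X}\bscr_X \;\simeq\; (\nscr_X\otimes_\Z\Q)\otimes_{\nscr_X}\bscr_X \;\simeq\; \bscr_X\otimes_\Z\Q \;\simeq\; \bscr_{X,\Q},
\]
and one checks by naturality of rationalization applied to the comparison $\varphi_X\colon\nscr_X\to\bscr_X$ that the resulting universal map from the pushout to $\bscr_{X,\Q}$ coincides with the horizontal arrow of the diagram.

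The main obstacle is the identification $\nscr_{X,\Q}\simeq \nscr_X\otimes_\Z\Q$ as $\mathbb{E}_\infty$-algebras rather than just as underlying motives. This rests on the fact that rationalization in $\DM(X,\Z)$ is a smashing Bousfield localization, which lifts canonically to a localization of $\mathbb{E}_\infty$-algebras; combined with the identification of \Cref{ratcasgen} and the base change compatibilities of $\nscr_{(-)}$ recorded in \Cref{indfieldsalg}, this pins $\nscr_X\otimes_\Z\Q$ down to the Nori algebra $\nscr_{X,\Q}$ representing rational Nori motivic cohomology. The analogous statement for $\bscr$ is built into Ayoub's construction. Once these identifications are in place, the statement in the functor category is automatic, since both $\nscr_X$ and $\bscr_X$ are pulled back from small base schemes and the pullback functors $f^\ast$ are symmetric monoidal, so compatible with relative tensor products of algebras.
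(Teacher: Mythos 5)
Your argument establishes the cocartesian (pushout) property of the square: you identify all four corners with module categories over $\nscr_X$, $\nscr_{X,\Q}$, $\bscr_X$, $\bscr_{X,\Q}$ and show that the corresponding square of $\mathbb{E}_\infty$-algebras in $\DM(X,\Z)$ is a pushout, using the smashing property of rationalisation to get $\nscr_{X,\Q}\simeq\nscr_X\otimes_\Z\Q$ and $\bscr_{X,\Q}\simeq\bscr_X\otimes_\Z\Q$, so that $\nscr_{X,\Q}\otimes_{\nscr_X}\bscr_X\simeq\bscr_{X,\Q}$. Taking $\Mod_{(-)}$ then yields the pushout of $\PrL_\Z$-objects. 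This is a coherent argument for the pushout assertion. (A minor remark: your ``the claim reduces to showing that the associated square of algebras is cocartesian'' should be phrased as a sufficient condition rather than a reduction, since $A\mapsto\Mod_A$ does not reflect pushouts; but you do in fact establish the algebra pushout, so the logic goes through.)

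However, the paper's own proof establishes a \emph{different} property of the same square, namely that it is \emph{cartesian}: the proof forms the pullback $P(X)$ in $\CAlg(\PrL_\Z)$, observes that the comparison functor $\DN(X,\Z)\to P(X)$ preserves compact objects so that its right adjoint commutes with $-\otimes\Q$, and then applies \Cref{LemmeMagique} to check the equivalence after tensoring with $\Q$ (trivial) and with $\Z/n\Z$ (rigidity, both sides becoming $\D(X_\et,\Z/n\Z)$). The cartesian form is what is actually used downstream: the proof of \Cref{PBBetticons} explicitly cites \Cref{PBBetti} as providing ``a pullback square after passing to the $\In$-categories.'' Given this, the word ``cocartesian'' in the statement is almost certainly a slip for ``cartesian.'' The point is that, for module categories, a pushout square of $\mathbb{E}_\infty$-algebras gives a pushout of module $\infty$-categories for free, but the \emph{pullback} of the module categories (the arithmetic fracture descent statement $\Mod_{\nscr_X}\simeq\Mod_{\nscr_{X,\Q}}\times_{\Mod_{\bscr_{X,\Q}}}\Mod_{\bscr_X}$) does not follow formally from the algebra-level pullback of \Cref{PBN}, and requires an independent argument such as \Cref{LemmeMagique}. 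Your proof therefore does not establish the property that the rest of the paper relies on. If you want to recover the cartesian statement, you would need to supplement your algebra pushout with the descent argument, either along the lines of the paper's proof or by invoking a general arithmetic fracture theorem for module categories over a cartesian-and-cocartesian square of ring spectra.
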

\begin{proof}
  It suffices to check the proposition after evaluating at a finite type $k$-scheme $X$. If we let $P(X)$ be the pullback of the proposition, there is a functor $\DN(X,\Z)\to P(X)$. This functor preserves compact objects (note that $k$ is algebraically closed thus $\DN(X,\Z)\simeq \In\DNgm(X,\Z)$ by \cref{megathmcons}), thus its right adjoint commutes with colimits, whence commutes with $-\otimes \Q$ so that we may use \cref{LemmeMagique}.
\end{proof}

\begin{cor}
  \label{betticons}
  Let $k$ be any subfield of $\C$ and let $X$ be a finite type $k$-scheme.
  The Betti realisation 
  \[\DNgm(X,\Z)\to \D^b_c(X^\mathrm{an},\Z)\] is conservative.
\end{cor}
\begin{proof}
  Indeed, the functor $\DNgm(X,\Z)\to\DNgm(X\times_k\overline{k},\Z)$ is conservative by continuity and Galois descent (if $M_{\overline{k}}= 0$, there exists a finite extension $L$ of $k$ such that $M_L$ is zero, and then by Galois descent the functor $(-)_L$ is conservative, thus $M= 0$).
  Then by \cref{PBBetti} the functor $\DNgm(X_{\overline{k}},\Z)\to \D^b_c(X^\mathrm{an},\Z)$ is conservative: if $R_\mathrm{B}(M)=0$, then by conservativity of the realisation rationally we have that $M\otimes \Q=0$, thus both $R_\mathrm{B}(M)$ and $M\otimes \Q$ are zero thus $M=0$ in the pullback.
\end{proof}

The pullback square of \cref{PBBetti} allows us to endow the $\DN(X,\Z)$ with a t-structure. Recall first that if $\Lambda$ is regular, the category $\dconsan(X^\mathrm{an},\Lambda)$ is endowed with a t-structure induced by the t-structure of $\D(X^\an,\Lambda)$. Lurie's \cite[Lemma C.2.4.3]{lurieSpectralAlgebraicGeometry} ensures that $\In \dconsan(X^\mathrm{an},\Lambda)$ is endowed with a t-structure whose negative (resp. positive) objects are the filtered colimits of negative (resp. positive) objects, and that therefore, the canonical functor \[\In \dconsan(X^\mathrm{an},\Lambda)\to \D(X^\an,\Lambda)\] is t-exact. One consequence of \cite[Theorem 1.93]{ayoubAnabelianPresentationMotivic2022}, is that $\dgman(X^\mathrm{an},\Lambda)$ has a t-structure induced by that of $\dconsan(X^\mathrm{an},\Lambda)$; we will denote its heart by $\Sh_\mathrm{gm}(X^\mathrm{an},\Lambda)$. Using \cite[Lemma C.2.4.3]{lurieSpectralAlgebraicGeometry} again, the stable $\infty$-category $\In \dgman((-)^\mathrm{an},\Lambda)$ has a t-structure which we call the \emph{ordinary t-structure}. 

On the other hand, we have $\DN(X,\Q)=\In  \D^b(\mcal_\mathrm{perv}(X,\Q))$. In \cite[Definition 4.1]{SwannRealisation}, the second author defined a t-structure on $\D^b(\mcal_\mathrm{perv}(X,\Q))$ that is called the \emph{constructible t-structure} in the quoted paper but that we will call the \emph{ordinary t-structure} in this paper for consistency. Denote its heart by $\mcal_\mathrm{ord}(X,\Q)$, then by \cite[Corollary 4.19]{SwannRealisation}, the natural map \[\D^b(\mcal_\mathrm{ord}(X,\Q))\to \D^b(\mcal_\mathrm{perv}(X,\Q))\] is an equivalence. Furthermore, the functor $\D^b(\mcal_\mathrm{ord}(X,\Q))\to\dgman(X^\mathrm{an},\Q)$ is t-exact with the ordinary t-structure on both sides. Therefore, using \cref{LimTStructures}, we get

\begin{prop}\label{tstrkbarre}
  Assume that $k$ is an algebraically closed subfield of $\C$.
  Let $X$ be a finite type $k$-scheme, there is a t-structure on $\DN(X,\Z)$ such that the rationalisation, endowed with its ordinary t-structure, and the Betti realisation are t-exact. Furthermore, we have a pullback square:
  \[\begin{tikzcd}
    \DN(X,\Z)^\heart \arrow[r] \arrow[d] 
      & \In \Sh_\mathrm{gm}(X^\mathrm{an},\Z)\arrow[d] \\
    \In  \mcal_\mathrm{ord}(X,\Q) \arrow[r]
      & \In \Sh_\mathrm{gm}(X^\mathrm{an},\Q).
  \end{tikzcd}\] 
\end{prop}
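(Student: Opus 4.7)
The plan is to realise this t-structure as a limit t-structure via the cartesian description of $\DN(X,\Z)$ provided by \Cref{PBBetti}. Although \Cref{PBBetti} is stated as a cocartesian square in $\mathrm{CAlg}(\mathrm{Pr}^\mathrm{L}_\Z)$, its proof identifies $\DN(X,\Z)$ with the honest pullback of the cospan
\[\DN(X,\Q)\longrightarrow \In\dgman(X^\an,\Q)\longleftarrow \In\dgman(X^\an,\Z)\]
in $\PrL_\St$ via \Cref{LemmeMagique}; so we may apply \Cref{LimTStructures} to this cospan provided that each corner carries a t-structure and that both arrows of the cospan are t-exact.

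The three corners come equipped with canonical t-structures. The categories $\In\dgman(X^\an,\Z)$ and $\In\dgman(X^\an,\Q)$ inherit their ordinary t-structures from $\dgman$ via \cite[Lemma C.2.4.3]{lurieSpectralAlgebraicGeometry}, with hearts $\In\Sh_\mathrm{gm}(X^\an,\Z)$ and $\In\Sh_\mathrm{gm}(X^\an,\Q)$ respectively (since the t-structure on $\dgman$ is bounded, the heart of its indisation is the indisation of its heart). Similarly, from the equivalence $\DN(X,\Q)\simeq \In\D^b(\mcal_\mathrm{perv}(X,\Q))\simeq \In\D^b(\mcal_\mathrm{ord}(X,\Q))$ recalled in the excerpt using \cite[Corollary 2.19]{SwannRealisation}, the ordinary t-structure of \cite[Definition 2.1]{SwannRealisation} extends to a t-structure on $\DN(X,\Q)$ of heart $\In\mcal_\mathrm{ord}(X,\Q)$. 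The rationalisation $\In\dgman(X^\an,\Z)\to \In\dgman(X^\an,\Q)$ is t-exact because it is the restriction of the flat tensor product $-\otimes_\Z\Q$ on $\D(X^\an,\Z)$, and the Betti realisation $\D^b(\mcal_\mathrm{ord}(X,\Q))\to\dgman(X^\an,\Q)$ is t-exact by construction of the ordinary t-structure in \cite{SwannRealisation}, so its indisation is t-exact as well. Thus \Cref{LimTStructures} produces a t-structure on $\DN(X,\Z)$ whose $\leqslant 0$ (resp. $\geqslant 0$) part consists of objects whose image in each corner is in degrees $\leqslant 0$ (resp. $\geqslant 0$), and for which all three projections are t-exact. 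In particular, both the rationalisation and the Betti realisation from $\DN(X,\Z)$ are t-exact.

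For the heart, the explicit description of $\DN(X,\Z)^{\leqslant 0}$ and $\DN(X,\Z)^{\geqslant 0}$ provided by \Cref{LimTStructures} shows that the heart consists precisely of objects whose projections to the three corners all lie in their respective hearts. Since $\In\dgman(X^\an,\Q)^\heart=\In\Sh_\mathrm{gm}(X^\an,\Q)$ is itself the pullback of $\In\mcal_\mathrm{ord}(X,\Q)$ and $\In\Sh_\mathrm{gm}(X^\an,\Z)$ over nothing extra, one concludes that $\DN(X,\Z)^\heart$ is the pullback
\[\DN(X,\Z)^\heart\;\simeq\;\In\mcal_\mathrm{ord}(X,\Q)\times_{\In\Sh_\mathrm{gm}(X^\an,\Q)}\In\Sh_\mathrm{gm}(X^\an,\Z),\]
which is the advertised cartesian square. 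The only genuine technical point is the computation of the hearts of the indisations, which is a direct application of \cite[Lemma C.2.4.3]{lurieSpectralAlgebraicGeometry} to the bounded t-structures on $\D^b(\mcal_\mathrm{ord}(X,\Q))$ and on $\dgman(X^\an,\Lambda)$; the verification that both arrows of the cospan are t-exact has already been established in \cite{SwannRealisation} and in the paragraph preceding the statement.
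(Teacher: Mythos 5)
Your proposal is correct and follows the same route the paper takes: use the cartesian description of $\DN(X,\Z)$ from \Cref{PBBetti}, endow the three corners with their ordinary t-structures via \cite[Lemma C.2.4.3]{lurieSpectralAlgebraicGeometry} and \cite[Corollary 2.19]{SwannRealisation}, verify that both legs of the cospan are t-exact, and apply \Cref{LimTStructures} to produce the t-structure and to compute its heart. The paper leaves the heart computation implicit; your spelling it out (including the observation that for a pullback of t-exact functors the heart is the pullback of hearts, since isomorphisms between heart objects in $\In\dgman(X^\an,\Q)$ already live in its heart) is a harmless and accurate elaboration.
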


\begin{rem}\label{SHNori} 
  Using the ideas of this section, we can define a category of Nori motivic spectra which is akin to the category $\mathrm{SH}^{\et}(X)$ of \'etale motivic spectra when $X$ is a scheme of finite type over an \emph{algebraically closed} field of characteristic $0$. First, let $\bscr_X^{\mathbb{S}}$ be the algebra in $\mathrm{SH}^{\et}(X)$ representing Betti cohomology with $\mathbb{S}$-coefficients. Once again, Ayoub constructed in \cite[Theorem 1.93]{ayoubAnabelianPresentationMotivic2022} an equivalence $\Mod_{\bscr_{X}^{\mathbb{S}}}(\mathrm{SH}^{\et}(X))\to \In  \dgman(X^\mathrm{an},\mathbb{S})$, where $\dgman(X^\mathrm{an},\mathbb{S})$ is the full subcategory of the category $\dconsan(X^\mathrm{an},\mathbb{S})$ of constructible analytic spectral sheaves, made of objects of geometric origin and is endowed with an induced t-structure. 

  By analogy with \cref{PBN}, we can define $\nscr^{\mathbb{S}}_X$ to fit in the pullback   
  \[\begin{tikzcd}
    \nscr_X^{\mathbb{S}} \arrow[r] \arrow[d]
        \arrow[dr, phantom, very near start, "{ \lrcorner }"]
      & \bscr_X^{\mathbb{S}} \arrow[d] \\
    \nscr_{X,\Q} \arrow[r]
      & \bscr_{X,\Q}
\end{tikzcd}
\]
and then define $\mathrm{SH}_\mathrm{Nori}^{\et}(X):={\Mod}_{\nscr_X^{\mathbb{S}}}(\mathrm{SH}^{\et}(X))$. We now claim that the square 
\[\begin{tikzcd}
  \mathrm{SH}_\mathrm{Nori}^{\et}(X) \arrow[r] \arrow[d] 
      & \In \dgman(X^\mathrm{an},\mathbb{S}) \arrow[d] \\
    \DN(-,\Q) \arrow[r]
      & \In \dgman((-)^\mathrm{an},\Q)
\end{tikzcd}\] is cartesian. The full faithfulness of the map $\Psi_X\colon \mathrm{SH}_\mathrm{Nori}^{\et}(X)\to P(X)$ (where $P(X)$ denotes the pullback) follows from the general claim that if $A=B\times_D C$ is a pullback of commutative algebra objects in a stable $\infty$-category $\ccal$, the map $\Mod_A(\ccal) \to \Mod_\mathrm{B}(\ccal) \times_{\Mod_D(\ccal)}\Mod_C(\ccal)$ is fully faithful which can be seen using a direct computation of mapping spectra. To show that the map $\Psi_X$ is essentially surjective, note that $P(X)$ is endowed with an ordinary t-structure by \cref{LimTStructures} so that it suffices to reach the objects of its heart. We claim that it is the case. Indeed the functor \[-\otimes_\mathbb{S} \Z\colon \In \dgman(X^\mathrm{an},\mathbb{S})\to \In \dgman(X^\mathrm{an},\Z)\] yields the equivalence $\tau^{\geqslant 0}(-\otimes_\mathbb{S} \Z)$ between the hearts, therefore we have an equivalence $P(X)^\heart \to \DN(X,\Z)^\heart$.
Using that at the generic point $\eta$ for a projective smooth morphism $f$, the object $f_*\Z$ is the sum in $\DN(\eta,\Z)$ of its cohomology objects (by Deligne's criterion for degeneracy of spectral sequences \cite[Th\'eor\`eme 1.5]{zbMATH03256272}), one can prove the claim at the generic point, and then localisation enables one to prove that $\Psi_X$ is an equivalence in general.
Hence we have an ordinary motivic t-structure on $\mathrm{SH}_\mathrm{Nori}^\et(X)$ whose heart is the same as that of $\DN(X,\Z)$. 
\end{rem}

\subsection{The ordinary t-structure on geometric Nori motives}

Let $X$ be a qcqs $\Q$-scheme of finite dimension. In this section we construct an ordinary t-structure on $\DNgm(X,\Z)$. Note that by \cref{Corgeomfields} we already have a t-structure on $\DNgm(k,\Z)$ for any subfield $k$ of $\C$.
By continuity and because a filtered colimit of stable $\infty$-categories with $t$-structures and t-exact functors has a natural t-structure, we can assume that $X$ is a finite type $\Q$-scheme. In this case we may use
 the punctual gluing of t-structures due to Vaish \cite[Proposition 3.8]{MR4109490}.
In order to use his result, we have to verify the condition he calls "continuity of the t-structure", which is the purpose of the next proposition. Let us first recall how the t-structure is defined:
\begin{defi}
  \label{punctuT}
  For each $x\in X$, denote by $i_x\colon \Spec(\kappa(x))\to X$ the inclusion of $x$ in $X$.
  An object $M\in\DNgm(X,\Z)$ is called 
  \begin{enumerate}
    \item \emph{punctually connective} if for each $x\in X$ we have $i_x^*M \in \D^b(\mcal(\kappa(x),\Z))^{\leqslant 0}$
    \item \emph{punctually coconnective} if for each $x\in X$ we have $i_x^!M \in \D^b(\mcal(\kappa(x),\Z))^{\geqslant 1}$.
  \end{enumerate}
  We denote by $\DNgm(X,\Z)^{\leqslant 0}$ and $\DNgm(X,\Z)^{\geqslant 1}$ the associated full subcategories.
\end{defi}
\begin{prop}
  Let $X$ be a finite type $\Q$-scheme. Let $x\in X$ and let $M_x\in\D^b(\mcal(\kappa(x),\Z))$.
  \begin{enumerate}
    \item If $M\in\D^b(\mcal(\kappa(x),\Z))^{\leqslant 0}$ there exists a nonempty open subset $U$ of $\overline{\{x\}}$ and an object $M\in\DNgm(U,\Z)^{\leqslant 0}$ such that $i_x^*M = M_x$.
    \item  If $M\in\D^b(\mcal(\kappa(x),\Z))^{\geqslant 1}$ there exists a nonempty open subset $U$ of $\overline{\{x\}}$ and an object $M\in\DNgm(U,\Z)^{\geqslant 1}$ such that $i_x^!M = M_x$.
  \end{enumerate}
\end{prop}
\begin{proof}
  In both cases we can assume that $x$ is a generic point of $X$ and that $X$ is irreducible, and even smooth up to replace $X$ by one of its dense open subsets. In this case both $i_x^*$ and $i_x^!$ agree.  Thus the proof of the two cases are almost the same and we only deal with the first point.
  By continuity, we know that there exist some $U_0\subset X$ nonempty and open, and $M_0\in\DNgm(U_0,\Z)$ such that $i_x^*M_0 = M_x$. Now the Betti realisation being conservative, we know that there exists some integer 
  $C\in\N^*$ such that if $p$ is a prime verifying that the multiplication by $p$ is zero on $M_0$, then $p\mid C$. Moreover, there is some other integer $a>0$ such that the Betti realisation of $M_0$ lives in degrees $[-a,a]$.
  Because $\Z_\ell$ is flat over $\Z$ and the functor $\D^b_c(X_\et,\Z_\ell)\to \D^b_c(X^\mathrm{an},\Z_\ell)$ are conservative and t-exact, we see that for each prime number $\ell$, the $\ell$-adic realisation of $M_0$ is concentrated in degrees $[-a,a]$. Let $U_1$ be a nonempty open subset of $U_0$ on which $M_0$ is dualisable. 
  For each $j\in [1,a]$ and each prime number $\ell$ dividing  $C$, because $i_x^*\HH^j(R_\ell(M_1))$ is zero and $\HH^j(R_\ell(M_1))$ is a local system on $(U_1)_\et$, there exists a nonempty open subset $U_{j,\ell}$ of $U_1$ such that the restriction to $U_{j,\ell}$ of $\HH^j(R_\ell(M_1))$ is zero.
  We let $U$ be the intersection of all of those $U_{j,\ell}$ and $M$ be the restriction of $M_1$ to $U$.
  Then we see that $R_\mathrm{B}(M)$ is connective, thus $M$ is punctually connective, as wanted.
\end{proof}

\begin{thm}\label{tord}
  The conditions of \cref{punctuT} define a t-structure on $\DNgm(X,\Z)$ and the Betti realisation is t-exact. If $X$ is of finite type over an algebraically closed field embedded in $\C$, this t-structure is the same as the t-structure constructed in \cref{tstrkbarre}. Moreover, for any qcqs $\Q$-scheme $X$ of finite dimension this induces a t-structure on $\DNgm(X,\Z)$ such that all $\ell$-adic realisations are t-exact. 
\end{thm}

\begin{proof}
  The t-structure on $\DNgm(X,\Z)$ is given by the above proposition together with \cite[Proposition 3.8]{MR4109490}. The fact that the Betti realisation is t-exact can be checked on stalked at closed points, where it is true. Conservativity of the Betti realisation \cref{betticons} then implies the comparison with the t-structure of \cref{tstrkbarre}. Thus the $\ell$-adic realisation are also t-exact, and when we endow  $\DNgm(X,\Z)$, for $X$ a general finite dimentional qcqs $\Q$-scheme by continuity using \cref{coLimTStructures}, the $\ell$-adic realisation stay t-exact because if $X=\lim X_i$ is a cofiltered limit with affine transitions, although the functor \[\colim_i\dconspro(X_{i,\proet},\Z_\ell)\to\dconspro(X_\proet,\Z_\ell)\] is not an equivalence, it is t-exact.
\end{proof}

\begin{defi}
  \label{defi:Mord}
  Let $X$ be a finite dimentional qcqs $\Q$-scheme. The category of \emph{ordinary Nori motives} $\mcal_\mathrm{ord}(X,\Z)$ is the heart of the t-structure given by \cref{tord}. We call the latter the \emph{ordinary t-structure}.
\end{defi}
\begin{rem}
  All the construction above would work if we replace $\Z$ by a localisation of the ring of integers of a number field, finite or not over $\Q$. This is because in this case the category $\mcal(k,\Lambda)$ is well-behaved (because $\Lambda$ is a Dedekind ring), so that \cref{geomfields} and \cref{Corgeomfields} work in that case. For example, we have a t-structure with $\overline{\Z}$-coefficients.
\end{rem}
\begin{rem}
  As the $\ell$-adic realisation is also conservative by \cref{Conservativity}, any t-exactness property of the ordinary t-structure on $\ell$-adic sheaves transfers to the ordinary t-structure on Nori motives.
\end{rem}

\begin{rem}
  Let $k$ be a subfield of $\C$ and let $X$ be a $k$-variety.
  Arapura considered in \cite{MR2995668} an abelian category of motivic sheaves modelled on constructible sheaves. By construction, his category 
  $\mcal_{\mathrm{Arapura}}(X,\Z)$ has an universal property, so that there exists a faithful exact functor $\mcal_{\mathrm{Arapura}}(X,\Z)\to\mcal_\mathrm{ord}(X,\Z)$ compatible with the Betti realisations. We do not know much about this functor, except that it is an equivalence for $X=\Spec(k)$.
\end{rem}

\subsection{The derived category of ordinary Nori motives}
We now prove that $\DNgm$ is the derived category of ordinary Nori motives following Nori \cite{MR1940678}. To that end, we first need:
\begin{lem}\label{EtDescDb}
  The functor $\D^b(\mcal_\mathrm{ord}(-,\Lambda))$ is a finitary \'etale hypersheaf on qcqs schemes of finite dimension.
\end{lem}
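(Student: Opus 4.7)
Both properties follow from the analogous properties of $\DNgm(-,\Lambda)$, using that the ordinary t-structure is defined so that pullbacks $f^*$ are t-exact.

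For the finitary property, let $X = \lim_i X_i$ be a cofiltered limit of qcqs finite-dimensional $\Q$-schemes with affine transition maps. Assertion 3 of \Cref{megathmcons} yields $\DNgm(X,\Lambda) \simeq \colim_i \DNgm(X_i,\Lambda)$ with t-exact transitions. Then \Cref{coLimTStructures} identifies the hearts, giving $\mcal_\mathrm{ord}(X,\Lambda) \simeq \colim_i \mcal_\mathrm{ord}(X_i,\Lambda)$ as a filtered colimit of abelian categories with exact transitions, and \Cref{colimDb} gives the finitary property
\[\D^b(\mcal_\mathrm{ord}(X,\Lambda)) \simeq \colim_i \D^b(\mcal_\mathrm{ord}(X_i,\Lambda)).\]

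For \'etale hyperdescent, I would follow the pattern of \Cref{etdescfields}. Write $\D^b(\mcal_\mathrm{ord}(-,\Lambda)) = \colim_n \D^{[-n,n]}(\mcal_\mathrm{ord}(-,\Lambda))$; since each $\D^{[-n,n]}(\mcal_\mathrm{ord}(X,\Lambda))$ is a $(2n+2)$-category, it suffices to show that each $\D^{[-n,n]}(\mcal_\mathrm{ord}(-,\Lambda))$ is an \'etale sheaf. For an \'etale covering $f\colon Y \to X$ with its Cech nerve $f_\bullet$, the $\ell$-adic realisation $R_\ell$ from \Cref{LAdicSmall} is t-exact by \Cref{realelltex} and conservative on geometric motives by \Cref{Conservativity}, and its target $\dconspro(-_{\proet}, \Lambda_\ell)$ satisfies \'etale hyperdescent. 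The argument of \Cref{etdescfields} then applies almost verbatim: the descent reduces to showing that for $M \in \D^{[-n,n]}(\mcal_\mathrm{ord}(X,\Lambda))$ the truncated totalization $\lim_\Delta \tau^{\leqslant n}(f_m)_* f_m^* M$ exists in $\D^{[-n,n]}(\mcal_\mathrm{ord}(X,\Lambda))$ and that $R_\ell$ commutes with it.

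The hardest point is this compatibility between realisation and bounded totalization. It follows exactly as in \Cref{etdescfields}: the totalization is a finite limit in a $(2n+2)$-truncated category, hence is preserved by any exact functor landing in $\D^{\geqslant -n}$, while the inclusion $\D^{\geqslant -n} \hookrightarrow \D^b$ preserves finite limits; combined with the already established compatibilities of $R_\ell$ with $(f_m)_*$ and $f_m^*$ via \Cref{compatibilite sif} and \Cref{LAdicSmall}, this yields the desired compatibility. No genuinely new ingredient beyond those of \Cref{etdescfields} is required.
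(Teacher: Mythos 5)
Your proof of finitarity matches the paper's route (continuity of $\DNgm$, identification of hearts, then \Cref{colimDb}), with the welcome extra detail of explicitly identifying $\mcal_\mathrm{ord}(X,\Lambda)\simeq\colim_i\mcal_\mathrm{ord}(X_i,\Lambda)$ from the t-exact transitions.

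For hyperdescent, the strategy (follow \Cref{etdescfields}, peel off bounded pieces, run the truncated Cech-nerve argument through a conservative t-exact realisation to a category already known to satisfy descent) is correct, but you route through the $\ell$-adic realisation $R_\ell$, whereas the paper routes through the tautological functor $\D^b(\mcal_\mathrm{ord}(-,\Lambda))\to\DNgm(-,\Lambda)$. The latter is automatically conservative and t-exact, commutes with the operations $f^*$, $f_*$ for $f$ \'etale essentially by construction (both categories share the same heart and the operations on the heart side are exact), and the target is already shown to be an \'etale hypersheaf by \Cref{etdesc} --- no extra machinery needed. By contrast, going through $R_\ell$ imports two side-conditions that you should flag: (i) \Cref{realelltex} and \Cref{Conservativity} impose that $\Lambda$ be a localisation of a flat integral $\Z$-algebra, which is narrower than the regularity hypothesis under which $\mcal_\mathrm{ord}$ is defined, so you silently shrink the coefficient generality; and (ii) the compatibility of $R_\ell$ with $f_*$ landing in constructible complexes, as recorded in \Cref{LAdicSmall}, is only asserted over \emph{quasi-excellent} schemes, so over a general qcqs finite-dimensional $\Q$-scheme you would have to first reduce to the finite-type-over-$\Q$ case by the finitarity you just proved (this reduction is harmless but should be said). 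Also, for part 2 of \Cref{Conservativity} conservativity is guaranteed only for the joint family of all $R_\ell$ with $\ell\notin\Lambda^\times$, not a single one, unless $\Lambda$ is a $\Q$-algebra. None of these is fatal, but the paper's choice of realisation is cleaner precisely because it sidesteps all of them.
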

\begin{proof}
  The finitarity of the functor follows from \cref{colimDb} and the continuity of $\DNgm$. Because for $f$ an \'etale map, the functors $f^*$ and $f_!$ are t-exact, they exist on $\D^b(\mcal_\mathrm{ord}(-,\Lambda))$ as an adjunction, compatible with the realisation functor \[\D^b(\mcal_\mathrm{ord}(-,\Lambda))\to \DNgm(-,\Lambda)\] which is conservative. Thus, because $\DNgm(-,\Lambda)$ is an \'etale hypersheaf, the functor $\mcal_\mathrm{ord}(-,\Lambda)$ is an \'etale sheaf: it is a sub presheaf of $\DNgm(-,\Lambda)$ and the cohomological amplitude with respect to the ordinary t-structure can be tested \'etale-locally. We can now apply the same proof as \cite[Theorem 5.24]{SwannRealisation} to conclude.
\end{proof}
\begin{thm}\label{DbNat}
  Let $\Lambda$ be a regular ring which is flat over $\Z$. Let $X$ be a qcqs $\Q$-schemes of finite dimension. Then, the canonical functor 
  \[\D^b(\mcal_\mathrm{ord}(X,\Lambda))\to \DNgm(X,\Lambda)\] is an equivalence.
\end{thm}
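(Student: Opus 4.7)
The strategy is to reduce to $X$ of finite type over an algebraically closed subfield of $\C$ by continuity and \'etale hyperdescent, and then to split the check along the fracture square of \Cref{LemmeMagique} into a rational case (known) and a torsion case (controlled by rigidity).

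First, using finitarity and \'etale hyperdescent on both sides (by \Cref{EtDescDb} for the source, and \Cref{megathmcons} together with \Cref{etdesc} for the target), we reduce to the case where $X$ is affine of finite type over a subfield $k$ of $\C$; combining \Cref{GaloisInvCat} with the same descent argument further lets us assume $k=\bar k$ is algebraically closed. The comparison functor $F\colon \D^b(\mcal_\mathrm{ord}(X,\Lambda)) \to \DNgm(X,\Lambda)$ is t-exact by the very construction of the ordinary t-structure on the target and is the identity on hearts. Each generator $f_\sharp\Lambda_Y(i)$ of $\DNgm(X,\Lambda)$ is t-bounded, by conservativity and t-exactness of the refined Betti realisation (\Cref{tstrcons}); since the subcategory of t-bounded objects is thick, every geometric Nori motive is t-bounded, and a d\'evissage on truncation triangles then yields essential surjectivity of $F$.

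The hard part is full faithfulness. We pass to ind-completions to obtain a colimit-preserving, compact-preserving, $\Lambda$-linear functor $\tilde F\colon \In\D^b(\mcal_\mathrm{ord}(X,\Lambda)) \to \DN(X,\Lambda)$, using the identification $\DN(X,\Lambda)=\In\DNgm(X,\Lambda)$ which holds because $\bar k$ has $\Lambda$-cohomological dimension zero (\Cref{megathmcons}). Because $\tilde F$ preserves compacts, its right adjoint commutes with all colimits, in particular with $-\otimes_\Z\Q$, so \Cref{LemmeMagique} reduces the problem to showing that $\tilde F$ becomes an equivalence after tensoring with $\Q$ and with $\Z/n\Z$ for every nonzero integer $n$. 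Rationally, combining \Cref{colimDb} and \Cref{TensPerf}, $\tilde F\otimes\Q$ identifies with the ind-completion of the analogous comparison for the coefficient ring $\Lambda\otimes\Q$, which is an equivalence by \cite[Corollary 2.19]{SwannRealisation}.

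The main obstacle is the torsion case. On the right, \Cref{rigtorscomplet} gives $\DN(X,\Lambda)\otimes_\Z\Z/n\Z \simeq \D(X_\et,\Lambda/n\Lambda)$. On the left, \Cref{torsderive} reduces the problem to showing that $\mcal_\mathrm{ord}(X,\Lambda)$ has enough torsion-free objects and that its $n$-torsion subcategory $\mcal_\mathrm{ord}(X,\Lambda)[n]$ is equivalent, via the Artin motive functor, to the category of constructible \'etale sheaves of $\Lambda/n\Lambda$-modules (a relative version of \Cref{rigab}). Granted these two identifications, the mod-$n$ comparison becomes Nori's classical theorem \cite{MR1940678} that the bounded derived category of this abelian category coincides with $\dconset(X_\et,\Lambda/n\Lambda)$. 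The most delicate point is the construction of torsion-free covers in $\mcal_\mathrm{ord}(X,\Lambda)$, which one expects to follow from the flatness of $\Lambda$ over $\Z$ together with the fact that the canonical generators $f_\sharp\Lambda_Y(i)$ of $\DNgm(X,\Lambda)$ yield torsion-free classes in the heart.
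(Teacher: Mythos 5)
Your strategy is genuinely different from the paper's. The paper follows Nori \cite{MR1940678} and Beilinson \cite{MR0923133} directly: after reducing by \Cref{EtDescDb}, continuity and Zariski hyperdescent to $X$ separated of finite type over $\Q$, it reduces to showing that the functors $\mathrm{E}^i_M = \Hom_{\DNgm}(M,(-)[i])$ are effaceable on $\mcal_\mathrm{ord}(X,\Lambda)$ for $i>0$ (Beilinson's criterion). Admissibility is then proved first for $\Lambda_X$ (induction on $\A^n_\Q$, Noether normalisation, induction on affine covers), next for torsion-free lisse objects (because $-\otimes L$ is t-exact and left adjoint of a t-exact functor), and finally for torsion lisse objects via an explicit two-term resolution $\iota(K)\to\iota(f_!\Lambda^n)\to L_\mathrm{tors}$ by torsion-free lisse objects, using flatness of $\Lambda$ over $\Z$ in a crucial way. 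A stratification d\'evissage concludes. This is not a fracture-square argument at all.

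The gap in your proposal is exactly at the point you flag as the ``most delicate'': you need $\mcal_\mathrm{ord}(X,\Lambda)$ to have enough torsion-free objects in the sense of \Cref{TorsOb} in order to invoke \Cref{torsderive}, and you do not prove this. This is not a minor omission. For perverse Nori motives, the analogous ``enough torsion-free objects'' statement is precisely \Cref{conjTors}, which the paper leaves as an open conjecture, with \Cref{DbPerv} giving only the conditional version of what you are trying to prove unconditionally for the ordinary heart. The paper's effaceability argument is designed to \emph{sidestep} this: it never asserts that $\mcal_\mathrm{ord}(X,\Lambda)$ has enough torsion-free objects globally, only that specific lisse objects admit explicit torsion-free resolutions, and then d\'evissage takes over. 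Moreover, the heuristic you offer — that the generators $f_\sharp\Lambda_Y(i)$ of $\DNgm(X,\Lambda)$ ``yield torsion-free classes in the heart'' — does not give what is needed: those generators are complexes, not objects of the heart, so they do not by themselves produce epimorphisms from torsion-free objects onto arbitrary $M\in\mcal_\mathrm{ord}(X,\Lambda)$. One can imagine a stratification induction (building covers from $j_!\iota(f_!\Lambda^n)$ on the open stratum and $i_*(\cdot)$ on its complement, using t-exactness of $j_!$ and $i_*$ for the ordinary t-structure), but this must actually be written out. You would also need the ordinary-t-structure analogue of \Cref{torsMp} identifying $\mcal_\mathrm{ord}(X,\Lambda)[n]$ with constructible \'etale $\Lambda/n$-sheaves, which the paper only proves for the perverse heart.

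Two smaller remarks. The reduction to $X$ over $\bar k$ algebraically closed is a detour the paper does not take (it works over $\Q$ directly); it is legitimate via \Cref{EtDescDb} and \Cref{GaloisInvCat}, but adds bookkeeping without simplifying the core difficulty. Your rational case is fine: \cite[Corollary 2.19]{SwannRealisation} does give the statement over $\Lambda\otimes\Q$, and \Cref{colimDb}, \Cref{TensPerf} handle the change of coefficients.
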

\begin{proof}
  By \cref{EtDescDb}, continuity and Zariski hyperdescent for $\DNgm(-,\Lambda)$, we can assume that $X$ is of finite type over $\Q$ and separated.
  Note that by \cite[Lemma 1.4]{MR0923133}, to prove the theorem, it is sufficient to prove that for any pair of Nori motives $M,N$ in $\mcal_\mathrm{ord}(X,\Lambda)$ and any integer $i>0$, the map 
  \[\mathrm{Ext}^i_{\mcal_\mathrm{ord}(X,\Lambda)}(M,N)=\Hom_{\D^b(\mcal_\mathrm{ord}(X,\Lambda))}(M,N[i])\to \Hom_{\DNgm(X,\Lambda)}(M,N[i])\] is an isomorphism, which, given the characterisation of the $\mathrm{Ext}^i$, is equivalent to the functor \[\mathrm{E}_M^i:=\Hom_{\DNgm(X,\Lambda)}(M,(-)[i])\colon \mcal_{\mathrm{ord}}(X,\Lambda)\to\mathrm{Ab}\] being effaceable for each $i>0$ and each $M$ in $\mcal_\mathrm{ord}(X,\Lambda)$. More explicitly, this means that for every Nori motive $N$ in $\mcal_\mathrm{ord}(X,\Lambda)$, there exists a monomorphism $f\colon N\to P$ whose image under our functor $\mathrm{E}^i_M$ vanishes. 
  
  The proof of this fact is then very similar to the proof the second author gave, following Nori, in the case of Nori motives with rational coefficients (see \cite{MR1940678,SwannRealisation}). Until the end of this proof, we will call \emph{admissible} an Nori motive $M$ in $\mcal_{\mathrm{ord}}(X,\Lambda)$ such that for all $i>0$ the functor $\mathrm{E}_M^i$ is effaceable. 

  By remarking that all the proofs that lead to \cite[Corollary 4.16]{SwannRealisation} work verbatim with integral coefficients (because we only use the four operations $f^*,f_*,f^!,f_!$, geometric arguments and the vanishing \cite[Lemma 2.6]{SwannRealisation} works with integral coefficients because Nori proved it in this generality in \cite[Proposition 2.2]{MR1940678}), we know that the constant object $\Lambda_X$ is admissible on $X$. To be more precise, we first prove by induction on $n$ the admissibility of the constant object on $\A^n_\Q$ (the case $n=0$ is true because we already know the theorem in that case by \cref{geomfields}, while the induction uses well-chosen projection to $\A^{n-1}$ and the already cited \cite[Lemma 4.6]{SwannRealisation}). Then, using Noether normalisation one can prove that the result is true for any affine scheme. Finally, because our scheme is separated, we can do an induction on the numbers of affine open subsets needed to cover $X$ and deduce that $\Lambda_X$ is indeed admissible on $X$
  
  The next step is to prove that any torsion-free lisse sheaf $L$ on $X$ is admissible. This follows from the fact that because $L$ is dualisable, the functor $-\otimes L$ is t-exact because $L$ is torsion-free (this can be checked after realisation and then on stalks so that this reduces to the same statement about modules) and left adjoint of a t-exact functor; thus by \cite[Corollary 4.13]{SwannRealisation} the object $\Lambda\otimes L = L$ is admissible. Using \cite[Corollary 4.13]{SwannRealisation} again, the functor $f_!$ for $f$ \'etale preserves admissibility: the functors $f^*$ and $f_!$ are t-exact because they are t-exact after taking the Betti realisation. Moreover by \cite[Lemma 4.10]{SwannRealisation}, if one has a short exact sequence \[0\to M'\to M\to M'' \to 0\] with $M'$ and $M''$ admissible, then $M$ is admissible. Now if $L$ is a lisse motive, then we have the short exact sequence 
  \[0\to L_\mathrm{tors} \to L\to L_\mathrm{fr}\to 0\] with $L_\mathrm{tors}$ the biggest torsion subobject and $L_\mathrm{fr}$ torsion-free. Note that $L_\mathrm{tors}$ and $L_\mathrm{fr}$ are also lisse because they are after taking the Betti realisation. Hence $L_\mathrm{fr}$ is admissible. 
  
  We claim that $L_\mathrm{tors}$ is also admissible. As it is torsion, it is in fact of the form $\iota(F)$ for a lisse \'etale sheaf on $X$: it is of the form $\iota(F)$ because of \cref{rigtorscomplet} and $F$ is lisse because the functor $\D(X_\et,\Lambda)\to\D(X^\mathrm{an},\Lambda)$ is conservative and the image of $F$ in $\DN(X,\Lambda)$ is dualisable, and thus its image in $\D(X^\mathrm{an},\Lambda)$ is also dualisable which yields the dualisability of $F$ by conservativity. Choose some $f\colon U\to X$ \'etale such that $f^*F = N$ is a constant $\Lambda$-module. We can choose a surjection $\Lambda^n\to N$, so that by t-exactness of $f_!$, we have a composition of surjections $ f_!\Lambda^n\twoheadrightarrow  f_!f^*F\twoheadrightarrow F$. Let $K$ be the kernel of this composed surjection. It is also a lisse and torsion-free \'etale sheaf: the ring $\Lambda$ is flat over $\Z$ which is a Dedeking domain, so that a subobject of a torsion-free object is again torsion-free. Thus, in $\D^b(\mcal_\mathrm{ord}(X,\Lambda))$ we have an exact triangle 
  \[\iota(K)\to \iota(f_!\Lambda^n)\to L_\mathrm{tors}.\] Because $\iota(K)$ and $\iota(f_!\Lambda^n)$ are admissible, for any motive $P\in\mcal_\mathrm{ord}(X,\Lambda)$ the maps 
  \[\map_{D^b(\mcal(X,\Lambda))}(\iota(K),P)\to \map_{\DNgm(X,\Lambda)}(\iota(K),P)\] and 
  \[\map_{D^b(\mcal(X,\Lambda))}(\iota(f_!\Lambda^n),P)\to \map_{\DNgm(X,\Lambda)}(\iota(f_!\Lambda^n),P)\] are equivalences. Thus the analogous map for $L_\mathrm{tors}$ is an equivalence so that $L_\mathrm{tors}$ is also admissible. This implies that $L$ is admissible. Now, the same proof as \cite[Theorem 4.18]{SwannRealisation} (which is the same proof as \cite[Proposition 3.10]{MR1940678}) gives by d\'evissage that any motive is admissible, finishing the proof.
\end{proof}

\subsection{Applications: motivic Leray spectral sequence; arc descent}
The existence of ordinary integral Nori motives implies that the Leray spectral sequence, with integral coefficients, is motivic; this is a generalisation of \cite[Section 5.4]{ivorraFourOperationsPerverse2022} and \cite{MR2178703}.
\begin{prop}\label{Leray}
  Let $X$ be a finite type $k$-scheme with $k$ a subfield of $\C$.
  Let $\fcal$ be an object of the abelian category $\Sh_\mathrm{cons}(X^\an,\Z)$ of constructible sheaves of abelian groups over $X$. 
  
  Assume that $\fcal$ is motivic, meaning that it is the image of some integral Nori motive under the Betti realisation. Let \[X\xrightarrow{f}Y\xrightarrow{g}Z\] be morphisms of finite type $\C$-schemes. Then the Leray spectral sequence 
  \[\mathrm{E}_2^{p,q}=\mathrm{R}^pg_*\mathrm{R}^qf_*\fcal\Rightarrow \mathrm{R}^{p+q}(g\circ f)_*\fcal\] is the image of a spectral sequence in $\mcal_\mathrm{ord}(Z)$.
\end{prop}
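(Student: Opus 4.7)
The plan is to interpret the topological Leray spectral sequence as the image under the Betti realisation of a spectral sequence living entirely inside the abelian category $\mcal_\mathrm{ord}(Z,\Z)$ of ordinary Nori motives on $Z$.

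First I would lift $\fcal$ to a Nori motive in the heart. By hypothesis there is some $N\in\DNgm(X,\Z)$ with $R_\mathrm{B}(N)\simeq \fcal$ viewed as a complex in degree zero. Since the Betti realisation is t-exact for the ordinary t-structure by \Cref{tstrcons}, the object $M:=H^0_\mathrm{ord}(N)\in\mcal_\mathrm{ord}(X,\Z)$ still satisfies $R_\mathrm{B}(M)\simeq \fcal$ (the higher ordinary cohomology motives of $N$ realise to $H^i(\fcal)=0$). From now on I would work with this $M$.

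Next I would produce the motivic Leray spectral sequence as the spectral sequence associated to the Postnikov-filtered object $\{\tau^{\leqslant p}f_*M\}_p$ of $f_*M\in\DN(Y,\Z)$, pushed forward by $g_*$ and truncated on $Z$. This is the standard spectral sequence attached to a filtered object in a stable $\infty$-category with a right-complete t-structure (see for instance \cite[Section 1.2.2]{lurieHigherAlgebra2022}). Using the isomorphism $g_*f_*\simeq(g\circ f)_*$ coming from the six functor formalism \Cref{SixFunBig}, one obtains a convergent spectral sequence in $\mcal_\mathrm{ord}(Z,\Z)$ of the shape
\[ E_2^{p,q}=H^p_\mathrm{ord}\bigl(g_*H^q_\mathrm{ord}(f_*M)\bigr) \Rightarrow H^{p+q}_\mathrm{ord}\bigl((g\circ f)_*M\bigr). \]
Convergence causes no trouble: $M$ is bounded in the ordinary t-structure and both $f$ and $g$ have finite cohomological dimension on constructible Nori motives (since this can be checked after the conservative Betti realisation).

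Finally I would compare the two spectral sequences termwise by applying $R_\mathrm{B}$. Its t-exactness together with its compatibility with direct images $f_*$ and $g_*$ of geometric source motives, which is \Cref{compatibilite sif}, yields the identifications $R_\mathrm{B}(H^p_\mathrm{ord}(g_*H^q_\mathrm{ord}(f_*M)))\simeq R^pg_*R^qf_*\fcal$ on the $E_2$ page and $R_\mathrm{B}(H^{p+q}_\mathrm{ord}((g\circ f)_*M))\simeq R^{p+q}(g\circ f)_*\fcal$ on the abutment. The only delicate point I foresee is making sure the identification is compatible with the differentials on every page; this will be essentially formal because the Betti realisation, being an exact functor of stable $\infty$-categories, sends the whole Postnikov-filtered object defining the motivic spectral sequence to the analogous filtered object computing the topological Leray spectral sequence, and the differentials on each page are functorially extracted from this filtered object.
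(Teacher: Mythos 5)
Your proposal is correct and spells out precisely the argument that the paper's one-line proof ("this is a consequence of the existence of the t-structure and the functoriality of pushforwards") alludes to: the Postnikov filtration on $f_*M$ with respect to the ordinary t-structure, pushed forward by $g_*$, and compared to its topological counterpart via the t-exact, pushforward-compatible Betti realisation. One small caveat: for the comparison of differentials you invoke \Cref{compatibilite sif}, which concerns $\rho_\mathrm{N}\colon\DM\to\DN$; what you actually need is compatibility of the Betti realisation $\DN\to\D((-)^\mathrm{an},\Z)$ with $f_*$ on geometric objects, which is the corollary following \Cref{PBN} (and which also requires the observation that the truncations $\tau^{\leqslant q}f_*M$ remain geometric, by \Cref{tstrcons} and \Cref{sifgm}).
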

\begin{proof}
  This is a consequence of the existence of the ordinary t-structure and the functoriality of pushforwards. Indeed, let $M\in\mcal_\mathrm{ord}(X)$ and consider the filtered object 
  \[\dots g_*\tau^{\leqslant i+1}f_*M \to g_*\tau^{\leqslant i}f_*M \to g_*\tau^{\leqslant i-1}f_*M\to \dots\] of $\DNgm(Z,\Z)$ (the truncation functors are for the ordinary t-structure). By \cite[Proposition 1.2.2.14]{lurieHigherAlgebra2022} (applied in $\In\DNgm(Z,\Z)$ so that the category considered has all sequential colimits and a t-structure compatible with sequential colimits by \cite[Lemma C.2.4.3]{lurieSpectralAlgebraicGeometry}), we obtain a converging spectral sequence in $\mcal_\mathrm{ord}(Z)$ whose first page reads 
  $$E_1^{p,q} = \HH^{p+q}(\mathrm{cofib}(g_*\tau^{\leqslant p+1}f_*M \to g_*\tau^{\leqslant p}f_*M))$$ and converges to $\HH^{p+q}(\colim_i g_* \tau^{\leqslant i}f_*M)$. Using that the extension of $g_*$ to $\In\DNgm(Y,\Z)$ commutes with colimits by definition, and the fact that $f_*M$ is bounded, we can rewrite the abutement of the spectral sequence as $\HH^{p+q}(g_*f_*M)$. 
  The triangle for the truncation functors gives that $\mathrm{cofib}(g_*\tau^{\leqslant p+1}f_*M \to g_*\tau^{\leqslant p}f_*M) \simeq g_*\HH^p(f_*M)[-p]$. Thus, reindexing $E_1$ to $E_2$ to fit the general convention, we obtain the motivic Leray spectral sequence 
  \[E_2^{p,q}=\HH^{q}g_*\HH^pf_*M\Rightarrow \HH^{p+q}(g\circ f)_*M.\]

Applying the Betti realisation to the filtered object \[\dots g_*\tau^{\leqslant i+1}f_*M \to g_*\tau^{\leqslant i}f_*M \to g_*\tau^{\leqslant i-1}f_*M\to \dots\]
and letting $\fcal$ be the Betti realisation of $M$ yields a filtered object \[\dots g_*\tau^{\leqslant i+1}f_*\fcal \to g_*\tau^{\leqslant i}f_*\fcal \to g_*\tau^{\leqslant i-1}f_*\fcal \to \dots\]
by t-exactness of the Betti realisation (see \cref{tord}) and its compatibility with pushforwards (\cref{Betti real 6FF}). The usual Leray spectral sequence is by definition obtained from this filtered object using the same tools as above. Unwinding the definitions, this proves that it is the image of the motivic Leray spectral sequence through the Betti realisation. 
\end{proof}

As an application of the existence of the ordinary t-structure, we prove that Nori motives afford arc-descent.
\begin{thm}\label{ArcDesc}
The functor 
 $\DNgm(-,\Z)$
 satisfies arc-hyperdescent over qcqs $\Q$-schemes of finite dimension. This means that for any arc-hypercovering $X_\bullet \to X$ (\cite[Definition 1.2]{MR4278670}) where  $X$ and each $X_n$ are $\Q$-schemes of finite dimension, the functor 
 \[\DNgm(X,\Z)\to\lim_{[n]\in\Delta}\DNgm(X_n,\Z)\] is an equivalence.
 \end{thm}
 \begin{proof}
   As \[\DNgm(X,\Z) = \bigcup_n \DNgm(X,\Z)^{[-n,n]}\] we see that to prove that $\DNgm(-,\Z)$ is an arc-hypersheaf, it suffices to show that $\DNgm(-,\Z)^{[-n,n]}$ is an arc-hypersheaf (hence even an arc-sheaf), where the truncation is taken for the ordinary t-structure.

   The presheaf $\DNgm(-,\Z)^{[-n,n]}$ takes values in $\mathrm{Cat}_{2n+1}$ which is compactly generated by cotruncated objects (see \cite[Example 3.6 (3)]{MR4278670}). We consider \[\fcal\colon (\Sch_\Q^\mathrm{qcqs})^\op\to \mathrm{Cat}_{2n+1}\] its left Kan extension to $(\Sch_\Q^\mathrm{qcqs})^\op$ the category of (non necessarily finite-dimensional) qcqs $\Q$-schemes. By \cref{megathmcons} we have that $\fcal(X)= \DNgm(X,\Z)^{[-n,n]}$ for $X$ of finite dimension. We may now apply the main theorem of \cite{MR4278670}: a finitary $v$-sheaf taking values in an $\infty$-category which is compactly generated by cotruncated objects is an arc-sheaf if and only if it is aic-$v$-excisive. By definition, $\fcal$ is finitary.

   First, we want to prove that $\fcal$ is a $v$-sheaf. Given a $v$-covering $f\colon Y\to X$, we can write it as $\lim_i (f_i\colon Y_i\to X_i)$ with $X_i$ and $Y_i$ of finite type over $\Q$. Moreover, as the sheaf condition for $f$ and $\fcal$ is finite because it is a totalisation in an $\infty$-category which is compactly generated by cotruncated objects, it is compatible with the filtered colimit of the continuity. Hence, it suffices to show that $\DNgm(-,\Z)^{[-n,n]}$ is a $v$-sheaf when restricted to finite type $\Q$-schemes, but for Noetherian schemes the $v$-topology and the $h$-topology agree, so that this is follows from \cref{hdescGro} and \cref{megathmcons}\footnote{In fact in our setting there is a simpler proof thanks to the t-structure: one can do as in \cite[Proposition 5.11]{MR4278670}, in which they reduce $v$-descent to descent along proper surjective maps thanks to \'etale hyperdescent, and then the proper base change theorem implies the result, with a combination of Barr-Beck-Lurie theorem and the finitary property. This proof essentially dates back to Deligne in \cite{MR1106898}.}.

   Next, we prove that $\fcal$ satisfies aic-$v$-excision. Recall that this means that absolulety integrally  closed valuation ring $V$ which is a $\Q$-algebra, together with a prime ideal $\mathfrak{p}\subset V$, the square 
\[\begin{tikzcd}
	{\fcal(\Spec(V))} & {\fcal(\Spec(V/\mathfrak{p}))} \\
	{\fcal(\Spec(V_\mathfrak{p}))} & {\fcal(\Spec(\kappa(\mathfrak{p})))}
	\arrow[from=1-1, to=1-2]
	\arrow[from=1-1, to=2-1]
	\arrow[from=1-2, to=2-2]
	\arrow[from=2-1, to=2-2]
\end{tikzcd}\] is cartesian. Let $(V,\mathfrak{p})$ be such datum. By \cite[Lemma 2.22]{MR4278670}, we can write $V = \colim_i V_i$ as a filtered colimits of absolulety integrally closed valuation subrings $V_i$ such that each $V_i$ has finite rank (this is equivalent to being finite-dimensional by \cite[Lemma 2.3.1]{zbMATH02208665}) and each map $V_i\to V_j$ is faithfully flat. As filtered colimits commute with finite limits and $\fcal$ is finitary, we may assume that $V$ is finite-dimensional (by replacing $V$ by $V_i$ and $\mathfrak{p}$ by $\mathfrak{p}\cap V_i$). In the setting of $\Q$-schemes of finite dimension, we can prove more generally Minor excision: assume that 
 \[\begin{tikzcd}W \ar[d,"g"]\ar[r,"k"] & Y \ar[d,"f"] \\
     Z \ar[r,"i"] & X 
     \end{tikzcd}\] is a Milnor square (\cite[Definition 1.11]{MR4278670}) of qcqs $\Q$-schemes of finite dimension, we claim that applying $\DNgm(-,\Z)^{[-n,n]}$ to this square yields a cartesian square. We first prove it for $\DNgm(-,\Z)$. Firstly, the functor
     \[F\colon \DNgm(X,\Z)\to \DNgm(Z,\Z)\times_{\DNgm(W,\Z)}\DNgm(Y,\Z)\] is fully faithful. This can be checked after tensoring the mapping spectra with $\Q$ and $\Z/p$ for all primes $p$. The mod $p$ part follows from rigidity (\cite[Theorem 3.1]{bachmannrigidity}) and \cite[Theorem 5.13]{MR4278670}. Rationally, the $-\otimes\Q$ goes inside the mapping spectra thanks to \cref{megathmcons} and we can use \cite[Theorem 5.6]{MR4319065}. Indeed, as in \cite[Remark 4.7]{MR4319065}, the functor $\DN(-,\Q) = \mathrm{Mod}_{\mathscr{N}_\Q}(\DM(-,\Q))$ satisfies Milnor excision because $\DM(-,\Q)$ satisfies it by \cite[Theorem 5.6]{MR4319065}. Now, the compact objects of a finite limit of compactly generated $\infty$-categories are the finite limit of the compact objects, thus we even have that 
     \[\DNgm(X,\Q)\simeq \DNgm(Z,\Q)\times_{\DNgm(W,\Q)}\DNgm(Y,\Q).\] 
     
    Secondly, we prove essential surjectivity. As we already proved full faithfullness, we are allowed to use d\'evissage and retracts of objects of the essential image also are in the essential image.
    We first claim that the rationalisation of the $\infty$-category $\DNgm(Z,\Z)\times_{\DNgm(W,\Z)}\DNgm(Y,\Z)$ is $\DNgm(Z,\Q)\times_{\DNgm(W,\Q)}\DNgm(Y,\Q)$. Indeed there is a fully faithful functor 
     \[(\DNgm(Z,\Z)\times_{\DNgm(W,\Z)}\DNgm(Y,\Z))\otimes_{\Perf_\Z}\Perf_\Q\to \DNgm(Z,\Q)\times_{\DNgm(W,\Q)}\DNgm(Y,\Q),\] as the rationalisation is the idempotent completion of the $\infty$-category where we replace all mapping spectra by their rationalisation. This functor is essentially surjective because the target is idempotent complete. In particular, using \cite[Recolletion 2.13]{balmer-gallauer}, we see that given $N\in \DNgm(Z,\Z)\times_{\DNgm(W,\Z)}\DNgm(Y,\Z)$, there exists $M\in \DNgm(X)$ and a map $N\oplus N[1]\to F(M)$ which is an equivalence after rationalisation. In particular, the cofiber $C$ of this map is a (fiber product of) geometric motives that satisfy $C\otimes\Q=0$. By \cref{megathmcons} and the fact that we are looking at a finite limit, this implies that there is some $n\in\N^*$ such that the multiplication by $n$ on $C$ is the zero map. In particular, we have that $C/n\simeq C\oplus C[1]$, and it suffices to show that $C/n$ is in the image of $F$. By doing a d\'evissage, we may assume that $n=p$ is a prime number. In that case, we have that $C/p$ comes from $$\DNgm(Z,\F_p)\times_{\DNgm(W,\F_p)}\DNgm(Y,\F_p)\simeq \DNc(Z,\F_p)\times_{\DNc(W,\F_p)}\DNc(Y,\F_p),$$ by \cref{megathmcons}. Using rigidity \cite[Theorem 3.1]{bachmannrigidity} and Milnor excision for constructible \'etale sheaves \cite[Theorem 5.13]{MR4278670}, this object $C/p$ comes from $\DNc(X,\F_p)\simeq \DNgm(X,\F_p)$. This means that we have some $M\in\DNgm(X,\F_p)$ such that the image of $M$ in $\DNgm(Z,\F_p)\times_{\DNgm(W,\F_p)}\DNgm(Y,\F_p)$ is $C/p$. Because $\DNgm(X,\F_p)$ is generated as a thick subcategory of itself by $M'/p$ with $M'\in\DNgm(X,\Z)$, this implies that $C/p$, as an object of $\DNgm(Z,\Z)\times_{\DNgm(W,\Z)}\DNgm(Y,\Z)$, lies in the thick subcategory generated by the image of $F$, which is the image of $F$ as the latter is fully faithful. This proves that $\DNgm(-,\Z)$ satisfies Milnor excision for this square of finite dimension $\Q$-schemes.
    
     To prove that $\DNgm(-,\Z)^{[-n,n]}$ satisfies excision as well, it suffices to show that if $M$ belongs to $\DNgm(X,\Z)$ and is such that $f^*M$ and $i^*M$ are in cohomological degrees $[-n,n]$ then in fact $M$ is also in degrees $[-n,n]$. This is true because the map $Z\sqcup Y\to X$ is surjective hence the hypothesis imply that for any point $x$ of $X$, the motive $x^*M$ in $\DNgm(x,\Z)$ is concentrated in degrees $[-n,n]$, so that $M$ is also concentrated in degrees $[-n,n]$.
 \end{proof}

\begin{rem}
  Conjecturally, the category $\DMgm(X,\Z)$ has an ordinary t-structure after rationalisation. In that case the functor $\DMgm(X,\Z)\to\DNgm(X,\Z)$ is an equivalence because it is an equivalence rationalised and mod $n$ for any nonzero integer $n$, so that $\DMgm(-,\Z)$ is a finitary arc-hypersheaf and the category $\DMgm(X,\Z)$ has an ordinary t-structure without needing to rationalise. In fact, it would suffice to prove that $\DMgm(\Spec(\Q),\Q)$ has a motivic t-structure (see \cref{rem:Pridham}).
\end{rem}

\section{Perverse Nori motives.}
\subsection{The perverse t-structure}
In this subsection, we define the perverse t-structure on Nori motives. The definition is very similar to that of the perverse t-structure on constructible complexes of $\Z/\ell^n\Z$-modules given in \cite[Section 2.2]{MR0751966}. We also prove that the derived category of perverse Nori motives is $\DNgm$, which is in the spirit of \cite{MR0923133}.
\begin{prop}\label{ExistPerv} Let $X$ be an excellent $\Q$-schemes of finite dimension endowed with a dimension function $\delta$\footnote{See \cite[Expos\'e XIV, D\'efinition 2.1.8]{travauxgabber} for the definition of a dimension function.}.
  We let \[{}^p\DNgm(X,\Z)^{\leqslant 0} :=\{M \in \DNgm(X,\Z) \mid \forall x \in X, i_x^*(M)\leqslant -\delta(x)\},\]
  \[{}^p\DNgm(X,\Z)^{\geqslant 0} :=\{M \in \DNgm(X,\Z) \mid \forall x \in X, i_x^!(M)\geqslant -\delta(x)\}.\]

  Then, the pair $({}^p\DNgm(X,\Z)^{\leqslant 0}, {}^p\DNgm(X,\Z)^{\geqslant 0})$ defines a t-structure on $\DNgm(X,\Z)$ such that the $\ell$-adic realisation functor \[R_\ell\colon \DNgm(X,\Z)\to \dconspro(X_\proet,\Z_\ell)\] is t-exact when the right-hand side is endowed with its perverse t-structure. 
\end{prop}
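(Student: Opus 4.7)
The plan is to construct the perverse t-structure by BBD-style gluing along the dimension stratification~\cite[Theorem 1.4.10]{MR0751966}, and to derive the t-exactness of $R_\ell$ from the fact that both perverse t-structures are defined by the same pointwise conditions. The key inputs are the recollement structure, continuity, and the base case on the spectrum of a field.

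First I would set up the recollement. For any closed immersion $i\colon Z \hookrightarrow X$ with open complement $j\colon U \hookrightarrow X$, \Cref{sifgm} provides the six operations on $\DNgm(-,\Lambda)$ over quasi-excellent finite-dimensional $\Q$-schemes, and the standard localisation property yields a recollement between $\DNgm(Z,\Lambda)$, $\DNgm(X,\Lambda)$ and $\DNgm(U,\Lambda)$ in the sense of~\cite[Section 1.4]{MR0751966}. The dimension function $\delta$ restricts appropriately to both $Z$ and $U$.

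Next, I would proceed by Noetherian induction on the dimension of $X$. The base case is when $X = \Spec(k)$ is the spectrum of a field with $\delta(\Spec k) = d$; then \Cref{geomfields} and \Cref{DbNat} identify $\DNgm(k,\Lambda)$ with $\D^b(\mcal_\mathrm{ord}(k,\Lambda))$, and the pointwise perverse definition is simply the ordinary t-structure of \Cref{tstrcons} shifted by $d$. For the induction step, choose a dense regular open $U \subseteq X$ (possible by excellence) with complement $Z$ of strictly smaller dimension. By induction the perverse t-structure exists on $\DNgm(Z,\Lambda)$; on $U$ it is built from the base case together with the constructibility and continuity properties from Assertions (3) and (5) of \Cref{megathmcons}, which let one check the pointwise conditions on a finite stratification adapted to a given motive, each stratum being the limit of its open neighbourhoods. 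The BBD gluing lemma then produces a t-structure on $\DNgm(X,\Lambda)$ whose $\leqslant 0$ part is \[\{M \mid j^*M\in {}^p\DNgm(U,\Lambda)^{\leqslant 0},\ i^*M\in {}^p\DNgm(Z,\Lambda)^{\leqslant 0}\}\] and dually for $\geqslant 0$ with $i^!$. Since the pointwise conditions $i_x^*(M)\leqslant -\delta(x)$ and $i_x^!(M)\geqslant -\delta(x)$ partition according to whether $x\in U$ or $x\in Z$, and since $j^*i_x^* = i_x^*$ (resp. $i^*i_x^* = i_x^*$) for $x\in U$ (resp. $x\in Z$), this glued t-structure coincides with the one of the statement.

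Finally, the t-exactness of $R_\ell$ follows formally: the perverse t-structure on $\dconspro(X_\proet,\Lambda_\ell)$ is characterised by the same pointwise conditions, and by \Cref{LAdicSmall} the functor $R_\ell$ commutes with $i_x^*$ and $i_x^!$. Since $R_\ell$ is t-exact for the ordinary t-structure over a field by \Cref{realelltex}, it preserves shifted conditions and is therefore t-exact for the perverse t-structures. The hardest part will be making rigorous sense of the functors $i_x^*$ and $i_x^!$ at non-closed points, where $\{x\}$ is not a finite-type subscheme: one must pass to a pro-(open neighbourhood) limit and verify that truncation functors behave well with respect to this limit, which is where the finitary character of $\DNgm$ from \Cref{megathmcons} and the existence of a finite adapted stratification become indispensable for reducing each check to a finite computation.
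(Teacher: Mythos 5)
Your proposal has the right flavour (gluing \emph{\`a la} BBD along stratifications, checking t-exactness of $R_\ell$ via the stratum-wise conditions) and it is close in spirit to the paper's proof, but as written it has a genuine structural gap and it omits a key technical input.

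The structural gap is in the induction. You induct on $\dim X$ and, in the inductive step, pick a dense regular open $U\subseteq X$ with complement $Z$ of strictly smaller dimension; the inductive hypothesis then gives the t-structure on $\DNgm(Z,\Lambda)$, and you want to glue it to one on $\DNgm(U,\Lambda)$. But $\dim U = \dim X$, so the inductive hypothesis does not apply to $U$, and the t-structure on $\DNgm(U,\Lambda)$ is not simply the shifted ordinary one: geometric motives on a regular $U$ are not globally dualizable, so you still have to stratify $U$. The remark that ``on $U$ it is built from the base case together with constructibility and continuity, which let one check the pointwise conditions on a finite stratification adapted to a given motive'' is exactly where a real argument is needed; the conditions being checked pointwise do not by themselves yield a t-structure. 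The paper's organisation sidesteps this: rather than inducting on $\dim X$, it considers \emph{pairs} $(\scal,\lcal)$ of a finite regular stratification together with a finite set of dualizable motives on each stratum, glues a t-structure on each subcategory $\DN_{(\scal,\lcal)}(X,\Lambda)$ by shifting the lisse t-structure stratum-by-stratum, and then writes $\DNgm(X,\Lambda)=\colim_{(\scal,\lcal)}\DN_{(\scal,\lcal)}(X,\Lambda)$ and applies \Cref{coLimTStructures}. The pointwise description via $i_x^*$ and $i_x^!$ is not the \emph{definition} but a characterisation proved \emph{a posteriori} from t-exactness and conservativity of $R_\ell$, exactly as in BBD~2.2.12; in particular one never needs to worry about truncation commuting with a pro-open-neighbourhood limit.

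The omitted technical input is absolute purity. To take the colimit over $(\scal,\lcal)$ you must know that if $(\scal',\lcal')$ refines $(\scal,\lcal)$ then the two glued t-structures agree on $\DN_{(\scal,\lcal)}(X,\Lambda)$; concretely, for a regular closed immersion $i\colon Z\hookrightarrow S$ of codimension $c$ inside a stratum $S$ and a dualizable $N$ on $S$, one must identify $i^!(N)$ with $i^*(N)(-c)[-2c]$. This is absolute purity for Nori motives, which the paper proves as a lemma inside the proof by transport from the $\ell$-adic side using conservativity of $R_\ell$ and its compatibility with the six operations. Without this your gluing data are not compatible under refinement, and the passage to the colimit breaks down. (In your formulation the same issue surfaces as the need to compare the recollement t-structure with respect to $(U,Z)$ with the one for a finer decomposition, which is precisely where purity enters.) The t-exactness of $R_\ell$ then comes out for free because the $\ell$-adic perverse t-structure is built by the same gluing procedure; your argument there is essentially correct, once the t-structure has actually been constructed.
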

\begin{proof}
  The proof is the same as in \cite[Section 2.2]{MR0751966} but we include a sketch for the reader's convenience.
  Consider couples $(\scal,\lcal)$ where $\scal$ is a finite stratification of $X$ by locally closed connected regular subschemes, and $\lcal$ is the data, for each stratum $Z$ of $\scal$ of a finite set $\lcal(Z)$ of dualisable Nori motives on $Z$. We denote by $\DN_{(\scal,\lcal)}(X,\Z)$ the full subcategory of $\DNgm(X,\Z)$ made of $(\scal,\lcal)$-constructible Nori motives, that is the motives $M$ such that, for each stratum $Z$ of $\scal$ and each integer $n$, $\HH^n(M|_{Z})$ is isomorphic to an iterated extension of elements of $\lcal(Z)$. 
  
  Any such couple can be refined into a couple $(\scal,\lcal)$ such that if $Z$ is a stratum in $\scal$ and $N$ belongs to $\lcal(Z)$, letting $j\colon Z\to X$ be the inclusion, the Nori motive $j_*(N)$ is $(\scal,\lcal)$-constructible. 

  This assumption ensures that we can define a t-structure on $\DN_{(\scal,\lcal)}(X,\Z)$ by gluing (in the sense of\cite[Section 1.4]{MR0751966})the t-structure on each stratum $Z$ by $-\delta(Z)$. If $(\scal,\lcal)$ refines $(\scal',\lcal')$, the t-structure on $\DN_{(\scal,\lcal)}(X,\Z)$ induces the t-structure on $\DN_{(\scal',\lcal')}(X,\Z)$; this is a consequence of the absolute purity property for Nori motives: 
  \begin{lem}
    Let $i\colon Z\to X$ be a closed immersion everywhere of codimension $c$ between regular schemes and let $M$ be a dualisable object of $\DN(Z,\Z)$, then the natural map \[i^*(M)(-c)[-2c] \to i^!(M)\] is an equivalence. 
  \end{lem}
  \begin{proof}
    This follows from the same statement for $\ell$-adic sheaves \cite[Expos\'e XVI, Th\'eor\`eme 3.1.1]{travauxgabber} and from the conservativity of the $\ell$-adic realisation \cref{Conservativity} and its compatibility with the six functors over quasi-excellent schemes \cref{LAdicSmall}. 
  \end{proof}
  
  To finish the proof of the proposition, note that \[\DNgm(X,\Z)=\colim_{(\scal,\lcal)}\ \DN_{(\scal,\lcal)}(X,\Z)\] and therefore we get a t-structure on $\DNgm(X,\Z)$ by \cref{coLimTStructures}. The t-exactness of the realisation follows from the fact that the perverse t-structure is defined in the same way by gluing along stratifications in \cite[Section 2.2]{MR0751966}. The characterisation of the positive and negative parts of the t-structure then follows from the t-exactness of the realisation combined with its conservativity and the analogous characterisation of the positive and negative parts of the perverse t-structure on $\dconspro(X_\proet,\Z_\ell)$ which is \cite[2.2.12]{MR0751966}. 
\end{proof}
\begin{rem}
  Along the way we proved that the perverse t-structure on the subcategory $\DN_\mathrm{lisse}(X,\Z)$ of $\DNgm(X,\Z)$ made of dualisable objects is the ordinary t-structure shifted by $-\delta(X)$. 

  This result also implies that any t-exactness property of the perverse t-structure on $\ell$-adic sheaves is true for the perverse t-structure on Nori motives. In particular, the Lefschetz Hyperplane Theorem (also known as Artin Vanishing) is true for Nori motives. The Hard Lefschetz theorem for Nori motives is also true using similar arguments.
\end{rem}

\begin{defi}
  \label{defi:Mperv}
  Let $X$ be an excellent $\Q$-schemes of finite dimension endowed with a dimension function. The \emph{abelian category of perverse Nori motives} is the heart of the perverse t-structure on $\DNgm(X,\Z)$. We denote it by $\mcal_\mathrm{perv}(X,\Z)$. 
\end{defi}
\begin{rem}\label{perv+}
  One of the main feats of the perverse t-structure with rational coefficients is that it is preserved by Verdier duality. This is famously not the case with integral coefficients (note for instance that $\map_{\D(\Z)}(\Z/n\Z,\Z)=\Z/n\Z[-1]$). However, if $\ell$ is a prime number there is a t-structure on $\dconspro(X_\proet,\Z_\ell)$ that is exchanged with the perverse t-structure by duality (see \cite[4.0 (a)]{MR0751966}). 
  
  We can mimic its definition to get a t-structure on $\DNgm(X,\Z)$ as follows; assume that $\Z$ is a Dedekind domain and define \[{}^{\mathrm{ord}^+}\DNgm(X,\Z)^{\leqslant 0}:=\{ M \in \DNgm(X,\Z) \mid M\leqslant_\mathrm{ord} 1 \text{ and } \HH^1(M)\otimes_\Z \Q=0\}\]
  \[{}^{\mathrm{ord}^+}\DNgm(X,\Z)^{\geqslant 0}:=\{ M \in \DNgm(X,\Z) \mid M\geqslant_\mathrm{ord} 0 \text{ and } \HH^0(M) \text{ is torsion-free} \}\]
  This is a t-structure on $\DNgm(X,\Z)$; then by gluing as before this t-structure along stratifications (using the same shifts) we get a t-structure $({}^{p^+}\DNgm(X,\Z)^{\leqslant 0},{}^{p^+}\DNgm(X,\Z)^{\geqslant 0})$ whose heart we denote by $\mcal_\mathrm{perv}^+(X,\Z)$. 

  Because the six functors are compatible with the $\ell$-adic realisation and because same statement is true after realisation by \cite[3.3.4]{MR0751966}, the Verdier duality functor $\mathbb{D}_X$ of \cref{Verdier} exchanges the subcategory ${}^{p^+}\DNgm(X,\Z)^{\leqslant 0}$ (resp. ${}^{p^+}\DNgm(X,\Z)^{\geqslant 0}$) with ${}^{p}\DNgm(X,\Z)^{\geqslant 0}$ (resp. ${}^{p^+}\DNgm(X,\Z)^{\leqslant 0}$) and therefore it exchanges $\mcal_\mathrm{perv}(X,\Z)$ with $\mcal_\mathrm{perv}^+(X,\Z)$. 
\end{rem}


\subsection{Universal motives and the derived category of perverse Nori motives}
By the universal property of the bounded derived category, there is a canonical realisation functor 

\begin{equation}
  \label{realMperv}
  \D^b(\mcal_\mathrm{perv}(X,\Z))\to \DNgm(X,\Z).
\end{equation}

This functor is an equivalence after tensorisation by $\Perf_\Q$. We have the following rigidity result to compute its torsion:

\begin{prop}
  \label{torsMp}Let $X$ be an excellent $\Q$-schemes of finite dimension endowed with a dimension function $\delta$.
  Let $n$ be a prime number.
  Consider the composition  
\[\iota_n\colon \mathrm{Perv}(X_\et,\Z/n)\subseteq\dconset(X_\et,\Z/n) \xrightarrow{f}\dconset(X_\et,\Z)_{tors}\xrightarrow{\iota} \DNgm(X,\Z)_{tors}\] where $\mathrm{Perv}(X_\et,\Z/n)$ is the category of \'etale perverse sheaves and $f$ is the forgetful functor. It lands in $\mcal_\mathrm{perv}(X,\Z)[n]$ and induces an equivalence \[\mathrm{Perv}(X_\et,\Z/n)\xrightarrow{\sim} \mcal_\mathrm{perv}(X,\Z)[n].\]
\end{prop}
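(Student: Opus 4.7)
The plan is to leverage the rigidity statement of \Cref{rigtorscomplet} to reduce everything to a statement about \'etale sheaves, where the perverse t-structure is well-understood. Recall that the functor $\iota\colon \D(X_\et,\Lambda)\to \DN(X,\Lambda)$ factors as $\iota = \rho_\mathrm{N}\circ \rho_!$, where $\rho_!$ is the Artin motive functor to \'etale motives; both of these are compatible with the six operations (by \Cref{SixFunBig} and \cite[\S 4.4.2]{MR3477640}), so $\iota$ is as well. Moreover, by \Cref{rigtorscomplet}, $\iota$ induces an equivalence of symmetric monoidal $\infty$-categories $\D(X_\et,\Lambda)_\mathrm{tors}\xrightarrow{\sim}\DN(X,\Lambda)_\mathrm{tors}$, and since $\Lambda$ is flat over $\Z$, the forgetful functor $f$ identifies $\dconset(X_\et,\Lambda/n)$ with the $n$-torsion subcategory of $\dconset(X_\et,\Lambda)_\mathrm{tors}$.

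Full faithfulness of $\iota_n$ is then immediate from this rigidity equivalence: restricting an equivalence to a full subcategory yields a fully faithful functor. The essential surjectivity onto $n$-torsion objects of $\DN(X,\Lambda)$ is also immediate for the same reason; it remains only to identify perverse Nori motives among the torsion objects with \'etale perverse sheaves mod $n$.

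For this identification, I will argue that the rigidity equivalence sends the perverse t-structure on $\dconset(X_\et,\Lambda/n)$ to the (restriction of the) perverse t-structure on $\DNgm(X,\Lambda)$. Both perverse t-structures are constructed by gluing ordinary t-structures along stratifications with the same shifts $-\delta(Z)$ (see \Cref{ExistPerv} and \cite[Section 2.2]{MR0751966}). The compatibility of $\iota$ with $i_x^*$ and $i_x^!$ reduces the check to showing that the rigidity equivalence is t-exact for the ordinary t-structure on dualizable objects over each stratum: on a regular connected stratum $Z$ of dimension $d$, a dualizable object is perverse mod $n$ iff it is concentrated in ordinary degree $-d$, and the analogous characterisation holds for Nori motives by the remark following \Cref{ExistPerv}. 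Ordinary t-exactness of $\iota$ on torsion objects follows by applying the $\ell$-adic realisation $R_\ell$ (which is t-exact and conservative by \Cref{realelltex} and \Cref{Conservativity}, and compatible with $\iota$). Putting these together, $\iota_n$ is t-exact for the perverse t-structures, hence sends $\mathrm{Perv}(X_\et,\Lambda/n)$ into $\mcal_\mathrm{perv}(X,\Lambda)[n]$, and the fully faithful essentially surjective functor on the torsion subcategories restricts to an equivalence on the hearts.

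The main technical point is verifying that the rigidity equivalence intertwines the two perverse t-structures; this is essentially bookkeeping but requires that all the six-functor compatibilities invoked above genuinely hold at the level of $\infty$-categories and not just on homotopy categories. Assuming this, the proposition follows by restricting the equivalence of $\infty$-categories $\dconset(X_\et,\Lambda/n)\simeq \DN(X,\Lambda)_{n\text{-tors}}$ to hearts.
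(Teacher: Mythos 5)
Your overall strategy---reduce via the rigidity equivalence of \Cref{rigtorscomplet} and then match t-structures---is the same as the paper's, but there is a genuine gap in the way you try to deduce full faithfulness and essential surjectivity.

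The problem is your claim that ``the forgetful functor $f$ identifies $\dconset(X_\et,\Lambda/n)$ with the $n$-torsion subcategory of $\dconset(X_\et,\Lambda)_\mathrm{tors}$''. This is false at the level of stable $\infty$-categories: the forgetful functor $\D(\Lambda/n\text{-mod})\to\D(\Lambda\text{-mod})$ is conservative but not fully faithful (e.g. $\Lambda/n$ typically has higher self-$\mathrm{Ext}$s over itself that vanish over $\Lambda$). Consequently, the sentence ``Full faithfulness of $\iota_n$ is then immediate from this rigidity equivalence: restricting an equivalence to a full subcategory yields a fully faithful functor'' does not apply, because $\mathrm{Perv}(X_\et,\Lambda/n)$ is not a full subcategory of $\dconset(X_\et,\Lambda)_\mathrm{tors}$ after applying $f$. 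What \emph{is} true---and what the paper actually uses---is that the forgetful functor is fully faithful \emph{on abelian hearts}, with essential image the $n$-torsion objects: a $\Lambda$-linear morphism between $n$-torsion modules is automatically $\Lambda/n$-linear. The paper makes this precise by exhibiting a left adjoint $\HHp^0(-\otimes_\Lambda\Lambda/n)$ to the restriction of $f$ to the hearts (the underived tensor, i.e.\ the cokernel of multiplication by $n$) and checking that the unit and counit are isomorphisms; this simultaneously yields full faithfulness and essential surjectivity onto $\mcal_\mathrm{perv}(X,\Lambda)[n]$, neither of which is ``immediate'' from the derived-level rigidity. Your essential surjectivity argument inherits the same gap.

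On the other hand, your route to perverse t-exactness---via gluing along stratifications and the $\ell$-adic realisation being t-exact and conservative---is a legitimate alternative to the paper's direct argument (which observes that $f$ preserves lisse objects on strata). Both need a little care to be fully rigorous, but the $\ell$-adic route does work and is arguably cleaner. So the t-exactness part of your proposal is fine; the missing ingredient is the purely abelian observation that $n$-torsion objects of the heart of $\dconset(X_\et,\Lambda)_\mathrm{tors}$ are the same as $\Lambda/n$-perverse sheaves, together with the explicit two-sided inverse given by $\HHp^0(-\otimes_\Lambda\Lambda/n)$.
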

\begin{proof}
  By \cref{rigtorscomplet} the last functor is an equivalence. Moreover as $\DNgm(X,\Z)_{tors}$ is the kernel of the perverse t-exact functor $-\otimes_\Z\Q$, it is endowed with a perverse t-structure, whose heart is the abelian category of torsion perverse motives.

  First note that the forgetful functor $f$ is t-exact: indeed, the heart of the perverse t-structure on $\dconset(X_\et,\Z)$ consists of those complexes $K$ such there is a stratification of $X$ by nil-regular subschemes such that on each stratum $S$ we have $K|_S \simeq L[-\delta(X)]$ for a lisse sheaf $L$; the image through the forgetful functor of a lisse sheaf of $\Z/n$-modules is a lisse sheaf of $\Z$-modules, whence the functor $f$ is indeed t-exact. 

  This implies that the functor $\iota_n$ lands in $\mcal_\mathrm{perv}(\Z)[n]$. Furthermore, the restriction of the forgetful functor to the heart affords a left adjoint, namely $\HHp^0(-\otimes_\Z \Z/n)$; it is in fact the underived tensor product with $\Z/n$ that sends a perverse sheaf $K$ to the cokernel of the map $K\xrightarrow{\times n} K$. With this, it is clear that  if $M$ belongs to $\mathrm{Perv}(X_\et,\Z/n)$, we have \[\HH^0(f(M)\otimes_\Z\Z /n)=M\] and for any object of $\dconset(X_\et,\Z)_{tors}^\heartsuit$ which is of $n$-torsion, we have \[f(\HH^0(N\otimes_\Z\Z/n))=N.\] Whence, the functor $\iota_n$ indeed induces an equivalence. 
\end{proof}
 Unfortunately, we do not know if the abelian category $\mcal_\mathrm{perv}$ has enough torsion-free sheaves. If it was true, we would use \cref{torsderive} to conclude that the functor \cref{realMperv} is fully faithful thanks to \cref{torsMp} and Beilinson's Theorem on the derived category of perverse sheaves \cite{MR0923133}; this would then imply that this functor is an equivalence. We have a partial answer in that direction, that requires to introduce the universal category of perverse motives. Let $k$ be a subfield of the complex numbers (a more general field can be considered, but the following results for those fields can be reduced to subfields of $\C$ by the continuity properties of perverse categories considered along fields).

 Recall first the theory of universal abelian categories in the form used in \cite[Section 2.1]{ivorraFourOperationsPerverse2022}:

\begin{constr}\label{univCatdef} Given a triangulated category $\mathrm{T}$, one first consider the category of finite presentation modules over $\mathrm{T}$: it is the full subcategory $\mathrm{fp}(\mathrm{T})\subset \Sh_{\mathrm{add}}(\mathrm{T},\mathrm{Ab})$ of additive functors $\fcal$ which fit in a short exact sequence 
\[y_M\to y_N\to \fcal\to 0\] where $y_M$ is the image of $M$ under the Yoneda embedding. Any cohomological functor on $\mathrm{T}$ will factor through $\mathrm{fp}(\mathrm{T})$. Thus, starting with a cohomological functor 
\[\HH \colon\mathrm{T}\to\acal\] with values in an abelian category $\acal$,
we have a factorisation of $\HH$ of the form 
\[\mathrm{T}\to \mathrm{fp}(\mathrm{T})\xrightarrow{\bar{\HH}}\acal.\]
Denote by $I$ the kernel of $\bar{\HH}$. It is the full subcategory of $\mathrm{fp}(\mathrm{T})$ consisting of objects $M$ such that $\bar{\HH}(M)=0$. The universal factorisation of $\HH$ is the factorisation 
\[\mathrm{T} \to \mcal(\mathrm{T},\HH):=\mathrm{fp}(\mathrm{T})/I \xrightarrow{\HH_\mathrm{univ}} \acal.\]
Is has clearly an universal property, and note that composing $\HH$ with any faithful functor will give the same universal category $\mcal(\mathrm{T},\HH)$. And that any functor comparing with $\HH$ after composing with an
exact functor $F:\acal\to\bcal$ will also factor uniquely through $\mcal(\mathrm{T},\HH)$ (because one kernel will be included in the other). 
For example, this is the case for $\acal=\Lambda\text{-}\mathrm{mod}\xrightarrow{\otimes_\Lambda \Lambda'} \Lambda'\text{-}\mathrm{mod}=\bcal$ with $\Lambda'$ a flat $\Lambda$-algebra.
\end{constr}

We apply this to the perverse realisation functor.
\begin{constr}
  \label{constru:MunivPerv}
  Let $X$ be a finite type $k$-scheme. Consider the cohomological functor 
  \[\HHp^0\circ\rho_\mathrm{B}\colon\DMgm(X,\Z)\to \mathrm{Perv}(X^\mathrm{an},\Z)\] where $\mathrm{Perv}(X^\mathrm{an},\Z)$ is the category of analytic perverse sheaves and $\HHp^0$ is the $0$-th perverse cohomology functor.
  As in \cref{univCatdef}, we can consider the universal abelian factorisation $\mcal_\mathrm{univ}(X,\Z)$ of this functor through a faithful exact functor $r_\mathrm{B}\colon\mcal_\mathrm{univ}(X,\Z)\to\mathrm{Perv}(X^\mathrm{an},\Z)$. We thus have a universal homological functor 
  \[\HH_\mathrm{univ}\colon \DMgm(X,\Z)\to\mcal_\mathrm{univ}(X,\Z).\]
  Because the Betti realisation of perverse Nori motives is faithful exact (as the realisation from $\DNgm(X,\Z)$ is conservative), we actually have a finer factorisation:
  \[\begin{tikzcd}
    {\DMgm(X,\Z)} & {\mcal_\mathrm{univ}(X,\Z)} & {\mcal_\mathrm{perv}(X,\Z)}
    \arrow["{\mathrm{comp}}", from=1-2, to=1-3]
    \arrow["{\HH_\mathrm{univ}}", from=1-1, to=1-2]
    \arrow["{\HHp^0\circ\rho_\mathrm{N}}"', from=1-1, to=1-3,bend right=20]
  \end{tikzcd}\]
with $\mathrm{comp}$ a faithful exact functor.
\end{constr}

\begin{lem}
  \label{compQ}
  Let $X$ be a finite type $k$-scheme. The functor 
  \[\mathrm{comp}\otimes_\Z \Q\colon \mcal_\mathrm{univ}(X,\Z)\otimes_\Z \Q\to\mcal_\mathrm{perv}(X,\Z)\otimes_\Z \Q\] is an equivalence.
\end{lem}
\begin{proof}
  The definition of $\mcal_\mathrm{perv}(X,\Q)$ is $\mcal_\mathrm{univ}(X,\Q)$. Thus we have to show that $\mcal_\mathrm{univ}(X,\Z)\otimes_\Z \Q$ is the universal category for $\HHp^0\circ\rho_\mathrm{B}\otimes_\Z \Q$. This follows easily from the construction of the universal category together with the fact that the rationalisation of the abelian hull of an additive category if the abelian hull of the rationalisation.
\end{proof}

For each nonzero integer $n$, we have an Artin motive functor 
\[\iota\colon\dconset(X_\et,\Z/n)\to \DMgm(X,\Z)\] such that the composition with the realisation is perverse t-exact and conservative. Thus it induces a faithful exact functor 
\[i\colon \mathrm{Perv}(X_\et,\Z/n)\to \mcal_\mathrm{univ}(X,\Z)[n].\]

The following result is an adaptation of a proof due to Nori (\cite{fakhruddinNotesNoriLectures2000}) in the case of fields.
\begin{prop}
  \label{compTors}
  Let $X$ be a finite type $k$-scheme. Let $n$ be an integer. The functor 
  \[i\colon\mathrm{Perv}(X_\et,\Z/n)\to\mcal_{\mathrm{univ}}(X,\Z)[n]\] is an equivalence of quasi-inverse the functor 
  \[\mcal_{\mathrm{univ}}(X,\Z)[n] \xrightarrow{\mathrm{comp}} \mcal_{\mathrm{perv}}(X,\Z)[n] \simeq \mathrm{Perv}(X_\et,\Z/n).\]
\end{prop}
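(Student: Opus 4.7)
The plan is to show that $i$ is fully faithful and essentially surjective, thereby proving that $T := \iota_n^{-1} \circ \mathrm{comp}|_{[n]}$ is a quasi-inverse. I would first establish an auxiliary computation: for every $E \in \dconset(X_\et, \Lambda/n)$, there is a canonical isomorphism $\HH^0_\mathrm{univ}(\iota(E)) \cong i(\HHp^0(E))$ in $\mcal_\mathrm{univ}(X, \Lambda)$. For a perverse sheaf $P$, the faithful exactness of the realisation $r_\mathrm{B}$ combined with the identity $r_\mathrm{B}(\HH^k_\mathrm{univ}(\iota(P))) = \HHp^k(P)^\an$ (via Artin comparison and perverse t-exactness of $\rho_\mathrm{B} \circ \iota$) forces $\HH^k_\mathrm{univ}(\iota(P)) = 0$ for $k \neq 0$. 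Applying this vanishing and the faithfulness of $r_\mathrm{B}$ to the perverse truncation triangles $\taup^{\leqslant -1} E \to E \to \taup^{\geqslant 0} E$ and $\HHp^0(E) \to \taup^{\geqslant 0} E \to \taup^{\geqslant 1} E$ yields the claim by diagram chasing the induced long exact sequences.

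For full faithfulness of $i$, note that by construction $\mathrm{comp} \circ i$ agrees with the functor $\iota_n$ of \Cref{torsMp}, so $T \circ i$ is the identity. Since $\mathrm{comp}$ is faithful and $T \circ i$ is fully faithful, $i$ must be fully faithful. I would then prove the Serre property: the essential image of $i$ is closed under subobjects and quotients in $\mcal_\mathrm{univ}(X, \Lambda)[n]$. Given $B \hookrightarrow i(P)$, set $P' := \iota_n^{-1}(\mathrm{comp}(B)) \subseteq P$. Both $B$ and $i(P')$ are subobjects of $i(P)$ whose images under $\mathrm{comp}$ coincide (both equal $\iota_n(P')$ as subobjects of $\mathrm{comp}(i(P))$). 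The standard fact that a faithful exact functor between abelian categories reflects equality of subobjects (apply exactness to the intersection $B \cap i(P')$ and use conservativity on the quotient $B/(B \cap i(P'))$) then yields $B \cong i(P')$; closure under quotients follows from the exactness and full faithfulness of $i$.

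Finally, for essential surjectivity, let $A \in \mcal_\mathrm{univ}(X, \Lambda)[n]$. The description $\mcal_\mathrm{univ} = \mathrm{fp}(\DMgm)/I$ presents $A$ as a quotient of $\HH^0_\mathrm{univ}(M)$ for some $M \in \DMgm(X, \Lambda)$. Since $A$ is $n$-torsion, this surjection factors through $\HH^0_\mathrm{univ}(M)/n$, which by the long exact sequence of the triangle $M \xrightarrow{n} M \to M/n$ injects into $\HH^0_\mathrm{univ}(M/n)$. By rigidity (\Cref{rigtorscomplet}), $M/n \cong \iota(E)$ for some $E \in \dconset(X_\et, \Lambda/n)$, and the auxiliary computation gives $\HH^0_\mathrm{univ}(M/n) \cong i(\HHp^0(E))$. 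Hence $A$ is a subquotient of an object in the essential image of $i$, and the Serre property forces $A$ into the essential image. The main obstacle will be proving the Serre property: one must carefully transport subobjects across the faithful (but not full) functor $\mathrm{comp}$ and back through the equivalence $\iota_n$, relying on the precise reflection lemma for subobjects.
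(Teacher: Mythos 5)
Your proof is correct and follows essentially the same route as the paper's: the key ingredients are identical (the identity $\mathrm{comp}\circ i = \iota_n$, closure of the essential image of $i$ under subquotients via the faithfulness of $\mathrm{comp}$, and presenting an $n$-torsion object as a subquotient of $\HH^0_\mathrm{univ}(M/n) = i(\HHp^0(E))$ using rigidity). The only differences are expository: you establish full faithfulness of $i$ upfront via the retraction $T\circ i = \mathrm{id}$ rather than deducing it from essential surjectivity afterward, and you phrase the surjectivity step more directly as a subquotient argument instead of the paper's explicit diagram chase with $T = \mathrm{im}(\HH_\mathrm{univ}(N)\to\HH_\mathrm{univ}(N/n))$.
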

\begin{proof}
  We begin by noting three things: 
  
\begin{enumerate}
  \item The composite functor $\mathrm{comp}\circ i$ lands in $\mcal_\mathrm{perv}(X,\Z)[n] =\mathrm{Perv}(X_\et,\Z/n)$ and induces the identity of the latter, because we have that $\mathrm{comp}\circ i = \HHp^0\circ\rho_\mathrm{N}\circ \iota$. \item  The functors $\mathrm{comp}$ and $i$ are faithful; thus it suffices to prove that $i$ is essentially surjective. Indeed, if $i$ is essentially surjective, this implies that $\mathrm{comp}$ is fully faithful, thus $i$ is also fully faithful.
  \item The image of $i$ is stable under subquotients. Indeed, if $M$ belongs to $\mcal_\mathrm{perv}(X,\Z)[n]$ and is a subobject of something of the form $i(N)$, then $\mathrm{comp}(M)$ is a subobject of $\mathrm{comp}\circ i (N)=N$. This implies that $i\circ\mathrm{comp}(M)$ and $M$ are both subobjects of $i(N)$ that are isomorphic after applying the faithful functor $\mathrm{comp}$ thus they are isomorphic. For the quotients, one uses that if there is a surjection $i(N)\to M$ then the kernel is in the image of $i$ thus the cokernel also.
\end{enumerate}

  Note also that if $N$ belongs to $\DMgm(X,\Z)$, we can write $N/n = \iota(K)$ for some $K$ in $\dconset(X_\et,\Z/n)$, thus \[\HH_\mathrm{univ}(N/n) =\HH_\mathrm{univ}(\iota(K)) \overset{(*)}{=} \HH_\mathrm{univ}(\iota(\HHp^0(K)))=i(\HHp^0(K))\]
 is in the image of $i$, where $(*)$ follows from the perverse t-exactness of the functor 
  \[\dconset(X_\et,\Z/n)\to\DNgm(X,\Z).\] 

  It suffices to show that any object of the form $M/nM$ is in the image of $i$, because any $n$-torsion object is of this form.  Thus, let $M$ be in $\mcal_\mathrm{univ}(X,\Z)$ and let $\HH_\mathrm{univ}(N)\to M$ be a surjective map (this exists by definition of the universal category see \cref{univCatdef}). As there is an exact triangle 
  \[N\xrightarrow{\times n} N\to N/n\] in $\DMgm(X,\Z)$, we have a short exact sequence 
  \[\HH_\mathrm{univ}(N)\xrightarrow{\times n}\HH_\mathrm{univ}(N)\to \HH_\mathrm{univ}(N/n).\] Denote by t the image of the map $\HH_\mathrm{univ}(N)\to\HH_\mathrm{univ}(N/n)$. Because $\HH_\mathrm{univ}(N/n)$ is in the image of $i$ we see that t is also in the image of $i$ as the latter is closed under subquotients. Moreover we have the following commutative diagram with exact rows:
  \[\begin{tikzcd}
    {\HH_\mathrm{univ}(N)} & {\HH_\mathrm{univ}(N)} & T & 0 \\
    M & M & {M/nM} & 0
    \arrow["{\times n}", from=1-1, to=1-2]
    \arrow[from=1-1, to=2-1]
    \arrow[from=1-2, to=1-3]
    \arrow[from=1-2, to=2-2]
    \arrow[from=1-3, to=1-4]
    \arrow[dashed, from=1-3, to=2-3]
    \arrow["{\times n}"', from=2-1, to=2-2]
    \arrow[from=2-2, to=2-3]
    \arrow[from=2-3, to=2-4]
  \end{tikzcd}\] where the map $T\to M/nM$ exists because the composite of the map $M\to M/nM$ with the map $\HH_\mathrm{univ}(N)\to M$ vanishes on the kernel of the quotient map $\HH_\mathrm{univ}(N)\to T$. As $\HH_\mathrm{univ}(N)\to M$ is surjective, this is also the case of the map $T\to M/nM$ thus $M/nM$ is in the image, finishing the proof.
\end{proof}

Thus the functor 
\[\mathrm{comp}\colon \mcal_{\mathrm{univ}}(X,\Z)\to \mcal_{\mathrm{perv}}(X,\Z)\] is an equivalence after being tensored by $\Q$ and induces an equivalence on $n$-torsion objects for all $n$. To conclude that it is an equivalence, one could compute some $\mathrm{Ext}^1$ in $\mcal_{\mathrm{perv}}(X,\Z)$, but this seems to be beyond the scope of our current technology. However, the following conjecture seems more reachable:

\begin{conject}
  \label{conjTors}
Let $X$ be an algebraic variety over $\Q$. Then the abelian category $\mcal_\mathrm{univ}(X,\Z)$ has enough torsion-free object: for every $M$ in $\mcal_\mathrm{univ}(X,\Z)$ there exists an epimorphism $N\twoheadrightarrow M$ in $\mcal_\mathrm{univ}(X,\Z)$ with $N$ a torsion-free object.
\end{conject}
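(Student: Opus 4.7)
The plan is to exploit the universal property of $\mcal_\mathrm{univ}(X,\Lambda)$: by the construction recalled in \Cref{univCatdef}, every object $M$ is a quotient of $\HH_\mathrm{univ}(N)$ for some $N\in\DMgm(X,\Lambda)$. It therefore suffices to show that, given any such $N$, one can find $N'\in\DMgm(X,\Lambda)$ together with a map $N'\to N$ such that $\HH_\mathrm{univ}(N')$ is torsion-free in $\mcal_\mathrm{univ}(X,\Lambda)$ and the induced map $\HH_\mathrm{univ}(N')\to\HH_\mathrm{univ}(N)$ remains an epimorphism; composing with $\HH_\mathrm{univ}(N)\twoheadrightarrow M$ then produces the desired cover.

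The key initial reduction is that the faithful exact Betti realisation $r_\mathrm{B}\colon\mcal_\mathrm{univ}(X,\Lambda)\to\mathrm{Perv}(X^\mathrm{an},\Lambda)$ detects and reflects torsion-freeness: since $r_\mathrm{B}$ is exact it identifies the kernel of $\times n$ on $A\in\mcal_\mathrm{univ}(X,\Lambda)$ with the kernel of $\times n$ on $r_\mathrm{B}(A)$, and by faithfulness this kernel vanishes if and only if the latter does. The problem therefore becomes geometric: produce $N'\to N$ in $\DMgm(X,\Lambda)$ whose image under $\HH_\mathrm{univ}$ is a perversely torsion-free perverse sheaf in the Betti realisation and which surjects onto $\HH_\mathrm{univ}(N)$. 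Since the image of $\HH_\mathrm{univ}(N')\to \HH_\mathrm{univ}(N)$ is a subobject of the torsion-free $\HH_\mathrm{univ}(N')$, the image itself will automatically be torsion-free, so one only needs to arrange surjectivity.

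The technical heart of the proof would be to construct such $N'$ by a motivic analogue of Beilinson's affine variant of Nori's basic lemma, refined so as to control integral torsion in perverse cohomology. Concretely, one would reduce via alterations and localisation to motives of the form $f_!\Lambda_Y[d](n)$ for $f\colon Y\to X$ a suitable smooth morphism, then apply a generic fibration and stratification argument to replace $Y$ by an open dense subscheme on which the perverse $!$-pushforward becomes torsion-free (which holds on a Zariski-dense open by a generic flatness / semicontinuity argument on the Betti side) and finally lift the resulting sheaf-theoretic surjection to the motivic level. The main obstacle is exactly this last integral refinement of the basic lemma: whereas in the rational setting one may freely invert integers and apply Beilinson's construction \cite{MR0923133}, over $\Z$ the torsion in the perverse cohomology of a proper pushforward can come from monodromy that persists after arbitrary shrinking, and there is no standard motivic procedure that kills such torsion while preserving surjectivity. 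Overcoming this would plausibly require either an integral motivic basic lemma in the spirit of Nori~\cite{MR3618276}, or a direct comparison functor from Ivorra's category in~\cite{MR3723805} — whose explicit generators are torsion-free by design — back into $\mcal_\mathrm{univ}(X,\Lambda)$, and a verification that this comparison is essentially surjective on subobjects.
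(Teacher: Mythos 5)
This statement is explicitly labelled a \emph{conjecture} in the paper; the authors state in the introduction and just before \Cref{conjTors} that they were unable to prove it and merely propose a strategy. You are therefore not being asked to reproduce a proof the paper contains, and indeed your write-up is not a proof but a strategy with an honestly acknowledged gap. Given that, your assessment of the situation is accurate, and it aligns remarkably well with the paper's own discussion.

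Your initial reductions are sound: every object of $\mcal_\mathrm{univ}(X,\Lambda)$ is a quotient of some $\HH_\mathrm{univ}(N)$ with $N\in\DMgm(X,\Lambda)$ (this is exactly how the universal category is built, see \Cref{univCatdef} and the beginning of the proof of \Cref{compTors}); a faithful exact functor detects and reflects torsion-freeness and epimorphisms, so the question can be probed through $r_\mathrm{B}$; and the image of a torsion-free object is torsion-free since $\Lambda$ is a Dedekind-type ring. The place where your argument stops being a proof is precisely where you say it does: one has no integral analogue of Beilinson's basic-lemma machinery that produces, \emph{motivically}, a torsion-free cover in $\mcal_\mathrm{univ}(X,\Lambda)$. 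Your ``generic flatness / semicontinuity'' remark is not enough --- as you yourself note, monodromy torsion survives Zariski shrinking, so the perverse $\HH^0$ of $f_!\Lambda_Y[d]$ need not become torsion-free on any dense open, and more seriously, even if one arranges torsion-freeness on the Betti side for some particular $N'$ one still has to lift the resulting surjection to a map in $\DMgm(X,\Lambda)$, which is exactly the nontrivial step. The paper's preferred route is the second of your two suggestions: prove that Ivorra's category $\mcal_\mathrm{pairs}(X,\Lambda)$, built from a diagram of very good pairs whose generators are torsion-free by construction, is equivalent to $\mcal_\mathrm{univ}(X,\Lambda)$. The paper notes this would follow from the expected identification of Ivorra's realisation functor of \cite{MR3518311} with $\HHp^0\circ\rho_\mathrm{N}$, and is related to \cite[Conjecture 2.12]{ivorraFourOperationsPerverse2022}. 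In short: your analysis correctly locates the obstacle, your two suggested remedies are reasonable, and one of them coincides with the paper's own proposed strategy --- but, as you and the paper both acknowledge, neither you nor the paper actually closes the gap.
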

This conjecture would follow from the equivalence \[\mcal_\mathrm{pairs}(X,\Z)\xrightarrow{\sim} \mcal_\mathrm{univ}(X,\Z),\] where 
$\mcal_\mathrm{pairs}(X,\Z)$ is the universal category associated to a diagram of very good pairs as in Nori's original construction, as considered by F. Ivorra in \cite{MR3723805} (he was considering rational motives but his construction remains valid for integral sheaves). This equivalence would follow from the fact that Ivorra's realisation functor, the main construction in \cite{MR3518311} is the $\HHp^0$ of our functor $\rho_\mathrm{N}$. Note that in \cite{MR3518311} the author was only considering smooth varieties but it is not hard to see that a slight modification of his functor (namely, replace the constant sheaf $\Q[-\dim X]$ on $X$ by $\pi_X^!\Q_{\Spec(\Q)}$) would give a functor for any variety. Such a conjecture is related to \cite[Conjecture 2.12]{ivorraFourOperationsPerverse2022}, and might be doable with the current technology. 

We now explain how \cref{conjTors} implies that both $\mcal_\mathrm{univ}(X,\Z)\simeq \mcal_\mathrm{perv}(X,\Z)$ for all $X$ but also that $\D^b(\mcal_\mathrm{perv}(X,\Z))\simeq\DNgm(X,\Z)$.

\begin{prop}
  \label{DbPerv}
 Assume that \cref{conjTors} holds. Then for every field $k$ of characteristic zero and $X$ a finite type $k$-scheme, the functor $\mathrm{comp}\colon\mcal_\mathrm{univ}(X,\Z)\to\mcal_\mathrm{perv}(X,\Z)$ is an equivalence, and the functor  
  \[\D^b(\mcal_\mathrm{univ}(X,\Z))\to \DNgm(X,\Z)\] is an equivalence.
\end{prop}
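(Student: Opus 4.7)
The plan is to first establish the derived equivalence $F\colon \D^b(\mcal_\mathrm{univ}(X,\Lambda))\longrightarrow \DNgm(X,\Lambda)$, and then deduce the equivalence $\mathrm{comp}$ by reading off the hearts. Both sides of $F$ are small $\Z$-linear stable $\infty$-categories, so the natural strategy is to check that $F$ is an equivalence after tensoring (over $\Perf_\Z$) with $\Perf_\Q$ and with $\Perf_{\Z/n\Z}$ for each positive integer $n$. Full faithfulness after these tensorisations reduces to a statement about mapping spectra via the formula \cite[Proposition 3.5.5]{hoRevisitingMixedGeometry2023}, while for essential surjectivity it is convenient to indize and apply an analogue of \Cref{LemmeMagique}, noting that the right adjoint to the induced functor on indized categories commutes with rationalisation because $F$ sends compact objects to compact objects.

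After tensoring with $\Perf_\Q$, \Cref{compQ} identifies the source with $\D^b(\mcal_\mathrm{perv}(X,\Lambda\otimes\Q))$ and \Cref{TensPerf} identifies the target with $\DNgm(X,\Lambda\otimes\Q)$; the resulting comparison is the rational derived realisation of perverse motives, an equivalence by \cite[Corollary 2.19]{SwannRealisation}. After tensoring with $\Perf_{\Z/n\Z}$, the hypothesis of \Cref{conjTors} together with \Cref{torsderive} provides a fully faithful functor
\[\D^b(\mcal_\mathrm{univ}(X,\Lambda))\otimes_{\Perf_\Z}\Perf_{\Z/n\Z}\hookrightarrow \D^b(\mcal_\mathrm{univ}(X,\Lambda)[n]),\]
and \Cref{compTors} together with Beilinson's theorem \cite{MR0923133} identifies the target with $\dconset(X_\et,\Lambda/n)$. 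By \Cref{TensPerf} and rigidity (as used in \Cref{rigtorscomplet} and \Cref{LAdicSmall}), the tensorisation $\DNgm(X,\Lambda)\otimes_{\Perf_\Z}\Perf_{\Z/n\Z}$ also identifies with $\dconset(X_\et,\Lambda/n)$, and one verifies that the functor induced by $F$ matches the identity under these identifications.

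The main obstacle is essential surjectivity in the mod-$n$ step: contrary to the general warning following \Cref{torsderive}, we need the fully faithful functor above to actually exhaust $\D^b(\mathrm{Perv}(X_\et,\Lambda/n))$. The key input of \Cref{conjTors} is used here to show that every object $N$ of $\mathrm{Perv}(X_\et,\Lambda/n)\simeq \mcal_\mathrm{univ}(X,\Lambda)[n]$ appears as a quotient $M/n\twoheadrightarrow N$ for a torsion-free $M\in \mcal_\mathrm{univ}(X,\Lambda)$ (recalling that $nM$ lies in the kernel of any surjection $M\to N$, since $N$ is $n$-torsion); a Noetherian induction on the resulting kernel then shows that the thick subcategory generated by the $M\boxtimes \Z/n\Z$ with $M$ torsion-free exhausts the target. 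Once $F$ is known to be an equivalence, the equivalence $\mathrm{comp}$ follows because $F$ is t-exact for the standard t-structure on the source and the perverse t-structure on the target (both hearts generate the respective t-structures, and $F$ restricted to $\mcal_\mathrm{univ}(X,\Lambda)$ factors through $\mcal_\mathrm{perv}(X,\Lambda)$ via $\mathrm{comp}$), whence $F$ induces an equivalence of hearts that is precisely $\mathrm{comp}$.
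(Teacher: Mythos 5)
Your overall framework — reduce to $X$ of finite type over $\Q$, prove the comparison with $\Q$- and $\Z/n\Z$-coefficients via \Cref{LemmeMagique}, invoke \Cref{compQ}, \Cref{compTors}, \Cref{torsderive}, Beilinson and rigidity — is close in spirit to the paper's, but your treatment of essential surjectivity mod $n$ contains a real error. The identification you claim, namely $\DNgm(X,\Lambda)\otimes_{\Perf_\Z}\Perf_{\Z/n\Z}\simeq\dconset(X_\et,\Lambda/n)$, is false: by \Cref{TensPerf} the left-hand side is $\DNgm(X,\Lambda/n)$, and since $\Lambda/n$ is not regular, assertion (5) of \Cref{megathmcons} does not apply, so geometric and constructible objects differ and $\DNgm(X,\Lambda/n)$ is a \emph{proper} thick subcategory of $\dconset(X_\et,\Lambda/n)$ (already over $X=\Spec(\overline{\Q})$ with $\Lambda=\Z$, $n=4$, you would be claiming $\Perf_{\Z/4}\simeq\D^b(\Z/4\text{-}\mathrm{mod}^\mathrm{ft})$). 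Consequently the ``Noetherian induction'' you sketch, whose goal is to show the thick subcategory generated by the $M\boxtimes\Z/n\Z$ with $M$ torsion-free exhausts all of $\D^b(\mathrm{Perv}(X_\et,\Lambda/n))$, cannot succeed; indeed the remark following \Cref{torsderive} exhibits exactly this kind of failure of essential surjectivity of $\alpha$. If you instead correct the target to $\DNgm(X,\Lambda/n)$, you would have to show that the image of $\alpha$ coincides with $\DNgm(X,\Lambda/n)$, i.e.\ that the perverse cohomologies of the generators $f_\sharp(\Lambda/n)(i)$ lie in the image of $\mathrm{comp}$ — which is precisely a version of the surjectivity the proposition is asserting, making the step circular.

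The paper sidesteps this by using \Cref{torsderive} only at the level of mapping spectra (full faithfulness of $\alpha$ suffices for that), and then establishes essential surjectivity separately by a d\'evissage in the abelian heart $\mcal_\mathrm{perv}(X,\Lambda)$: given $M$ there, split $0\to M_\mathrm{tors}\to M\to M_\mathrm{fr}\to 0$; the torsion part is in the image by \Cref{compTors} and \Cref{torsMp}; for $M_\mathrm{fr}$ one uses \Cref{compQ} to produce a monomorphism $\mathrm{comp}(P)\hookrightarrow M_\mathrm{fr}$ with torsion cokernel, and concludes using that the essential image of a fully faithful exact functor is closed under extensions. This shows both that $\mathrm{comp}$ is essentially surjective (hence an equivalence, being already fully faithful by the mapping-spectra argument) and that the derived functor is an equivalence, without ever needing essential surjectivity of $\alpha$. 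You should replace your mod-$n$ essential surjectivity step with this d\'evissage; the rest of your proposal (the rational step, the mod-$n$ full faithfulness via $\alpha$ and rigidity, and the deduction of $\mathrm{comp}$ from the $t$-exactness of $F$) is sound.
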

\begin{proof} 
  By continuity along change of fields, it suffices to prove the theorem when $X$ is of finite type over $\Q$.
  It suffices to prove that the functor 
  \[\D^b(\mcal_\mathrm{univ}(X,\Z))\to\DNgm(X,\Z)\] induced by universal property from the functor 
  \[\mcal_\mathrm{univ}(X,\Z)\to \mcal_\mathrm{perv}(X,\Z)\to \DNgm(X,\Z)\] is fully faithful. Indeed, once it is fully faithful, it will be an equivalence by d\'evissage because the essential image is then stable under extension. Indeed if $M\in\mcal_\mathrm{perv}(X,\Z)$, then there is an short exact sequence \[0\to M_\mathrm{tors}\to M\to M_\mathrm{fr}\to 0\] in $\mcal_\mathrm{perv}(X,\Z)$ with $M_\mathrm{tors}$ of torsion and $M_\mathrm{fr}$ torsion-free. By \cref{compTors} and \cref{torsMp} the object $M_\mathrm{tors}$ is in the image. Moreover by \cref{compQ} there exists $P\in\mcal_\mathrm{univ}(X,\Z)$ without torsion and a map $P\to M_\mathrm{fr}$ which becomes an equivalence after tensorisation by $\Q$. In particular, this map has to be injective, and the cokernel is torsion, thus in the image. By fully faithfulness, the object $M_\mathrm{fr}$ is in the image, whence $M$ is in the image.

  The functor is an equivalence after tensorisation by $\Q$, and if we tensor the mapping spectra by $\otimes_\Z\Z/p\Z$, using \cref{torsderive} (we have enough torsion-free objects by hypothesis!) together with \cite{MR0923133} and rigidity, we conclude. Note that this also proves that $\mathrm{comp}$ is an equivalence.
\end{proof}

\subsection{Applications to Artin motives}
\label{0mot}
Recall that given a scheme $X$ and a ring of coefficients $\Z$, the category of Artin motives (resp. constructible Artin motives) is the localising (resp. thick) subcategory of $\DM(X,\Z)$ generated by the $f_*\un_Y$ for $f\colon Y\to X$ finite (see for instance \cite[Definition 2.1.1.1]{ruimyAbelianCategoriesArtin2023}); we denote it by $\mathrm{DM}^{\et,0}(X,\Z)$ (resp. $\mathrm{DM}^{\et,0}_c(X,\Z)$). We likewise define $\DN^0(X,\Z)$ and $\DN^0_c(X,\Z)$. 
\begin{prop}\label{0motGros}
  Let $X$ be a qcqs $\Q$-scheme and let $\Lambda$ be a commutative ring. Then, the Nori realisation functor $\rho_N$ induces an equivalence $\mathrm{DM}^{\et,0}(X,\Lambda)\to \DN^0(X,\Lambda)$. Moreover, the functor $\mathrm{DM}^{\et,0}_c(X,\Z)\to \DNgm(X,\Z)$ is t-exact when $\mathrm{DM}^{\et,0}_c(X,\Z)$ is endowed with the ordinary homotopy t-structure (see \cite[Definition 3.1.1.1 and Theorem 3.1.2.7]{ruimyAbelianCategoriesArtin2023}).
\end{prop}
\begin{proof}
  The functors $\Mod_\Lambda(\DN^0(X,\Z))\to \DN^0(X,\Lambda)$ and $\Mod_\Lambda(\mathrm{DM}^{\et,0}(X,\Z))\to \mathrm{DM}^{\et,0}(X,\Lambda)$ are equivalences. Hence, we can assume that $\Lambda=\Z$. Using \cref{LemmeMagique}, it suffices to tackle the case where $\Lambda=\Z/n\Z$ and the case where $\Lambda=\Q$. The first case follows from rigidity while the second case follows from the main Theorem of \cite{Swann0mot}. The t-exactness property follows from the t-exactness of the $\ell$-adic realisations functors on Artin motives (\cite[Theorem 3.1.2.7]{ruimyAbelianCategoriesArtin2023}) and their conservativity and t-exactness on $\DNgm(X,\Z)$.
\end{proof}

We will now use Nori motives to give a characterisation of dualisable objects of $\mathrm{DM}^{\et,0}_c(X,\Z)$. To that end, recall that the category $\mathrm{DM}^{\et,\mathrm{sm}0}(X,\Z)$ (resp. $\mathrm{DM}^{\et,\mathrm{sm}0}_c(X,\Z)$) of smooth Artin motives (resp. constructible smooth Artin motives) is the localising (resp. thick) subcategory of $\DM(X,\Z)$ generated by the $f_*\un_Y$ for $f\colon Y\to X$ finite \emph{\'etale}. Using \cref{0motGros}, they are equivalent to full subcategories of $\DN(X,\Z)$ defined similarly.

\begin{lem}\label{Lemmamachin}
  Let $\Lambda$ be a commutative ring. For $X$ a qcqs $\Q$-scheme denote by  $\mathrm{DM}^{\et,0}_\mathrm{rig}(X,\Lambda)$ the full subcategory of $\mathrm{DM}^{\et,0}(X,\Lambda)$ made of those objects which are dualisable in $\DM(X,\Lambda)$. The functor $\mathrm{DM}^{\et,0}_\mathrm{rig}(-,\Lambda)$ is a finitary \'etale sheaf.
\end{lem}
\begin{proof}
  As $\DMc(-,\Lambda)$ is finitary by \cite[Corollary 3.10]{DMpdf}, the functor $\mathrm{DM}^{\et,0}_\mathrm{rig}(-,\Lambda)$ is also finitary: being Artin and rigid are both finitary conditions. We have to show that $\mathrm{DM}^{\et,0}_\mathrm{rig}(-,\Lambda)$ is a subsheaf of $\DMc(-,\Lambda)$, the latter being a sheaf by \cite[Lemma 2.5]{DMpdf}, meaning that if $f\colon Y\to X$ is an \'etale covering and $M$ is an object of $\DMc(X,\Lambda)$ such that $f^*M$ is dualisable and Artin, then so is $M$. But dualisability is \'etale local and so is being a $0$-motive.
\end{proof}

\begin{prop}
  Let $X$ be a normal $\Q$-scheme. The inclusion \[\mathrm{DM}^{\et,\mathrm{sm}0}_c(X,\Z)\subseteq \mathrm{DM}^{\et,0}_\mathrm{rig}(X,\Z)\] is an equivalence. The t-structure on $\DNgm(X,\Z)$ restricts to the full subcategory $\mathrm{DM}^{\et,\mathrm{sm}0}_c(X,\Z)$ and the heart of the restricted t-structure is equivalent to $\mathrm{Rep}^A(\pi_1^\et(X),\Z)$.
\end{prop}
\begin{proof}
  Using \'etale descent, continuity and \cref{Lemmamachin}, it suffices to prove the statements if $X$ is of finite type over an algebraically closed field. 
  
   First note that the inclusion $\mathrm{DM}^{\et,0}_\mathrm{rig}(X,\Z) \to \DN^0_\mathrm{rig}(X,\Z)$ is an equivalence, where the right-hand side consists of the objects of $\DN^0(X,\Z)$ which are dualisable in $\DN(X,\Z)$. Indeed, if $M$ belongs to $\DN^0_\mathrm{rig}(X,\Z)$ then its dual is also a $0$-motive, because being a constructible $0$-motive can be tested on points and all constructible $0$-motives over a point are dualisable, with $0$-motivic duals. But now, the t-structure of $\DNgm(X,\Z)$ restricts to $\mathrm{DM}^{\et,0}_\mathrm{rig}(X,\Z)$ because it can be restricted to both dualisable objects and $0$-motives.
   
   By \cref{PBBetti} we have a pullback square  
   \[ \begin{tikzcd}
    \mathrm{DM}^{\et,0}_\mathrm{rig}(X,\Z)^\heart \arrow[r] \arrow[d]
          \arrow[dr, phantom, very near start, "{ \lrcorner }"]
        & \mathrm{Loc}(X^\an,\Z) \arrow[d] \\
        \mathrm{DM}^{\et,0}_\mathrm{rig}(X, \Q)^\heart \arrow[r,swap]
        & \mathrm{Loc}(X^\an,\Q)
  \end{tikzcd}\] where $\mathrm{Loc}(X^\an,\Z)$ is the abelian category of local systems on $X^\an$ with $\Z$ coefficients. By \cite{Swann0mot} the category $\mathrm{DM}^{\et,0}_\mathrm{rig}(X,\Q)^\heart$ is equivalent (through $\rho_!$) to  $\mathrm{Rep}^A(\pi_1^\et(X),\Q)$. As the category $\mathrm{Rep}^A(\pi_1^\et(X),\Z)$ also fits in the corner of this pullback diagram, this proves that the functor 
  $\rho_!$ induces an equivalence $\mathrm{Rep}^A(\pi_1^\et(X),\Z)\to \mathrm{DM}^{\et,0}_\mathrm{rig}(X,\Z)^\heart$. Finally, by d\'evissage, because any object of $\mathrm{Rep}^A(\pi_1^\et(X),\Z)$ defines an object of $\mathrm{DM}^{\et,\mathrm{sm}0}_c(X,\Z)$, any rigid $0$-motive is smooth Artin.
\end{proof}

Using the framework of Nori motives, we can prove new cases of \cite[Theorem 3.2.3.7]{ruimyAbelianCategoriesArtin2023} which is one of the main results of \textit{loc. cit.} and is an analogue of the Artin Vanishing Theorem in the setting of Artin motives. We begin by the following lemma:

\begin{lem}\label{cohcoh}
  Let $X$ be a qcqs $\Q$-scheme. The ordinary and perverse t-structures on $\DNgm(X,\Z)$ restrict to the category $\DN^\mathrm{coh}_c(X,\Z)$ defined as the thick subcategory of $\DN(X,\Z)$ generated by the $f_*\Z_Y$ where $f\colon Y\to X$ is proper.
\end{lem}
\begin{proof}
  By \cite{Swann0mot}, the result holds for $\Q$ coefficients. Moreover it suffices to prove the result over the spectrum of a field $k$. Because the map \[\DN^\mathrm{coh}_c(k,\Z)\otimes_{\Perf_\Z}\Perf_{\Q}\to \DN^\mathrm{coh}_c(k,\Q)\] is an equivalence and if $M$ is in $\DN^\mathrm{coh}_c(k,\Z)$ then $\HH^i(M\otimes_\Z \Q)\simeq \HH^i(M)\otimes_\Z \Q$ is cohomological, there is a cohomological motive $P$ in $\DN^\mathrm{coh}_c(k,\Z)$ and a map 
  $P\to \HH^i(M)$ whose cone is torsion and dualisable, thus a constructible $0$-motive, which are cohomological. The claim about the perverse t-structure is true by gluing. 
\end{proof}

Recall that a smooth Artin motive $M$ is $\Q$-constructible if $M\otimes_\Z \Q$ is constructible as a smooth Artin motive. We denote by $\mathrm{DM}^{\et,sm0}_{\Q-c}(X,\Z)$ the category of $\Q$-constructible smooth Artin motives. We can now prove new cases of \cite[Theorem 3.2.3.7]{ruimyAbelianCategoriesArtin2023}. 
\begin{prop}\label{ArtinVanishing} Let $S$ be an excellent scheme $\Q$-scheme, let $f\colon X\to S$ be an affine quasi-finite morphism. Assume that $X$ is regular. Then, the functor $$f_!\colon \mathrm{DM}^{\et,sm0}_{\Q-c}(X,\Z) \to \mathrm{DM}^{\et,0}(S,\Z)$$ is t-exact with respect to the perverse homotopy t-structure (see \cite[Definition 3.2.1.1]{ruimyAbelianCategoriesArtin2023}).
\end{prop}
\begin{proof}
  The same proof as in \cite[Theorem 3.2.3.7]{ruimyAbelianCategoriesArtin2023} allows to reduce to the case of rational coefficients. We now have to prove that  $f_!\colon \DN^{sm0}_{c}(X,\Q) \to \DN^0(S,\Q)$ is t-exact with respect to the perverse homotopy t-structure (defined in the same fashion as in \cite[Definition 3.2.1.1]{ruimyAbelianCategoriesArtin2023}). 

  The rest of the proof is an adaptation of the arguments of \cite[Section 4.8]{ruimyArtinPerverseSheaves2023}.
  We can write the functor  $$f_!\colon \mathrm{DM}^{\et,sm0}_{c}(X,\Q) \to \mathrm{DM}^{\et,0}(S,\Q)$$ as the composition 
  $$\mathrm{DM}^{\et,sm0}_{c}(X,\Q)\simeq \mathrm{DN}^{sm0}_{c}(X,\Q) \xrightarrow{\iota_0}\mathrm{DN}_c^\mathrm{coh}(X,\Q)\xrightarrow{f_!}\mathrm{DN}^\mathrm{coh}_c(S,\Q)\xrightarrow{\omega^0}\mathrm{DN}^0_c(S,\Q)\simeq  \mathrm{DM}^{\et,0}_c(S,\Q),$$
  where the Artin truncation functor $$\omega^0\colon \DN^\mathrm{coh}(X,\Q)\to \DN^0(X,\Q)$$ defined as the right adjoint to the inclusion $\iota_0$ of Artin motives into cohomological motives, lands in $\DN^0_c(X,\Q)$ by \cite{Swann0mot}.
  Thus it suffices to check that all functors are t-exact.
   The functor $\omega^0$ is t-exact with respect to the perverse t-structure on the left-hand side (which exists by the above \cref{cohcoh}) and the perverse homotopy t-structure on the right-hand side: this is because an object of $\DN^0(X,\Q)$ is perverse homotopy t-negative if and only if it is perverse t-negative (this can be shown as in \cite[Proposition 4.5.1]{ruimyArtinPerverseSheaves2023}) and because the functor $\omega^0$ is perverse right t-exact as per \cite[Lemma 3.2.6]{MR3293216}. But as $\iota_0$ is t-exact when restricted to smooth Artin objects because it is t-exact for the ordinary t-structure, the result reduces to the usual Artin Vanishing Theorem.
\end{proof}
\subsection{Motivic intersection complex.}
\label{j!*}
Consider an excellent scheme $B$ of dimension $2$ or less as well as a regular separated $B$-scheme of finite type $S$, endowed with a geometric and $\otimes$-invertible object $\omega_S$ in $\DN(S, \Z)$ (note that $\omega_S = \Z_S$ works). We fix $X$ a finite type $S$-scheme. 

From now on, we assume that $X$ is endowed with a bounded below dimension function $\delta$; by shifting, we can assume $0$ to be its lower bound. 
Any non-increasing function $\overline{q}\colon \N\to\Z$, defines a perversity function $q=\overline{q}\circ \delta$ on $X$ along any stratification of $X$ by connected regular and locally closed strata (in the sense of \cite[Section 2.1.1]{MR0751966}). 
The construction given in \cref{ExistPerv} for $\overline{q}=-\id$ works whenever the condition 
\[(*)\colon \forall n\in \N,\ q(n)-q(n+1)\in \{0,1,2\}\]
is satisfied. It then yields a $q$-perverse t-structure ${}^q\mathrm{t}$ on $\DNgm(X,\Z)$, whose heart we will denote by $\mcal_{q\mathrm{-perv}}(X,\Z)$ and whose cohomology functors are will be denoted by ${}^q\HH^n(-)$.
\begin{defi} Let $\overline{q}\colon \N\to\Z$ be a non-increasing function satisfying condition $(*)$, let $q=\overline{q}\circ \delta$ and let $j\colon U\to X$ be an open immersion.
  The \emph{$q$-intermediate extension functor} is the functor 
  \[{}^q j_{!*}\colon \mcal_{q\mathrm{-perv}}(U,\Z)\to \mcal_{q\mathrm{-perv}}(X,\Z)\] defined by 
  \[{}^q j_{!*}M = \mathrm{Im}(^q\HH^0(j_!M)\to {}^q\HH^0(j_*M)).\]
The \emph{$q$-motivic intersection complex} is the $q$-perverse Nori motive $${}^q\mathrm{IC}_{X,\Z}:={}^qj_{!*}(\Z[-q(U)])[q(U)].$$ When $X$ is endowed with a dimension function $\delta$ and $q=-\delta$ is the self-dual perversity, we simply refer to those as the intermediate extension functor $j_{!*}$ and the motivic intersection complex $\mathrm{IC}_{X,\Z}$. 
\end{defi}

\begin{rem}
  In \cite{MR2953419,MR4050012} J\"org Wildeshaus constructed a motivic intersection complex in $\mathrm{DM}(X,\Q)$ using weights, in several cases (mainy when $X$ is a Shimura variety). In those cases, we expect the complex $\mathrm{IC}_{X,\Q}$ to be the Nori realisation of Wildeshaus' motivic intersection complex up to a shift of $-\dim(X)$. Indeed assuming the existence of the motivic t-structure, this would be true by uniqueness of Wildeshaus' complex \cite[Theorem 3.1]{MR2953419}.
\end{rem}

We fix a stratification $(X_i)$ of $X$ by regular and connected locally closed subschemes. Given an integer $n$, we set $U_n$ to be the union of those strata $X_i$ such that $\delta(X_i)\geqslant n$ and $j_n\colon U_{n}\to U_{n-1}$ to be the open immersion.
\begin{prop}
  \label{formuleSale}
  Let $\overline{q}\colon \N\to\Z$ be a non-increasing function satisfying condition $(*)$ and let $q=\overline{q}\circ \delta$. 
  Let $k\geqslant 0$ be an integer and let $j\colon U_k\to X=U_0$ be the open immersion. Then there is a canonical isomorphism of functors 
  \[{}^qj_{!*}\simeq \left(\tau^{\leqslant \overline{q}(0)-1}(j_{1})
_*\right)\circ\cdots\circ \left(\tau^{\leqslant \overline{q}(k-2)-1}(j_{k-1})_*\right)\circ\left(\tau^{\leqslant \overline{q}(k-1)-1}(j_{k})_*\right)\] 
on $ \mcal_{q\mathrm{-perv}}(U_k)$, with $\tau^{\leqslant i}$ the truncations functors for the ordinary t-structure.
\end{prop}
\begin{proof}
  This is a consequence of \cite[Proposition 1.4.23]{MR0751966} (see the proof of \cite[Proposition 2.1.11]{MR0751966}).
\end{proof}

\begin{cor}
  Let $\overline{q},\overline{r}\colon \N\to\Z$ be non-increasing functions and $q=\overline{q}\circ \delta$ and $r=\overline{r}\circ \delta$.
  Let $k\geqslant 0$ be an integer and let $j\colon U_k\to X=U_0$ be the open immersion. 
  There exists a canonical pairing in $\DN_c(X,\Z)$
 \[ {}^qj_{!*}(M[-q(U_k)])\otimes{}^rj_{!*}(N[-r[U_k]])\to{}^{q+r}j_{!*}(M\otimes N[-q(U_k)-r(U_k)])\]
 for $M,N\in\mcal_\mathrm{ord}^\mathrm{rig}(U_k)$ motivic local systems (that is, objects in the ordinary heart which are dualisable). 
\end{cor}
\begin{proof}
This is a direct consequence of the formula for the intermediate extension given in \cref{formuleSale}, knowing that the functors of the form $\gamma_*$ are lax-monoidal as right adjoints of monoidal functors and that for any $M$ and $N$, we have a map $$\tau^{\leqslant n}(M)\otimes \tau^{\leqslant m}(N)\to \tau^{\leqslant n+m}(M\otimes N)$$ given by the right t-exactness of the tensor product and adjunction.
\end{proof}
This yields a pairing 
\[ \mathrm{IC}_{X,\Z}^q\otimes\mathrm{IC}_{X,\Z}^r\to \mathrm{IC}_{X,\Z}^{q+r}\]
in $\mathrm{DN}(X,\Z)$.
Now consider the \emph{top perversity} $\overline{t}=-2\mathrm{Id}_\Z$. Gluing on the stratification as in \cite[Section 5.1]{IH2}, whenever $\overline{q}\leqslant \overline{t}$, there exists a map $\mathrm{IC}_{X,\Z}^{q}\to \omega_X[\delta(X)]$ where $\omega_X = \mathbb{D}_X(\Z)$ is the dualising complex. Whence a map 
\begin{equation}
  \label{ICparing}
  \mathrm{IC}_{X,\Z}\otimes\mathrm{IC}_{X,\Z}\to \omega_X[\delta(X)].
\end{equation}
With rational caoefficients, this pairing induces an equivalence 
\begin{equation}\label{pairingICQ}\mathrm{IC}_{X,\Q} \xrightarrow{\sim} \mathbb{D}_X(\mathrm{IC}_{X,\Q})[-\delta(X)]\end{equation} by \cite[Section 5.3]{IH2}.
Let $k$ be a field of characteristic zero.
\begin{lem}
  \label{coeffuniv}
  Let $K\in\DNc(k,\Z)$ and $M\in\mcal(k,\Z)$. For each $i\in\Z$ there exists an short exact sequence 
  \[0\to\HH^1(\underline{\mathrm{Hom}}(\HH^{-i+1}(K),M))\to \HH^i(\underline{\mathrm{Hom}}(K,M))\to \HH^0(\underline{\Hom}(\HH^{-i}(K),M))\to 0\]
  in $\mcal(k,\Z)$.
\end{lem}
\begin{proof}
  This realises to the classical universal coefficient theorem of abelian groups (or $\Z_\ell$-modules if one takes $\ell$-adic realisations).
\end{proof}
\begin{defi}
  Assume that $X$ is a finite type $k$-scheme. The intersection homology complex of $X$ is the object \[\mathrm{IH}(X,\Z):= p_*\mathrm{IC}_{X,\Z}\] in $\DN_c(k,\Z)$ where $p\colon X\to\Spec(k)$ is the structural morphism. Denote by 
  $\mathrm{IH}_i(X,\Z)=\HH^{-i}(\mathrm{IH}(X,\Z))$ the $i$-th intersection homology motive. There is also a version with compact support 
  \[\mathrm{IH}_c(X,\Z):= p_!\mathrm{IC}_{X,\Z}\] and its homology 
  motive $\mathrm{IH}_{i,c}(X,\Z)=\HH^{-i}(\mathrm{IH}_c(X,\Z))$
\end{defi}
With rational coefficients, we obtain from \cref{pairingICQ} a perfect pairing 
\[\mathrm{IH}_{i,c}(X,\Q)\otimes_\Q \mathrm{IH}_{d-i}{(X,\Q)}\to \Q_k(-d),\] where $d$ is the dimension of $X$. 


We finish with a motivic lift of the results in \cite[Section 3]{MR699009} providing a motivic Seifert linking pairing. The proof of the next proposition is the same as in \emph{loc. cit.}
\begin{prop}
  For $i\in\Z$ denote by $T_i(X)$ (\emph{resp.} by $T_{i,c}(X)$) the maximal torsion subobject of $\mathrm{IH}_i(X,\Z)$ (\emph{resp.} of $\mathrm{IH}_{i,c}(X,\Z))$. There is a pairing 
  \[T_{i,c}(X)\times T_{d-i-1}(X)\to \Q/\Z(-d)\] in $\mcal(k,\Z)$.
\end{prop}
\begin{proof}
  We apply \cref{coeffuniv} and obtain a short exact sequence
  \[ 0\to \HH^1(\underline{\Hom}(\mathrm{IH}_{d-i-1}(X,\Z),\Z))\to \HH^{-i}(\underline{\Hom}(\mathrm{IH}(X,\Z)[d](d),\Z))\to \HH^0(\underline{\Hom}(\mathrm{IH}_{d-i}(X,\Z),\Z))\to 0.\]
  Using the exact triangle $\Z\to\Q\to\Q/\Z$ and the fact that for any $M\in\mcal(k,\Z)$ the object $\HH^1(\underline{\Hom}(M,\Q))$ vanishes we obtain
  that  \[\HH^0(\underline{\Hom}(\mathrm{IH}_{d-i-1}(X,\Z)(d),\Q/\Z))\xrightarrow{\sim}\HH^1(\underline{\Hom}(\mathrm{IH}_{d-i-1}(X,\Z)(d),\Z))\] is an equivalence. Moreover because $\Q/\Z$ is torsion, the map 
  \[\HH^0(\underline{\Hom}(\mathrm{T}_{d-i-1}(X)(d),\Q/\Z))\to\HH^0(\underline{\Hom}(\mathrm{IH}_{d-i-1}(X,\Z)(d),\Q/\Z))\] is also an equivalence (this is true after realisation). Also any map $M\to \HH^0(\underline{\Hom}(\mathrm{IH}_{d-i}(X,\Z)(d),\Z))$ with $M$ torsion vanishes because it can be checked after realisation. Thus the composed map
    $T_{i,c}(X)\to \mathrm{IH}_{i,c}(X,\Z)\to \HH^{-i}(\underline{\Hom}(\mathrm{IH}(X,\Z)[d](d),\Z))$ obtained from applying $\HH^{-i}p_!$ to \cref{ICparing} factors through the inclusion  \[\HH^0(\underline{\Hom}(\mathrm{T}_{d-i-1}(X)(d),\Q/\Z))\to \HH^{-i}(\underline{\Hom}(\mathrm{IH}(X,\Z)[d](d),\Z)),\] yielding
  \[ T_{i,c}(X)\to \HH^0(\underline{\Hom}(\mathrm{T}_{d-i-1}(X)(d),\Q/\Z)).\]
  By adjunction (here we use that $\mcal(k,\Z)$ has enough flat objects) this provides the desired pairing.
\end{proof}
\begin{rem}
  The above pairing is non-degenerate when $X$ is smooth. It is not non-degenerate in general, there is a topological obstruction introduced in \cite{MR699009} when $X$ is singular and proper.
\end{rem}
\section{Integral mixed Hodge modules.} 

\subsection{Definition of mixed Hodge modules with integral coefficients}
Let $X$ be a separated finite type $\C$-scheme. In \cite{MR1047415}, M. Saito constructed an abelian category $\mathrm{MHM}(X)$ of mixed Hodge modules over $X$, which is a relative version of mixed Hodge structures, modelled on perverse sheaves, in the sense that if $X= \Spec(\C)$, we have that $\mathrm{MHM}(X)\simeq \mathrm{MHS}^P_\Q(\C)$ is the category of polarisable mixed Hodge structure over $\C$ with rational coefficients (which consists of mixed Hodge structure such that the graded pieces of the weight filtration are polarisable \cite[D\'efinition 2.1.15]{MR0498551}, it is a full subcategory of Deligne's mixed Hodge structures), and there is a faithful exact functor \[\mathrm{rat}\colon \mathrm{MHM}(X)\to\mathrm{Perv}(X^\mathrm{an},\Q).\] 
Moreover, M. Saito constructed the six operations on the bounded derived category $\D^b(\mathrm{MHM}(-))$, in a way compatible with the functor $\mathrm{rat}$. 
In \cite{SwannRealisation} the second author proved that the construction of these six operations by M. Saito could be canonically lifted to the world of $\infty$-categories, with all possible coherences (this includes an extension to non separated schemes and morphisms). As a consequence, using the universal property of $\DM(-,\Q)$, the second author obtained a realisation 
\[\rho_\mathrm{H}\colon\DM(-,\Q)\to\D_\mathrm{H}(-)\] compatible with the six operations, where $\D_\mathrm{H}(-)$ is the indization of $\D^b(\mathrm{MHM}(-))$. 

In this section, we show that the methods we used for Nori motives apply for mixed Hodge modules, and that proofs are easier in this case.

\begin{defi}
  \label{defDH}
  The presentable $\infty$-category of mixed Hodge modules with integral coefficients is the pullback 
  \[ \begin{tikzcd}
      \D_\mathrm{H}(-,\Z) \arrow[r] \arrow[d]
          \arrow[dr, phantom, very near start, "{ \lrcorner }"]
        & \D_\mathrm{H}(-) \arrow[d,"\mathrm{rat}"] \\
      \In\dconsan((-)^\mathrm{an},\Z) \arrow[r,"-\otimes \Q",swap]
        & \In \dconsan((-)^\mathrm{an},\Q)
  \end{tikzcd}\] where the pullback is taken in the $\infty$-category of functors $(\Sch_\C^\mathrm{ft})^\op\to\mathrm{Pr}^{\mathrm{L},\omega}_\St$.
\end{defi}

Because all categories have a perverse t-structure and that the functors are t-exact, the $\infty$-category $\D_\mathrm{H}(X,\Z)$ affords a perverse t-structure, that restricts to compact objects, over any finite type $\C$-scheme $X$. Moreover, $\D_\mathrm{H}(-,\Z)$ affords a $6$-functor formalism.

Recall that in \cite[D\'efinition 2.3.1]{MR0498551} Deligne defines mixed Hodge structures that have an integral lattice. In fact, we have a pullback for the abelian categories of polarisable mixed Hodge structures
\[ \begin{tikzcd}
    \mathrm{MHS}^P_\Z(\C) \arrow[r] \arrow[d,"\mathrm{int}"]
        \arrow[dr, phantom, very near start, "{ \lrcorner }"]
      & \mathrm{MHS}^P_\Q(\C) \arrow[d] \\
    \mathrm{Ab}^\mathrm{ft} \arrow[r]
      & \mathrm{Vect}_\Q^\mathrm{fd}
\end{tikzcd}\] with $\mathrm{Vect}_\Q^\mathrm{fd}$ the abelian category of finite-dimensional $\Q$-vector spaces 
(note that the weight filtration only exists with rational coefficients by definition). By definition, we then have that $\mathrm{MHS}^P_\Z(\C)$ is heart of the full subcategory of compact objects in $\D_\mathrm{H}(\Spec\C,\Z)$.
By the universal property of the fiber product, we obtain a functor 
 \[\D^b(\mathrm{MHS}_\Z^P(\C))\to\D_\mathrm{H,c}^b(\Spec(\C))\] with target bounded objects of $\D_{\mathrm{H}}(\Spec(\C))$ with cohomology in $\mathrm{MHS}_\Z^P(\C)$.
\begin{prop}
   \label{MHMpt}
   The above functor is an equivalence.
 \end{prop}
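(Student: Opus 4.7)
The plan is to construct a t-exact symmetric monoidal functor $F\colon \D(\In\mathrm{MHS}^P_\Z(\C)) \to \D_\mathrm{H}(\Spec(\C),\Z)$ using the universal property of the pullback, and to verify it is an equivalence via \Cref{LemmeMagique}. Since polarisable rational mixed Hodge structures on $\C$ form an abelian category of cohomological dimension one, the preamble identifies $\D_\mathrm{H}(\Spec(\C))$ with $\D(\In\mathrm{MHS}^P_\Q(\C))$; moreover $\In\dconsan((\Spec\C)^\mathrm{an},\Lambda) \simeq \D(\Lambda)$ for $\Lambda = \Z,\Q$. Thus \Cref{defDH} identifies $\D_\mathrm{H}(\Spec(\C),\Z)$ with the pullback of $\D(\In\mathrm{MHS}^P_\Q(\C)) \to \D(\Q) \leftarrow \D(\Z)$ in $\PrL$. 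The abelian pullback square defining $\mathrm{MHS}^P_\Z(\C)$ provides exact functors compatible over $\mathrm{Vect}_\Q^\mathrm{fd}$; extending to ind-completions and deriving yields colimit-preserving functors from $\D(\In\mathrm{MHS}^P_\Z(\C))$ to $\D(\In\mathrm{MHS}^P_\Q(\C))$ and to $\D(\Z)$, matched over $\D(\Q)$, so the universal property of the pullback in $\PrL$ furnishes $F$.

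I would then observe that $F$ is t-exact. Indeed, the pullback $\D_\mathrm{H}(\Spec(\C),\Z)$ inherits via \Cref{LimTStructures} a canonical t-structure for which the three projection functors are t-exact; the composition of $F$ with each projection comes from an exact functor on the abelian level, hence $F$ is t-exact.

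To conclude, I apply \Cref{LemmeMagique}: it suffices to check that $F\otimes\Q$ and $F\otimes\Z/n$ are equivalences for every nonzero integer $n$, and that the right adjoint of $F$ commutes with rationalisation. Tensoring with $\Q$ collapses the bottom edge of the pullback defining the target, since $\D(\Z)\otimes\Q \simeq \D(\Q)$, and leaves $\D(\In\mathrm{MHS}^P_\Q(\C))$; the source rationalises to the same, and $F\otimes\Q$ is visibly the identity. Tensoring with $\Z/n$ kills the $\Q$-linear terms $\D_\mathrm{H}(\Spec(\C))$ and $\D(\Q)$ on the target side, reducing the pullback to $\D(\Z)\otimes\Z/n = \D(\Z/n)$; on the source, the full subcategory $\mathrm{MHS}^P_\Z(\C)[n]$ coincides with $\mathrm{Ab}^\mathrm{ft}[n] = \Z/n\mathrm{-mod}^\mathrm{ft}$ because a polarisable rational MHS killed by $n$ vanishes, and a standard argument along the lines of \Cref{rigAlgClos} identifies $\D(\In\mathrm{MHS}^P_\Z(\C))\otimes\Z/n$ with $\D(\Z/n)$. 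Thus $F\otimes\Z/n$ is the identity.

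The main obstacle I anticipate is verifying that the right adjoint of $F$ commutes with $-\otimes\Q$; for this, I would describe it explicitly from the pullback presentation, where an object $(V_\mathrm{H},V_\Z,\alpha)$ is sent to the integral mixed Hodge module with underlying Hodge structure $V_\mathrm{H}$ and lattice $V_\Z$, a formation involving only finite limits and hence commuting with the filtered colimit $\mathrm{id} \to -\otimes\Q$. Should this direct computation prove delicate, an alternative route, analogous to the proof of \Cref{DInd}, is to bypass \Cref{LemmeMagique} altogether: one checks that the heart of $\D_\mathrm{H}(\Spec(\C),\Z)$ is the fiber product of hearts $\In\mathrm{MHS}^P_\Q(\C)\times_{\In\mathrm{Vect}_\Q^\mathrm{fd}}\In\mathrm{Ab}^\mathrm{ft}$, which equals $\In\mathrm{MHS}^P_\Z(\C)$ since filtered colimits commute with finite limits; then one verifies left completeness and the $0$-complicial property of the target t-structure and applies Lurie's characterisation \cite[Remark C.5.4.11]{lurieSpectralAlgebraicGeometry} of $\D(\mathcal{A})$ to conclude that $F$ is an equivalence.
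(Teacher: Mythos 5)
Your route through \Cref{LemmeMagique} is genuinely different from the paper's, which is considerably shorter: the paper simply invokes the fact (Beilinson, \cite[Corollary 1.10 and Remark 2.4]{MR0862628}) that $\mathrm{MHS}_\Z^P(\C)$ has cohomological dimension $1$, so that by \cite[Lemma 1.1.7]{MR3477640} the objects of $\D^b(\mathrm{MHS}_\Z^P(\C))$ are compact in $\D(\In\mathrm{MHS}_\Z^P(\C))$, whence the source is $\In\D^b(\mathrm{MHS}_\Z^P(\C))$; since $\D_\mathrm{H}(\Spec(\C),\Z)$ is by construction a pullback in $\mathrm{Pr}^{\mathrm{L},\omega}_\St$, the whole statement reduces to matching the small categories of compact objects, and Beilinson's computation of the $\mathrm{Ext}$'s in $\mathrm{MHS}_\Z^P(\C)$ is precisely what identifies $\D^b(\mathrm{MHS}_\Z^P(\C))$ with $\D^b(\mathrm{MHS}_\Q^P(\C))\times_{\Perf_\Q}\Perf_\Z$. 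Your fracture-square argument works, but it is longer and, as written, glosses over points whose cleanest justification is the same cohomological dimension fact.

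Concretely: you invoke cohomological dimension $1$ for $\mathrm{MHS}^P_\Q(\C)$ in the preamble but never state it for $\mathrm{MHS}^P_\Z(\C)$, yet you need it. Your claims that $\D(\In\mathrm{MHS}^P_\Z(\C))\otimes\Mod_\Q\simeq\D(\In\mathrm{MHS}^P_\Q(\C))$ and that $\D(\In\mathrm{MHS}^P_\Z(\C))\otimes\Mod_{\Z/n}\simeq\D(\Z/n)$ are most directly obtained by first identifying $\D(\In\mathrm{MHS}^P_\Z(\C))$ with $\In\D^b(\mathrm{MHS}^P_\Z(\C))$ (using cohomological dimension $1$) and then arguing on the small categories as in \Cref{ratalgclos} and \Cref{rigAlgClos}; the argument of \Cref{rigAlgClos} also requires enough flat objects in $\mathrm{MHS}^P_\Z(\C)$, which you should note holds since lattices are free. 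Likewise your treatment of the hypothesis on the right adjoint in \Cref{LemmeMagique} is heuristic (``a formation involving only finite limits''): the clean justification, mentioned in the remark after \Cref{LemmeMagique}, is that once both sides are known to be compactly generated and $F$ preserves compacts, the right adjoint $G$ commutes with all colimits, hence with $\mathrm{id}\to-\otimes\Q$; but this again uses that the source is $\In\D^b(\mathrm{MHS}^P_\Z(\C))$. Your alternative route via left completeness, $0$-compliciality and \cite[Remark C.5.4.11]{lurieSpectralAlgebraicGeometry} is closer in spirit to \Cref{DInd} and is also viable, though each of those three verifications on the pullback target requires care and makes the proof heavier than the paper's two-line argument.
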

 \begin{proof} 
   This follows from the fact that the cohomological dimension of $\mathrm{MHS}_\Z^P$ is $1$ by \cite[Corollary 1.10]{MR0862628}. 
\end{proof}

There is a canonical realisation 
\[\rho_\mathrm{H}\colon \DM(-,\Z)\to\D_\mathrm{H}(-,\Z)\] that commutes with the $6$ operations. Indeed, this functor is obtained from the universal property of the pullback, and as $\D_\mathrm{H}(-,\Z)$ is compactly generated all operations commute with rationalisation so that the proof consist in proving that the exchange maps are equivalence when rationalised (this follows from \cite[Theorem 4.2.29]{MR3971240}) and when taken mod $p$ for all primes $p$.

\subsection{Mixed Hodge modules over \texorpdfstring{$\R$}{R}-schemes.}
In this section we extend the definition of mixed Hodge modules (with integral coefficients) to $\R$-schemes, using the same method as for Nori motives.

Let $X$ be a finite type $\R$-scheme. Then $X_\C := X\times_{\Spec \R}\Spec(\C)$ has a canonical $\Z/2\Z$ action given by the complex conjugation, and thus the $\infty$-category $\D_\mathrm{H}(X_\C,\Z)$ is endowed with a $\Z/2\Z$ action. In fact this construction is functorial and we obtain a functor 
\[(\Sch^\mathrm{ft}_\R)^\op\to\mathrm{Fun}(\mathrm{B}\Z/2\Z,\CAlg(\PrL))\] that sends a finite type $\R$-scheme $X$ to $\D_\mathrm{H}(X_\C,\Z)$. 

Now consider the functor \[(-)^{\mathrm{h}\Z/2\Z}\colon\mathrm{Fun}(\mathrm{B}\Z/2\Z,\CAlg(\PrL))\to \CAlg(\PrL)\] that sends a presentably symmetric monoidal $\infty$-category $\ccal$ with a $\Z/2\Z$ action to the homotopy fixed points $\ccal^{\mathrm{h}\Z/2\Z}$. Note that this functor take a diagram $G\colon \mathrm{B}\Z/2\Z\to \CAlg(\PrL)$ to its limit, thus has a left adjoint $F$ that takes an object $\ccal\in\CAlg(\PrL)$ to the constant diagram $\underline{\ccal}$.
\begin{lem}
  If $X$ is a finite type $\C$-scheme, then the natural functor \[p_1^*\colon\D_\mathrm{H}(X,\Z)\to\D_\mathrm{H}(X_\C,\Z)\] induced by the first projection $p_1\colon X\times_{\Spec(\R)}\Spec(\C)\to X$ identifies naturally the left-hand side to the invariants of the right-hand side. More precisely there is a canonical isomorphism between the functor $p_1^*$ and the counit 
  \[\eta\colon FG(\D_\mathrm{H}(X_\C,\Z))\to \D_\mathrm{H}(X_\C,\Z).\]
\end{lem}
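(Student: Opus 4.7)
The plan is to explicitly decompose $X_\C$ and reduce the computation of $\Z/2\Z$-fixed points to a purely formal statement. First I would unwind $X_\C$ when $X$ is already a $\C$-scheme. Factoring the structure map as $X\to \Spec(\C)\to \Spec(\R)$, we obtain
\[X_\C = X\times_{\Spec(\R)}\Spec(\C) \simeq X\times_{\Spec(\C)}\Spec(\C\otimes_\R\C).\]
Using the canonical isomorphism $\C\otimes_\R \C\simeq \C\times \C$, this gives an identification $X_\C\simeq X\sqcup X$ (up to the standard complex-conjugation twist on one of the summands). Under this identification, the $\Z/2\Z$-action on $X_\C$ coming from conjugation in the second factor corresponds to swapping the two copies, and the first projection $p_1\colon X_\C\to X$ becomes the codiagonal $X\sqcup X\to X$.

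Next, I would use that $\D_\mathrm{H}(-,\Z)$ turns finite coproducts of $\C$-schemes into products. This is immediate from \Cref{defDH}: each of the three functors involved in the defining pullback square ($\D_\mathrm{H}(-)$, $\In\dconsan((-)^{\mathrm{an}},\Z)$, and $\In\dconsan((-)^{\mathrm{an}},\Q)$) is a Zariski sheaf and hence takes disjoint unions to products, and finite limits commute with finite products. Therefore
\[\D_\mathrm{H}(X_\C,\Z)\simeq \D_\mathrm{H}(X,\Z)\times \D_\mathrm{H}(X,\Z)\]
in $\Fun(\mathrm{B}\Z/2\Z,\CAlg(\PrL))$, where the target carries the swap $\Z/2\Z$-action and $p_1^*$ corresponds to the diagonal functor $M\mapsto (M,M)$.

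Finally, I would invoke the standard computation that homotopy fixed points of the swap action on $\ccal\times\ccal$ are canonically identified with $\ccal$ via the diagonal: explicitly, $\ccal\times\ccal$ with swap action is $\Fun(\Z/2\Z,\ccal)$ with the left-translation action, so its $\mathrm{h}\Z/2\Z$-fixed points are $\Fun_{\Z/2\Z}(\Z/2\Z,\ccal)\simeq\ccal$. Combining this with the previous step yields $\D_\mathrm{H}(X_\C,\Z)^{\mathrm{h}\Z/2\Z}\simeq \D_\mathrm{H}(X,\Z)$, with the equivalence realized precisely by $p_1^*$. Interpreting this equivariantly shows that the $\Z/2\Z$-equivariant enhancement of $p_1^*$ (whose source has trivial action because $X$ is a $\C$-scheme) is identified with the counit $\underline{\D_\mathrm{H}(X_\C,\Z)^{\mathrm{h}\Z/2\Z}}\to \D_\mathrm{H}(X_\C,\Z)$, as required.

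The main obstacle will be bookkeeping the various conjugation twists: strictly speaking, the isomorphism $\C\otimes_\R\C\simeq \C\times\C$ involves a choice, and the two factors of $X\sqcup X$ are really $X$ and its complex conjugate $\overline X$ as $\C$-schemes, so that $p_1$ is a morphism of $\R$-schemes but not of $\C$-schemes. However, since $X$ and $\overline X$ share the same underlying real analytic space, $\D_\mathrm{H}$ identifies them canonically via the $\Z/2\Z$-action already present in the construction, so that after all identifications the action on $\D_\mathrm{H}(X,\Z)\times\D_\mathrm{H}(X,\Z)$ is indeed the honest swap. Once this is spelled out, the rest is formal.
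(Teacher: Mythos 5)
Your proof is correct, but it takes a genuinely different route from the paper's. The paper argues \emph{\`a la} \Cref{GaloisInvCat}: it notes that $p_1\colon X_\C\to X$ is a degree-$2$ \'etale cover, identifies the \v{C}ech nerve of $p_1$ with the cosimplicial diagram appearing in the Bousfield--Kan formula for $(-)^{\mathrm{h}\Z/2\Z}$, and concludes by \'etale descent for $\D_\mathrm{H}(-,\Z)$. You instead decompose $X_\C$ outright as $X\sqcup\overline{X}$, use only the (weaker) fact that $\D_\mathrm{H}(-,\Z)$ sends disjoint unions to products to get $\D_\mathrm{H}(X_\C,\Z)\simeq\ccal\times\ccal$ with $\ccal=\D_\mathrm{H}(X,\Z)$, and invoke the formula $(\ccal\times\ccal)^{\mathrm{h}\Z/2\Z}\simeq\ccal$ for the coinduced (swap) action. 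Both rest on the same geometric fact $\C\otimes_\R\C\simeq\C\times\C$, but your version is more elementary and avoids the simplicial resolution entirely, at the cost of having to track the conjugation twists explicitly --- and you correctly flag that as the one genuinely delicate point. One clarification there: the statement ``$X$ and $\overline{X}$ share the same underlying real analytic space, so $\D_\mathrm{H}$ identifies them'' is not quite the right justification, since $\D_\mathrm{H}$ is built from Saito's mixed Hodge modules and therefore depends on the \emph{complex} (i.e. $\dcal$-module and Hodge-filtration) structure, not merely on the underlying real-analytic one. The actual reason $\D_\mathrm{H}(X,\Z)\simeq\D_\mathrm{H}(\overline{X},\Z)$ is that $\D_\mathrm{H}(-,\Z)$ is equivariant for complex conjugation acting on $\mathrm{Sch}^{\mathrm{ft}}_\C$ --- precisely the equivariance the paper uses to endow $\D_\mathrm{H}(X_\C,\Z)$ with its $\Z/2\Z$-action, so your appeal to ``the $\Z/2\Z$-action already present in the construction'' is the correct mechanism; only the ``same real-analytic space'' phrasing should be dropped.
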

\begin{proof}
  Indeed, one remarks that $X_\C \simeq X\times_\R X$, and that the projection map $X_\C\to X$ is an \'etale cover, 
  so that the Bousfield-Kan formula for the limit of the $\Z/2\Z$-invariants is exactly the limit for \'etale descent.
\end{proof}

In particular the following definition does not introduce clash of notations:
\begin{defi}
  The functoriality of the presentable $\infty$-category of mixed Hodge modules on finite type $\R$-schemes is the composition 
  \[\D_\mathrm{H}(-,\Z)\colon (\Sch_\R^\mathrm{ft})^\op\to \mathrm{Fun}(\mathrm{B}\Z/2\Z,\CAlg(\PrL)) \xrightarrow{(-)^{\mathrm{h}\Z/2\Z}} \CAlg(\PrL).\]
\end{defi}
By the previous lemma, the resriction of this functor to finite type $\C$-schemes corresponds to ind-mixed Hodge modules as constructed by M. Saito.
We may denote by $\D_\mathrm{H}(-,\Q):=\D_\mathrm{H}(-,\Z)\otimes\Mod_\Q$. 
\begin{prop}
  \label{MHMRrat}
  Let $X$ be a finite type $\R$-scheme. Then $\D_\mathrm{H}(X,\Q)$ is compactly generated and its full subcategory of compact objects is canonically isomorphic to $\D^b(\mathrm{MHM}(X,\Q))$ where $\mathrm{MHM}(X,\Q)$ consists of the $\Z/2\Z$-invariants of $\mathrm{MHM}(X_\C,\Q)$.
\end{prop}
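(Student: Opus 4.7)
The strategy is to exploit that $|\Z/2\Z|=2$ is invertible in $\Q$, so that homotopy fixed points of $\Z/2\Z$-actions on $\Q$-linear stable presentable $\infty$-categories are very well behaved. The canonical map $p\colon X_\C\to X$ is finite \'etale of degree $2$; hence in the six functor formalism on $\D_\mathrm{H}(-,\Q)$ we have adjunctions $p_\sharp\dashv p^*\dashv p_*$ with $p_\sharp\simeq p_*$ via the norm associated to the $\Z/2\Z$-action, and $p^*$ identifies $\D_\mathrm{H}(X,\Q)$ with the homotopy fixed points by the lemma preceding the proposition. From ambidexterity one immediately obtains compact generation: $p_\sharp$ preserves compact objects (its right adjoint $p^*$ is cocontinuous), and the conservativity of $p^*$ together with the formula $\mathrm{Hom}(p_\sharp c,M)\simeq\mathrm{Hom}(c,p^*M)$ shows that the $p_\sharp(c)$ for $c$ compact generate $\D_\mathrm{H}(X,\Q)$.

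Next, I would transport the perverse t-structure. The $\Z/2\Z$-action on $\D_\mathrm{H}(X_\C,\Q)$ is implemented by pullback along automorphisms of $X_\C$, hence is t-exact for the perverse t-structure. By \Cref{LimTStructures} applied to the Bousfield--Kan formula computing homotopy fixed points, this induces a t-structure on $\D_\mathrm{H}(X,\Q)$ making $p^*$ t-exact. Its heart is the $\Z/2\Z$-fixed points of $\In\mathrm{MHM}(X_\C,\Q)$; since filtered colimits commute with the finite limit defining $\Z/2\Z$-invariants (and $\mathrm{MHM}(X,\Q)$ is by definition $\mathrm{MHM}(X_\C,\Q)^{h\Z/2\Z}$), this heart identifies with $\In\mathrm{MHM}(X,\Q)$.

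The universal property of the bounded derived category \cite[Corollary 7.4.12]{bunkeControlledObjectsLeftexact2019} then produces a canonical functor
\[\Phi\colon \D^b(\mathrm{MHM}(X,\Q))\to\D_\mathrm{H}(X,\Q).\]
Its image lies in the compact part: $p^*\Phi$ factors through the complex realisation $\D^b(\mathrm{MHM}(X_\C,\Q))\to\D_\mathrm{H}(X_\C,\Q)$, which lands in the compact objects by construction, and $p^*$ detects compactness thanks to the ambidextrous adjunction. Conversely, if $M\in\D_\mathrm{H}(X,\Q)^c$ then $p^*M\in\D^b(\mathrm{MHM}(X_\C,\Q))$, so $M$ is bounded for the induced t-structure with cohomology in $\mathrm{MHM}(X,\Q)$, yielding essential surjectivity onto the compact part.

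It remains to prove full faithfulness, which by d\'evissage reduces to showing that for $M,N\in\mathrm{MHM}(X,\Q)$ the natural map
\[\mathrm{Ext}^i_{\mathrm{MHM}(X,\Q)}(M,N)\to\mathrm{Hom}_{\D_\mathrm{H}(X,\Q)}(M,N[i])\]
is an isomorphism. Both sides compute as $\Z/2\Z$-invariants of the analogous groups over $X_\C$: the right-hand side by definition of homotopy fixed points, and the left-hand side because the functor $(-)^{\Z/2\Z}$ is exact on $\Q$-linear abelian categories (invertibility of $2$), hence commutes with the computation of $\mathrm{Ext}^i$. The corresponding statement over $\C$ is precisely the identification of the compact objects of $\D_\mathrm{H}(X_\C,\Q)$ with $\D^b(\mathrm{MHM}(X_\C,\Q))$ recalled at the start of the section. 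The main delicate point is ensuring that the $\infty$-categorical $\Z/2\Z$-invariants in the definition of $\D_\mathrm{H}(X,\Q)$ match the strict invariants defining $\mathrm{MHM}(X,\Q)$; this is guaranteed by the exactness of invariants in characteristic coprime to $2$, which collapses the homotopy limit to the ordinary limit on the hearts.
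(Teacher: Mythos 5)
Your proof is correct, but it takes a route genuinely different from the paper's. Where the paper establishes compactness of the objects in $\dcal(X)$ (preimages of compact objects under base change to $X_\C$) by an explicit chain of equivalences culminating in \cite[Lemma 1.1.10]{MR3477640} (commutation of $\mathrm{R}\Gamma(\R_\et,-)$ with uniformly bounded filtered colimits in $\Q$-linear coefficients), and then identifies $\dcal(X)$ with $\D^b(\mathrm{MHM}(X,\Q))$ by running the descent argument of \Cref{etdescfields} for the map $X_\C\to X$, you instead invoke ambidexterity of the degree-$2$ finite \'etale cover (the norm $p_\sharp\simeq p_*$, available since $2$ is invertible) to get compact generation and detection of compactness, and then you produce the comparison functor from the universal property of $\D^b$ \cite[Corollary 7.4.12]{bunkeControlledObjectsLeftexact2019} and close the argument with an $\mathrm{Ext}$-computation using exactness of $(-)^{\Z/2\Z}$ on $\Q$-linear abelian categories. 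Both hinge on the same underlying fact (vanishing of higher $\Z/2\Z$-cohomology with $\Q$-linear coefficients), but your version makes the norm splitting explicit and replaces the descent step by a direct d\'evissage; the paper's version has the advantage of directly reusing the already-proved descent lemma \Cref{etdescfields} and avoids checking separately that the universal functor is fully faithful. One small point worth flagging in your write-up: the order of your last two steps should be reversed or merged — essential surjectivity via d\'evissage on a bounded $t$-structure presupposes full faithfulness, so the $\mathrm{Ext}$-argument should come first, and the ``$M$ is bounded with heart cohomology'' observation then gives essential surjectivity as a corollary.
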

\begin{proof}
  Because $\D_\mathrm{H}(X_\C,\Q)$ has a t-structure that restricts to compact objects, it is true that $\D_\mathrm{H}(X,\Q)$ has a t-structure, that restricts to the invariants $\dcal(X)$ of the compact objects, seen as a full subcategory of $\D_\mathrm{H}(X,\Q)$. Now if $D$ belongs to $\dcal(X)$, and $M=\colim_i M_i$ is a filtered colimit of objects of $\D_\mathrm{H}(X,\Q)$, we have that 
  \begin{align*}
    \map_{D_\mathrm{H}(X,\Q)}(D,\colim M_i) &\simeq \map_{D_\mathrm{H}(X_\C,\Q)}(D_{\mid X_\C},\colim_i (M_i)_{\mid X_\C})^{\mathrm{h}\Z/2\Z} \\
    & \simeq \mathrm{R}\Gamma(\R_\et,\colim\map_{D_\mathrm{H}(X_\C,\Q)}(D_{\mid X_\C}, (M_i)_{\mid X_\C}) ) \\
    & \overset{(*)}{\simeq} \colim_i\mathrm{R}\Gamma(\R_\et,\map_{D_\mathrm{H}(X_\C,\Q)}(D_{\mid X_\C}, (M_i)_{\mid X_\C}) )\\
   & \simeq\colim_i \map_{\D_\mathrm{H}(X,\Q)}(D,M_i)
  \end{align*} where $(*)$ follows from \cite[Lemma 1.1.10]{MR3477640}. Thus any object of $\dcal(X)$ is compact. Moreover, because the functor 
  \[\D_\mathrm{H}(X,\Q)\to\D_\mathrm{H}(X_\C,\Q)\] is conservative, the $\infty$-category $\dcal(X)$ generates $\D_\mathrm{H}(X,\Q)$ under colimits. Thus, the canonical functor $\In\dcal(X)\to\D_\mathrm{H}(X,\Z)$ is an equivalence.

  Because $p_1^*$ is perverse t-exact, the functor $\mathrm{MHM}(-,\Q)$ has descent along $p_1$. Moreover, because $\D^b(\mathrm{MHM}(-,\Q))\to\dcal(-)$ is conservative and commutes with $p_1^*$ as well as $(p_1)_*$ (which is also perverse t-exact), the same proof as \cite[Theorem 5.23]{SwannRealisation} gives that $\D^b(\mathrm{MHM}(-,\Q))$ has descent for the morphism $p_1$, but this implies that the functor $\D^b(\mathrm{MHM}(X,\Q))\to\dcal(X)$ is an equivalence, finishing the proof.
\end{proof}
\begin{cor}\label{MHMRZ}
  The functor $\D_\mathrm{H}(-,\Z)$ is an \'etale hypersheaf on finite type $\R$-schemes. For any finite type $\R$-scheme $X$, the $\infty$-category $\D_\mathrm{H}(X,\Z)$ affords a t-structure that restricts to the subcategory $\D_\mathrm{H,c}^b(X,\Z)$of \emph{constructible objects}, which consists of those objects whose underlying ind-constructible sheaf is constructible. We denote by $\mathrm{MHM}(X,\Z)$ the heart of the latter $\infty$-category.
\end{cor}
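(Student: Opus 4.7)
The étale hyperdescent will be shown first on finite-type $\C$-schemes and then transferred to $\R$-schemes. On $\C$-schemes, $\D_\mathrm{H}(-,\Z)$ is by \Cref{defDH} a pullback of three functors: $\D_\mathrm{H}(-)$, $\In\dconsan((-)^\mathrm{an},\Z)$ and $\In\dconsan((-)^\mathrm{an},\Q)$. The two analytic constructible categories satisfy étale hyperdescent by classical arguments (after passing to the analytification, étale covers become local homeomorphisms, and the descent for bounded constructible analytic complexes can be reduced truncation by truncation as in \Cref{etdescfields}). For $\D_\mathrm{H}(-)$, étale hyperdescent follows from the six functor formalism together with the fact that the heart $\mathrm{MHM}(-)$ is an étale sheaf and that the cohomological amplitude is bounded by $2\dim + 2$, so the argument of \Cref{etdescfields} applies. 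Since a limit of hypersheaves is a hypersheaf, $\D_\mathrm{H}(-,\Z)$ is an étale hypersheaf on finite-type $\C$-schemes. For $\R$-schemes, base change along $\Spec(\C)\to\Spec(\R)$ preserves étale hypercovers, and the formation of $\Z/2\Z$-homotopy fixed points is itself a limit, so it commutes with the limit encoding descent. Thus $\D_\mathrm{H}(-,\Z)$ is an étale hypersheaf on finite-type $\R$-schemes.

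Next, I construct the perverse t-structure on $\D_\mathrm{H}(X_\C,\Z)$ for $X_\C$ a finite-type $\C$-scheme. Each of the three categories entering the pullback of \Cref{defDH} carries its perverse t-structure: on $\D_\mathrm{H}(X_\C)$ it comes from the ind-extension of the perverse t-structure on $\D^b(\mathrm{MHM}(X_\C))$, while on $\In\dconsan((X_\C)^\mathrm{an},\Lambda)$ it comes by \cite[Lemma C.2.4.3]{lurieSpectralAlgebraicGeometry} from the perverse t-structure on $\dconsan$. The functor $\mathrm{rat}$ is t-exact by construction of the t-structure on $\mathrm{MHM}(X_\C)$, and the rationalisation functor on analytic sheaves is t-exact because it sends a perverse $\Z$-complex to a perverse $\Q$-complex (tensoring with $\Q$ is exact, and perversity of the cohomology sheaves can be tested after rationalisation up to torsion, which is automatically perverse). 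Applying \Cref{LimTStructures} to this pullback diagram produces a t-structure on $\D_\mathrm{H}(X_\C,\Z)$ such that all three projections are t-exact. Passing to the $\R$-case, the $\Z/2\Z$-action on $\D_\mathrm{H}(X_\C,\Z)$ is by t-exact functors (pulling back along the conjugation, which is a finite étale map, is t-exact), so a second application of \Cref{LimTStructures} yields the desired t-structure on $\D_\mathrm{H}(X,\Z)$.

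The subcategory $\D_\mathrm{H,c}^b(X,\Z)$ is defined as the preimage, under the conservative Betti realisation $\D_\mathrm{H}(X,\Z)\to \In\dconsan(X^\mathrm{an},\Z)$, of the full subcategory $\dconsan(X^\mathrm{an},\Z)$ of bounded constructible analytic complexes. To see that the t-structure restricts, it is enough to check that if $M$ has bounded constructible Betti realisation, so do $\tau^{\leqslant 0}M$ and $\tau^{\geqslant 0}M$. This is the content of the classical fact that the perverse t-structure on analytic sheaves restricts to $\dconsan$, combined with the t-exactness of the Betti realisation established in the previous paragraph. The heart is then defined to be $\mathrm{MHM}(X,\Z)$. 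The main point of delicacy is checking that the rationalisation functor $\In\dconsan((-)^\mathrm{an},\Z)\to\In\dconsan((-)^\mathrm{an},\Q)$ is t-exact for the perverse t-structures (so that \Cref{LimTStructures} can be applied); this is standard but worth writing out, since the perverse t-structure involves vanishing conditions for $i_x^!$ whose behaviour under tensoring with $\Q$ requires a short verification.
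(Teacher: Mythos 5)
Your proof takes the same route the paper leaves implicit (the corollary is stated without a written proof, but the paragraph following \Cref{defDH} and the proof of \Cref{MHMRrat} make the intended argument clear): on $\C$-schemes the statements come from the defining pullback via \Cref{LimTStructures}, and one passes to $\R$-schemes by taking $\Z/2\Z$-homotopy fixed points, which commute with the limits computing descent and with \Cref{LimTStructures} because the Galois action is by (perverse) t-exact functors, pullback along a finite \'etale map being perverse t-exact. I therefore regard your argument as essentially identical to the paper's.

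One auxiliary remark you make is wrong and should be repaired. You justify the perverse t-exactness of $-\otimes\Q$ on $\dconsan(X^\an,-)$ by claiming that ``perversity of the cohomology sheaves can be tested after rationalisation up to torsion, which is automatically perverse''. Torsion constructible complexes are not automatically perverse: the constant sheaf $\Z/2\Z$ placed in degree $0$ on $\A^1_\C$ is torsion but sits in perverse degree $1$. The conclusion you need (that $-\otimes\Q$ is t-exact for the perverse t-structures with $\Z$- and $\Q$-coefficients) is nevertheless correct, but for a different reason: $-\otimes\Q$ is an exact functor on the underlying abelian categories, it commutes with $i_x^*$, and it commutes with $i_x^!$ for constructible complexes (because $i^!$ is a finite limit built from $i^*$ and $j_*$, and $j_*$ has finite cohomological amplitude on analytic sheaves over finite-type $\C$-schemes, hence commutes with the filtered colimit computing $-\otimes\Q$); it therefore preserves both the support condition defining ${}^p\D^{\leqslant 0}$ and the cosupport condition defining ${}^p\D^{\geqslant 0}$. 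Replace your parenthetical with this argument.
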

\begin{rem}
  Let $X$ be a smooth $\R$-scheme.
  Let $M\in\mathrm{MHM}(X,\Z)$, then $M_{\mid X_\C}$ admits an underlying $\dcal$-module on $X_\C$ with an action of $\Z/2\Z$, so that in fact we have a functor \[\mathrm{MHM}(X,\Z)\to \dcal\text{-}\mathrm{mod}(X)\] by Galois descent of $\dcal$-modules.
\end{rem}

\begin{prop}
  \label{MHMptR}
  The heart of $\D^b_\mathrm{H,c}(\Spec(\R),\Z)$ is the category $\mathrm{MHS}_\Z^p(\R)$ of polarisable mixed Hodge modules over $\R$ with integral coefficients. Moreover, the canonical functor 
  \[\D^b(\mathrm{MHS}_\Z^p(\R))\to\D^b_\mathrm{H,c}(\Spec(\R),\Z)\]
  is an equivalence.
\end{prop}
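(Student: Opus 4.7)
The plan is to adapt the proof of \Cref{MHMpt} to the fixed-point setting. I first identify the heart: by \Cref{MHMRZ}, the heart of $\D^b_\mathrm{H,c}(\Spec(\R),\Z)$ equals $\mathrm{MHM}(\Spec(\R),\Z)$, which by definition is the $\Z/2\Z$-invariants of $\mathrm{MHM}(\Spec(\C),\Z)$. Examining the pullback of \Cref{defDH} at $\Spec(\C)$ on hearts, combined with \Cref{MHMpt}, identifies $\mathrm{MHM}(\Spec(\C),\Z)$ with the category $\mathrm{MHS}_\Z^p(\C)$ of polarisable integral mixed Hodge structures over $\C$. Taking invariants under complex conjugation then yields precisely Deligne's category $\mathrm{MHS}_\Z^p(\R)$ by \cite[D\'efinition 2.3.1]{MR0498551}, giving the first claim.

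The universal property of the bounded derived $\infty$-category then produces a canonical exact functor $F\colon \D^b(\mathrm{MHS}_\Z^p(\R)) \to \D^b_\mathrm{H,c}(\Spec(\R),\Z)$ extending the identity on the heart. To show that $F$ is an equivalence, I would follow the strategy of \Cref{MHMpt}: the category $\mathrm{MHS}_\Z^p(\R)$ has cohomological dimension $1$ by \cite[Corollary 1.10 and Remark 2.4]{MR0862628}, so by \cite[Lemma 1.1.7]{MR3477640} every object of $\D^b(\mathrm{MHS}_\Z^p(\R))$ defines a compact object of $\D(\In\mathrm{MHS}_\Z^p(\R))$. Passing to compact objects would reduce the claim to the identification of presentable $\infty$-categories $\D_\mathrm{H}(\Spec(\R),\Z) \simeq \D(\In\mathrm{MHS}_\Z^p(\R))$.

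The hard part will be establishing this last identification, which amounts to interchanging $\Z/2\Z$-homotopy fixed points with the passage from an abelian category with $\Z/2\Z$-action to its derived $\infty$-category. My approach would be to apply Lurie's criterion \cite[Theorem C.5.4.9]{lurieSpectralAlgebraicGeometry}, exactly as in the proof of \Cref{DInd}: the t-structure inherited on $\D_\mathrm{H}(\Spec(\R),\Z) = \D_\mathrm{H}(\Spec(\C),\Z)^{\mathrm{h}\Z/2\Z}$ is left complete by applying \cite[Proposition 4.7.4.19]{lurieHigherAlgebra2022} as in \Cref{DInd}; its heart is the $\Z/2\Z$-invariants of $\In\mathrm{MHS}_\Z^p(\C)$, which coincides with $\In\mathrm{MHS}_\Z^p(\R)$ since filtered colimits of accessible $1$-categories commute with finite limits; and the Grothendieck prestable subcategory $\D_\mathrm{H}(\Spec(\R),\Z)^{\leqslant 0}$ inherits $0$-compliciality from the $\Spec(\C)$ case via the restriction-induction adjunction along the $\Z/2\Z$-action on hearts. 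Together, these inputs produce the required equivalence, from which the proposition follows.
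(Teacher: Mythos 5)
Your identification of the heart agrees with the paper's, but your route to the equivalence of bounded derived categories diverges from the paper's and contains a real gap. The paper's actual proof is a short descent argument: $\D^b(\mathrm{MHS}_\Z^p(-))$ satisfies \'etale descent along $\Spec(\C)\to\Spec(\R)$ (by the same mechanism as in the proof of \Cref{MHMRrat}, using conservativity of the forgetful functor to perverse sheaves and the finiteness of the totalisation in each cohomological range), while $\D^b_\mathrm{H,c}(-,\Z)$ satisfies it as well by \Cref{MHMRZ}; the equivalence over $\Spec(\R)$ is then a formal consequence of the equivalence over $\Spec(\C)$, which is \Cref{MHMpt}. This sidesteps entirely the question of the cohomological dimension of $\mathrm{MHS}_\Z^p(\R)$.

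That question is precisely where your argument breaks down. You invoke \cite[Corollary 1.10 and Remark 2.4]{MR0862628} to assert that $\mathrm{MHS}_\Z^p(\R)$ has cohomological dimension $1$, but Beilinson's result, as used in the proof of \Cref{MHMpt}, concerns the category $\mathrm{MHS}_\Z^p(\C)$ of polarisable integral mixed Hodge structures without a $\Z/2\Z$-equivariance datum. Once one adds a $\Z/2\Z$-action, the Yoneda $\mathrm{Ext}$ groups acquire contributions from the group cohomology $\mathrm{H}^*(\Z/2\Z,-)$, which with integral coefficients is $2$-periodic and unbounded; this is exactly analogous to $\Z[\Z/2\Z]$ having infinite global dimension, and to the phenomenon discussed in \Cref{WhyDnComplicated} for $k=\R$. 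Thus the compact objects of $\D(\In\mathrm{MHS}_\Z^p(\R))$ are \emph{not} $\D^b(\mathrm{MHS}_\Z^p(\R))$, and the passage from the presentable identification back to the bounded one collapses. (For a related reason, $\D_\mathrm{H}(\Spec(\R),\Z)$ is not compactly generated by $\D^b_\mathrm{H,c}(\Spec(\R),\Z)$ either, so identifying the two $\D^b$'s as compact objects of the respective ind-categories fails on both sides.) Your Lurie-criterion identification of the presentable categories may be salvageable, but you would then need to identify the bounded constructible objects intrinsically in terms of the t-structure rather than via compactness, and that still requires additional work that the descent argument makes unnecessary.

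A smaller point: the identification of $\left(\In\mathrm{MHS}_\Z^p(\C)\right)^{\mathrm{h}\Z/2\Z}$ with $\In\mathrm{MHS}_\Z^p(\R)$ cannot be justified by ``filtered colimits commute with finite limits'' alone, since homotopy fixed points under $\Z/2\Z$ are a totalisation; the right argument is the one in \Cref{CoeurBigDN}, using compactness in the heart and the fact that Galois cohomology of a finite group commutes with filtered colimits of uniformly bounded-below complexes.
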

\begin{proof}
  We can identify the heart with the abelian category of polarisable mixed Hodge structures over $\C$ with integral coefficients, together with an action of $\Z/2\Z$, and this is exactly $\mathrm{MHS}_\Z^p(\R)$. Thus we have a functor 
  \[\D^b(\mathrm{MHS}_\Z^p(\R))\to\D^b_\mathrm{H,c}(\Spec(\R),\Z).\]
  As in the proof of \cref{MHMRrat} the left-hand side has descent along the morphism $\R\to\C$, so that the result follows from \cref{MHMpt}.
\end{proof}

\begin{cor}
  Let $f\colon X\to \Spec \R$ be a finite type map, then we recover the absolute Hodge cohomology groups of Beilinson (\cite{MR0862628}) as 
  \[\mathrm{R}\Gamma_\hcal(X,\Z(p))\simeq\map_{\D_\mathrm{H}(\Spec(\C),\Z)}(\Z,f_*\Z(p))\simeq \map_{\D_\mathrm{H}(X,\Z)}(\Z,\Z(p)).\]
\end{cor}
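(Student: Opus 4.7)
The strategy is to reduce the first isomorphism to the definition of absolute Hodge cohomology via the identification of $\D_\mathrm{H}(\Spec(\R),\Z)$ with (ind-)mixed Hodge structures over $\R$ provided by \Cref{MHMptR}, and to obtain the second isomorphism by pure formal manipulation in the six-functor formalism. (Note: I read the middle term as $\D_\mathrm{H}(\Spec(\R),\Z)$; the $\C$ in the statement appears to be a typo since $f$ lands in $\Spec\R$.)

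First, the second equivalence. The six-functor formalism for $\D_\mathrm{H}(-,\Z)$ on finite type $\R$-schemes follows from the same construction that produced it on $\C$-schemes together with the Galois descent from \Cref{MHMRZ}: one takes $\Z/2\Z$-invariants of the six-functor formalism over $\C$, exactly as in the passage between $\DN$ over $k$ and over $\bar k$. In particular, we have an internal $(f^*,f_*)$-adjunction and $f^*\Z = \Z$ since the unit is pulled back by structural maps. Hence
\[
\map_{\D_\mathrm{H}(\Spec(\R),\Z)}(\Z,f_*\Z(p))\simeq\map_{\D_\mathrm{H}(X,\Z)}(f^*\Z,\Z(p))=\map_{\D_\mathrm{H}(X,\Z)}(\Z,\Z(p)).
\]

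Second, the first equivalence. By \Cref{MHMptR} we have equivalences
\[
\D_\mathrm{H}(\Spec(\R),\Z)\simeq\In\D^b_\mathrm{H,c}(\Spec(\R),\Z)\simeq\In\D^b(\mathrm{MHS}_\Z^p(\R))\simeq\D(\In\mathrm{MHS}_\Z^p(\R)),
\]
the last equivalence holding because $\mathrm{MHS}_\Z^p(\R)$ has cohomological dimension $\leqslant 1$ (by \cite{MR0862628}, extended by the same argument as in the discussion preceding \Cref{defDH}). Under this equivalence, the mapping spectrum $\map_{\D_\mathrm{H}(\Spec(\R),\Z)}(\Z,f_*\Z(p))$ becomes $\mathrm{RHom}_{\mathrm{MHS}_\Z^p(\R)}(\Z,f_*\Z(p))$, which by definition is Beilinson's absolute Hodge cohomology \emph{provided} one identifies the object $f_*\Z(p)\in\D^b(\mathrm{MHS}_\Z^p(\R))$ with Beilinson's absolute Hodge complex of $X$.

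The main step, and the one likely to require the most care, is therefore this identification. For rational coefficients, $f_*\Z(p)\otimes\Q$ is by construction the pushforward in Saito's six-functor formalism of ind-mixed Hodge modules, which is known to coincide with Beilinson's complex via the comparison between mixed Hodge modules and Deligne--Beilinson mixed Hodge complexes (this goes back to Saito and is essentially the content of the construction of the trace map). Integrally, one invokes the pullback square of \Cref{defDH}: the underlying ind-constructible sheaf of $f_*\Z(p)$ is $f_*\Z(p)^\mathrm{an} = \mathrm{R}\Gamma(X^\mathrm{an},\Z(p))$ (compatibility of the six operations with the forgetful functor to $\In\dconsan$), so $f_*\Z(p)$ is obtained as the homotopy fibre product of $\mathrm{R}\Gamma(X^\mathrm{an},\Z(p))$ and Saito's rational pushforward over their rationalisations. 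This is precisely Beilinson's recipe for constructing the integral absolute Hodge complex out of the Betti complex and the rational mixed Hodge complex, which yields the desired identification.
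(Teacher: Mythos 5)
Your proposal is correct and follows essentially the same route as the paper: reduce the rational part to Saito's comparison of his pushforward with Beilinson's complex over $\C$, and then observe that the integral (and real) absolute Hodge complex is, by Beilinson's own construction, the $\Z/2\Z$-invariants of a homotopy pullback of the rational Hodge complex and the integral Betti complex over their common rationalisation, which is exactly what the pullback definition of $\D_\mathrm{H}(-,\Z)$ and its $\Z/2\Z$-fixed-point variant over $\R$-schemes produce. You add useful explicit detail (the adjunction for the second equivalence, the passage through $\D(\In\mathrm{MHS}_\Z^p(\R))$), and you are right to flag the $\Spec(\C)$ in the middle term as a typo for $\Spec(\R)$.
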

\begin{proof}
  Indeed Saito proved the rational result for $X_\C$ in \cite[Section 1.15]{MR1047415}, thus it suffices to check that Beilinson definition is indeed the $\Z/2\Z$-invariants of a pullback of the rational cohomology and the $\Z$-structure, but this is the case by \cite[Section 4.1, Theorem 3.4, Section 7]{MR0862628}.
\end{proof}

We finish with a proposition computing the torsion of $\D_\mathrm{H}(X,\Z)$.
\begin{prop}
  Let $X$ be a finite type $\R$-scheme. Then the functor 
  \[\D(X_\et,\Z)\to\D_\mathrm{H}(X,\Z)\] 
  obtained by taking Galois invariant of the "Artin object functor"
  \[\D((X_\C)_\et,\Z)\to\DM(X_\C,\Z)\to\D_\mathrm{H}(X_\C,\Z)\]
  induces an equivalence 
  \[\D(X_\et,\Z/n\Z)\simeq\D_\mathrm{H}(X,\Z)\otimes_\Z\Mod_{\Z/n\Z}.\]
\end{prop}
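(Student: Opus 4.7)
The strategy mirrors the proof of rigidity for Nori motives (Corollary~\ref{rigfields}), first establishing the case of complex schemes via the pullback description of Definition~\ref{defDH}, then descending to real schemes by taking $\Z/2\Z$-invariants.

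The first step is the case of a finite type $\C$-scheme $X$. The $\infty$-category $\Mod_{\Z/n\Z}$ is compactly generated, hence dualizable in $\PrL_\Z$, so $-\otimes_\Z\Mod_{\Z/n\Z}$ preserves both limits and colimits. Tensoring the pullback square of Definition~\ref{defDH} with $\Mod_{\Z/n\Z}$, the two $\Q$-linear vertices $\D_\mathrm{H}(X)$ and $\In\dconsan(X^\mathrm{an},\Q)$ become zero (any $\Q$-linear presentable category is killed by $-\otimes_\Z\Mod_{\Z/n\Z}$), yielding a natural equivalence
\[\D_\mathrm{H}(X,\Z)\otimes_\Z\Mod_{\Z/n\Z}\simeq\In\dconsan(X^\mathrm{an},\Z)\otimes_\Z\Mod_{\Z/n\Z}.\]

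The second step identifies the right-hand side with $\D(X_\et,\Z/n\Z)$. Change of coefficients (in the spirit of Corollary~\ref{TensPerf}) yields $\In\dconsan(X^\mathrm{an},\Z/n\Z)$. Artin's comparison theorem for torsion coefficients \cite[Expos\'e XVI]{SGA4} identifies this with $\In\dconset(X_\et,\Z/n\Z)$, which agrees with $\D(X_\et,\Z/n\Z)$ by compact generation of the étale derived category by constructible sheaves in the torsion setting. A diagram chase confirms that the resulting equivalence is induced by the Artin object functor composed with $\rho_\mathrm{H}$, settling the complex case.

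The third step is the descent to a finite type $\R$-scheme $X$: by construction $\D_\mathrm{H}(X,\Z)$ is the $\Z/2\Z$-homotopy fixed points of $\D_\mathrm{H}(X_\C,\Z)$ under complex conjugation, and the equivalence from the $\C$-case is $\Z/2\Z$-equivariant by functoriality. Using again the dualizability of $\Mod_{\Z/n\Z}$ in $\PrL_\Z$ (which allows $-\otimes_\Z\Mod_{\Z/n\Z}$ to commute with the finite limit defining $\Z/2\Z$-fixed points), together with étale descent for $\D(-,\Z/n\Z)$ along the Galois cover $X_\C\to X$, taking $\Z/2\Z$-invariants of the complex equivalence produces the desired identification $\D(X_\et,\Z/n\Z)\simeq\D_\mathrm{H}(X,\Z)\otimes_\Z\Mod_{\Z/n\Z}$.

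The main obstacle lies in the technical use of the dualizability of $\Mod_{\Z/n\Z}$ in $\PrL_\Z$, which underlies both the commutation of $-\otimes_\Z\Mod_{\Z/n\Z}$ with the defining pullback in the first step and with the $\Z/2\Z$-fixed points in the third step. An alternative, more hands-on approach is to describe $\ccal\otimes_\Z\Mod_{\Z/n\Z}$ as the full subcategory of $\ccal$ of objects on which multiplication by $n$ admits a coherent nullhomotopy, a condition preserved by limits of presentable $\infty$-categories, thereby bypassing any abstract dualizability argument.
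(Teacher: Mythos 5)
Your proof is correct and matches the paper's (very terse) argument: the paper also reduces to finite type $\C$-schemes via Galois invariants and then invokes the pullback square of Definition~\ref{defDH}, whose $\Q$-linear vertices vanish after $-\otimes_\Z\Mod_{\Z/n\Z}$, leaving exactly the Artin-comparison identification you spell out. The only wrinkle is the parenthetical ``finite limit defining $\Z/2\Z$-fixed points'' in step three; the homotopy fixed points $\ccal^{\mathrm{h}\Z/2\Z}=\lim_{\mathrm{B}\Z/2\Z}\ccal$ is a totalization, not a finite limit, but dualizability of $\Mod_{\Z/n\Z}$ in $\PrL_\Z$ makes $-\otimes_\Z\Mod_{\Z/n\Z}$ commute with arbitrary limits, so the conclusion stands.
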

\begin{proof}
  Because everything is obtained by taking Galois invariants, it suffices to prove the claim when $X$ is a finite type $\C$-scheme. In that case, the pullback definition provides us with a proof.
\end{proof}

\subsection{Motivic Hodge modules}

As for Nori motives, this realisation functor factors through modules over the algebra $\hcal$ representing Hodge cohomology. Using the same method as \cite[Section 6.2]{SwannRealisation}, we proved in \cite[Section 2]{SwannMHM} that the canonical functor 
\[\Mod_{\hcal\otimes_\Z\Q}(\DM(-,\Q))\to \D_\mathrm{H}(-)\] is fully faithful over finite type $\C$-schemes, and that the t-structure on $\D_\mathrm{H}(-)$ induces a t-structure on the image that we describe following Ayoub's ideas in \cite[Theorem 1.98]{ayoubAnabelianPresentationMotivic2022}: it is the localising subcategory generated by the $f_*\Q(n)$ with $f$ a proper morphism and $n$ an integer, so that we may call those \emph{mixed Hodge modules of geometric origin} (this means that it is the smallest full subcategory stable under all colimits and finite limits that contains the $f_*\Q(n)$). This category of modules was first introduced by Drew in \cite{drewMotivicHodgeModules2018}. 

There is also an enriched version of this result, where instead of obtaining the localising subcategory generated by the $f_*\Q(n)$, we obtain the localising subcategory generated by the $f_*H(n)$ for any mixed Hodge structure $H$ (that we see as a constant mixed Hodge module), that we may call mixed Hodge modules of Hodge origin. Because this theory of enriched modules is more subtle than the geometric origin one, we will explain how to construct an integral version of the enriched category, and leave the case of geometric origin to the careful reader.

\begin{constr}
  The functor $\D_\mathrm{H}(-,\Z)$ naturally takes values in $\D_{H}(\Spec(\R),\Z)$-linear presentable $\infty$-categories. Thus, the functor $\rho_\mathrm{H}$ induces a $\D_{H}(\Spec(\R),\Z)$-linear functor 
  \[\bm{\rho_\mathrm{H}}\colon \DM(-,\Z)\otimes_{\Mod_\Z}\D_{H}(\Spec(\R),\Z) \to \D_\mathrm{H}(-,\Z).\]
  By the good properties of Lurie tensor product, it turns out that the left-hand side is 
  \[\bm{\DM}(-,\Z):=\DM(-,\D_{H}(\Spec(\R),\Z))\] 
  the presentable $\infty$-category of \'etale motives with coefficients in mixed Hodge structures which has the same definition as \'etale motives with coefficient $\Lambda$, except that one replaces $\Mod_\Lambda$ by $\D_{H}(\Spec(\R),\Z)$. The right adjoint $\bm{\rho^\mathrm{H}_*}$ of $\bm{\rho_\mathrm{H}}$ is lax monoidal thus we get an object $\bm{\hcal}_X=\bm{\rho^\mathrm{H}_*}(\Z)$ of $\mathrm{CAlg}(\bm{\DM}(X,\Z))$ for all finite type $\R$-schemes $X$. 
  There is a natural map
  \[\pi_X^*\bm{\hcal}_{\Spec(\C)}\to\bm{\hcal}_X\] with $\pi_X\colon X\to\Spec(\R)$ the structural morphism, it will be shown to be an isomorphism.
  The presentable $\infty$-categories of integral motivic Hodge modules is the category 
  \[\bm{\mathrm{DH}}(X,\Z):=\mathrm{Mod}_{\pi_X^*\bm{\hcal}}(\bm{\DM}(X,\Z)).\]
  As for Nori motives, it is also the base change of $\bm{\DM}(-,\Z)$ from $\bm{\DM}(\Spec(\R),\Z)$-linear categories to $\bm{\mathrm{DH}}(\Spec(\R),\Z)$-linear categories:
  \[\bm{\mathrm{DH}}(-,\Z)\simeq \bm{\DM}(-,\Z)\otimes_{\bm{\DM}(\Spec(\R),\Z)}\bm{\mathrm{DH}}(\Spec(\R),\Z).\] Thus as in \cref{compatibilite sif}, the functor 
  $\bm{\mathrm{DH}}(-,\Z)$ affords all the six operations in such a way that the functor from $\bm{\DM}(-,\Z)$ and the canonical functor to $\D_\mathrm{H}(-,\Z)$ commute with them, the composition of the two being $\bm{\rho_\mathrm{H}}$.
\end{constr}

\begin{lem}
  \label{algfields}
  Let $\iota\colon\Spec(\C)\to\Spec(\R)$ be the spectrum of the inclusion of $\R$ into $\C$. Then the canonical map 
  $\iota^*\bm{\hcal}_{\Spec(\R)}\to\bm{\hcal}_{\Spec(\C)}$ is an equivalence.
\end{lem}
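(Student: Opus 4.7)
\emph{Proof plan.} The proof will follow the strategy of \Cref{indfieldsalg}: since $\bm{\DM}(\Spec(\C),\Z)$ is presentable and $\Mod_\Z$-linear, it suffices by \Cref{LemmeMagique} to verify that the map $\iota^*\bm{\hcal}_{\Spec(\R)}\to\bm{\hcal}_{\Spec(\C)}$ becomes an equivalence after tensoring with $\Q$ and with $\Z/n\Z$ for every nonzero integer $n$. To apply this reduction one first needs an analogue of \Cref{RArat}: that the right adjoint $\bm{\rho^\mathrm{H}_*}$ commutes with rationalisation. This can be proved by the same argument as in the Nori case, using that $\bm{\DM}(\Spec(\R),\Z)$ is generated under colimits, shifts and twists by objects whose Hodge realisation is compact, together with \cite[Lemma 1.1.10]{MR3477640} applied to the $\Z/2\Z$-Galois cohomology of $\R$.

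For the mod-$n$ part, the rigidity result just proved identifies $\D_\mathrm{H}(X,\Z)\otimes_\Z\Mod_{\Z/n\Z}$ with $\D(X_\et,\Z/n\Z)$ via the Artin-object functor; combined with Bachmann's rigidity \cite[Theorem 3.1]{bachmannrigidity} one gets the analogous identification for $\bm{\DM}(X,\Z)\otimes_\Z\Mod_{\Z/n\Z}$ (the enrichment over $\D_\mathrm{H}(\Spec(\R),\Z)$ itself reduces to $\D(\R_\et,\Z/n\Z)$ modulo $n$ by the previous proposition, and is absorbed into the étale site of the $\R$-scheme $X$). Under these identifications the functor $\bm{\rho_\mathrm{H}}$ becomes the identity, so the canonical map $\un_X/n\to\bm{\hcal}_X/n$ is an equivalence for both $X=\Spec(\R)$ and $X=\Spec(\C)$. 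Since $\iota^*$ is symmetric monoidal and $\iota^*\un_{\Spec(\R)}=\un_{\Spec(\C)}$, the mod-$n$ reduction of the map in question is tautologically an equivalence.

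For the rational part, commutation of $\bm{\rho^\mathrm{H}_*}$ with $-\otimes\Q$ reduces the statement to the corresponding equivalence $\iota^*\bm{\hcal}_{\R,\Q}\to\bm{\hcal}_{\C,\Q}$ for the rational enriched Hodge algebra, which is the content of \cite{SwannMHM}. The main obstacle I anticipate is verifying the enriched Bachmann rigidity cleanly: the coefficient category is not literally $\Mod_\Z$, but its mod-$n$ reduction is of étale-sheaf type, so the argument does ultimately collapse to the classical case; nevertheless, tracking the compatibility of the Hodge realisation through these identifications is the technical core of the argument.
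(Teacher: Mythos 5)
Your proposal is a valid-looking route, but it is considerably more elaborate than the paper's actual argument, and it misses the key shortcut available in this specific situation. The paper's proof is essentially one line: because $\iota$ is \emph{finite \'etale}, one has $\iota^*\simeq\iota^!$; since $\iota_!$ (a left adjoint) is one of the operations that $\bm{\rho_\mathrm{H}}$ commutes with (\Cref{SixFunBig}-type compatibility), passing to right adjoints shows that $\iota^!\simeq\iota^*$ commutes with $\bm{\rho^\mathrm{H}_*}$, whence $\iota^*\bm{\hcal}_{\Spec(\R)}=\iota^*\bm{\rho^\mathrm{H}_*}(\Z)\simeq\bm{\rho^\mathrm{H}_*}(\iota^*\Z)=\bm{\hcal}_{\Spec(\C)}$. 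No fracture square, no rigidity, no analogue of \Cref{RArat} is needed.

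By contrast, you propose to run the generic Hasse-fracture strategy of \Cref{indfieldsalg} through \Cref{LemmeMagique}. That strategy is the right one when the base-change map is \emph{not} finite \'etale (as for $k\subset K$ arbitrary subfields of $\C$ in the Nori case, where no ambidexterity $\iota^*\simeq\iota^!$ is available), but here it forces you to establish two auxiliary facts that are genuinely more work than the lemma itself: (i) that $\bm{\rho^\mathrm{H}_*}$ commutes with rationalisation over $\Spec(\R)$ — which is nontrivial precisely because $\D_\mathrm{H}(\Spec(\R),\Z)$ is not obviously compactly generated and needs the Galois-cohomology-commutes-with-filtered-colimits argument you gesture at; and (ii) an ``enriched'' mod-$n$ rigidity for $\bm{\DM}$, which you yourself flag as the technical core. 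Both of these \emph{can} be made precise (the paper proves a version of (ii) inside \Cref{theoMHMmod}, and (i) is analogous to \Cref{RArat}), so your plan is not wrong — but it trades a one-line observation for a multi-step reduction that reproves downstream results out of order. Worth noting for yourself: the finite-\'etale ambidexterity $\iota^*\simeq\iota^!$ is exactly the structural feature that makes the $\R/\C$ case easier than the general field-extension case of \Cref{indfieldsalg}, and spotting such special structure before reaching for the generic fracture argument is a useful reflex.
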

\begin{proof}
  Becuase $\iota$ is finite \'etale, the functor $\iota^*=i^!$, being a right adjoint, commutes with $\bm{\rho_\mathrm{H}}$. Thus $\iota^*\bm{\hcal}_{\Spec(\R)} \simeq \bm{\rho_\mathrm{H}}(\iota^*\un)\simeq\bm{\hcal}_{\Spec(\C)}$.
\end{proof}

\begin{thm}
  \label{theoMHMmod}
  The canonical functor \[\bm{\underline{\rho_\mathrm{H}}}\colon\bm{\mathrm{DH}}(-,\Z)\to\D_\mathrm{H}(-,\Z)\] factoring $\bm{\rho_\mathrm{H}}$ is fully faithful and its image is stable under truncations.
\end{thm}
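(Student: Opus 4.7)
The plan is to mirror the strategy used for Nori motives (notably the proof of the equivalence $\overline{\rho}_\mathrm{N}$), exploiting the Hasse fracture principle of \Cref{LemmeMagique} together with rigidity mod $n$ and the already-known rational statement from \cite{SwannMHM}.

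First I would prove full faithfulness. The functor $\bm{\underline{\rho_\mathrm{H}}}$ is $\Z$-linear between $\Z$-linear presentable $\infty$-categories, so \Cref{LemmeMagique} reduces the statement to checking full faithfulness after $-\otimes_\Z\Q$ and after $-\otimes_\Z\Z/n\Z$, provided the right adjoint commutes with rationalisation. The latter is proved exactly as in \Cref{RArat}: $\bm{\DM}(-,\Z)$ is generated by motives of the form $M_X(Y)(i)\otimes H$ for $H$ a mixed Hodge structure, and on these generators the compactness argument together with \cite[Corollary 5.4.9]{MR3477640} and the Galois-cohomology computation identifies the rationalisation of $\bm{\rho^\mathrm{H}_*}$ with $\bm{\rho^\mathrm{H}_*}$ of the rationalisation. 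The rational statement then follows from \cite[Section 2]{SwannMHM} (the enriched Hodge version of \cite[Theorem 4.19]{SwannRealisation}).

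The mod $n$ case is the analogue of \Cref{rigfields} and \Cref{rigrhoN}. By Artin's comparison, the Betti-type pullback defining $\D_\mathrm{H}(-,\Z)$ becomes, after $-\otimes_\Z\Z/n\Z$, the $\infty$-category of constructible \'etale sheaves of $\Z/n\Z$-modules (using \Cref{defDH}, \Cref{rigtorscomplet} and the fact that the $\mathrm{rat}$-functor is an equivalence mod $n$ between the two \'etale-sheaf categories). In particular, the structural map $\un_X/n\to\bm{\hcal}_X/n$ is an equivalence; hence $\bm{\mathrm{DH}}(-,\Z)\otimes_\Z\Mod_{\Z/n\Z}$ coincides with $\DM(-,\Z)\otimes_\Z\Mod_{\Z/n\Z}$, and both functors become the Bachmann rigidity equivalence of \cite[Theorem 3.1]{bachmannrigidity}. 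Together with \Cref{algfields} this also yields the compatibility $\pi_X^*\bm{\hcal}_{\Spec(\R)}\xrightarrow{\sim}\bm{\hcal}_X$, as one checks separately on $-\otimes\Q$ (by \cite[Section 2]{SwannMHM}) and on $-\otimes\Z/n\Z$ (where it is the identity of $\un_X/n$). One then also derives a Hasse-type fracture square for $\bm{\hcal}_X$ in the spirit of \Cref{PBN}.

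For stability of the image under truncations, I would combine the two adic regimes again. The image is a full localising subcategory (because $\bm{\underline{\rho_\mathrm{H}}}$ is colimit-preserving and fully faithful). To show that $\tau^{\leqslant 0}$ and $\tau^{\geqslant 0}$ of the t-structure of \Cref{MHMRZ} preserve it, observe that both are exact functors and that it suffices to verify this on the pullback-square components: rationally, the image equals the subcategory of motivic Hodge modules of \cite[Section 2]{SwannMHM} which is explicitly stable under the perverse truncation; $n$-torsion-wise, the image is \emph{all} of $\D(X_\et,\Z/n\Z)$ by the rigidity step above, so stability is automatic. The hard part here will be to glue these two stabilities along the Hasse fracture square $\D_\mathrm{H}(X,\Z)\simeq \D_\mathrm{H}(X,\Q)\times_{\D_\mathrm{H}(X,\Q)^{\wedge}}\D_\mathrm{H}(X,\Z)^{\wedge}$: one must check that if $M\in\D_\mathrm{H}(X,\Z)$ lies in the image, the image truncations agree under the gluing maps. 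This follows because $\bm{\underline{\rho_\mathrm{H}}}$ commutes with $-\otimes\Q$ and $-\otimes\Z/n\Z$ (both are colimit-preserving in the source and target, by the same argument as \Cref{RArat}), and the gluing comparison maps are themselves t-exact, so truncation on either side is witnessed by a truncation of a preimage under $\bm{\underline{\rho_\mathrm{H}}}$.
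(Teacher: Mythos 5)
Your overall strategy matches the paper's: reduce full faithfulness to the rational case (handled by \cite{SwannMHM}) and the mod~$n$ case (handled by rigidity), then deduce stability under truncation from the two regimes. Two small points where the paper's execution is cleaner and where your write-up has gaps.

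First, you propose to verify that the right adjoint commutes with rationalisation by the same generator-by-generator Galois-cohomology argument as \Cref{RArat}. That argument is designed for the situation over a non--algebraically-closed field, where the source of the right adjoint is not compactly generated. Here the paper first reduces by \'etale descent and \Cref{algfields} to finite type $\C$-schemes, where $\D_\mathrm{H}(-,\Z)$ \emph{is} compactly generated (it is a pullback of compactly generated $\infty$-categories along compact-preserving functors, see \Cref{defDH}). Hence $\bm{\rho_*^\mathrm{H}}$ commutes with all colimits, and in particular with $-\otimes\Q$, with no further work. Your route is not wrong, but the reduction to $\C$ should be stated as a first step since it removes the need for \Cref{RArat}-type gymnastics.

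Second, for stability of the image under truncation, you assert that $\tau^{\leqslant 0}$ and $\tau^{\geqslant 0}$ ``are exact functors.'' They are not: truncation functors of a t-structure do not preserve exact triangles. What you are reaching for can be said more precisely, and the paper does so via an arithmetic-fracture triangle. Since the t-structure is compatible with filtered colimits and rationalisation is t-exact, for $M$ in the image one has an exact triangle
\[
\tau^{\leqslant 0}M\;\longrightarrow\;\tau^{\leqslant 0}(M\otimes\Q)\;\longrightarrow\;\colim_n\,\tau^{\leqslant 0}(M)\otimes_\Z\Z/n\Z.
\]
The middle term lies in the image by the rational theorem from \cite{SwannMHM}, and each $\tau^{\leqslant 0}(M)\otimes_\Z\Z/n\Z$ lies in the image because $\bm{\underline{\rho_\mathrm{H}}}\otimes_{\Mod_\Z}\Mod_{\Z/n\Z}$ is an equivalence. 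The image of $\bm{\underline{\rho_\mathrm{H}}}$ is a localising subcategory closed under finite limits (being the essential image of a fully faithful colimit-preserving functor between presentable stable $\infty$-categories), so $\tau^{\leqslant 0}M$ is in the image, with no additional gluing argument to set up. This replaces your ``hard part'' paragraph entirely and sidesteps the false exactness claim.
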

\begin{proof}
  By \'etale descent and \cref{algfields}, it suffices to prove the claim over finite type $\C$-schemes.
  Because $\D_\mathrm{H}(-,\Z)$ is compactly generated, the right adjoint $\bm{\rho_*^\mathrm{H}}$ commutes with rationalisation, so that the algebra $\pi_X^*\bm{\hcal}_{\Spec(\C)}$, tensored with $\Q$ is the algebra considered by Drew in \cite{drewMotivicHodgeModules2018} and the second author in \cite{SwannMHM}. This implies that $\bm{\underline{\rho_\mathrm{H}}}\otimes\mathrm{Mod}_\Q$ is fully faithful with image stable under truncations by the main theorem of \cite[Section 2]{SwannMHM} (see also\cite[Theorem 1.98]{ayoubAnabelianPresentationMotivic2022}). When tensoring the square of \cref{defDH} by $\mathrm{Mod}_{\Z/n\Z}$, we obtain an equivalence 
  \[\D_\mathrm{H}(-,\Z)\otimes_{\Mod_\Z}\Mod_{\Z/n\Z} \xrightarrow{\sim} \In(\dconsan(X^\mathrm{an},\Z)\otimes_{\Perf_\Z}\Perf_{\Z/n\Z}),\] but the right-hand side coincides with the derived category of \'etale sheaves of $\Z/n\Z$-modules on $X$, thanks to Artin comparison theorem. By rigidity, this implies that the natural map \[\Z/n\Z\to \pi_X^*\bm{\hcal}_{\Spec(\C)}/n\] is an equivalence, and then that $\bm{\underline{\rho_\mathrm{H}}}\otimes_{\Mod_\Z}\Mod_{\Z/n\Z}$ is an equivalence. This finishes the proof of the full faithfulness.

  Now let $M$ be in the image of $\bm{\underline{\rho_\mathrm{H}}}$. Because the t-structure is compatible with filtered colimits (this is the case for the t-structure of all categories in the pullback defining $\D_\mathrm{H}$), we have an exact triangle 
  \[\tau^{\leqslant 0}M\to \tau^{\leqslant 0}(M\otimes_\Z \Q) \to \colim_n \tau^{\leqslant 0}(M)\otimes_\Z \Z/n\Z.\]
  The middle term is in the image because we proved the theorem with rational coefficients in \cite{SwannMHM}. Thus it suffices to show that for all $n\in\N^*$, the object $\tau^{\leqslant 0}(M)\otimes_\Z\Z/n\Z$ is in the image. This is the case because $\bm{\underline{\rho_\mathrm{H}}}\otimes_{\Mod_\Z}\Mod_{\Z/n\Z}$ is an equivalence. This proves that the image is stable under truncations, finishing the proof.
\end{proof}

\begin{rem}
  The fact that $\bm{\underline{\rho}_\mathrm{H}}$ is fully faithful implies readily that the map $\pi_X^*\bm{\hcal}_{\Spec(\R)}\to\bm{\hcal}_X$ is an equivalence for all $X$.
\end{rem}

\begin{rem}
  As explained at the beginning of this subsection, the above theorem also works for the non enriched version of the Hodge realisation of \'etale motives. We will call those Hodge modules of geometric origin, and denote them by $\mathrm{DH}(-,\Z)$. 
\end{rem}

\begin{rem}
  \label{remGen}
  For $X$ a finite type $\R$-scheme, the image of $\bm{\rho_\mathrm{H}}$ can be 
  described as the localising subcategory 
  spanned by objects of the form $p_*H(n)$ for $p\colon Y\to X$ proper, and $H$ a polarisable integral mixed Hodge structure over $\R$ seen as a constant integral mixed Hodge module over $Y$.
\end{rem}

\subsection{Abelian categories of integral Hodge modules}

By definition and by \cref{theoMHMmod} for each finite type $\R$-scheme $X$ the three categories $\D_\mathrm{H}(X,\Z)$, $\mathrm{DH}(X,\Z)$ and $\bm{\mathrm{DH}}(X,\Z)$ admit a perverse t-structure, that restrict to constructible object  $\D_\mathrm{H,c}^b(X,\Z)$, $\mathrm{DH}_c(X,\Z)$ and $\bm{\mathrm{DH}}_c(X,\Z)$.
As above, the two categories $\mathrm{DH}_c(X,\Z)$ and $\bm{\mathrm{DH}}_c(X,\Z)$ are the thick categories spanned by the generators of \cref{remGen}.

We will denote by $\mathrm{MHM}(X,\Z)$, $\mathrm{MHM}_\mathrm{geo}(X,\Z)$ and $\mathrm{MHM}_\mathrm{Hdg}(X,\Z)$ the three hearts of the constructible objects. 

In more concrete terms, a  mixed Hodge module with integral coefficients is the data of three objects $(M,\fcal,S,\varphi)$ where $M\in\mathrm{MHM}(X_\C,\Q)$ is a mixed Hodge module as constructed by M. Saito, $\fcal\in\mathrm{Perv}(X_\C^\mathrm{an},\Z)$ is a perverse sheaf, $S\in\operatorname{End}(M,\fcal)$ is an involution and $\varphi\colon \fcal\otimes_\Z \Q\simeq \mathrm{rat}(M)$ is an isomorphism of perverse sheaves that commutes with $S$.  

Because we have the $6$ operations, we can also construct ordinary t-structures on the constructible objects by gluing along stratifications, doing the same construction as in the proof of \cref{ExistPerv} in reverse. We will denote their hearts by $\mathrm{MHM}^\mathrm{ord}(X,\Z)$, $\mathrm{MHM}_\mathrm{geo}^\mathrm{ord}(X,\Z)$ and $\mathrm{MHM}_\mathrm{Hdg}^\mathrm{ord}(X,\Z)$ respectively. Using the same proof as \cref{DbNat}, we obtain:

\begin{prop}
  \label{Dbmhm}
  The natural functors 
  \[\D^b(\mathrm{MHM}^\mathrm{ord}(X,\Z))\to \D_\mathrm{H,c}^b(X,\Z),\] 
  \[\D^b(\mathrm{MHM}_\mathrm{geo}^\mathrm{ord}(X,\Z))\to\mathrm{DH}_c(X,\Z)\] and \[\D^b(\mathrm{MHM}_\mathrm{Hdg}^\mathrm{ord}(X,\Z))\to\bm{\mathrm{DH}}_c(X,\Z)\] are equivalences.
\end{prop}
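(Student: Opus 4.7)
The plan is to adapt verbatim the proof of \Cref{DbNat}. First, via \cite[Lemma 1.4]{MR0923133}, I reduce the claim (say for the first functor) to showing that for every $M\in\mathrm{MHM}^\mathrm{ord}(X,\Z)$ and every $i>0$, the functor $\mathrm{E}^i_M:=\Hom_{\D_\mathrm{H,c}^b(X,\Z)}(M,(-)[i])$ on $\mathrm{MHM}^\mathrm{ord}(X,\Z)$ is effaceable; call such an $M$ \emph{admissible}. \'Etale hyperdescent of $\D_\mathrm{H,c}^b(-,\Z)$ provided by \Cref{MHMRZ}, together with continuity and Zariski hyperdescent on the ordinary hearts (which are detected Zariski locally inside the derived categories), allow me to assume that $X$ is separated of finite type over $\R$.

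Next, following the admissibility strategy of Nori and of \cite[Theorem 2.18]{SwannRealisation}, I first show by induction on $n$ that $\Z_X$ is admissible for $X=\A^n_\R$: the base case is \Cref{MHMptR}, and the induction step uses the six functors together with a well-chosen projection $\A^n\to\A^{n-1}$ and Nori's integral vanishing \cite[Proposition 2.2]{MR1940678}, which transfers to our setting because of the conservative and t-exact Betti realisation built into \Cref{defDH}. Noether normalisation and then an induction on the number of affine opens needed to cover the separated scheme $X$ extend this to arbitrary such $X$. For any torsion-free lisse $L$, tensoring with $L$ is t-exact and left adjoint to a t-exact functor, so \cite[Corollary 2.13]{SwannRealisation} gives that $L=\Z_X\otimes L$ is admissible. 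For a general lisse $L$ I use the short exact sequence $0\to L_\mathrm{tors}\to L\to L_\mathrm{fr}\to 0$ (whose outer terms are lisse, as can be checked after Betti realisation) and reduce $L_\mathrm{tors}$ to an \'etale sheaf via the Hodge analogue of \Cref{rigtorscomplet}; this analogue follows at once from the equivalence $\bm{\hcal}_X/n\simeq \un_X/n$ already established along the way of \Cref{theoMHMmod}, which identifies $\D_\mathrm{H}(X,\Z)_\mathrm{tors}$ with $\D(X_\et,\Z)_\mathrm{tors}$. Choosing an \'etale cover $f\colon U\to X$ trivialising the underlying torsion \'etale sheaf $F$ of $L_\mathrm{tors}$ and a surjection $\Lambda^n\twoheadrightarrow f^*F$ produces an exact triangle $\iota K\to \iota f_!\Lambda^n\to L_\mathrm{tors}$ with admissible outer terms, hence $L_\mathrm{tors}$ and $L$ are admissible. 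A dévissage along a stratification into lisse pieces, exactly as in \cite[Proposition 3.10]{MR1940678}, concludes.

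The two other cases are obtained by running the same argument inside $\mathrm{DH}_c(X,\Z)$ and $\bm{\mathrm{DH}}_c(X,\Z)$, which are by \Cref{theoMHMmod} thick subcategories of $\D_\mathrm{H,c}^b(X,\Z)$ closed under the six operations (and whose ordinary hearts inherit a t-structure by the same gluing procedure), so every admissibility argument above carries through. The hard part I anticipate is purely bookkeeping: verifying that each ingredient used in the proof of \Cref{DbNat} (conservativity and t-exactness of the Betti realisation, integral rigidity, t-exactness of $f^*$ and $f_!$ for $f$ \'etale, and Nori's integral vanishing) has a bona-fide analogue in the Hodge setting. All of these are built into \Cref{defDH} or are consequences of \Cref{theoMHMmod}, so I expect no genuinely new technical input to be required.
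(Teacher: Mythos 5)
Your proposal is correct and follows exactly the approach the paper intends: the paper's proof of \Cref{Dbmhm} is simply the phrase ``Using the same proof as \Cref{DbNat}, we obtain:'' placed immediately before the statement, and your writeup is a faithful and careful unpacking of what that transfer entails, including the key point that the integral-rigidity ingredient (the analogue of \Cref{rigtorscomplet}) is already available from the $\Z/n\Z$-reduction step in the proof of \Cref{theoMHMmod}.
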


\begin{rem}
  As for Nori motives, we suspect that the functors 
  \[\D^b(\mathrm{MHM}_\mathrm{geo}(X,\Z))\to\mathrm{DH}_c(X,\Z)\] 
  and 
  \[\D^b(\mathrm{MHM}_\mathrm{Hdg}(X,\Z))\to\bm{\mathrm{DH}}_c(X,\Z)\] are equivalences, because in some sense, objects of geometric origin have a tendency to be flat, or at least, resolved by flat objects. 
\end{rem}

We finish the paper by linking our notion of mixed Hodge modules to the classical notion of \emph{variation of mixed Hodge structure}. We say that an integral mixed Hodge module $M \in \mathrm{MHM}(X,\Z)$ is lisse if its underlying perverse sheaf $\mathrm{int}(M)$ is locally constant. We denote by $\mathrm{MHM}_\mathrm{lis}(X,\Z)$ the category of lisse Mixed Hodge modules on a finite-type $\C$-scheme. Note that the pullback defining $\D_\mathrm{H}(X,\Z)$ implies that the perverse t-structure induces a t-structure on the subcategory of dualisable objects of $\D_\mathrm{H}(X,\Z)$; the heart of this t-structure is precisely $\mathrm{MHM}_\mathrm{lis}(X,\Z)$. 

\begin{prop}\label{VMHS}
  Let $X$ be a finite-type $\C$-scheme and let $\mathrm{VMHS}_\mathrm{ad}(X,\Z)$ be the category of admissible variations of mixed Hodge structures, where a variation of mixed Hodge structure in the sense of \cite[1.8.14]{MR0601520} is admissible if its rationalisation is admissible in the sense of \cite[2.1]{zbMATH00015320}. Then, we have an equivalence of categories \[\mathrm{MHM}_\mathrm{lis}(X,\Z)\to \mathrm{VMHS}_\mathrm{ad}(X,\Z).\]
\end{prop}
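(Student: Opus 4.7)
The plan is to reduce to Saito's known rational equivalence $\mathrm{MHM}_\mathrm{lis}(X,\Q)\simeq\mathrm{VMHS}_\mathrm{ad}(X,\Q)$ and exploit the pullback description of integral mixed Hodge modules from \Cref{defDH}. Denote by $\mathrm{LocSys}(X^\mathrm{an},\Lambda)$ the abelian category of finite-rank local systems of $\Lambda$-modules on $X^\mathrm{an}$.

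First, I unpack both sides as pullback categories. Taking the heart of the perverse t-structure in \Cref{defDH} and restricting to lisse objects, one has a cartesian square of abelian categories
\[
\begin{tikzcd}
\mathrm{MHM}_\mathrm{lis}(X,\Z) \arrow[r] \arrow[d] \arrow[dr, phantom, very near start, "{ \lrcorner }"] & \mathrm{MHM}_\mathrm{lis}(X,\Q) \arrow[d,"\mathrm{rat}"] \\
\mathrm{LocSys}(X^\mathrm{an},\Z) \arrow[r,"-\otimes\Q"] & \mathrm{LocSys}(X^\mathrm{an},\Q)
\end{tikzcd}
\]
where the left vertical arrow sends a lisse integral mixed Hodge module to its underlying integral local system (the perverse sheaf $\mathrm{int}(M)$, unshifted). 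On the other side, by definition of admissibility through rationalisation, an object of $\mathrm{VMHS}_\mathrm{ad}(X,\Z)$ is precisely the datum of a finite-rank integral local system $\fcal$ together with the structure of an admissible rational VMHS on $\fcal\otimes\Q$. Hence $\mathrm{VMHS}_\mathrm{ad}(X,\Z)$ fits in an entirely analogous cartesian square, with the same bottom row and with $\mathrm{VMHS}_\mathrm{ad}(X,\Q)$ in the top right, the right vertical arrow being the forgetful functor sending a VMHS to its underlying rational local system.

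Next, I would invoke Saito's classical equivalence $\mathrm{MHM}_\mathrm{lis}(X,\Q)\simeq\mathrm{VMHS}_\mathrm{ad}(X,\Q)$: this identifies the two top-right corners in a way that is manifestly compatible with the two forgetful functors to $\mathrm{LocSys}(X^\mathrm{an},\Q)$, because on both sides this functor is the one extracting the underlying rational local system. Taking the pullback along the common bottom row then produces the desired equivalence $\mathrm{MHM}_\mathrm{lis}(X,\Z)\xrightarrow{\sim}\mathrm{VMHS}_\mathrm{ad}(X,\Z)$.

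The only delicate point is the first step: one must check that the pullback square defining $\mathrm{MHM}(X,\Z)$ in \Cref{defDH}, which is formulated at the level of presentable $\infty$-categories of constructible complexes, really does induce a pullback square of abelian categories on the hearts after restricting to the subcategory of lisse objects. This is a formal consequence of t-exactness and conservativity of the vertical arrow $\mathrm{rat}$ in \Cref{defDH} together with the observation that a constructible complex is lisse on $X$ if and only if its rationalisation is lisse and its underlying integral perverse sheaf is lisse; both conditions are visibly preserved and detected by the pullback. I expect this compatibility check to be the main (if modest) obstacle, while the rest of the argument is a purely formal manipulation of pullback squares on top of Saito's theorem.
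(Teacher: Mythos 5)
Your proposal is correct and follows essentially the same route as the paper: both arguments express $\mathrm{MHM}_\mathrm{lis}(X,\Z)$ and $\mathrm{VMHS}_\mathrm{ad}(X,\Z)$ as pullbacks of their rational versions over local systems (the two square orientations are just transposes of one another), then identify the top corners via Saito's theorem from \cite[Theorem 2.2]{zbMATH00015320}. Your ``delicate point'' about the pullback descending to lisse hearts is handled in the paper just before the statement of the proposition, where it is noted that the perverse t-structure restricts to dualizable objects in $\D_\mathrm{H}(X,\Z)$ with heart exactly $\mathrm{MHM}_\mathrm{lis}(X,\Z)$; your justification via t-exactness and conservativity of $\mathrm{rat}$ is a correct way to see this.
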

\begin{proof}
  We have a pullback square:
\[\begin{tikzcd}
  \mathrm{MHM}_\mathrm{lis}(X,\Z)\ar[r]\ar[d] & \mathrm{Loc}(X^\an,\Z) \ar[d]\\
  \mathrm{MHM}_\mathrm{lis}(X,\Q) \ar[r]& \mathrm{Loc}(X^\an,\Q)
\end{tikzcd}\] where $\mathrm{Loc}(X,\Lambda)$ is the category of locally constant sheaves of $\Lambda$-modules with values a in finitely generated $\Lambda$-modules. 

By \cite[Theorem 2.2]{zbMATH00015320}, we have an equivalence of categories \[\mathrm{MHM}_\mathrm{lis}(X,\Q)\to \mathrm{VMHS}_\mathrm{ad}(X,\Q).\]
But by definition, a VMHS with integral coefficients is exactly a VMHS with rational coefficients admitting an integral lattice. Hence, we also have a pullback diagram:
\[\begin{tikzcd}
  \mathrm{VMHS}_\mathrm{ad}(X,\Z)\ar[r]\ar[d] & \mathrm{Loc}(X^\an,\Z)\ar[d] \\
  \mathrm{VMHS}_\mathrm{ad}(X,\Q) \ar[r]& \mathrm{Loc}(X^\an,\Q),
\end{tikzcd}\]
and the result follows. 
\end{proof}

\begin{rem}
  One could go further and define the usual operations of mixed Hodge modules, but with integral coefficients. For example if $f\colon X\to\A^1_\C$ is a map of finite type $\C$-schemes, one can define nearby cycles $\Psi_f$ associated to $f$ as the functor 
  \[ \Psi_f\colon \D^b_\mathrm{H,c}(X_\eta,\Z)\to\D^b_\mathrm{H,c}(X_s,\Z),\]
  with $X_\eta = f^{-1}(\Gm)$ and $X_s = f^{-1}(\{0\})$, defined by the pullback property:
  \[\begin{tikzcd}
	{\D_\mathrm{H,c}(X_\eta,\Z)} \\
	& {\D_\mathrm{H,c}(X_s,\Z)} & {\D^b_c(X_s,\Z)} \\
	& {\D_\mathrm{H,c}(X_s,\Q)} & {\D^b_c(X_s,\Z)}
	\arrow["{\Psi_f}"{description}, dashed, from=1-1, to=2-2]
	\arrow["{\Psi_f(\mathrm{int}(-))}", from=1-1, to=2-3,bend left=20]
	\arrow["{\Psi_f(-\otimes_\Z \Q)}"', from=1-1, to=3-2,bend right=20]
	\arrow[from=2-2, to=2-3]
	\arrow[from=2-2, to=3-2]
	\arrow["\lrcorner"{anchor=center, pos=0.125}, draw=none, from=2-2, to=3-3]
	\arrow[from=2-3, to=3-3]
	\arrow[from=3-2, to=3-3]
\end{tikzcd},\] so that all known properties about $\Psi_f$ on $\D^b_c(-,\Z)$ remain true for the one for $\D^b_{\mathrm{H,c}}(-,\Z)$, for example t-exactness or compatibility with duality. Similarly, one can define vanishing cycles, monodromic mixed Hodge modules and have a Thom-Sebastiani formula by pullback. Taking $\Z/2\Z$-invariants, we also have a version over finite type $\R$-schemes.
\end{rem}

\bibliographystyle{alpha}
\bibliography{BibSCNet}

\vfill

\begin{tabular}{ll}
    (Rapha\"el Ruimy) &Universit\'e Grenoble Alpes, Institut Fourier \\
    &F-38610 Grenoble, France. \\
    &Email adress: \url{raphael.ruimy@univ-grenoble-alpes.fr} \\ \\

    (Swann Tubach) &Sorbonne Universit\'e and Universit\'e Paris Cit\'e, CNRS, IMJ-PRG, \\
    &F-75005 Paris, France. \\
    &Email adress: \url{tubach@imj-prg.fr}
\end{tabular}

\end{document}